\newtheorem{thm}{Theorem}[section]
\newtheorem{cor}[thm]{Corollary}
\newtheorem{prop}[thm]{Proposition}
\newtheorem{lem}[thm]{Lemma}
\theoremstyle{definition}
\newtheorem{defn}[thm]{Definition}
\theoremstyle{remark}
\newtheorem{rem}[thm]{Remark}
\let\c@equation\c@thm
\numberwithin{equation}{section}
\title{Semigeostrphic equations in physical space with free upper boundary }
\author{Jingrui Cheng}
\begin{document}
\maketitle
\begin{abstract}
We define various notions of Lagrangian solution in physical space for 3-d incompressible semigeostrophic system with free upper boundary under different conditions for initial data, then prove their existence via the minimization with respect to a geostrophic functional, generalizing the the work of \cite{Cullen-Feldman} and \cite{Feldman-Tudorascu} to the case of free upper boundary. As a byproduct of our proof, we obtain the existence of measure-valued dual space solutions when the initial measure $\nu_0\in\mathcal{P}_2(\mathbf{R}^3)$ and is supported on $\{-\frac{1}{\delta}\leq y_3\leq-\delta\}$

\end{abstract}

\tableofcontents

\section{Introduction}
The Semi-Geostrophic system (abbreviated as SG in the following)models large-scale atmospheric-ocean flows, where large scale means the flow is rotation-dominated. J.-M. Benamou and Y.Brenier \cite{Brenier} proved the existence of solutions to incompressible SG system in a fixed domain $\Omega\subset\mathbf{R}^3$ in the so-called dual space formulation, which is a formal change of variable. Under the dual space formulation, the SG system can be written as a transport equation coupled with Monge-Amp$\acute{e}$re equation. Mike Cullen and W.Gangbo \cite{Cullen-Gangbo} considered the free boundary case in 3-D, but with additional assumption that the potential temperature is constant. Under this additional assumption, the system can be rewritten as a system in 2-D, the so-called Semi-geostriophic Shallow Water system. The authors then proved the existence of dual-space solutions with initial dual density in $L^p$($p>1$). The existence of solutions in the original physical variables is first proved by Cullen and Feldman \cite{Cullen-Feldman}, in the Lagrangian formulation of the physical system, for both fixed boundary SG system and shallow water system with the same assumption on the dual density as above, and this assumption amounts to some strict convexity condition for the modified pressure. Then in \cite{Feldman-Tudorascu},\cite{Feldman2}, Feldman and Tudorascu put forward a more general notion of physical solutions, to allow for more general initial data, in particular, they proved the existence of general measure valued solutions in dual space.

In this work, we consider the incompressible SG system in a 3-D domain with free upper boundary, but without the constancy assumption made in \cite{Cullen-Gangbo}. The existence of dual space solutions has been proved in \cite{Pelloni} by invoking the general theory of Hamiltonian ODE established in \cite{AG}. Here we prove the existence of Lagrangian solutions in physical space, generalizing the work of \cite{Cullen-Feldman} and \cite{Feldman-Tudorascu}. The main difficulty involved is the more complicated geostrophic energy in our situation since it involves the unknown free boundary profile. The 3-D SG system with free upper boundary has the following form:

\begin{equation}
D_t(\mathbf{u}^g_1,\mathbf{u}_2^g)+(-\mathbf{u}_2,\mathbf{u}_1)+(\partial_{x_1}p,\partial_{x_2}p)=0
\end{equation}
\begin{equation}\nabla\cdot\mathbf{u}=0
\end{equation}
\begin{equation}
D_t\rho=0
\end{equation}
\begin{equation}
\nabla p=(\mathbf{u}_2^g,-\mathbf{u}_1^g,-\rho)
\end{equation}
(1.1)-(1.4) hold in $\Omega_h$, where $D_t=\partial_t+\mathbf{u}\cdot\nabla$, and $\Omega_{h}=\{(x_1,x_2,x_3)\in\Omega_2\times[0,\infty)|0<x_3<h(t,x_1,x_2)\}$

Here $\Omega_2\subset\mathbf{R}^2$ is a bounded convex region. $h(t,x_1,x_2)\geq0$ describes the unknown free upper boundary. In the above $p$ is the pressure, $\mathbf{u}$ is the velocity, and $\rho$ is the density.

Of course we need to prescribe suitable boundary and free boundary conditions. We require that no flow can penetrate the fixed boundary, the pressure at the top is a constant which without loss of generality we take to be zero. This is given by (1.5) and (1.6) below respectively. (1.7) means that no flow penetrates the free boundary, instead the fluid particles move with the flow.
\begin{equation}
\mathbf{u}\cdot\mathbf{n}=0\,\,\,on\,\,\,\partial\Omega_{h}-\{x_3=h\}
\end{equation}
\begin{equation}
p(x_1,x_2,x_3)=0\,\,\,on\,\,\,\{x_3=h\}
\end{equation}
\begin{equation}
\partial_th+\mathbf{u}_1\partial_{x_1}h+\mathbf{u}_2\partial_{x_2}h=\mathbf{u}_3\,\,\,on\,\,\,\{x_3=h\}
\end{equation}

We remark that (1.5) and (1.7) combined express conservation of mass , and are formally equivalent to
$$\partial_t\sigma_h+\nabla\cdot(\mathbf{u}\sigma_h)=0$$
where

\begin{equation}
\sigma_h(x_1,x_2,x_3)=\chi_{\Omega_{h}}(x_1,x_2,x_3)
\end{equation}

Now we put
\begin{equation}
P(t,x)=p(t,x)+\frac{1}{2}(x_1^2+x_2^2)
\end{equation}

then the above system can be written as

\begin{equation}
D_t(\nabla P)=J(\nabla P-x)
\end{equation}
\begin{equation}\nabla\cdot\mathbf{u}=0
\end{equation}

In the above \begin{displaymath}
J =
\left( \begin{array}{ccc}
0 &-1 & 0 \\
1& 0 & 0 \\
0 & 0 &0
\end{array} \right)
\end{displaymath}

The free boundary condition for $P$ is
\begin{equation}
P(x_1,x_2,x_3)=\frac{1}{2}(x_1^2+x_2^2)\,\,\,on\,\,\,\{x_3=h\}
\end{equation}

The geostrophic energy is
\begin{equation}
E=\int_{\Omega_h}\frac{1}{2}((\mathbf{u}_1^{g})^2+(\mathbf{u}_2^{g})^2)+\rho x_3dx=\int_{\Omega_h}\frac{1}{2}[(\partial_1P-x_1)^2+(\partial_2P-x_2)^2]-\partial_3P x_3dx
\end{equation}

By Cullen`s stability principle, the function $P$ above should be convex, and $\nabla P$ should minimize above functional among all possible rearrangement of particles. To make this precise, we are motivated to consider the following minimization problem.

\begin{equation}E_{\nu}(h,\mathbf{T})=\int_{\Omega_{\infty}}\frac{1}{2}[(x_1-\mathbf{T}_1(x))^2+(x_2-\mathbf{T}_2(x))^2]-x_3\mathbf{T}_3(x)dx
\end{equation}

Here $\nu\in\mathcal{P}(\mathbf{R}^3)$ is fixed, $\mathbf{T}_{\sharp}\sigma_h=\nu$ and we require that the actual free boundary profile $h$ and the Borel map $\nabla P$ minimizes $E_{\nu}(h,\mathbf{T})$ among all pairs of $(h,\mathbf{T})$ such that $h\geq0$, continuous, $\int_{\Omega_2}h=1$, and $\mathbf{T}_{\sharp}\sigma_h=\nu$.

Also we recall that $\partial_{x_3}P=-\rho$, where $\rho$ represents the density, so it`s reasonable to assume the convex potential P should satisfy
$-\frac{1}{\delta}\leq\partial_{x_3}P\leq-\delta$, or we require $supp\,\,\nu\subset \mathbf{R}^2\times[-\frac{1}{\delta},-\delta]$.
More generally, we can consider

\begin{equation}
E_{\nu}(h,\gamma)=\int[ \frac{1}{2}[(x_1-y_1)^2+(x_2-y_2)^2]-x_3y_3]d\gamma(x,y)
\end{equation}

Here
 $$h\geq0,\,\,\int_{\Omega_2}hdx_1dx_2=1\,\,\gamma\in\Gamma(\sigma_h,\nu)$$

Here $\Gamma(\sigma_h,\nu)$ denotes the set of measures on the product space $\Omega_{\infty}\times\mathbf{R}^3$ with marginals $\sigma_h$ and $\nu$. Here $\Omega_{\infty}=\Omega_2\times[0,\infty)$. This is the relaxed Kantorovitch problem of (1.14). For a fixed $\sigma_h$, if $\gamma\in \Gamma(\sigma_h,\nu)$ minimize (1.15), then $\gamma$ is called an optimal transport plan of (1.15). If the optimal plan has the form $\gamma=(id\times \mathbf{T})\sharp\sigma_h$ for some Borel map $\mathbf{T}$, then this problem reduces to (1.14).

In the following, we will denote the cost function:
\begin{equation}
c(x,y)=\frac{1}{2}[(x_1-y_1)^2+(x_2-y_2)^2]-x_3y_3=\frac{1}{2}(x_1^2+x_2^2+y_1^2+y_2^2)-x\cdot y
\end{equation}

Here $x=(x_1,x_2,x_3)\in\Omega_{\infty}$ and $y=(y_1,y_2,y_3)\in\mathbf{R}^3$.

 Define $\nu=\nabla P_{\sharp}\sigma_h$, where $P$ is the function appearing in (1.10), then formally it satisfies the following continuity equation. The derivation can be found in \cite{Pelloni}, section 2.

\begin{equation}
\partial_t\nu+\nabla\cdot(\nu\mathbf{w})=0\,\,\,in\,\,[0,T]\times\Lambda
\end{equation}
\begin{equation}
\mathbf{w}(t,y)=J(y-\nabla P^*(t,y))\,\,\,in\,\,[0,T]\times\Lambda
\end{equation}
\begin{equation}
(h,\nabla P)\,\,minimizes\,\,E_{\nu}(\bar{h},\bar{\mathbf{T}})\,\,\,among\,\,\,all\,\,\,pairs\,\,\,such\,\,\,that\,\,\,\bar{\mathbf{T}}_{\sharp}\sigma_{\bar{h}}=\nu
\end{equation}

Pelloni et al proved in \cite{Pelloni} the existence of weak solutions to above SG system in dual variables, (1.17)-(1.19), using the theory of Hamilton ODEs developed in \cite{BV vector} . In this paper, we will reprove this result using a more straightforward time-stepping argument. Then we prove the existence of solutions to physical equations in Lagrangian variables when the dual density $\nu$ defined above is absolutely continuous. Finally we define a notion of relaxed Lagrangian solution similar to \cite{Feldman-Tudorascu} and prove its existence. This allows us to deal with the case where $\nu\in\mathcal{P}_2(\mathbf{R}^3)$(probability measures with finite second order moment).

Here are some notations and terminology which will be used thoughout this work.
In the following, $\Omega_{\infty}$ represents the region $\Omega_2\times[0,\infty)$,and $\Omega_H=\Omega_2\times[0,H]$.\\
Given $h:\Omega_2\rightarrow\mathbf{R}_+$, we denote $\Omega_h=\{x\in\Omega_{\infty}|0<x_3<h(x_1,x_2)\}$, and $\sigma_h(x)=\chi_{\Omega_h}(x)$. We identify an absolutely continuous measure (with respect to $\mathcal{L}^3$) with its densities.

Suppose $A,B\subset\mathbf{R}^3$, and we have two functions $f(x),g(y)$ defined on A and B respectively. We say $f,g$ are convex conjugate to each other over $A$ and $B$ if the following holds.
$$f(x)=\sup_{y\in B}(x\cdot y-g(y))\,\,\,x\in A$$
and
$$g(y)=\sup_{x\in A}(x\cdot y-f(x))\,\,\,y\in B$$

In the sequal, let $p\geq 1$, we denote $\mathcal{P}_p(\mathbf{R}^3)$ to be the set of probability measures in $\mathbf{R}^3$ such that $\int_{\mathbf{R}^3}|y|^pdy<\infty$. For $\nu\in\mathcal{P}_p(\mathbf{R}^3)$, denote $M_p(\nu)=\int|y|^p$

In the following, $\nabla_2$ mans the gradient with respect to the first 2 variables only, namely $\nabla_2v=(\partial_1v,\partial_2v)$

Given $\mu,\nu\in\mathcal{P}_p(\mathbf{R}^3)$($p\geq1$), we define the $p$-Wasserstein metric to be
$$W^p_p(\mu,\nu)=\inf_{\gamma\in\Gamma(\mu,\nu)}\int_{\mathbf{R}^3\times\mathbf{R}^3}|x-y|^pd\gamma(x,y)
$$
Here $\Gamma(\mu,\nu)$ denotes the set of probability measures on $\mathbf{R}^3\times\mathbf{R}^3$ with marginals $\mu,\nu$ respectively. $W_p(\mu,\nu)$ is indeed a metric on $\mathcal{P}_p(\mathbf{R}^3)$, see \cite{gradient-flows} chapter 7.

To conclude this section, we briefly describe the plan of this paper.

In section 2, we study the geostrophic functional and it`s dual problem in detail, and establish various properties of the optimizers which will be used later on. Then in the case when the dual density $\nu\in L^q$, for some $q>1$ with compact support, we follow \cite{Cullen-Feldman} to establish the existence of weak Lagrangian solutions, using the theory of Lagrangian flows generated by BV vector fields developed in \cite{BV vector}, see Theorem 3.9 in section 3.3. In the case when $\nu$ is singular and may have unbounded support, we generalize the notion of weak Lagrangian solutions and prove their existence  with suitable initial data, see Theorem 4.6. As a byproduct, we obtain the existence of measure-valued dual space solutions when the initial dual density $\nu_0\in\mathcal{P}_2(\mathbf{R}^3)$  with support contained in $\mathbf{R}^2\times[-\frac{1}{\delta},-\delta]$ for some $\delta>0$, see Corollary 4.13.

\section{The study of the functional $E_{\nu}(h,\gamma)$}
\subsection{The case when $\nu$ has bounded support}

In this section, we study the functional involved in the geostrophic energy and the associated dual problem, prove basic properties such as unique existence of optimizers. Finally we give an alternative proof of dual space existence result using time stepping since later on we  will need some regularity properties of dual space solutions which are not so clear in Hamiltonian ODE approach as was done in \cite{Pelloni}.

We study the property of the functional
\begin{equation}
E_{\nu}(h,\gamma)=\int_{\Omega_{\infty}\times\Lambda}c(x,y)d\gamma(x,y)=\int_{\Omega_{\infty}\times\Lambda}[\frac{1}{2}(x_1^2+x_2^2+y_1^2+y_2^2)-x\cdot y]d\gamma
\end{equation}
where  $ (h,\gamma)\in\mathcal{M}_{\nu}$  $\nu\in\mathcal{P}_2(\mathbf{R}^3)$ and $supp\,\,\nu\subset\Lambda\subset\mathbf{R}^2\times[-\frac{1}{\delta},-\delta]$ and compact. Here let`s choose $\Lambda=B_D(0)\times[-\frac{1}{\delta},-\delta]$ and assume for technical reasons $\Omega_2\subset B_D(0)$\\
where $$\mathcal{M}_{\nu}=\{(h,\gamma)|h\geq0, \textrm{ continuous}, \int_{\Omega_2}hdx_1dx_2=1,\gamma\in
\Gamma(\sigma_h,\nu)\}$$
and also the functional
\begin{equation}
J_{\nu}^H(P,R)=\int_{\Lambda}[\frac{1}{2}(y_1^2+y_2^2)-R(y)]\nu(y)dy+\inf_{0\leq h\leq H}\int_{\Omega_{\infty}}[\frac{1}{2}(x_1^2+x_2^2)-P(x)]\sigma_h(x)dx
\end{equation}
where $$P(x)+R(y)\geq x\cdot y\,\,\,\forall x\in\Omega_2\times [0,H]\,\,\forall y\in\Lambda$$
Call the collection of all such pairs satisfying the above condition to be $\mathcal{N}_{\nu}$.

We will also consider the untruncated version, namely
\begin{equation}
J_{\nu}(P,R)=\int_{\Lambda}[\frac{1}{2}(y_1^2+y_2^2)-R(y)]\nu(y)dy+\inf_{h\geq0}\int_{\Omega_{\infty}}[\frac{1}{2}(x_1^2+x_2^2)-P(x)]\sigma_h(x)dx
\end{equation}
where we require
$$R\in L^1(d\nu)\,\,\,\,P\in L^1(\Omega_K)\,\,\forall K>0,\,\,\,and\,\,\,h\in L^1(\Omega_2)\,\,\,P\sigma_h
\in L^1(\Omega_{\infty})$$
and
$$P(x)+R(y)\geq x\cdot y\,\,\,\forall x\in\Omega_{\infty}\,\,y\in\Lambda$$
We will see later that $J_{\nu}(P,R)$ is dual to $E_{\nu}(h,\gamma)$ in the next subsection. Also the study of the dual problem $J_{\nu}(P,R)$ will help with proving uniqueness of minimizers of $E_{\nu}(h,\gamma)$, since the geostrophic functional($E_{\nu}(h,\gamma)$ in our case does not seem to have strict convexity as in \cite{Cullen-Gangbo},\cite{Cullen-Maroofi}. This was first noted in \cite{Pelloni} and the idea of considering a dual variational problem is inspired by \cite{axisymmtricflow}.

 In the case when $\nu$ has bounded support, $\Lambda$ can be taken to be bounded, and later on(in this subsection) we will show the $h$ which assumes the infimum in $(2.3)$ has a universal bound in $L^{\infty}$(depending only on the data of the problem) and so it will be equivalent to solving the truncated problem $J_{\nu}^H(P,R)$ if one takes $H$ large enough depending only on $\delta,\Omega_2$ and $\Lambda$.

Suppose $\partial_{x_3}P(x)\leq-\delta$, let`s define
$$\Pi_P(x_1,x_2,s):=\int_0^s[\frac{1}{2}(x_1^2+x_2^2)-P(x_1,x_2,x_3)]dx_3$$

It`s easy to see for fixed $(x_1,x_2)\in\Omega_2$, the function $s\longmapsto\Pi_p(x_1,x_2,s)$ is  uniformly convex and so there exists a unique
$s_*$ where $\Pi_P$ achieves minimum on $[0,\infty)$. We define this function to be $h_P(x_1,x_2)$. We also define $h_P^H(x_1,x_2)$ to be the unique $s^*\in[0,H]$ where $\Pi_P$ achieves minimum on $[0,H]$. Notice by convexity,one has $h_P^H=\min(h_P,H)$.

\begin{rem}
Whenever $h_P(x_1,x_2)>0$, we must have $$P(x_1,x_2,h_P(x_1,x_2))=\frac{1}{2}(x_1^2+x_2^2)$$
Otherwise
$$P(x_1,x_2,0)\leq\frac{1}{2}(x_1^2+x_2^2)$$
Conversely, if P is defined on $\Omega_{\infty}$ and $h$ satisfies above condition, then we will also have $h=h_P$

\end{rem}
\begin{rem}
It`s easy to see in the situation of $J_{\nu}(P,R)$
\begin{equation}
\inf_{h\geq0}\int_{\Omega_{\infty}}[\frac{1}{2}(x_1^2+x_2^2)-P(x)]\sigma_h(x)dx=\int_{\Omega_{\infty}}[\frac{1}{2}(x_1^2+x_2^2)-P(x)]\sigma_{h_P}(x)dx
\end{equation}
and in the situation of $J_{\nu}^H(P,R)$
\begin{equation}
\inf_{0\leq h\leq H}\int_{\Omega_H}[\frac{1}{2}(x_1^2+x_2^2)-P(x)]\sigma_h(x)dx=\int_{\Omega_H}[\frac{1}{2}(x_1^2+x_2^2)-P(x)]\sigma_{h_P^H}(x)dx
\end{equation}
\end{rem}
Now we prove the following
\begin{lem}
Suppose there exists a sequence $\partial_{x_3}P_n,\partial_{x_3}P\leq-\delta$, and $P_n\rightarrow P$ uniformly on $\Omega_2\times[0,H]$ for each $H>0$, then we have $h_{P_n}(x_1,x_2)\rightarrow h_P(x_1,x_2)$ uniformly on $\Omega_2$. If $P_n,P$ satisfy the same condition but is only defined on $\Omega_2\times[0,H_0]$, then $h_{P_n}^{H_0}\rightarrow h_{P}^{H_0}$ uniformly on $\Omega_2$
\end{lem}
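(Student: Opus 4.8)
The plan is to exploit the one structural feature that makes the minimizer $h_P(x_1,x_2)$ well behaved, namely that $s\mapsto\Pi_P(x_1,x_2,s)$ is uniformly convex with a modulus \emph{independent of} $(x_1,x_2)$. Writing $g_P(x_1,x_2,s)=\tfrac12(x_1^2+x_2^2)-P(x_1,x_2,s)$, so that $\Pi_P(x_1,x_2,s)=\int_0^s g_P(x_1,x_2,t)\,dt$, the hypothesis $\partial_{x_3}P\le-\delta$ says precisely that $s\mapsto g_P(x_1,x_2,s)-\delta s$ is nondecreasing; integrating, $s\mapsto\Pi_P(x_1,x_2,s)-\tfrac\delta2 s^2$ is convex on $[0,\infty)$, i.e.\ $\Pi_P(x_1,x_2,\cdot)$ is $\delta$-uniformly convex, with the same constant $\delta$ for all $(x_1,x_2)\in\Omega_2$, and likewise for every $\Pi_{P_n}$. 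This quantitative, location-free convexity is the engine: it turns a uniform perturbation of $P$ into a uniform displacement bound for the minimizer.

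First I would isolate the elementary fact that if $f$ is $\delta$-uniformly convex on an interval $I\subset[0,\infty)$ and attains its minimum over $I$ at $h$, then $f(s)\ge f(h)+\tfrac\delta2(s-h)^2$ for all $s\in I$ (a standard consequence of $\delta$-uniform convexity together with the first-order minimality of $h$ on $I$). Next I would establish a uniform a priori bound: there is $H_*<\infty$, depending only on $\delta$, $\Omega_2$ and the sequence, with $h_{P_n}\le H_*$ and $h_P\le H_*$ on $\Omega_2$. Indeed, $\Omega_2$ is bounded and $P_n(\cdot,\cdot,0)$ is uniformly bounded on $\Omega_2$ (by uniform convergence on $\Omega_2\times[0,1]$), so $C:=\sup_n\sup_{\Omega_2}\big|\tfrac12(x_1^2+x_2^2)-P_n(x_1,x_2,0)\big|<\infty$; from $g_{P_n}(x_1,x_2,s)\ge g_{P_n}(x_1,x_2,0)+\delta s\ge-C+\delta s$ it follows that $\Pi_{P_n}(x_1,x_2,\cdot)$ is strictly increasing for $s>C/\delta$, so its minimizer satisfies $h_{P_n}(x_1,x_2)\le C/\delta=:H_*$, and likewise for $h_P$. (In the truncated statement $P_n,P$ live only on $\Omega_2\times[0,H_0]$, so this step is unnecessary: $h^{H_0}_{P_n},h^{H_0}_P\in[0,H_0]$ by definition, and one simply uses $H_0$ in place of $H_*$ below.)

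With this reduction to the slab $\Omega_2\times[0,H_*]$ the estimate is immediate. Put $\eta_n:=\|P_n-P\|_{L^\infty(\Omega_2\times[0,H_*])}\to0$; then for all $s\in[0,H_*]$ and $(x_1,x_2)\in\Omega_2$ one has $|\Pi_P(x_1,x_2,s)-\Pi_{P_n}(x_1,x_2,s)|\le H_*\eta_n$. Fix $(x_1,x_2)$, abbreviate $\Pi_P(s)=\Pi_P(x_1,x_2,s)$ and $\Pi_{P_n}(s)=\Pi_{P_n}(x_1,x_2,s)$, and set $h=h_P(x_1,x_2)$, $h_n=h_{P_n}(x_1,x_2)$, both in $[0,H_*]$. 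Applying the convexity fact to $\Pi_P$ at $s=h_n$ and to $\Pi_{P_n}$ at $s=h$ and adding the two inequalities gives
\[
\delta(h_n-h)^2\le\big(\Pi_P(h_n)-\Pi_{P_n}(h_n)\big)+\big(\Pi_{P_n}(h)-\Pi_P(h)\big)\le 2H_*\eta_n .
\]
Taking the supremum over $(x_1,x_2)\in\Omega_2$ yields $\|h_{P_n}-h_P\|_{L^\infty(\Omega_2)}\le\sqrt{2H_*\eta_n/\delta}\to0$, which is the first assertion; the truncated statement follows from the same computation with all minima taken over $[0,H_0]$ and the a priori bound step omitted. I do not expect a genuine obstacle: the content is just that $\Pi_P(x_1,x_2,\cdot)$ is $\delta$-uniformly convex with $\delta$ uniform in $(x_1,x_2)$, and the only point needing a little care is the uniform a priori bound $H_*$ in the untruncated case, which rests on the harmless uniform boundedness of $P_n$ on $\Omega_2\times\{0\}$.
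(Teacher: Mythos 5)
Your proof is correct, but it runs on a different engine than the paper's. The paper never exploits the strong convexity of $\Pi_P$ quantitatively: it works directly with the free-boundary characterization of the minimizer (the fact recorded right after the definition of $h_P$, namely $P(x_1,x_2,h_P(x_1,x_2))=\frac{1}{2}(x_1^2+x_2^2)$ wherever $h_P>0$ and $P(x_1,x_2,0)\leq\frac{1}{2}(x_1^2+x_2^2)$ where $h_P=0$) combined with the monotonicity $\partial_{x_3}P\leq-\delta$, and then runs a short case analysis ($h_P=0$ versus $h_P>0$, $h_n\geq h$ versus $h_n\leq h$) to obtain the linear estimate $|h_{P_n}-h_P|\leq\epsilon_n/\delta$. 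You instead observe that $\Pi_P(x_1,x_2,\cdot)$ is $\delta$-strongly convex uniformly in $(x_1,x_2)$, apply the standard quadratic-growth inequality at a constrained minimizer to both $\Pi_P$ and $\Pi_{P_n}$, and add the two inequalities; this avoids the boundary identity and the case splitting altogether (which is a genuine convenience in the truncated case, where the identity $P=\frac{1}{2}(x_1^2+x_2^2)$ is only valid for $0<h<H_0$ and the paper has to wave at "other cases"), at the price of the weaker rate $\sqrt{2H_*\eta_n/\delta}$ -- harmless here, since only uniform convergence is asserted. Note that both arguments rely on the same implicit input in the untruncated case: a uniform bound on $P_n(\cdot,\cdot,0)$ over $\Omega_2$ (equivalently, boundedness of $P(\cdot,\cdot,0)$), which the paper also uses without comment in its first display and which holds in all applications because the relevant $P$'s are convex conjugates with gradients confined to $\Lambda$; your derivation of the a priori bound $h_{P_n}\leq C/\delta$ from $g_{P_n}(x_1,x_2,s)\geq -C+\delta s$ is the same in substance as the paper's boundedness step.
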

\begin{proof}
Denote $h_n=h_{P_n}^{H_0}$, $h=h_{P}^{H_0}$ for convenience. First we show that $\{h_n\}$ is uniformly bounded. Indeed,
$$\frac{1}{2}(x_1^2+x_2^2)=P_n(x_1,x_2,h_{P_n}(x_1,x_2))\leq-\delta h_n(x_1,x_2)+P_n(x_1,x_2,0)$$

Put $\epsilon_n=\sup_{x\in\Omega_2\times[0, H]}|P_n(x)-P(x)|$, then we have

If $h_P(x_1,x_2)=0$, then
$$\frac{1}{2}(x_1^2+x_2^2)\geq P(x_1,x_2,0)\geq-\epsilon_n+(P_n(x_1,x_2,0)-P_n(x_1,x_2,h_n(x_1,x_2))+\frac{1}{2}(x_1^2+x_2^2)$$
$$\geq-\epsilon_n+\delta h_n(x_1,x_2)+\frac{1}{2}(x_1^2+x_2^2)$$

If $h_P(x_1,x_2)>0$,and $h_n\geq h$ then
$$0=P_n(x_1,x_2,h_n(x_1,x_2))-P(x_1,x_2,h(x_1,x_2))$$$$=P_n(x_1,x_2,h_n)-P(x_1,x_2,h_n)+P(x_1,x_2,h_n)-P(x_1,x_2,h)\leq\epsilon_n-\delta(h_n-h)$$

The other case $h_n\leq h$ can be dealt with similarly.
\end{proof}

We also need another lemma which gives control over the absolute bound for the maximizing sequence.
\begin{lem}
Suppose there exists constant $K>0$ and $P:\Omega_H\rightarrow \mathbf{R}$ with $P(x_2^*,x_2^*,0)=0$ where $(x_1^*,x_2^*)\in\Omega_2$ and $P,R$ are convex conjugate over the domain $\Omega_H$ and $\Lambda$ respectively, such that for some $\lambda>0$
$$-K\leq J_{\nu}^H(P-\lambda,R+\lambda)$$
and
$$|\nabla_2P|\leq K\,\,\,\,-\frac{1}{\delta}\leq\partial_{x_3}P\leq-\delta$$
then $$|\lambda|\leq C_1(K,\Lambda,\Omega_2,H)$$as long as $H\geq \frac{2}{\mathcal{L}^2(\Omega_2)}$

\end{lem}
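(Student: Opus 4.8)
I want to bound $|\lambda|$ in terms of $K,\Lambda,\Omega_2,H$, given that $P,R$ are convex conjugate on $\Omega_H,\Lambda$, that $P$ is normalized by $P(x_1^*,x_2^*,0)=0$, that the gradient bounds $|\nabla_2 P|\le K$ and $-\tfrac1\delta\le\partial_{x_3}P\le-\delta$ hold, and that $-K\le J_\nu^H(P-\lambda,R+\lambda)$. The key observation is that $J_\nu^H(P-\lambda,R+\lambda)$ depends on $\lambda$ in an explicit, essentially affine-plus-truncation way: replacing $(P,R)$ by $(P-\lambda,R+\lambda)$ shifts the $R$-integral by $+\lambda$ (since $\nu$ is a probability measure) and shifts the inner infimum by $+\lambda\int_{\Omega_2}h_{P-\lambda}^H$, where the optimal height is $h_{P-\lambda}^H=\min(h_P,H)$ when $\lambda\ge 0$-type situations arise, but more carefully $h_{P-\lambda}^H$ is the minimizer of $\int_0^s[\tfrac12(x_1^2+x_2^2)-P(x)+\lambda]\,dx_3$ on $[0,H]$. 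So the first step is to write
\begin{equation}
J_\nu^H(P-\lambda,R+\lambda)=J_\nu^H(P,R)+\lambda - \lambda\,\mathcal L^2(\Omega_2)\cdot(\text{average optimal height for }P-\lambda),
\end{equation}
or rather to compute $\int_{\Omega_\infty}[\tfrac12(x_1^2+x_2^2)-(P-\lambda)]\sigma_{h_{P-\lambda}^H}\,dx$ and compare it with the $\lambda=0$ case.

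\smallskip

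\textbf{Two regimes.} The argument splits according to the sign of $\lambda$. When $\lambda>0$ is large: the integrand $\tfrac12(x_1^2+x_2^2)-P(x)+\lambda$ becomes positive on all of $\Omega_H$ (using $P(x_1^*,x_2^*,0)=0$, the gradient bounds to control $P$ on $\Omega_H$ by a constant $C(K,\Omega_2,H)$, hence $\tfrac12(x_1^2+x_2^2)-P\ge -C$ everywhere), so the optimal height $h_{P-\lambda}^H\equiv 0$ and the inner infimum equals $0$. Then $J_\nu^H(P-\lambda,R+\lambda)=\big(\int_\Lambda[\tfrac12(y_1^2+y_2^2)-R]\,d\nu\big)+\lambda$; but the first term is itself bounded above by a constant $C(\Lambda,K,H)$ because $R(y)=\sup_{x\in\Omega_H}(x\cdot y-P(x))\ge x^*\cdot y-P(x^*)$ for the normalization point gives a lower bound on $R$, hence an upper bound on $-R$. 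Wait — I need the inequality to go the other way for an upper bound on $J$; instead, I should use the hypothesis $-K\le J_\nu^H(P-\lambda,R+\lambda)$ together with an \emph{upper} bound on $J_\nu^H(P-\lambda,R+\lambda)$ in the \emph{other} regime. So: when $\lambda<0$ is very negative, the integrand $\tfrac12(x_1^2+x_2^2)-P+\lambda$ is negative on a definite portion of $\Omega_H$ — specifically, using the convexity of $s\mapsto\Pi_{P-\lambda}$ and the bound $\partial_{x_3}P\le-\delta$, the optimal height $h_{P-\lambda}^H$ equals $H$ (it saturates) once $|\lambda|$ exceeds a threshold, because $P$ grows at least linearly in $x_3$ and $\tfrac12(x_1^2+x_2^2)-P(x_1,x_2,H)+\lambda<0$. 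In that regime the inner infimum is $\int_{\Omega_H}[\tfrac12(x_1^2+x_2^2)-P+\lambda]\,dx\le C(K,\Omega_2,H)+\lambda\,\mathcal L^2(\Omega_2)H$, and $\int_\Lambda[\tfrac12(y_1^2+y_2^2)-R-\lambda]\,d\nu$ has its $-\lambda$ term, so that
\begin{equation}
J_\nu^H(P-\lambda,R+\lambda)\le C(K,\Lambda,\Omega_2,H)+\lambda\big(\mathcal L^2(\Omega_2)H-1\big)\xrightarrow[\lambda\to-\infty]{}-\infty,
\end{equation}
contradicting $-K\le J_\nu^H(P-\lambda,R+\lambda)$ once $|\lambda|$ is large; this is where the hypothesis $H\ge 2/\mathcal L^2(\Omega_2)$ enters, ensuring the coefficient $\mathcal L^2(\Omega_2)H-1\ge 1>0$ so the right side genuinely goes to $-\infty$. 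Symmetrically, for $\lambda>0$ large one bounds $J_\nu^H(P-\lambda,R+\lambda)$ from above by a constant plus $\lambda(1-0)$... hmm, that goes to $+\infty$, not useful directly — so for the $\lambda>0$ side I instead note that $h_{P-\lambda}^H\equiv 0$ makes the inner term $0$, and I need an upper bound on $\int_\Lambda[\tfrac12(y_1^2+y_2^2)-R-\lambda]\,d\nu$, which I get from a \emph{lower} bound on $R$: by conjugacy $R(y)\ge x\cdot y - P(x)$ for all $x\in\Omega_H$, in particular choosing $x=(x_1^*,x_2^*,0)$ gives $R(y)\ge x_1^*y_1+x_2^*y_2$, bounded below on the compact $\Lambda$; thus $\int_\Lambda[\tfrac12(y_1^2+y_2^2)-R-\lambda]\,d\nu\le C(\Lambda,\Omega_2)-\lambda\to-\infty$, again contradicting the lower bound once $\lambda$ is large and positive.

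\smallskip

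\textbf{Main obstacle.} The routine parts are the $L^\infty$ bound on $P$ over $\Omega_H$ from $|\nabla_2 P|\le K$, $|\partial_{x_3}P|\le\tfrac1\delta$ and the normalization, and the lower bound on $R$ from conjugacy. The genuinely delicate point is pinning down, quantitatively, the threshold value of $|\lambda|$ beyond which the optimal truncated height $h_{P-\lambda}^H$ is identically $0$ (for $\lambda>0$) or identically $H$ (for $\lambda<0$): this requires combining Remark 2.3 / the explicit minimization of $\Pi_{P-\lambda}$ with the one-sided bounds on $\partial_{x_3}P$ to show that shifting $P$ down by $\lambda$ moves the minimizer of $s\mapsto\int_0^s[\tfrac12(x_1^2+x_2^2)-P+\lambda]\,dx_3$ monotonically to an endpoint, \emph{uniformly in $(x_1,x_2)\in\Omega_2$}. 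Once that uniformity is in hand, the inner infimum becomes the explicit affine function of $\lambda$ used above, and the contradiction with $-K\le J_\nu^H(P-\lambda,R+\lambda)$ is immediate, yielding $|\lambda|\le C_1(K,\Lambda,\Omega_2,H)$. I would organize the write-up as: (i) $L^\infty$ bound on $P$ and lower bound on $R$; (ii) for $\lambda\ge\lambda_0(K,\Omega_2,H)$, show $h_{P-\lambda}^H\equiv0$ and derive the upper bound on $J$ forcing $\lambda\le C$; (iii) for $\lambda\le-\lambda_1(K,\Omega_2,H)$, show $h_{P-\lambda}^H\equiv H$, use $H\ge 2/\mathcal L^2(\Omega_2)$, and derive the upper bound forcing $\lambda\ge -C$.
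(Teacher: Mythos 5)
Your plan is correct and can be carried through, and its ingredients are the same as the paper's: the $L^\infty$ bound on $P$ over $\Omega_H$ from the normalization $P(x_1^*,x_2^*,0)=0$ and the gradient bounds, the lower bound $R(y)\geq x_1^*y_1+x_2^*y_2$ from conjugacy, and the fact that $\nu$ is a probability measure so the $\pm\lambda$ terms only cancel when the physical region has unit volume. Where you diverge is in the mechanics: the paper never identifies the optimal height $h_{P-\lambda}^H$ at all. It simply plugs two competitors into the infimum --- $h\equiv 0$, giving $-K\leq\int_\Lambda[\tfrac12(y_1^2+y_2^2)-R-\lambda]\,d\nu\leq C-\lambda$ and hence the upper bound on $\lambda$, and $h\equiv 2/\mathcal L^2(\Omega_2)$ (admissible precisely because $H\geq 2/\mathcal L^2(\Omega_2)$), whose region has volume $2$, so the net coefficient of $\lambda$ is $2-1=1$ and one gets $-K\leq C'+\lambda$, hence the lower bound. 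Since any competitor bounds the infimum from above, these estimates hold for every $\lambda$ simultaneously, with no case split. Consequently the step you single out as the ``main obstacle'' --- showing uniformly in $(x_1,x_2)$ that $h_{P-\lambda}^H\equiv 0$ for $\lambda$ large positive and $\equiv H$ for $\lambda$ very negative --- is not needed; it is, to be fair, also easy to execute (the integrand $\tfrac12(x_1^2+x_2^2)-P+\lambda$ is increasing in $x_3$ since $\partial_{x_3}P\leq-\delta$, so one only checks its sign at $x_3=0$, resp.\ $x_3=H$, using the uniform bound on $P$), so your route does close, just with an extra threshold argument and a regime split that buy nothing. One small slip: in the $\lambda<0$ regime you justify saturation by saying ``$P$ grows at least linearly in $x_3$''; with $\partial_{x_3}P\leq-\delta$ it is $-P$ that grows, but the condition you actually check (negativity of the integrand at $x_3=H$, combined with convexity of $\Pi_{P-\lambda}$) is the correct one, so this does not affect the argument.
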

\begin{proof}
First from our assumption on $P$, we obtain
$$|P(x)|\leq Kdiam\,\,\Omega_2+\frac{H}{\delta}:=C_{-1}\,\,\,x\in\Omega_H$$

Since $(P,R)$ are convex conjugate, one has
$$R(y)=\sup_{x\in\Omega_H}(x\cdot y-P(x))\geq x_1^*y_1+x_2^*y_2\geq -\max_{\Omega_2}|x|\cdot\max_{y\in\Lambda}(|y_1|+|y_2|):=-C_0$$

From the definition of $ J^H$, by taking $h=0$, one has
$$-K\leq\int_{\Lambda}[\frac{1}{2}(y_1^2+y_2^2)-R(y)-\lambda]\nu(y)dy\leq \max_{y\in\Lambda}(|y_1|+|y_2|)^2+C_0-\lambda$$

So$$\lambda\leq C_0+K+\max_{y\in\Lambda}(|y_1|+|y_2|)^2$$

On the other hand  we take $h=\frac{2}{\mathcal{L}^2(\Omega_2)}$, we then have
$$-K\leq\int_{\Lambda}[\frac{1}{2}(y_1^2+y_2^2)-R(y)-\lambda]\nu(y)dy+\int_{\Omega_2\times[0,\frac{2}{\Omega_2}]}[\frac{1}{2}(x_1^2+x_2^2)-P(x)+\lambda]dx$$$$\leq \max_{y\in\Lambda}(|y_1|+|y_2|)^2+C_{-1}+C_0+\max_{\Omega_2}|x|^2+\lambda$$
\end{proof}
Now we prove the existence of a pair of maximizer of $J^H_{\nu}$, using a standard compactness argument.
\begin{thm}
Suppose $H\geq \frac{2}{\mathcal{L}^2(\Omega_2)}$, then the variational problem $J^H_{\nu}$ has a  maximizer $(P,R)$, where $P$ and $R$ are convex conjugate to each other over $\Omega_H$ and $\Lambda$
\end{thm}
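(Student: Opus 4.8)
The plan is to establish existence of a maximizer of $J^H_\nu$ by the direct method of the calculus of variations, so the argument has three stages: show the supremum is finite, extract a compact maximizing sequence, and pass to the limit while preserving both the constraint $P(x)+R(y)\geq x\cdot y$ and the conjugacy between $P$ and $R$. First I would observe that since $\nu$ and $\Lambda$ are compactly supported and $H$ is finite, one can always produce an admissible pair (e.g. take $P\equiv 0$ on $\Omega_H$ and $R(y):=\sup_{x\in\Omega_H}(x\cdot y)$, or better, take $R$ to be the convex conjugate of a fixed smooth convex $P$ with $-\tfrac1\delta\leq\partial_{x_3}P\leq-\delta$), so the supremum is $>-\infty$; boundedness from above is immediate since the $h$-infimum term is $\leq 0$ (take $h\equiv 0$) and the $R$ term is bounded below on $\mathrm{supp}\,\nu\subset\Lambda$ because $R\geq x^*\cdot y - P(x^*)$ is bounded there. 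So $\sup J^H_\nu\in\mathbf{R}$.

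Next, given a maximizing sequence, I would normalize it: by subtracting a constant $\lambda_n$ from $P_n$ and adding it to $R_n$ (which leaves $J^H_\nu$ unchanged — this is exactly the $(P-\lambda,R+\lambda)$ invariance used in Lemma 2.6) we may assume $P_n(x_1^*,x_2^*,0)=0$ for a fixed interior point. Then I would replace $(P_n,R_n)$ by the convex-conjugate pair it generates: set $\tilde R_n(y)=\sup_{x\in\Omega_H}(x\cdot y-P_n(x))$ and $\tilde P_n(x)=\sup_{y\in\Lambda}(x\cdot y-\tilde R_n(y))$. Since $P_n+R_n\geq x\cdot y$ forces $\tilde R_n\leq R_n$ and $\tilde P_n\leq P_n$ (with $\tilde P_n$ still satisfying $\tilde P_n+\tilde R_n\geq x\cdot y$), replacing $(P_n,R_n)$ by $(\tilde P_n,\tilde R_n)$ only increases the functional (the $R$-term increases, the $P$-infimum term increases since $\tilde P_n\le P_n$... wait — one must be careful of the sign). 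I would instead argue the standard way: among all admissible competitors with fixed $P_n$, replacing $R_n$ by its "$c$-transform'' $\tilde R_n$ is admissible and has $\tilde R_n\le R_n$, hence $\int(\tfrac12(y_1^2+y_2^2)-\tilde R_n)d\nu\ge\int(\tfrac12(y_1^2+y_2^2)-R_n)d\nu$, so $J^H_\nu$ does not decrease; similarly replacing $P_n$ by $\tilde P_n\ge P_n$ (the convex conjugate of $\tilde R_n$), the $P$-term $\int(\tfrac12(x_1^2+x_2^2)-P)\sigma_h$ decreases, so again $J^H_\nu$ does not decrease. After these two replacements we may assume each $(P_n,R_n)$ is a conjugate pair; then $|\nabla_2 P_n|$ is bounded by $\max_{y\in\Lambda}|(y_1,y_2)|\le D$ on $\Omega_H$ (since $P_n$ is a sup of functions $x\mapsto x\cdot y-R_n(y)$ with $(y_1,y_2)\in B_D$), and $\partial_{x_3}P_n$ ranges in $[-\tfrac1\delta,-\delta]$ because $\Lambda=B_D\times[-\tfrac1\delta,-\delta]$. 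With the normalization $P_n(x_1^*,x_2^*,0)=0$ this gives a uniform $L^\infty$ bound on $P_n$ over $\Omega_H$, hence (via Lemma 2.6, whose hypotheses now hold with $J^H_\nu(P_n-\lambda,R_n+\lambda)=J^H_\nu(P_n,R_n)\to\sup>-\infty$, so $-K$ is a lower bound for large $n$) a uniform bound on the normalizing constants, and then a uniform Lipschitz bound on $R_n$ over $\Lambda$ as well.

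By the Arzelà–Ascoli theorem, after passing to a subsequence, $P_n\to P$ uniformly on $\Omega_H$ and $R_n\to R$ uniformly on $\Lambda$; the bounds $|\nabla_2 P|\le D$, $-\tfrac1\delta\le\partial_{x_3}P\le-\delta$, the pointwise inequality $P(x)+R(y)\ge x\cdot y$, and the conjugacy relation all pass to the uniform limit, so $(P,R)\in\mathcal N_\nu$ is admissible and conjugate. Finally I would check $J^H_\nu(P,R)=\lim J^H_\nu(P_n,R_n)$: the $R$-term converges since $R_n\to R$ uniformly on the compact set $\mathrm{supp}\,\nu$ and $\nu$ is a probability measure; for the $P$-term I use Lemma 2.5 (or Remark 2.4), which tells us $h_{P_n}^H\to h_P^H$ uniformly, so $\inf_{0\le h\le H}\int(\tfrac12(x_1^2+x_2^2)-P_n)\sigma_h\,dx=\int(\tfrac12(x_1^2+x_2^2)-P_n)\sigma_{h^H_{P_n}}\,dx\to\int(\tfrac12(x_1^2+x_2^2)-P)\sigma_{h^H_P}\,dx$, using that $P_n\to P$ uniformly and $\sigma_{h^H_{P_n}}\to\sigma_{h^H_P}$ in $L^1(\Omega_H)$ (domains of convergence sets). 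Hence $J^H_\nu(P,R)=\sup$, so $(P,R)$ is the desired maximizer. The main obstacle is the normalization/compactness step: the functional is invariant under the additive shift $(P,R)\mapsto(P-\lambda,R+\lambda)$, so without pinning down $\lambda$ no a priori bound exists; the work is precisely to combine the conjugacy reduction (which gives gradient bounds but not an absolute bound) with Lemma 2.6 (which converts a lower bound on $J^H_\nu$ into a bound on $\lambda$, and therefore on $\|P\|_\infty$ and $\mathrm{Lip}(R)$) — and one must take care that after the conjugacy reduction the hypotheses of Lemma 2.6, in particular $H\ge \tfrac{2}{\mathcal L^2(\Omega_2)}$ and the normalization at $(x_1^*,x_2^*)$, are genuinely available.
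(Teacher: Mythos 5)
Your overall outline matches the paper's (conjugate-pair reduction, gradient bounds, Lemma 2.6 to control the additive constant, Arzel\`a--Ascoli, then convergence of the functional via the uniform convergence $h^H_{P_n}\to h^H_P$), but the pivotal normalization step is carried out on a false premise. You assert that $J^H_\nu$ is invariant under the shift $(P,R)\mapsto(P-\lambda,R+\lambda)$ and use this twice: to claim the shifted sequence $P_n-\lambda_n$, $R_n+\lambda_n$ is still maximizing, and to verify the hypothesis of Lemma 2.6 via the identity $J^H_\nu(P_n-\lambda,R_n+\lambda)=J^H_\nu(P_n,R_n)$. This invariance is false: the $\nu$-term changes by exactly $-\lambda$ (since $\nu$ is a probability measure), while the $h$-term changes by $\lambda\int_{\Omega_2}h^*$, where the optimal $h^*$ in the inner infimum ranges over all $0\le h\le H$ with \emph{no} unit-mass constraint, and $\int h^*$ equals $1$ only at the maximizer, not along an arbitrary admissible pair. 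Indeed, if the invariance were true, Lemma 2.6 could not possibly hold, since no lower bound on $J^H_\nu$ could ever pin down $\lambda$; the content of that lemma is precisely that large shifts are penalized. As written, once the false invariance is removed, your shifted sequence need not be maximizing (its $J$-values change by amounts of order $|\lambda_n|=|P_n(x_1^*,x_2^*,0)|$, exactly the quantity you have not yet bounded), so the final step ``$J^H_\nu(P,R)=\lim J^H_\nu(P_n,R_n)=\sup$'' is unjustified, and your appeal to Lemma 2.6 to bound the $\lambda_n$ is circular.

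The repair is the paper's route: do \emph{not} shift the maximizing sequence. Keep the conjugatized pairs $(P_n,R_n)$ (whose derivative bounds you obtained correctly), and apply Lemma 2.6 to the auxiliary pair $\hat P_n:=P_n-P_n(x_1^*,x_2^*,0)$, $\hat R_n:=R_n+P_n(x_1^*,x_2^*,0)$ with $\lambda=-P_n(x_1^*,x_2^*,0)$; then $(\hat P_n-\lambda,\hat R_n+\lambda)=(P_n,R_n)$ is literally the original pair, whose value is eventually $\ge \sup-1>-\infty$, so the lemma bounds $|P_n(x_1^*,x_2^*,0)|$, hence $\|P_n\|_{L^\infty(\Omega_H)}$ and $\|R_n\|_{L^\infty(\Lambda)}$, and the rest of your compactness and convergence argument goes through for the original, genuinely maximizing sequence. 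Two smaller points: in your double-convexification step the inequality goes the other way ($\tilde P_n\le P_n$, since $P_n(x)\ge x\cdot y-\tilde R_n(y)$ for all $y\in\Lambda$), and it is the resulting \emph{increase} of both integrands $-R$ and $-P$ that makes $J^H_\nu$ non-decreasing — your statement that the $P$-term ``decreases, so $J^H_\nu$ does not decrease'' is self-contradictory; also, convexifying after normalizing can destroy $P_n(x_1^*,x_2^*,0)=0$, so the order of those operations needs care (another reason to avoid shifting altogether).
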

\begin{proof}
We choose a maximizing sequence
$(P_n,R_n)$, without loss of generality, we can assume they are convex conjugate over the domain $\Omega_H$ and $\Lambda$, then their derivatives are uniformly bounded(with a bound depending on $\Omega_2,\Lambda,H$) and P satisfies $\frac{-1}{\delta}\leq\partial_{x_3}P\leq-\delta$.

Now the functions $$\hat{P}_n:=P_n-P_n(x_1^*,x_2^*,0)\,\,\,\,\hat{R}_n:=R_n+P_n(x_1^*,x_2^*,0)$$
satisfy the assumption of previous lemma with $\lambda=-P_n(x_1^*,x_2^*,0)$, we can then conclude
$$|P_n(x_1^*,x_2^*,0)|\leq C$$

Since the derivatives are bounded independent of $n$, we get $P_n$ is uniformly bounded independent of $n$. Also since $P_n,R_n$ are convex conjugate, we see $R_n$ are also bounded uniformly independent of $n$.

Now we can apply Arzela-Ascoli to get a pair $(P,R)$ which is also convex conjugate over $\Omega_H$ and $\Lambda$.

Also by lemma 2.8, we also have

$$h_{P_n}^H\rightarrow h_P^H$$ uniformly on $\Omega_2$. Since we also have
$$P_n\rightarrow P\,\,\,\,R_n\rightarrow R\,\,\,\textrm{uniformly}$$
One can see $$J_{\nu}^H(P_n,R_n)\rightarrow J_{\nu}^H(P,R)$$

Hence $(P,R)$ is a maximizer.
\end{proof}

\begin{lem}
Suppose for some $H>0$, the variational problem $J^H_{\nu}$ has a pair of  maximizer $(P,R)$, such that $P(x)=\sup_{y\in\Lambda}(x\cdot y-R(y))$ and put $h=h_P^H$, then $$\nabla P_{\sharp}\sigma_h=\nu$$ in particular,
$$\int_{\Omega_2}h=1$$
\end{lem}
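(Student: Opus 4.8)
The plan is a first-variation (envelope) argument in the dual variable $R$, exploiting the optimality of $(P,R)$ for $J_\nu^H$. First I would normalize: replacing $R$ by $y\mapsto\sup_{x\in\Omega_H}(x\cdot y-P(x))$ does not change $P$ (nor $h=h_P^H$), it can only decrease $R$ on $\Lambda$ and hence can only raise the value of $J_\nu^H$, it keeps the admissibility inequality, and it keeps $P(x)=\sup_{y\in\Lambda}(x\cdot y-R(y))$; so I may assume $(P,R)$ are convex conjugate over $\Omega_H$ and $\Lambda$, with $R$ finite convex, hence continuous. Fix an arbitrary $\phi\in C(\Lambda)$ and, for $\epsilon\in\mathbf{R}$, set
\[
R_\epsilon:=R+\epsilon\phi,\qquad P_\epsilon(x):=\sup_{y\in\Lambda}\bigl(x\cdot y-R_\epsilon(y)\bigr).
\]
Then $P_\epsilon(x)+R_\epsilon(y)\geq x\cdot y$ on $(\Omega_2\times[0,H])\times\Lambda$, so $(P_\epsilon,R_\epsilon)\in\mathcal{N}_\nu$; moreover $P_\epsilon$ is convex with $\partial P_\epsilon(x)\subset\Lambda$, so $\nabla P_\epsilon\in\Lambda$ and $-\tfrac1\delta\leq\partial_{x_3}P_\epsilon\leq-\delta$ a.e., and $\|P_\epsilon-P\|_{C(\Omega_H)}\leq|\epsilon|\,\|\phi\|_{C(\Lambda)}$. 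The goal is to show $\int_\Lambda\phi\,d(\nabla P_{\sharp}\sigma_h)=\int_\Lambda\phi\,d\nu$ for every such $\phi$; since $\nabla P\in\Lambda$ a.e.\ on $\Omega_h$ and $\mathrm{supp}\,\nu\subset\Lambda$ is compact, this gives $\nabla P_{\sharp}\sigma_h=\nu$, and $\phi\equiv1$ then yields $\int_{\Omega_2}h=\nu(\mathbf{R}^3)=1$.

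Next I would derive a one-sided estimate. Write $J_\nu^H(P_\epsilon,R_\epsilon)=\int_\Lambda[\tfrac12(y_1^2+y_2^2)-R-\epsilon\phi]\,d\nu+G(\epsilon)$ with $G(\epsilon):=\inf_{0\leq h\leq H}\int_{\Omega_\infty}[\tfrac12(x_1^2+x_2^2)-P_\epsilon]\sigma_h\,dx$. By Remark 2.5 the infimum is attained at $h_{P_\epsilon}^H$, and $G(0)=\int_{\Omega_H}[\tfrac12(x_1^2+x_2^2)-P]\sigma_{h_P^H}\,dx$; using $h_{P_\epsilon}^H$ as a competitor in $G(0)$ gives $G(\epsilon)-G(0)\geq\int_{\Omega_H}(P-P_\epsilon)\,\sigma_{h_{P_\epsilon}^H}\,dx$. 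Since $R_\epsilon$ is continuous and $\Lambda$ compact, for a.e.\ $x$ the supremum defining $P_\epsilon(x)$ is attained exactly at $y=\nabla P_\epsilon(x)$ (any maximizer lies in $\partial P_\epsilon(x)=\{\nabla P_\epsilon(x)\}$), so $P_\epsilon(x)=x\cdot\nabla P_\epsilon(x)-R(\nabla P_\epsilon(x))-\epsilon\phi(\nabla P_\epsilon(x))$ while $P(x)\geq x\cdot\nabla P_\epsilon(x)-R(\nabla P_\epsilon(x))$; subtracting, $P(x)-P_\epsilon(x)\geq\epsilon\,\phi(\nabla P_\epsilon(x))$ a.e. With the exact contribution $-\epsilon\int_\Lambda\phi\,d\nu$ of the first term, this gives, for every $\epsilon\in\mathbf{R}$,
\[
J_\nu^H(P_\epsilon,R_\epsilon)-J_\nu^H(P,R)\ \geq\ \epsilon\Bigl(\int_\Lambda\phi\,d\bigl(\nabla P_{\epsilon\,\sharp}\sigma_{h_{P_\epsilon}^H}\bigr)-\int_\Lambda\phi\,d\nu\Bigr)=:\epsilon\,M_\epsilon.
\]

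To finish, I would let $\epsilon\to0$. Since $P_\epsilon\to P$ uniformly on $\Omega_H$ and all the $P_\epsilon$ are convex, $\nabla P_\epsilon\to\nabla P$ a.e.; by Lemma 2.8, $h_{P_\epsilon}^H\to h_P^H=h$ uniformly on $\Omega_2$, so $\sigma_{h_{P_\epsilon}^H}\to\sigma_h$ a.e.\ and boundedly; with $\phi$ bounded continuous on $\Lambda$, dominated convergence gives $M_\epsilon\to M:=\int_\Lambda\phi\,d(\nabla P_{\sharp}\sigma_h)-\int_\Lambda\phi\,d\nu$. Since $(P_\epsilon,R_\epsilon)\in\mathcal{N}_\nu$ and $(P,R)$ maximizes $J_\nu^H$, the left side above is $\leq0$, i.e.\ $\epsilon M_\epsilon\leq0$ for all $\epsilon$; letting $\epsilon\downarrow0$ forces $M\leq0$ and $\epsilon\uparrow0$ forces $M\geq0$, hence $M=0$, which is the claim. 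The step I expect to be delicate is exactly this passage to the limit: the one-sided envelope bound for $G(\epsilon)-G(0)$ together with the weak convergence $\nabla P_{\epsilon\,\sharp}\sigma_{h_{P_\epsilon}^H}\rightharpoonup\nabla P_{\sharp}\sigma_h$, whose inputs are Lemma 2.8 (uniform convergence of optimal heights) and the a.e.\ convergence of gradients of uniformly convergent convex functions. The envelope step is arranged so that only $h_{P_\epsilon}^H$ (never $h_P^H$) appears as a competitor, so no regularity of $\epsilon\mapsto h_{P_\epsilon}^H$ is needed; everything else is routine bookkeeping with the cost $c(x,y)=\tfrac12(x_1^2+x_2^2+y_1^2+y_2^2)-x\cdot y$.
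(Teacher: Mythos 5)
Your argument is correct and is essentially the paper's own proof: a first variation $R\mapsto R+\epsilon\phi$, the perturbed optimal height $h_{P_\epsilon}^H$ used as a competitor in the unperturbed infimum, the pointwise envelope bound $P-P_\epsilon\geq\epsilon\,\phi(\nabla P_\epsilon)$, and passage to the limit via the uniform convergence $h_{P_\epsilon}^H\to h_P^H$ (Lemma 2.3) together with a.e.\ convergence of gradients of convex functions. The only cosmetic differences are your two-sided $\epsilon$ in place of the paper's substitution $g\mapsto -g$, and your explicit normalization of $R$ to the conjugate of $P$, which the paper leaves implicit.
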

\begin{proof}
Let $g(y)\in C_b(\Lambda)$,$\delta>0$, define
$$R_{\delta}(y):=R(y)+\delta g(y)\,\,\,\,\,\,\,P_{\delta}(x):=\sup_{y\in\Lambda}(x\cdot y-R_{\delta}(y))$$

Since $(P,R)$ is a pair of maximizers, we have
$$J^H(P_{\delta},R_{\delta})\leq J^H(P,R)$$i.e, we have
$$-\delta\int_{\Lambda}g(y)d\nu(y)\leq\inf_{0\leq h\leq H}\int_{\Omega_h}[\frac{1}{2}(x_1^2+x_2^2)-P(x)]dx-\inf_{0\leq h\leq H}\int_{\Omega_h}[\frac{1}{2}(x_1^2+x_2^2)-P_{\delta}(x)]dx$$
$$\leq\int_{\Omega_2}dx_1dx_2\int_0^{h_{\delta}}[P_{\delta}(x_1,x_2,x_3)-P(x_1,x_2,x_3)]dx_3$$
Here $$h_{\delta}(x_1,x_2)=h^H_{P_{\delta}}(x_1,x_2)$$

Note that $\Pi_{P_{\delta}}(x_1,x_2,s)$ achieves minimum over $[0,H]$ at $h_{\delta}$. Therefore $h_{\delta}$ achieves the second infimum above.

Then we notice that since $R_{\delta}\rightarrow R$ uniformly on $\Lambda$, we have $P_{\delta}\rightarrow P$ uniformly on $\Omega_H$. Hence by Lemma 2.8, we have
$$h_{\delta}(x_1,x_2)\rightarrow h(x_1,x_2) \,\,\,\,\,\,\textrm{uniformly}$$

Suppose $P $ is differentiable at $x$, and let $y_{\delta}\in\bar{\Lambda}$ be the point such that
$$P_{\delta}(x)=x\cdot y_{\delta}-R_{\delta}(y_{\delta})$$

then we have
$$y_{\delta}\rightarrow\nabla P(x)\textrm{ as $\delta\rightarrow0$}$$

Also notice that
$$-g(\nabla P(x))\leq\frac{P_{\delta}(x)-P(x)}{\delta}\leq-g(y_{\delta})$$

By letting $\delta\rightarrow0$,we obtain
$$-\int_{\Lambda}g(y)\nu(y)dy\leq-\int_{\Omega_h}g(\nabla P(x))dx$$

Replacing $g$ by $-g$, we are done.
\end{proof}
Now we prove the function $h_P^H$ obtained above is Lipschitz.
\begin{prop}
Let $(P,R)$ be a pair of convex conjugate maximizers over the domain $\Omega_2\times[0,H]$ and $\Lambda$ respectively, then $h_P^H(x_1,x_2)$ is Lipschitz, with a Lipschitz constant depending only on $\delta,\Lambda,\Omega_2$(not $H$)
\end{prop}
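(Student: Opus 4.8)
The plan is to realise $h_P^H$ as a ``truncated zero crossing'' in the vertical variable and then balance a uniform $x_3$-monotonicity estimate against a uniform $(x_1,x_2)$-Lipschitz estimate. Set $\phi(x_1,x_2,s):=P(x_1,x_2,s)-\tfrac12(x_1^2+x_2^2)$, so that $\Pi_P(x_1,x_2,s)=-\int_0^s\phi(x_1,x_2,t)\,dt$ and $h_P^H(x_1,x_2)$ is the minimizer on $[0,H]$ of this convex-in-$s$ function.

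First I would record two $H$-independent estimates on $\phi$. Since $(P,R)$ are convex conjugate over $\Omega_2\times[0,H]$ and $\Lambda=B_D(0)\times[-\tfrac1\delta,-\delta]$, we have $P(x)=\sup_{y\in\Lambda}(x\cdot y-R(y))$. For fixed $x_3$, the map $(x_1,x_2)\mapsto P(x_1,x_2,x_3)$ is then a supremum of affine maps with gradients $(y_1,y_2)\in B_D(0)$, hence $D$-Lipschitz; since $\tfrac12(x_1^2+x_2^2)$ is $D$-Lipschitz on $\Omega_2\subset B_D(0)$, this gives $|\phi(x_1,x_2,s)-\phi(x_1',x_2',s)|\le 2D\,|(x_1,x_2)-(x_1',x_2')|$ for every $s$. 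For fixed $(x_1,x_2)$, $s\mapsto P(x_1,x_2,s)$ is a supremum of affine maps with slopes $y_3\in[-\tfrac1\delta,-\delta]$, hence convex with $\partial_{x_3}P\in[-\tfrac1\delta,-\delta]$; in particular $\phi(x_1,x_2,\cdot)$ is non-increasing with $\phi(x_1,x_2,s_1)-\phi(x_1,x_2,s_2)\ge\delta(s_2-s_1)$ whenever $s_1\le s_2$.

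Next I would pin down the sign of $\phi$ at the minimizer, using Remark~2.6 and $h_P^H=\min(h_P,H)$ together with the first-order optimality conditions for the convex $C^1$ function $\Pi_P(x_1,x_2,\cdot)$ on $[0,H]$: one has $\phi(x_1,x_2,h_P^H)=0$ when $h_P^H\in(0,H)$; $\phi(x_1,x_2,0)\le0$ when $h_P^H=0$; and $\phi(x_1,x_2,H)\ge0$ when $h_P^H=H$. Then, given two base points with $h:=h_P^H(x_1,x_2)\ge h':=h_P^H(x_1',x_2')$ and $d:=|(x_1,x_2)-(x_1',x_2')|$, the cases $h'=H$ and $h=0$ force $h=h'$, so I may assume $h>0$ and $h'<H$, whence $\phi(x_1,x_2,h)\ge0$ and $\phi(x_1',x_2',h')\le0$; the monotonicity bound at $(x_1,x_2)$ and the Lipschitz bound in $(x_1,x_2)$ then give
\[
\delta(h-h')\le\phi(x_1,x_2,h')-\phi(x_1,x_2,h)\le\phi(x_1,x_2,h')\le\phi(x_1',x_2',h')+2Dd\le 2Dd,
\]
so $|h-h'|\le\tfrac{2D}{\delta}\,d$. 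The Lipschitz constant $2D/\delta$ depends only on $\delta$ and $\Lambda$ (and the standing hypothesis $\Omega_2\subset B_D(0)$), not on $H$, as claimed.

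The main obstacle is really just regularity bookkeeping: $P$ is merely convex (locally Lipschitz), so $h_P^H$ need not be differentiable and one cannot simply differentiate $P(x_1,x_2,h)=\tfrac12(x_1^2+x_2^2)$ to read off $\nabla_2 h=(\mathrm{id}-\nabla_2 P)/\partial_{x_3}P$. The argument above sidesteps this by using only convexity-based difference quotients of $\phi$ in $x_3$ together with the honest Lipschitz bound of $\phi$ in $(x_1,x_2)$; the steps needing care are the verification of the endpoint sign conditions in the two truncated cases $h_P^H=0$ and $h_P^H=H$, and the fact that $\partial_{x_3}P\le-\delta$ (inherited from $\operatorname{supp}\nu\subset\mathbf{R}^2\times[-\tfrac1\delta,-\delta]$ via the convex conjugacy).
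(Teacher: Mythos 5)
Your proof is correct and takes essentially the same route as the paper: both rest on the horizontal Lipschitz bound for $P$ coming from $\partial P(\Omega_H)\subset\Lambda$, the vertical slope bound $\partial_{x_3}P\le-\delta$, and the free-boundary conditions ($P=\tfrac12(x_1^2+x_2^2)$ at an interior minimizer, $P(\cdot,0)\le\tfrac12(x_1^2+x_2^2)$ when $h_P^H=0$, $P(\cdot,H)\ge\tfrac12(x_1^2+x_2^2)$ when $h_P^H=H$), compared via difference quotients at the two heights. Your packaging of the paper's case analysis into the two unified sign inequalities $\phi(x_1,x_2,h)\ge0$ and $\phi(x_1',x_2',h')\le0$ is merely a cleaner presentation of the same argument, with the explicit constant $2D/\delta$.
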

\begin{proof}
Let`s denote $h_P^H(x_1,x_2)=h$. The proof is based on the following facts.

$(i)$$P(x_1,x_2,h(x_1,x_2))=\frac{1}{2}(x_1^2+x_2^2)$ whenever $0<h<H$

$(ii)$$\partial_{x_3}P\leq-\delta$, and $|\nabla_2P|\leq C$. Here $C$ depend only on $\Lambda$

Pick $(x_1,x_2),(z_1,z_2)\in\Omega_2$, without loss of generality, we can assume $h(z_1,z_2)<h(x_1,x_2)$

If $0<h(z_1,z_2)<h(x_1,x_2)<H$, then we have
$$P(x_1,x_2,h(x_1,x_2))=\frac{1}{2}(x_1^2+x_2^2)\,\,\,P(z_1,z_2,h(z_1,z_2))=\frac{1}{2}(z_1^2+z_2^2)$$

Thus
$$-C(|x_1-z_1|+|x_2-z_2|)\leq P(x_1,x_2,h(x_1,x_2))-P(x_1,x_2,h(z_1,z_2))+P(x_1,x_2,h(z_1,z_2))+P(z_1,z_2,h(z_1,z_2))$$
$$\leq-\delta(h(x_1,x_2)-h(z_1,z_2))+C(|x_1-z_1|+|x_2-z_2|)$$

Next consider $0<h(z_1,z_2)<h(x_1,x_2)=H$, then we have
$$P(x_1,x_2,H)\geq\frac{1}{2}(x_1^2+x_2^2)\,\,\,\,P(z_1,z_2,h(z_1,z_2))=\frac{1}{2}(z_1^2+z_2^2)$$

Thus
$$-C(|x_1-z_1|+|x_2-z_2|)\leq P(x_1,x_2,H)-P(z_1,z_2,h(z_1,z_2))$$
$$\leq C(|x_1-z_1|+|x_2-z_2|)-\delta(H-h(z_1,z_2))$$

Notice that if $h(z_1,z_2)=0$, then $P(z_1,z_2,0)\leq\frac{1}{2}(z_1^2+z_2^2)$, see Remark 2.4, so other cases can be dealt with in a similar way.
\end{proof}
\begin{cor}
Suppose $H\geq\frac{2}{\mathcal{L}^2(\Omega_2)}$(such that $J_{\nu}^H(P,R)$ has maximizers by Theorem 2.10), and let $(P,R)$ be convex conjugate maximizers defined on $\Omega_H$ and $\Lambda$ respectively, then there exists a constant $C=C(diam\,\,\Omega_2,\delta,\Lambda)$, such that
$$h_P^H(x_1,x_2)\leq C$$
\end{cor}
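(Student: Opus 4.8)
The plan is to read off the bound from two facts already established: the uniform Lipschitz estimate of Proposition 2.12 and the normalization $\int_{\Omega_2}h_P^H = 1$ coming from Lemma 2.11. The underlying idea is simply that a nonnegative Lipschitz function on the bounded set $\Omega_2$ whose total mass is prescribed cannot be large everywhere, so its sup is controlled by its average plus the Lipschitz constant times $\mathrm{diam}\,\Omega_2$.

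First I would apply Lemma 2.11. Since $(P,R)$ are assumed convex conjugate over $\Omega_H$ and $\Lambda$, in particular $P(x)=\sup_{y\in\Lambda}(x\cdot y - R(y))$, so the hypotheses of Lemma 2.11 hold with $h=h_P^H$. Hence $\nabla P_{\sharp}\sigma_h=\nu$, and because $\nu$ is a probability measure and pushforward preserves total mass, $\int_{\Omega_2}h_P^H\,dx_1dx_2=\mathcal{L}^3(\Omega_{h_P^H})=1$.

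Next, from $h_P^H\geq0$ and $\int_{\Omega_2}h_P^H=1$ one sees that $\inf_{\Omega_2}h_P^H\leq \frac{1}{\mathcal{L}^2(\Omega_2)}$: otherwise $h_P^H>\frac{1}{\mathcal{L}^2(\Omega_2)}$ on all of $\Omega_2$, which would force $\int_{\Omega_2}h_P^H>1$, a contradiction. Using the continuity of $h_P^H$ (Proposition 2.12), I can pick a point $(z_1,z_2)\in\Omega_2$ with $h_P^H(z_1,z_2)\leq \frac{1}{\mathcal{L}^2(\Omega_2)}+1$, say; this handles the mild topological subtlety that $\Omega_2$ is open rather than closed. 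Then, invoking the Lipschitz constant $L=L(\delta,\Lambda,\Omega_2)$ of Proposition 2.12, which is independent of $H$, for every $(x_1,x_2)\in\Omega_2$ one gets
$$ h_P^H(x_1,x_2)\leq h_P^H(z_1,z_2)+L\,|(x_1,x_2)-(z_1,z_2)|\leq \frac{1}{\mathcal{L}^2(\Omega_2)}+1+L\,\mathrm{diam}\,\Omega_2 =: C, $$
and $C$ depends only on $\delta$, $\Lambda$ and $\Omega_2$ (through $\mathrm{diam}\,\Omega_2$ and $\mathcal{L}^2(\Omega_2)$), not on $H$.

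There is essentially no computation left, so the only point needing care — the "main obstacle" such as it is — lies in Step one: making sure the convex-conjugacy hypothesis on $(P,R)$ over $\Omega_H,\Lambda$ genuinely triggers Lemma 2.11 and hence the mass identity $\int_{\Omega_2}h_P^H=1$, together with the routine but necessary check that the base point $(z_1,z_2)$ can be chosen despite $\Omega_2$ being open.
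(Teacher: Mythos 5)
Your proposal is correct and follows essentially the same route as the paper: use Lemma 2.11 to get $\int_{\Omega_2}h_P^H=1$, hence a point where $h_P^H$ is of size $O(1/\mathcal{L}^2(\Omega_2))$, and then propagate with the $H$-independent Lipschitz bound of Proposition 2.12 across $\mathrm{diam}\,\Omega_2$. The paper merely recomputes the Lipschitz constant explicitly via $\nabla h=\frac{1}{\partial_{x_3}P}(x_1-\partial_1P,\,x_2-\partial_2P)$ instead of citing Proposition 2.12 wholesale, which is an inessential difference.
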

\begin{proof}
By Lemma 2.11, we have $$\int_{\Omega_2}h=1$$ so there exists $(z_1,z_2)\in\Omega_2$, such that $h(z_1,z_2)\leq\frac{2}{\mathcal{L}^2(\Omega_2)}$.

By above corollary, we know $h$ is Lipschitz, and since $$P(x_1,x_2,h)=\frac{1}{2}(x_1^2+x_2^2)\,\,\,\textrm{whenever   $h>0$}$$so $$\nabla h(x_1,x_2)=\frac{1}{\partial_{x_3}P}(x_1-\partial_1P,x_2-\partial_2P)$$  Thus
$$|\nabla h|\leq\frac{\max_{\Omega_2}|x|+\max_{\Lambda}(|y_1|+|y_2|)}{\delta}$$
So
\begin{equation}
|h|\leq\frac{2}{\mathcal{L}^2(\Omega_2)}+\frac{2\max_{\Omega_2}|x|+\max_{\Lambda}(|y_1|+|y_2|)}{\delta}\cdot diam\,\,\Omega_2
\end{equation}
\end{proof}
As an easy consequence, we deduce:
\begin{cor}
Suppose $H>C_0$, here the $C_0$ is the constant on the right hand side of (2.14), if $(P,R)$ is a pair of maximizer of $J_{\nu}^H(P,R)$, convex conjugate over $\Omega_H$ and $\Lambda$, then $0\leq h_P^H<H$.
\end{cor}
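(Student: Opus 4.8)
The plan is to read this off directly from Corollary 2.14, the only point requiring attention being that the constant there is independent of $H$. First I would check that the hypotheses of Corollary 2.14 are met. The pair $(P,R)$ is assumed convex conjugate over $\Omega_H$ and $\Lambda$, which is exactly what that corollary requires; and one needs $H\ge\frac{2}{\mathcal{L}^2(\Omega_2)}$ in order for Theorem 2.10 to guarantee the existence of such maximizers in the first place. This is automatic here: the constant $C_0$ on the right-hand side of (2.14) is the sum of $\frac{2}{\mathcal{L}^2(\Omega_2)}$ and a nonnegative term, so the standing assumption $H>C_0$ already forces $H>\frac{2}{\mathcal{L}^2(\Omega_2)}$.

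With the hypotheses in place, Corollary 2.14 gives the pointwise bound $h_P^H(x_1,x_2)\le C_0$ for every $(x_1,x_2)\in\Omega_2$. Since $H>C_0$, this yields $h_P^H(x_1,x_2)\le C_0<H$ on all of $\Omega_2$. The complementary bound $h_P^H\ge 0$ is immediate from the definition of $h_P^H$, which is the minimizer of $s\mapsto\Pi_P(x_1,x_2,s)$ over the interval $[0,H]$. Together these give $0\le h_P^H<H$.

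There is no genuine obstacle: all the real work sits in Proposition 2.13 and Corollary 2.14, whose entire purpose is to produce an $L^\infty$ bound on $h_P^H$ with a constant depending only on $\mathrm{diam}\,\Omega_2$, $\delta$ and $\Lambda$, and never on the truncation level $H$. It is exactly that $H$-independence which makes the hypothesis "$H>C_0$" meaningful and lets it force the truncation to be inactive. One could add, as a remark, that since $h_P^H=\min(h_P,H)$ by the convexity of $\Pi_P$, the strict inequality $h_P^H<H$ in fact implies $h_P^H=h_P$, so for $H>C_0$ the truncated and untruncated free-boundary profiles coincide; this is the real reason the corollary is useful later.
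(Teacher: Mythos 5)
Your argument is correct and is exactly the one the paper intends (the paper states this as ``an easy consequence'' of the preceding corollary and gives no further proof): the $H$-independent $L^\infty$ bound $h_P^H\le C_0$ from (2.14), whose hypothesis $H\ge\frac{2}{\mathcal{L}^2(\Omega_2)}$ is indeed implied by $H>C_0$, combined with $h_P^H\ge 0$ from the definition. The only blemish is an off-by-one in your references (the Lipschitz estimate is Proposition 2.12 and the $L^\infty$ bound containing (2.14) is Corollary 2.13), which does not affect the mathematics.
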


\begin{cor}
Suppose $H$ satisfy the same condition as in previous corollary, then there exists another constant $C=C(\delta,\Omega_2,\Lambda,H)$, such that if $(P,R)$ is a convex conjugate maximizer of $J^H_{\nu}(P,R)$, we have $|P(x)|\leq C,\forall x\in\Omega_H$
\end{cor}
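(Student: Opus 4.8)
The plan is to reduce the estimate for $P$ over all of $\Omega_H$ to an estimate for $P$ at the single reference point $(x_1^*,x_2^*,0)\in\Omega_2\times\{0\}$, exploiting that a convex conjugate maximizer is automatically globally Lipschitz. Since $(P,R)$ is convex conjugate over $\Omega_H$ and $\Lambda=B_D(0)\times[-\tfrac1\delta,-\delta]$, at every point of differentiability one has $\nabla P(x)\in\overline{\Lambda}$, hence $|\nabla_2 P|\leq D$ and $-\tfrac1\delta\leq\partial_{x_3}P\leq-\delta$. Thus $P$ is Lipschitz on $\Omega_H$ with a constant $L=L(\Lambda,\delta)$ and
$$|P(x)|\leq|P(x_1^*,x_2^*,0)|+L\,\mathrm{diam}(\Omega_H)\qquad(x\in\Omega_H),$$
where $\mathrm{diam}(\Omega_H)$ depends only on $\Omega_2$ and $H$. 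So it suffices to bound $|P(x_1^*,x_2^*,0)|$ by a constant depending only on $\delta,\Omega_2,\Lambda,H$.

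For this I would invoke Lemma~2.9. Put $\lambda:=-P(x_1^*,x_2^*,0)$ and set $\hat P:=P+\lambda$, $\hat R:=R-\lambda$. Then $\hat P(x_1^*,x_2^*,0)=0$; the pair $(\hat P,\hat R)$ is again convex conjugate over $\Omega_H$ and $\Lambda$ (adding a constant to $P$ and subtracting it from $R$ preserves the Legendre-type duality); $\nabla\hat P=\nabla P$, so the gradient bounds $|\nabla_2\hat P|\leq K$, $-\tfrac1\delta\leq\partial_{x_3}\hat P\leq-\delta$ of Lemma~2.9 hold once $K$ is chosen large enough; and since $\hat P-\lambda=P$, $\hat R+\lambda=R$, we have $J_\nu^H(\hat P-\lambda,\hat R+\lambda)=J_\nu^H(P,R)$, which equals the maximal value $M:=\sup_{\mathcal N_\nu}J_\nu^H$ because $(P,R)$ is a maximizer. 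Finally $H>C_0\geq\tfrac{2}{\mathcal L^2(\Omega_2)}$ (from (2.14)), so the hypothesis $H\geq\tfrac{2}{\mathcal L^2(\Omega_2)}$ of Lemma~2.9 is met. Enlarging $K$ so that $K\geq -M$ as well, Lemma~2.9 yields $|\lambda|=|P(x_1^*,x_2^*,0)|\leq C_1(K,\Lambda,\Omega_2,H)$, and combining this with the displayed Lipschitz estimate finishes the proof with $C=C_1+L\,\mathrm{diam}(\Omega_H)$.

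The only point that needs care — the main (mild) obstacle — is producing the lower bound $-K\leq M$ required to feed Lemma~2.9, i.e.\ knowing that $M$ is finite with a bound depending only on the data. This is straightforward: testing with the constant pair $P\equiv c$, $R\equiv c'$ where $c+c'=\sup\{x\cdot y:x\in\Omega_H,\ y\in\Lambda\}$ gives an admissible pair in $\mathcal N_\nu$ whose functional value is a finite number depending only on $\delta,\Omega_2,\Lambda,H$ (the $y$-integral is finite since $\Lambda$ is bounded and $\nu$ is a probability measure, and the infimum is taken over $0\leq h\leq H$, a bounded family), so $M$ is bounded below by that number. Alternatively one may simply observe $M=J_\nu^H(P_*,R_*)$ for the maximizer $(P_*,R_*)$ constructed in Theorem~2.10, whose potentials are bounded by the preceding corollaries together with convex conjugacy, again giving $M>-\infty$. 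Everything else in the argument is bookkeeping of constants.
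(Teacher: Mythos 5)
Your argument is correct, but it anchors the bound differently from the paper. The paper pins down the value of $P$ by using the optimizer's geometry: since $\int_{\Omega_2}h=1$ (Lemma 2.11) there is a point with $h(z_1,z_2)>0$, and there the free-boundary identity $P(z_1,z_2,h)=\frac{1}{2}(z_1^2+z_2^2)$ together with the $L^\infty$ bound on $h$ (Corollary 2.13) and $-\frac{1}{\delta}\leq\partial_{x_3}P\leq-\delta$ gives $|P(z_1,z_2,0)|\leq C$; the bound is then propagated over $\Omega_H$ by the same Lipschitz estimate $\partial P(\Omega_H)\subset\Lambda$ that you use. You instead anchor $P$ at the fixed reference point $(x_1^*,x_2^*,0)$ by reusing the normalization Lemma 2.9 exactly as it is used for the maximizing sequence in Theorem 2.10: shift $(P,R)$ by $\lambda=-P(x_1^*,x_2^*,0)$, note the shifted pair is still convex conjugate with the same gradient bounds, and feed in the lower bound $-K\leq J_\nu^H(P,R)=\sup_{\mathcal N_\nu}J_\nu^H$, which is trivially bounded below by a data-dependent constant (your constant test pair works; even simpler, $P\equiv\frac{1}{2}(x_1^2+x_2^2)$, $R\equiv\frac{1}{2}(y_1^2+y_2^2)$ lies in $\mathcal N_\nu$ since $x_3y_3\leq0$ and gives value $0$, so the sup is $\geq0$). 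The statement of Lemma 2.9 says ``for some $\lambda>0$,'' but its proof bounds $\lambda$ from both sides and the paper itself applies it with $\lambda=-P_n(x_1^*,x_2^*,0)$ of no fixed sign, so your use is consistent. What each route buys: the paper's proof ties the bound to the height estimate (Corollary 2.13, whose constant is independent of $H$) and the free-boundary relation, while yours bypasses Corollary 2.13 and Lemma 2.11 entirely and leans on the variational normalization argument already present in the existence proof; both land on a constant depending only on $\delta,\Omega_2,\Lambda,H$, which is all the corollary claims. Your secondary ``alternative'' justification of $M>-\infty$ via the maximizer from Theorem 2.10 is the one weak spot (it edges toward circularity), but your primary test-pair argument makes it unnecessary.
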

\begin{proof}
Fix $(x_1,x_2)\in\Omega_2$, if $h(x_1,x_2)>0$, we then have $$\frac{1}{2}(x_1^2+x_2^2)\leq P(x_1,x_2,0)$$

On the other hand
$$\frac{1}{2}(x_1^2+x_2^2)=P(x_1,x_2,h(x_1,x_2))
=P(x_1,x_2,0)+\int_0^h\partial_{x_3}P(x_1,x_2,s)ds$$$$\geq-\frac{1}{\delta}h(x_1,x_2)+P(x_1,x_2,0)$$

Thus by Corollary 2.13, we know $$|P(x_1,x_2,0)|\leq C\,\,\,such\,\, that\,\,h(x_1,x_2)>0 $$ where C has the said dependence.

Now notice that since by assumption $(P,R)$ are convex conjugate over the domain
$\Omega_H$ and $\Lambda$ respectively, we know
$$\partial P(\Omega_H)\subset \Lambda$$
Hence $$|\nabla P(x)|\leq C(\Lambda,\delta)\,\,a.e$$

Fix $(z_1,z_2)$ such that $h(z_1,z_2)>0$, then for $x\in\Omega_H$, we have
$$P(x_1,x_2,x_3)=P(z_1,z_2,0)+\int_0^1\nabla P(t(z_1,z_2,0)+(1-t)(x_1,x_2,x_3))\cdot(x_1-z_1,x_2-z_2,x_3)dt$$

The result follows easily.
\end{proof}
In the sequel, we will always assume
\begin{equation}
H>\frac{2}{\mathcal{L}^2(\Omega_2)}+\frac{2\max_{\Omega_2}|x|+\max_{\Lambda}(|y_1|+|y_2|)}{\delta}\cdot diam\,\,\Omega_2
\end{equation}
unless otherwise stated.

\begin{thm}
Suppose $H$ is as in (2.17), $\nu\in\mathcal{P}(\mathbf{R}^3)$, with $supp\,\,\nu\subset\Lambda$ then we have

$(i)$$E_{\nu}(h,\gamma)\geq J_{\nu}^H(P,R)$, for any $(h,\gamma)\in\mathcal{M}_{\nu}$ and $(P,R)$ satisfying $P(x)+R(y)\geq x\cdot y\,\,\,\forall x\in\Omega_H\,\,\,\forall y\in\Lambda$

$(ii)$Suppose $(P,R)$ is a convex conjugate maximizer of $J^H(P,R)$, then we have equality in $(i)$ if and only if $\gamma=(id\times\nabla P)_{\sharp}\sigma_h$ and $h=h_P^H$

$(iii)$In the situation of  $(ii)$, and if $\nu<<\mathcal{L}^3$, we also have $$\nabla R_{\sharp}\nu=\sigma_h$$
\end{thm}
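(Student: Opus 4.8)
The plan is to establish the three parts in order, since $(ii)$ and $(iii)$ build on $(i)$. For part $(i)$, I would use the standard Kantorovich duality estimate. Given $(h,\gamma)\in\mathcal{M}_\nu$ and any admissible $(P,R)$ with $P(x)+R(y)\geq x\cdot y$ on $\Omega_H\times\Lambda$, I integrate this inequality against $\gamma$, whose marginals are $\sigma_h$ and $\nu$. Since $\mathrm{supp}\,\nu\subset\Lambda$ and, by Corollary 2.13 together with the normalization $\int_{\Omega_2}h=1$, the measure $\sigma_h$ is supported in $\Omega_H$ (here one uses that $H$ satisfies (2.17)), the integral $\int x\cdot y\,d\gamma$ is well defined and bounded above by $\int P\,d\sigma_h+\int R\,d\nu$. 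Rearranging gives
\[
E_\nu(h,\gamma)=\int\Big[\tfrac12(x_1^2+x_2^2+y_1^2+y_2^2)-x\cdot y\Big]d\gamma\ \geq\ \int_\Lambda\big[\tfrac12(y_1^2+y_2^2)-R(y)\big]d\nu(y)+\int_{\Omega_h}\big[\tfrac12(x_1^2+x_2^2)-P(x)\big]dx,
\]
and the second term is $\geq \inf_{0\leq h'\leq H}\int_{\Omega_{h'}}[\tfrac12(x_1^2+x_2^2)-P(x)]dx$, which is exactly $J_\nu^H(P,R)$ by Remark 2.6. This proves $(i)$.

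For part $(ii)$, the "if" direction: take $\gamma=(id\times\nabla P)_\sharp\sigma_h$ with $h=h_P^H$. Then equality $P(x)+R(\nabla P(x))=x\cdot\nabla P(x)$ holds $\sigma_h$-a.e. because $(P,R)$ are convex conjugates (the Fenchel equality holds at points of differentiability of $P$, which is a.e.), so the inequality in the $(i)$-estimate becomes an equality in the integrand. Moreover, since $h=h_P^H$ realizes the infimum in (2.6) (by Remark 2.6, as $\Pi_P(x_1,x_2,\cdot)$ is minimized at $h_P^H$ on $[0,H]$), the second term equals $J_\nu^H(P,R)$ exactly. By Lemma 2.11, $\nabla P_\sharp\sigma_h=\nu$, so $\gamma\in\Gamma(\sigma_h,\nu)$ and $(h,\gamma)\in\mathcal{M}_\nu$; hence $E_\nu(h,\gamma)=J_\nu^H(P,R)$. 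For the "only if" direction: suppose $(h,\gamma)\in\mathcal{M}_\nu$ achieves equality. Tracing back through the chain of inequalities in $(i)$, two things must be equalities: first, $P(x)+R(y)=x\cdot y$ for $\gamma$-a.e. $(x,y)$, which by convex conjugacy (the subdifferential characterization of Fenchel equality) forces $y\in\partial P(x)$, hence $y=\nabla P(x)$ at a.e. $x$; second, $\int_{\Omega_h}[\tfrac12(x_1^2+x_2^2)-P]dx=\inf_{0\leq h'\leq H}\int_{\Omega_{h'}}[\cdots]dx$, which by strict convexity of $\Pi_P(x_1,x_2,\cdot)$ and uniqueness of the minimizer forces $h=h_P^H$ a.e., hence everywhere by continuity. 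Combining, $\gamma=(id\times\nabla P)_\sharp\sigma_h$.

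For part $(iii)$, assume $\nu\ll\mathcal{L}^3$. The goal $\nabla R_\sharp\nu=\sigma_h$ is the dual Monge–Ampère statement. Since $(P,R)$ are convex conjugate over $\Omega_H$ and $\Lambda$, the maps $\nabla P$ and $\nabla R$ are a.e.-inverse to each other where both are defined: for $\nu$-a.e. $y$ (i.e., a.e.\ $y$ since $\nu\ll\mathcal{L}^3$), $R$ is differentiable at $y$, the supremum defining $R(y)=\sup_{x}(x\cdot y-P(x))$ is attained at a unique $x=\nabla R(y)$, and $\nabla P(\nabla R(y))=y$. From $(ii)$ we know $\nabla P_\sharp\sigma_h=\nu$ and the optimal plan is $(id\times\nabla P)_\sharp\sigma_h=(\nabla R\times id)_\sharp\nu$ (using the a.e.\ inversion to rewrite the plan), whose first marginal is $\nabla R_\sharp\nu$ and must equal $\sigma_h$. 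The one technical point to check carefully is that the push-forward $(\nabla R)_\sharp\nu$ is genuinely $\sigma_h$ and not merely absolutely continuous with the right total mass — this follows by applying the change-of-variables/inversion argument to test functions, exactly as the symmetric statement $\nabla P_\sharp\sigma_h=\nu$ was obtained in Lemma 2.11, now with the roles of $P$ and $R$ reversed, which is legitimate because $\nu\ll\mathcal{L}^3$ guarantees differentiability of $R$ $\nu$-a.e.

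The main obstacle I expect is not part $(i)$ (routine duality) but the careful handling of the equality cases in $(ii)$: one must ensure that the a.e.\ statements about $\nabla P$ on $\sigma_h$ genuinely pin down the plan $\gamma$ — in particular that $\gamma$ concentrated on the graph of $\nabla P$ combined with the marginal conditions forces $\gamma=(id\times\nabla P)_\sharp\sigma_h$ — and that the infimum-attainment argument for $h$ transfers from "a.e." to "everywhere" using continuity of $h$ and the established Lipschitz bound from Proposition 2.12. For $(iii)$, the subtlety is purely the measure-theoretic inversion, and here the hypothesis $\nu\ll\mathcal{L}^3$ is exactly what makes Aleksandrov's theorem / a.e.-differentiability of the conjugate applicable on the support of $\nu$.
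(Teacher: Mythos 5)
Your parts $(ii)$ and $(iii)$ are essentially the paper's argument (Fenchel equality $\gamma$-a.e.\ plus strict convexity of $\Pi_P(x_1,x_2,\cdot)$ for the equality case; $\nabla R\circ\nabla P=id$ $\sigma_h$-a.e., enabled by $\nu\ll\mathcal{L}^3$, for the push-forward identity). The problem is in part $(i)$, where your argument has a genuine gap.

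You justify integrating $P(x)+R(y)\geq x\cdot y$ against $\gamma$ by claiming that ``by Corollary 2.13 together with $\int_{\Omega_2}h=1$, the measure $\sigma_h$ is supported in $\Omega_H$.'' This is false for a general competitor: Corollary 2.13 bounds $h_P^H$ \emph{for a convex conjugate maximizer} $(P,R)$, not the height function of an arbitrary element of $\mathcal{M}_\nu$. The class $\mathcal{M}_\nu$ only requires $h\geq 0$ continuous with $\int_{\Omega_2}h=1$; such an $h$ can exceed $H$ on a thin spike, in which case $\sigma_h$ charges the region $\{x_3>H\}$ where $P$ is not even defined and the inequality $P(x)+R(y)\geq x\cdot y$ is not assumed. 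So the duality pairing step, and also your subsequent comparison of $\int_{\Omega_h}[\tfrac12(x_1^2+x_2^2)-P]dx$ with the truncated infimum defining $J_\nu^H$, are not justified as written. The paper avoids this as follows: first reduce WLOG to $(P,R)$ being a convex conjugate maximizer pair (this suffices since $J^H_\nu(P,R)\leq\sup J^H_\nu$ for any admissible pair), then extend $P$ to all of $\Omega_\infty$ by $P(x)=\sup_{y\in\Lambda}(x\cdot y-R(y))$, check that conjugacy and hence $P(x)+R(y)\geq x\cdot y$ hold on $\Omega_\infty\times\Lambda$, and finally use Corollary 2.15 (this is exactly where the choice of $H$ in (2.17) enters) to see that $\Pi_P(x_1,x_2,\cdot)$ attains its minimum over all of $[0,\infty)$ at $h_P^H<H$, so that $\int_{\Omega_\infty}[\tfrac12(x_1^2+x_2^2)-P]\sigma_h\,dx\geq\int_{\Omega_\infty}[\tfrac12(x_1^2+x_2^2)-P]\sigma_{h_P^H}\,dx=J^H_\nu(P,R)$ even when $h$ is unbounded by $H$. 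You need this extension device (or an equivalent one); with it, the rest of your equality analysis in $(ii)$ goes through as you describe.
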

\begin{proof}
First we prove $(i)$.

Without loss of generality, we can assume $(P,R)$ be a pair of convex conjugate maximizers of $J^H_{\nu}(P,R)$, and we can naturally extend $P$ to be defined on $\Omega_{\infty}$ such that $(P,R)$ are again convex conjugate. Indeed,
$$P(x)=\sup_{y\in\Lambda}(x\cdot y-R(y))$$
and we define $P(x)$ for $x\in\Omega_{\infty}$ by the same formula. Then one can check
$$R(y)=\sup_{x\in\Omega_H}(x\cdot y-P(x))=\sup_{x\in\Omega_{\infty}}(x\cdot y-P(x))$$

This implies in particular
$$P(x)+R(y)\geq x\cdot y\,\,\,x\in\Omega_{\infty}\,\,\,y\in\Lambda$$
Therefore, assuming $(h,\gamma)\in\mathcal{M}$, we can write
$$\int c(x,y)d\gamma(x,y)\geq\int_{\Lambda}[\frac{1}{2}(y_1^2+y_2^2)-R(y)]\nu(y)dy+\int_{\Omega_{\infty}}[\frac{1}{2}(x_1^2+x_2^2)-P(x)]\sigma_h(x)dx$$$$\geq\int_{\Lambda}[\frac{1}{2}(y_1^2+y_2^2)-R(y)]\nu(y)dy+\int_{\Omega_{\infty}}[\frac{1}{2}(x_1^2+x_2^2)-P(x)]\sigma_{h_P^H}(x)dx=J^H(P,R)\,\,\,(*)$$

Here only the second inequality requires some explanation.

By Corollary 2.15 and by strict convexity of $\Pi_P(x_1,x_2,s)$ in s, we see that $s\longmapsto\Pi_P(x_1,x_2,s)$ attains minimum over $[0,\infty)$ at $h_P^H(x_1,x_2)$. So we have
$$\int_{\Omega_{\infty}}[\frac{1}{2}(x_1^2+x_2^2)-P(x)]\sigma_{h}(x)dx=\int_{\Omega_2}\Pi_P(x_1,x_2,h(x_1,x_2))dx_1dx_2$$$$\geq
\int_{\Omega_2}\Pi_P(x_1,x_2,h_P^H(x_1,x_2))dx_1dx_2=\int_{\Omega_{\infty}}[\frac{1}{2}(x_1^2+x_2^2)-P(x)]\sigma_{h_P^H}(x)dx$$

It`s easy to see above inequality takes equality if and only if $h(x_1,x_2)=h_P^H(x_1,x_2)$.
 Up to now,we proved $(i)$

Now we can prove $(ii)$. Suppose we have $E_{\nu}(h,\gamma)=J_{\nu}^H(P,R)$, then both inequality in $(*)$ must be equality. Thus we know $h=h_P^H$ and we know from Corollary 2.13 that $supp\,\,\sigma_h\subset\Omega_H$. The first inequality in $(*)$ takes equality if and only if $P(x)+R(y)=x\cdot y\,\,\,\gamma-a.e\,\,(x,y)$, or equivalently $y\in\partial P(x)\,\,\gamma-a.e\,\,(x,y)$. So we obtain $\gamma=(id\times\nabla P)_{\sharp}\sigma_h$. Conversely, if $h=h_P^H$ and $\gamma=(id\times\nabla P)_{\sharp}\sigma_h$, then by Lemma 2.11, $(h,\gamma)\in\mathcal{M}_{\nu}$, also we have $E_{\nu}(h,\gamma)=J^H_{\nu}(P,R)$.

To see $(iii)$, note that by $(ii)$, we have $$\nabla P_{\sharp}\sigma_h=\nu$$

And since $\nu<<\mathcal{L}^3$, we know $$\nabla R\circ\nabla P(x)=x\,\,\,\,\sigma_h\,\,a.e$$

Indeed put $$E=\{y\in\Lambda|\nabla R\textrm{ is not defined at $y$}\}$$
then
$$\nu(E)=\sigma_h((\nabla P)^{-1}(E))=0$$
And if $\nabla P$ is defined at $x$, $\nabla R$ is defined at $\nabla P(x)$, then one has $\nabla R(\nabla P(x))=x$.

So
$$\sigma_h=\nabla R_{\sharp}\nabla P_{\sharp}\sigma_h=\nabla R_{\sharp}\nu$$
\end{proof}
Now we can prove the unique existence of the variational problem $E_{\nu}(h,\gamma)$.
\begin{cor}
Suppose H is as (2.17), then

$(i)$ $E_{\nu}(h,\gamma)$ has a unique minimizer $(h,\gamma)$;

$(ii)$Suppose $(P_0,P_1)$,$(P_1,R_1)$ are maximizers of $J_{\nu}^H(P,R)$, convex conjugate over $\Omega_H$ and $\Lambda$, then $h_{P_0}^H=h_{P_1}^H:=h$, and $P_0=P_1$ on $\Omega_{h}$

\end{cor}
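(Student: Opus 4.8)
The plan is to read off both statements from Theorem 2.18 together with the existence of a convex conjugate maximizer of $J_\nu^H$ provided by Theorem 2.10; no new variational argument is needed, and the only genuinely new point is a normalization that invokes the free upper boundary relation.

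For $(i)$, I fix a convex conjugate maximizer $(P,R)$ of $J_\nu^H$, extended to $\Omega_\infty$ as in the proof of Theorem 2.18, and put $h=h_P^H$, $\gamma=(id\times\nabla P)_{\sharp}\sigma_h$. By Lemma 2.11 the pair $(h,\gamma)$ lies in $\mathcal{M}_\nu$, and the ``if'' direction of Theorem 2.18$(ii)$ gives $E_\nu(h,\gamma)=J_\nu^H(P,R)$; since Theorem 2.18$(i)$ shows $J_\nu^H(P,R)\leq E_\nu(\bar h,\bar\gamma)$ for every $(\bar h,\bar\gamma)\in\mathcal{M}_\nu$, this $(h,\gamma)$ is a minimizer and there is no duality gap. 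Uniqueness is then immediate from the ``only if'' direction of Theorem 2.18$(ii)$: any minimizer $(\bar h,\bar\gamma)$ satisfies $E_\nu(\bar h,\bar\gamma)=J_\nu^H(P,R)$, whence $\bar h=h_P^H$ and $\bar\gamma=(id\times\nabla P)_{\sharp}\sigma_{h_P^H}$, so the minimizer is uniquely determined.

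For $(ii)$, let $(P_0,R_0)$ and $(P_1,R_1)$ be two convex conjugate maximizers. Applying the construction in $(i)$ to each, both $(h_{P_0}^H,(id\times\nabla P_0)_{\sharp}\sigma_{h_{P_0}^H})$ and $(h_{P_1}^H,(id\times\nabla P_1)_{\sharp}\sigma_{h_{P_1}^H})$ are minimizers of $E_\nu$, hence coincide by the uniqueness just proved. Comparing first components gives $h_{P_0}^H=h_{P_1}^H=:h$. Comparing second components gives $(id\times\nabla P_0)_{\sharp}\sigma_h=(id\times\nabla P_1)_{\sharp}\sigma_h$ as measures on $\Omega_\infty\times\mathbf{R}^3$ with common first marginal $\sigma_h$; since $P_0,P_1$ are convex (hence differentiable a.e.), disintegrating this identity over $\sigma_h$ forces $\nabla P_0=\nabla P_1$ a.e. on $\Omega_h$, so $P_0-P_1$ is locally constant on the open set $\Omega_h$.

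It remains to fix this constant to be $0$, which is where the free upper boundary enters and is the only step beyond bookkeeping. Each connected component of $\Omega_h$ has the form $\{(x_1,x_2,x_3):(x_1,x_2)\in V,\ 0<x_3<h(x_1,x_2)\}$ for a connected component $V$ of the open, nonempty (since $\int_{\Omega_2}h=1$) set $\{h>0\}\subset\Omega_2$. By Corollary 2.13 one has $h=h_{P_i}^H<H$ everywhere, so the free-boundary relation $P_i(x_1,x_2,h(x_1,x_2))=\frac{1}{2}(x_1^2+x_2^2)$ holds for $i=0,1$ wherever $h>0$ (see Remark 2.4 and fact $(i)$ in the proof of Proposition 2.12). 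Choosing $(x_1,x_2)\in V$ and letting $x_3\uparrow h(x_1,x_2)$ along the vertical segment contained in $\Omega_h$, continuity of $P_0$ and $P_1$ forces the locally constant value of $P_0-P_1$ on that component to equal $\frac{1}{2}(x_1^2+x_2^2)-\frac{1}{2}(x_1^2+x_2^2)=0$. Hence $P_0=P_1$ on all of $\Omega_h$, completing the proof. The whole argument is thus a formal consequence of the duality in Theorem 2.18, the only non-routine ingredient being this last normalization via the free-boundary identity (together with the elementary observation that the connected components of $\Omega_h$ reach the free boundary).
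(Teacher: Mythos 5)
Your proposal is correct and follows essentially the same route as the paper: existence and uniqueness of the minimizer are read off from the duality statement Theorem 2.18 (with Lemma 2.11 and Theorem 2.10), the two maximizers are compared via the unique minimizer to get $h_{P_0}^H=h_{P_1}^H$ and $\nabla P_0=\nabla P_1$ a.e.\ on $\Omega_h$, and the additive constant on each connected component is killed using the free-boundary identity $P_i(x_1,x_2,h)=\frac{1}{2}(x_1^2+x_2^2)$. The only cosmetic difference is that you pass to the limit $x_3\uparrow h$ by continuity where the paper evaluates directly on the surface $x_3=h$, and the bound $h<H$ is more precisely Corollary 2.15 (which follows from Corollary 2.13 and the choice (2.17) of $H$, exactly as you argue).
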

\begin{proof}
Existence of at least one maximizer of $J_{\nu}^H(P,R)$ has been proved in Theorem 2.10. Theorem 2.18 $(ii)$ gives the existence of at least one minimizer of $E_{\nu}(h,\gamma)$. Now we show uniqueness.

First we fix a maximizer of $J_{\nu}^H(P,R)$, say $(P_0,R_0)$, which is convex conjugate over $\Omega_H$ and $\Lambda$. If $(h_0,\gamma_0)$,$(h_1,\gamma_1)$ are both minimizers of $E_{\nu}(h,\gamma)$, then we have by previous theorem
$$h_0=h_1=h_{P_0}^H(x_1,x_2):=h_0\,\,\,\,,and\,\,\,\, \gamma_1=\gamma_0=(id\times\nabla P_0)_{\sharp}\sigma_{h}$$

This proves the uniqueness of the minimizer of $E_{\nu}(h,\gamma)$.

To see the uniqueness of $J^H(P,R)$ on $\Omega_{h_0}$, say both $(P_0,R_0)$,$(P_1,R_1)$ are maximizers. Let $h_0=h_{P_0}^H$ and $\gamma_0=(id\times\nabla P_0)_{\sharp}\sigma_{h_0}$. Then we know by Proposition 2.12 that $h_0$ is Lipschitz, also we know by Theorem 2.18$(ii)$ that $h=h_{P_1}^H=h_{P_0}^H$

Also by uniqueness of minimizer for $E_{\nu}(h,\gamma)$ already proved above, we know from above theorem $(ii)$
$$(id\times\nabla P_0)_{\sharp}\sigma_{h_0}=(id\times\nabla P_1)_{\sharp}\sigma_{h_0}$$
This implies $$\nabla P_0=\nabla P_1\,\,\,\sigma_{h_0}\,\,a.e$$

Let $U\subset \Omega_2$ to be a connected component of the open set $\{(x_1,x_2)\in\Omega_2|h_0(x_1,x_2)>0\}$, then the set $U_h:=\{(x_1,x_2,x_3)|(x_1,x_2)\in U,0<x_3<h_0(x_1,x_2)\}$ is connected and so we have $$P_0=P_1+C_U\,\,\,on\,\,\,U_h$$

But we also have $$P_0(x_1,x_2,h_0(x_1,x_2))=P_1(x_1,x_2,h_0(x_1,x_2))=\frac{1}{2}(x_1^2+x_2^2)$$for $(x_1,x_2)\in U$, hence $C_U=0$.
So we have $$P_0=P_1\,\,\,\sigma_{h_0} a.e$$
\end{proof}
\begin{cor}
Let $\Lambda=B_D(0)\times[-\frac{1}{\delta},-\delta]$ and assume also that $\Omega_2\subset B_D(0)$, H is chosen as (2.17). Then there exists a unique maximizer $(P_2,R_2)$ of $J_{\nu}^H(P,R)$ with the following properties:

$(i)$$(P_2,R_2)$ are convex conjugate over both $\Omega_{h_0}\bigcup\{x_3=0\},\Lambda$ and $\Omega_H,\Lambda$.

$(ii)$$P_2(x_1,x_2,0)=\frac{1}{2}(x_1^2+x_2^2)$ whenever $h_0(x_1,x_2)=0$
\end{cor}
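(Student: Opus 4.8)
The plan is to build the required pair by convex conjugation ``from the bottom up'' out of the canonical physical data supplied by Corollary 2.19. Fix any convex conjugate maximizer $(P_0,R_0)$ of $J_\nu^H$ over $\Omega_H$ and $\Lambda$ (Theorem 2.10) and set $h_0:=h_{P_0}^H$; by Corollary 2.19 the height $h_0$ and the restriction of $P_0$ to $\overline{\Omega_{h_0}}$ do not depend on which maximizer was chosen (here I use that $P_0$, being a supremum of affine maps with slopes in the bounded set $\Lambda$, is Lipschitz, so its trace on $\Omega_2\times\{0\}$ is well defined). Put $\mathrm{dom}:=\Omega_{h_0}\cup(\Omega_2\times\{0\})$, let $\bar P:\mathrm{dom}\to\mathbf R$ equal $P_0$ on $\Omega_{h_0}$ and on $\{(x_1,x_2,0):h_0(x_1,x_2)>0\}$, and equal $\tfrac12(x_1^2+x_2^2)$ at $(x_1,x_2,0)$ when $h_0(x_1,x_2)=0$; finally set $R_2(y):=\sup_{x\in\mathrm{dom}}(x\cdot y-\bar P(x))$ for $y\in\Lambda$ and $P_2(x):=\sup_{y\in\Lambda}(x\cdot y-R_2(y))$ for $x\in\Omega_H$. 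I claim $(P_2,R_2)$ is the pair we want.

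The routine part is as follows. Since $\mathrm{dom}\subset\Omega_H$ and, by Remark 2.4, $P_0(x_1,x_2,0)\ge\tfrac12(x_1^2+x_2^2)$ wherever $h_0(x_1,x_2)>0$, we have $\bar P\ge P_0$ on $\mathrm{dom}$; hence $R_2\le R_0$ on $\Lambda$ and so $P_2\ge P_0$ on $\Omega_H$, while $P_2\le\bar P$ on $\mathrm{dom}$ is immediate from the definition of $R_2$, so $P_2=P_0$ on $\Omega_{h_0}$. A biconjugation argument — exhibiting at each point of $\mathrm{dom}$ a supporting affine function with slope in $\Lambda$ (via subgradients of $P_0$ on $\Omega_{h_0}$ and its bottom trace, and via the explicit slope $y^\ast$ of the next paragraph on the part of the bottom face where $h_0=0$) — gives $P_2=\bar P$ on $\mathrm{dom}$, so $(P_2,R_2)$ are convex conjugate over $\mathrm{dom}$ and $\Lambda$; boundedness of $\Omega_H$ then upgrades this to convex conjugacy over $\Omega_H$ and $\Lambda$. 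The bounds $-\tfrac1\delta\le\partial_{x_3}P_2\le-\delta$ and $|\nabla_2P_2|\le C(\Lambda)$ are read off from $\Lambda=B_D(0)\times[-\tfrac1\delta,-\delta]$. Modulo property (ii), this already yields (i).

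The heart of the proof, and the step I expect to be the main obstacle, is property (ii): $P_2(x_1,x_2,0)=\tfrac12(x_1^2+x_2^2)$ whenever $h_0(x_1,x_2)=0$. The inequality ``$\le$'' is free because $(x_1,x_2,0)\in\mathrm{dom}$. For ``$\ge$'' I would test with the slope $y^\ast:=(x_1,x_2,-\tfrac1\delta)$, which belongs to $\Lambda$ precisely because $\Omega_2\subset B_D(0)$, and verify that the supremum defining $R_2(y^\ast)$ is attained at $(x_1,x_2,0)$ with value $\tfrac12(x_1^2+x_2^2)$; this gives $P_2(x_1,x_2,0)\ge(x_1,x_2,0)\cdot y^\ast-R_2(y^\ast)=\tfrac12(x_1^2+x_2^2)$. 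Attainment reduces to checking $x\cdot y^\ast-\bar P(x)\le\tfrac12(x_1^2+x_2^2)$ for all $x\in\mathrm{dom}$, in three cases: on the bottom face with $h_0=0$ this is completing the square; on the bottom face with $h_0>0$ one uses $\bar P=P_0\ge\tfrac12(\cdot)$; and on the interior of $\Omega_{h_0}$ one absorbs the vertical contribution $-x_3/\delta$ against $-P_0$ using $\partial_{x_3}P_0\ge-\tfrac1\delta$, which again reduces it to the bottom-face estimate.

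Granting (ii), one verifies $h_{P_2}^H=h_0$: where $h_0>0$ the map $s\mapsto\Pi_{P_2}(x_1,x_2,s)$ coincides with $\Pi_{P_0}$ on $[0,h_0]$ and is strictly increasing on $(h_0,H]$ since $\partial_{x_3}P_2\le-\delta$ and $P_2=\tfrac12(\cdot)$ on the free boundary, while where $h_0=0$ property (ii) forces $\partial_s\Pi_{P_2}(x_1,x_2,0)=0$, so the minimum over $[0,H]$ sits at $0$. Hence $J_\nu^H(P_2,R_2)=\int_\Lambda[\tfrac12(y_1^2+y_2^2)-R_2]\,d\nu+\int_{\Omega_{h_0}}[\tfrac12(x_1^2+x_2^2)-P_2]\,dx\ge J_\nu^H(P_0,R_0)$ because $R_2\le R_0$ on $\Lambda$ and $P_2=P_0$ on $\Omega_{h_0}$, so $(P_2,R_2)$ is a maximizer satisfying (i) and (ii). For uniqueness, any maximizer $(P_2',R_2')$ satisfying (i) and (ii) has $h_{P_2'}^H=h_0$ and $P_2'=P_0$ on $\Omega_{h_0}$ by Corollary 2.19, equals $\tfrac12(\cdot)$ on the part of $\Omega_2\times\{0\}$ with $h_0=0$ by (ii), and agrees with $P_0$ on the rest of $\Omega_2\times\{0\}$ by Lipschitz continuity; thus $P_2'=\bar P=P_2$ on $\mathrm{dom}$, whence $R_2'=R_2$ by convex conjugacy over $\mathrm{dom}$ and $\Lambda$ and then $P_2'=P_2$ on $\Omega_H$ by convex conjugacy over $\Omega_H$ and $\Lambda$.
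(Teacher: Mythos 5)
Your construction is correct and establishes both existence and uniqueness; it rests on the same mechanism as the paper but compresses the paper's two-step appendix argument into one step. The paper (Lemmas 5.1, 5.3 and Corollary 5.4) first replaces $P_0$ by $\max(P_0,\tfrac12(x_1^2+x_2^2))$ and double-conjugates over all of $\Omega_H$ — there the test slope $(x_1^0,x_2^0,z_0)\in\Lambda$ yields the dry-bottom identity immediately because the vertical contribution $x_3z_0\le 0$ — and only afterwards re-conjugates over $\Omega_{h_0}\cup\{x_3=0\}$ to obtain conjugacy over both domains; you instead prescribe $\bar P$ on $\Omega_{h_0}\cup(\Omega_2\times\{0\})$ directly and conjugate once, which is why your verification of (ii) needs the extra interior case, correctly handled by absorbing $-x_3/\delta$ against $\partial_{x_3}P_0\ge-\tfrac1\delta$. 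Your route saves a lemma; the paper's version keeps each conjugation step elementary, and its maximizer/uniqueness bookkeeping (comparison with $(P_0,R_0)$ via $R_2\le R_0$, $P_2=P_0$ on $\Omega_{h_0}$, $h_{P_2}^H=h_0$; then uniqueness from Corollary 2.19 plus (ii)) is essentially identical to yours. Two small write-up points: the inequality you actually need for $\bar P\ge P_0$ is the other clause of Remark 2.4, namely $P_0(x_1,x_2,0)\le\tfrac12(x_1^2+x_2^2)$ where $h_0(x_1,x_2)=0$ (the clause you cite concerns $h_0>0$, where $\bar P=P_0$ anyway); and the ``biconjugation/supporting plane'' sentence is superfluous, since the sandwich $P_0\le P_2\le\bar P$ together with $\bar P=P_0$ off the dry set and property (ii) on it already gives $P_2=\bar P$ on all of $\Omega_{h_0}\cup(\Omega_2\times\{0\})$, which is exactly what the conjugacy over that set and $\Lambda$ requires.
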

\begin{proof}
The uniqueness is easy to see, since by $(i)$, we must have
\begin{equation}
R_2(y)=\sup_{x\in\Omega_{h_0}\bigcup\{x_3=0\}}(x\cdot y-P_2(x))\,\,\,\,\textrm{$\forall y\in\Lambda$}
\end{equation}
The previous corollary shows that for any maximizer $(P,R)$ convex conjugate over $\Omega_H$ and $\Lambda$, the value of $P$ must agree on $\bar{\Omega}_h$, while point $(ii)$ takes care of the part where $x_3=0$ and $h(x_1,x_2)=0$. Hence $R_2$ is uniquely defined by (2.21).

And $$P_2(x)=\sup_{y\in\Lambda}(x\cdot y-R_2(y))\,\,\,\forall x\in\Omega_H$$
because $(P_2,R_2)$ are assumed to be convex conjugate over $\Omega_{H},\Lambda$, so $P_2$ is uniquely defined over $\Omega_H$.

The existence of such a maximizer is more technical and is proved in the appendix.
\end{proof}
To conclude this section, we notice the following stability results. Their proofs are standard compactness argument, making use of the uniqueness proved in Corollary 2.19 and 2.20. We sketch the proofs.
\begin{lem}
Let $\nu_n\rightarrow\nu$ narrowly, then we have
$$\inf_{(h,\gamma)\in\mathcal{M}_{\nu}}E_{\nu}(h,\gamma)\leq\lim\inf_{n\rightarrow\infty}\inf_{(h,\gamma)\in\mathcal{M}_{\nu_n}}E_{\nu_n}(h,\gamma)$$
\end{lem}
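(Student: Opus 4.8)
The plan is to run the direct method: pass to optimal pairs for $\nu_n$, extract a subsequence converging to an admissible competitor for $\nu$, and check the energies converge.

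First I would pass to a subsequence (not relabeled) along which $\liminf_{n\to\infty}\inf_{(h,\gamma)\in\mathcal{M}_{\nu_n}}E_{\nu_n}(h,\gamma)$ is attained as a genuine limit. Since $\mathrm{supp}\,\nu_n\subset\Lambda$ and $H$ is as in (2.17), Corollary 2.19(i) provides a minimizer $(h_n,\gamma_n)\in\mathcal{M}_{\nu_n}$ of $E_{\nu_n}$, so $E_{\nu_n}(h_n,\gamma_n)=\inf_{\mathcal{M}_{\nu_n}}E_{\nu_n}\to\liminf$. By Theorem 2.18(ii), $h_n=h_{P_n}^H$ for a convex conjugate maximizer $(P_n,R_n)$ of $J_{\nu_n}^H$, so Proposition 2.12 gives a Lipschitz bound for $h_n$ depending only on $\delta,\Lambda,\Omega_2$ (not on $n$ or $H$), and Corollary 2.13 gives $0\le h_n\le C_0$ with $C_0$ the right-hand side of (2.14), again independent of $n$. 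Moreover $\gamma_n$ is supported in the fixed compact set $\Omega_H\times\Lambda$.

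Next I would invoke Arzel\`a--Ascoli to extract a further subsequence with $h_n\to h$ uniformly on $\Omega_2$; then $h$ is Lipschitz, $0\le h\le C_0$, and $\int_{\Omega_2}h=\lim_n\int_{\Omega_2}h_n=1$. Uniform convergence yields $\mathcal{L}^3(\Omega_{h_n}\triangle\Omega_h)\le\int_{\Omega_2}|h_n-h|\to0$, hence $\sigma_{h_n}\to\sigma_h$ in $L^1(\Omega_\infty)$, and in particular narrowly as probability measures. The family $\{\gamma_n\}$ is tight (fixed compact support), so along a subsequence $\gamma_n\to\gamma$ narrowly; since the two coordinate projections $\Omega_\infty\times\Lambda\to\Omega_\infty$ and $\Omega_\infty\times\Lambda\to\Lambda$ are continuous, the marginals pass to the limit: the first marginal of $\gamma$ is $\lim_n\sigma_{h_n}=\sigma_h$ and the second is $\lim_n\nu_n=\nu$. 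Thus $\gamma\in\Gamma(\sigma_h,\nu)$ and $(h,\gamma)\in\mathcal{M}_\nu$.

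Finally, $c(x,y)$ is continuous and bounded on the compact set $\Omega_H\times\Lambda$, so narrow convergence $\gamma_n\to\gamma$ gives $E_{\nu_n}(h_n,\gamma_n)=\int c\,d\gamma_n\to\int c\,d\gamma=E_\nu(h,\gamma)$, whence
$$\inf_{(h,\gamma)\in\mathcal{M}_\nu}E_\nu(h,\gamma)\le E_\nu(h,\gamma)=\lim_{n\to\infty}E_{\nu_n}(h_n,\gamma_n)=\liminf_{n\to\infty}\inf_{(h,\gamma)\in\mathcal{M}_{\nu_n}}E_{\nu_n}(h,\gamma).$$
The only step needing care is the identification of the first marginal of the limiting plan, where one combines the $L^1$-convergence $\sigma_{h_n}\to\sigma_h$ (coming from uniform convergence of the $h_n$) with narrow convergence of $\gamma_n$; I expect no genuine obstacle, as the $n$-uniform Lipschitz and sup bounds on $h_n$ from Section 2 are precisely what make the compactness run on a fixed compact domain.
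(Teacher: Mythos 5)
Your proposal is correct and follows essentially the same route as the paper's (sketched) proof: extract minimizers $(h_n,\gamma_n)$, use the $n$-independent Lipschitz and sup bounds of Proposition 2.12 and Corollary 2.13 together with tightness of $\gamma_n$ to pass to a limit competitor $(h,\gamma)\in\mathcal{M}_\nu$, and conclude by convergence of the (compactly supported) cost integrals. Your write-up simply fills in the marginal-identification details that the paper leaves implicit, and correctly observes that the uniqueness of Corollary 2.19(i) is not actually needed for this liminf inequality.
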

\begin{proof}
Notice the uniform Lipschitz continuity and boundedness of $h_n$ proved in Proposition 2.12 and Corollary 2.13, and weak compactness of $\gamma_n$, finally make use of the uniqueness given by Corollary 2.19 $(i)$.
\end{proof}
\begin{lem}
Let $\nu_n\rightarrow\nu$ narrowly, then we have
$$\sup_{(P,R)\in\mathcal{N}}J^H_{\nu}(P,R)\geq\lim\sup_{n\rightarrow\infty}\sup_{(P,R)\in\mathcal{N}_{\nu_n}} J_{\nu_n}^H(P,R)$$
\end{lem}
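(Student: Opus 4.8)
The plan is to exploit the duality equality $\sup_{(P,R)\in\mathcal{N}_{\nu}}J^H_{\nu}(P,R)=\inf_{(h,\gamma)\in\mathcal{M}_{\nu}}E_{\nu}(h,\gamma)$ together with the existence of genuine maximizers from Theorem 2.10, so that the statement becomes a statement about a convergent sequence of optimal pairs $(P_n,R_n)$ for $\nu_n$. First I would pass to a subsequence realizing the $\limsup$ and, by Theorem 2.10, pick for each $n$ a convex-conjugate maximizer $(P_n,R_n)$ of $J^H_{\nu_n}$ over $\Omega_H$ and $\Lambda$. Normalizing by $P_n(x_1^*,x_2^*,0)=0$ (using the argument in the proof of Theorem 2.10 via Lemma 2.9), the family $\{P_n\}$ is uniformly Lipschitz on $\Omega_H$ with $-\tfrac1\delta\le\partial_{x_3}P_n\le-\delta$ and uniformly bounded, and by convex conjugacy $\{R_n\}$ is uniformly Lipschitz and bounded on $\Lambda$; note these bounds depend only on $\delta,\Omega_2,\Lambda,H$, not on $n$. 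Arzelà–Ascoli then gives (along a further subsequence) $P_n\to P$ uniformly on $\Omega_H$ and $R_n\to R$ uniformly on $\Lambda$, with $(P,R)$ again convex conjugate over $\Omega_H$ and $\Lambda$, hence $(P,R)\in\mathcal{N}_{\nu}$.

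The remaining task is to show $J^H_{\nu_n}(P_n,R_n)\to J^H_{\nu}(P,R)$. Write $J^H_{\nu_n}(P_n,R_n)=\int_\Lambda[\tfrac12(y_1^2+y_2^2)-R_n(y)]\,d\nu_n(y)+\int_{\Omega_\infty}[\tfrac12(x_1^2+x_2^2)-P_n(x)]\,\sigma_{h_{P_n}^H}(x)\,dx$, using Remark 2.7 to replace the infimum over $h$ by the value at $h_{P_n}^H$. For the first term: the integrand $\tfrac12(y_1^2+y_2^2)-R_n(y)$ converges uniformly on the compact set $\Lambda$ to $\tfrac12(y_1^2+y_2^2)-R(y)$, and $\nu_n\to\nu$ narrowly, so the pairing converges (a uniformly convergent sequence of continuous functions against narrowly convergent probability measures on a fixed compact set). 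For the second term: Lemma 2.8 gives $h_{P_n}^H\to h_P^H$ uniformly on $\Omega_2$, and $P_n\to P$ uniformly on $\Omega_H$, so $\sigma_{h_{P_n}^H}\to\sigma_{h_P^H}$ in $L^1(\Omega_H)$ and the integrand converges uniformly; since everything is supported in the fixed bounded set $\Omega_H$ (by Corollary 2.13 / the standing assumption (2.17)), the integral converges. Combining, $\lim_n J^H_{\nu_n}(P_n,R_n)=J^H_{\nu}(P,R)\le\sup_{(P',R')\in\mathcal{N}_\nu}J^H_{\nu}(P',R')$, which is exactly the claimed inequality.

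The main obstacle is keeping all the $n$-uniform estimates genuinely uniform: the normalization must be done carefully so that Lemma 2.9 applies with bounds independent of $n$ (in particular the hypothesis $-K\le J^H_{\nu_n}(P_n-\lambda,R_n+\lambda)$ must hold with a single $K$, which follows from the maximizing property once one checks the dual values $\sup J^H_{\nu_n}$ are bounded below uniformly — e.g. by testing with a fixed admissible pair such as $P\equiv 0$, $R(y)=\tfrac12(|y_1|+|y_2|)^2$ or similar, and using $\nu_n\in\mathcal{P}(\Lambda)$ with $\Lambda$ compact). Once the uniform bounds are in place, every convergence step is routine; the only genuinely new point beyond the proof of Theorem 2.10 is the narrow-convergence argument for the $\nu_n$-term, which is immediate because $\Lambda$ is compact and the test functions converge uniformly.
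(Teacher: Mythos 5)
Your proposal is correct and follows essentially the same route as the paper's (sketched) proof: choose convex-conjugate maximizers for each $\nu_n$, use the $n$-independent bounds and uniform Lipschitz constants (the paper cites Corollary 2.16 and Corollary 2.20 directly, where you re-derive the bound via the Lemma 2.9 normalization as in Theorem 2.10), extract a uniform limit by Arzel\`a--Ascoli, and pass to the limit in both terms of $J^H$ using narrow convergence on the compact set $\Lambda$ together with the uniform convergence of $h_{P_n}^H$ from Lemma 2.8. The only nitpick is that your particular throwaway test pair $P\equiv 0$, $R(y)=\tfrac12(|y_1|+|y_2|)^2$ does not satisfy $P(x)+R(y)\geq x\cdot y$ for small $|y_1|+|y_2|$; the pair $P=\tfrac12(x_1^2+x_2^2)$, $R=\tfrac12(y_1^2+y_2^2)$ does, and gives the needed uniform lower bound $\sup J^H_{\nu_n}\geq 0$.
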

\begin{proof}
Choose maximizers $(P_n,R_n)$ according to Corollary 2.20, using the boundedness given by Corollary 2.16 and that they are uniformly Lipschitz continuous.
\end{proof}
\begin{cor}
Suppose $\nu_n\rightarrow\nu$ narrowly, then $$\lim_{n\rightarrow\infty}\inf_{(h,\gamma)\in\mathcal{M}_{\nu_n}}E_{\nu_n}(h,\gamma)=\inf_{(h,\gamma)\in\mathcal{M}_{\nu}}E_{\nu}(h,\gamma)$$
\end{cor}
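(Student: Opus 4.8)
The plan is to read this corollary as the combination of the two one-sided semicontinuity estimates already in hand (Lemmas 2.22 and 2.23) with the strong-duality identity
\[
\min_{(h,\gamma)\in\mathcal{M}_{\nu}}E_{\nu}(h,\gamma)\;=\;\max_{(P,R)\in\mathcal{N}_{\nu}}J^H_{\nu}(P,R),
\]
valid for every admissible $\nu$ (support in $\Lambda$, and $H$ as in (2.17)). This identity is itself a consequence of results already established: Theorem 2.10 provides a convex-conjugate maximizer $(P,R)$ of $J^H_{\nu}$; Theorem 2.18$(i)$ gives $E_{\nu}(h,\gamma)\ge J^H_{\nu}(P,R)$ for all admissible $(h,\gamma)$ and $(P,R)$, hence $\inf E_{\nu}\ge\sup J^H_{\nu}$; and Theorem 2.18$(ii)$ shows that for a maximizer $(P,R)$ the pair $\bigl(h_P^H,\,(\mathrm{id}\times\nabla P)_{\sharp}\sigma_{h_P^H}\bigr)$ lies in $\mathcal{M}_{\nu}$ (using Lemma 2.11) and realizes equality. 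So the primal infimum is attained and equals the dual supremum, with zero duality gap.

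Granting this, the argument is a sandwich. First I would apply Lemma 2.23 along the sequence $\nu_n\to\nu$ and rewrite each side via duality: since $\inf_{\mathcal{M}_{\nu_n}}E_{\nu_n}=\sup_{\mathcal{N}_{\nu_n}}J^H_{\nu_n}$ and likewise for $\nu$,
\[
\limsup_{n\to\infty}\inf_{(h,\gamma)\in\mathcal{M}_{\nu_n}}E_{\nu_n}(h,\gamma)
=\limsup_{n\to\infty}\sup_{(P,R)\in\mathcal{N}_{\nu_n}}J^H_{\nu_n}(P,R)
\le\sup_{(P,R)\in\mathcal{N}_{\nu}}J^H_{\nu}(P,R)
=\inf_{(h,\gamma)\in\mathcal{M}_{\nu}}E_{\nu}(h,\gamma).
\]
On the other hand Lemma 2.22 gives directly
\[
\inf_{(h,\gamma)\in\mathcal{M}_{\nu}}E_{\nu}(h,\gamma)\le\liminf_{n\to\infty}\inf_{(h,\gamma)\in\mathcal{M}_{\nu_n}}E_{\nu_n}(h,\gamma).
\]
Chaining the two displays forces the $\liminf$ and $\limsup$ of the numerical sequence $\inf_{\mathcal{M}_{\nu_n}}E_{\nu_n}$ to coincide, with common value $\inf_{\mathcal{M}_{\nu}}E_{\nu}$, which is exactly the assertion.

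The only points that need care — and they are implicit in the statement — are the standing hypotheses behind Lemmas 2.22 and 2.23: that $H$ is held fixed as in (2.17) and that all of $\nu,\nu_1,\nu_2,\dots$ have support in the common compact set $\Lambda=B_D(0)\times[-\frac{1}{\delta},-\delta]$. This is precisely what makes the uniform Lipschitz bound of Proposition 2.12 and the uniform sup bounds of Corollaries 2.13 and 2.16 hold with $n$-independent constants, so that the compactness/Arzelà–Ascoli arguments used to prove those two lemmas apply uniformly in $n$. I do not expect any genuine obstacle here: once strong duality (Theorem 2.18) and the two semicontinuity lemmas are granted, the corollary is just the observation that a lower semicontinuous primal value and an upper semicontinuous dual value, tied together by a vanishing duality gap, must in fact be continuous.
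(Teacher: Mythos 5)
Your proposal is correct and follows essentially the same route as the paper: the paper's proof of this corollary is exactly the sandwich of Lemma 2.22 (lower semicontinuity of the primal value) and Lemma 2.23 (upper semicontinuity of the dual value), glued together by the zero duality gap $\inf_{\mathcal{M}_{\nu}}E_{\nu}=\sup_{\mathcal{N}_{\nu}}J^H_{\nu}$ from Theorem 2.18. Your extra remarks on why strong duality and the uniform ($n$-independent) bounds hold just make explicit what the paper leaves implicit.
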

\begin{proof}
$"\geq"$ follows from Lemma 2.23.$"\leq"$ follows from Lemma 2.22. Recall that by Theorem 2.18$(ii)$, we have $$\inf_{(h,\gamma)\in\mathcal{M}_{\nu}}E_{\nu}(h,\gamma)=\sup_{(P,R)\in\mathcal{N}_{\nu}}J_{\nu}^H(P,R)$$
\end{proof}
 Next we prove a stability result of the optimizers under narrow convergence. It will be useful when one proves the continuity in time of certain quantities.
\begin{thm}
Suppose $\nu_n,\nu\in\mathcal{P}(\Lambda)$ and $\nu_n\rightarrow\nu$ narrowly. Let $(P_n,R_n),(P,R)$ be the unique maximizers of $J^H_{\nu_n},J^H_{\nu}$ given by Corollary 2.20, $(h_n,\gamma_n)$,$(h_0,\gamma_0)$ be the minimizers of $E_{\nu_n},E_{\nu}$ respectively, then the following holds.

$(i)$ $h_n\rightarrow h_0$ uniformly,

$(ii)$$P_n\rightarrow P$ in $W^{1,r}(\Omega_H)$, for any $r<\infty$

$(iii)$$R_n\rightarrow R$ in $W^{1,r}(\Lambda)$, for any $r<\infty$
\end{thm}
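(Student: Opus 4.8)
The plan is to apply a compactness argument to the normalized maximizers, show that every subsequential limit is again a maximizer of $J^H_\nu$, identify it with the unique normalized maximizer $(P,R)$ supplied by Corollary 2.20, and finally upgrade uniform convergence of the potentials to $W^{1,r}$ convergence via their convexity. Since by Corollary 2.20 each $(P_n,R_n)$ is convex conjugate over $\Omega_H$ and $\Lambda$, one has $\partial P_n(\Omega_H)\subset\Lambda$ and $\partial R_n(\Lambda)\subset\overline{\Omega_H}$, so $\{P_n\}$ and $\{R_n\}$ are equi-Lipschitz with constants depending only on $\delta,\Omega_2,\Lambda,H$; together with the $n$-independent bound $\|P_n\|_{L^\infty(\Omega_H)}\le C$ from Corollary 2.16 (hence $\|R_n\|_{L^\infty(\Lambda)}\le C$ by conjugacy), Arzela--Ascoli gives, along a subsequence, $P_n\to\tilde P$ uniformly on $\Omega_H$ and $R_n\to\tilde R$ uniformly on $\Lambda$. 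Passing to the limit in the conjugacy relations keeps $(\tilde P,\tilde R)$ convex conjugate over $\Omega_H$ and $\Lambda$, with $-\tfrac1\delta\le\partial_{x_3}\tilde P\le-\delta$, and in particular $(\tilde P,\tilde R)\in\mathcal N_\nu$.

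Next I would show $(\tilde P,\tilde R)$ maximizes $J^H_\nu$. Since $R_n\to\tilde R$ uniformly on the compact set $\Lambda$ and $\nu_n\to\nu$ narrowly, $\int_\Lambda[\tfrac12(y_1^2+y_2^2)-R_n(y)]\,d\nu_n(y)\to\int_\Lambda[\tfrac12(y_1^2+y_2^2)-\tilde R(y)]\,d\nu(y)$; for the $\inf_h$ term, Remark 2.7 rewrites it as $\int_{\Omega_2}\Pi_{P_n}(x_1,x_2,h_{P_n}^H)\,dx_1dx_2$, and Lemma 2.8 gives $h_{P_n}^H\to h_{\tilde P}^H$ uniformly while $\Pi_{P_n}\to\Pi_{\tilde P}$ uniformly on $\Omega_2\times[0,H]$, so this term converges to $\int_{\Omega_2}\Pi_{\tilde P}(x_1,x_2,h_{\tilde P}^H)\,dx_1dx_2$. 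Hence $J^H_{\nu_n}(P_n,R_n)\to J^H_\nu(\tilde P,\tilde R)$. On the other hand $J^H_{\nu_n}(P_n,R_n)=\sup_{\mathcal N_{\nu_n}}J^H_{\nu_n}=\inf_{\mathcal M_{\nu_n}}E_{\nu_n}$ by Theorem 2.18(ii), which by Corollary 2.24 converges to $\inf_{\mathcal M_\nu}E_\nu=\sup_{\mathcal N_\nu}J^H_\nu$; so $J^H_\nu(\tilde P,\tilde R)=\sup_{\mathcal N_\nu}J^H_\nu$, i.e.\ $(\tilde P,\tilde R)$ is a maximizer. By Theorem 2.18(ii) together with the uniqueness of the minimizer of $E_\nu$ (Corollary 2.19(i)), $h_{\tilde P}^H$ equals the height $h_0$ of that minimizer; since every subsequence of $(P_n)$ has a further subsequence along which $P_n\to\tilde P$ uniformly with $h_{\tilde P}^H=h_0$, we conclude $h_n=h_{P_n}^H\to h_0$ uniformly, which is (i).

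To finish I would check that $\tilde P$ inherits the two normalizing properties of Corollary 2.20, so that $(\tilde P,\tilde R)=(P,R)$. For property (ii) there: if $h_0(x_1,x_2)=0$ then $h_n(x_1,x_2)\to0$, and since $P_n(x_1,x_2,h_n(x_1,x_2))=\tfrac12(x_1^2+x_2^2)$ in all cases (Remark 2.4 if $h_n>0$, Corollary 2.20(ii) for $\nu_n$ if $h_n=0$), letting $n\to\infty$ gives $\tilde P(x_1,x_2,0)=\tfrac12(x_1^2+x_2^2)$. For property (i) there, $(\tilde P,\tilde R)$ is already convex conjugate over $\Omega_H,\Lambda$, and conjugacy over $\Omega_{h_0}\cup\{x_3=0\}$ and $\Lambda$ follows by passing to the limit in $R_n(y)=\sup_{x\in\Omega_{h_n}\cup\{x_3=0\}}(x\cdot y-P_n(x))$, using $P_n\to\tilde P$ uniformly and $\Omega_{h_n}\to\Omega_{h_0}$ (from $h_n\to h_0$ uniformly) to identify the limit of the right-hand side with $\sup_{x\in\Omega_{h_0}\cup\{x_3=0\}}(x\cdot y-\tilde P(x))$. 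By the uniqueness in Corollary 2.20, $(\tilde P,\tilde R)=(P,R)$, and as this holds along every subsequence, $P_n\to P$ uniformly on $\Omega_H$ and $R_n\to R$ uniformly on $\Lambda$. On the convex sets $\Omega_H$ and $\Lambda$ the functions $P_n,P$ and $R_n,R$ are convex with uniformly bounded gradients; since gradients of uniformly convergent convex functions converge a.e., dominated convergence gives $\nabla P_n\to\nabla P$ in $L^r(\Omega_H)$ and $\nabla R_n\to\nabla R$ in $L^r(\Lambda)$ for every $r<\infty$, which is (ii) and (iii).

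The real obstacle, I expect, is the identification in the last paragraph of the limit with the \emph{normalized} maximizer of Corollary 2.20: compactness alone gives, via Corollary 2.19, only that any limit agrees with $P$ on $\Omega_{h_0}$, which is insufficient for $W^{1,r}$ convergence on all of $\Omega_H$; one genuinely needs the stability of the normalization, which in turn rests on the uniform convergence $h_n\to h_0$ and on controlling the supremum defining $R_n$ over the moving domains $\Omega_{h_n}\cup\{x_3=0\}$.
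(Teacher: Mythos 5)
Your proposal is correct and follows essentially the same route as the paper's (sketched) proof: compactness of the equi-Lipschitz, uniformly bounded normalized maximizers, identification of every subsequential limit with the unique optimizers via Corollaries 2.19--2.20 (exactly as in Lemmas 2.22--2.23), and the convexity argument that upgrades uniform convergence to a.e.\ convergence of the (uniformly bounded) gradients and hence to $W^{1,r}$ convergence. The only difference is that you spell out the stability of the normalization of Corollary 2.20 (the conjugacy over $\Omega_{h_0}\cup\{x_3=0\}$, handled by passing to the limit in the suprema over the moving domains $\Omega_{h_n}\cup\{x_3=0\}$, and the condition $P(x_1,x_2,0)=\tfrac12(x_1^2+x_2^2)$ where $h_0=0$), a point the paper leaves implicit.
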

\begin{proof}
This is proved in a similar way as Lemma 2.22 and Lemma 2.23. We simply note that by convexity, the uniform convergence of $P_n$,$R_n$ implies the a.e convergence of $\nabla P_n$,$\nabla R_n$, and they are uniformly bounded, hence the claimed convergence follows.
\end{proof}
Next we apply the above obtained results to give an alternative proof for the existence of weak solutions in dual spaces with absolute continuous initial data, which was first done in \cite{Pelloni}, using Hamiltonian ODE approach. Our approach here is less involved.

Now we define
\begin{equation}
\Lambda_0=B_{D_0}(0)\times[-\frac{1}{\delta},-\delta]\,\,\,and\,\,\,\Lambda=B_D(0)\times[-\frac{1}{\delta},-\delta]
\end{equation}

Based on what we proved above about the properties of the functional $E_{\nu}(h,\gamma)$ and $J_{\nu}^H(P,R)$ in Theorem 2.18, the dual space problem can be reformulated as
\begin{equation}
\partial_t\nu+\nabla\cdot(\nu\mathbf{w})=0
\end{equation}
\begin{equation}
\mathbf{w}=J(y-\nabla P^*(t,y))\,\,\,y\in\mathbf{R}^3
\end{equation}
\begin{equation}
(h,(id\times\nabla P)_{\sharp}\sigma_h)\,\,\,minimizes\,\,\,E_{\nu}(h,\gamma)\,\,\,among\,\,\,\mathcal{M}_{\nu(t,\cdot)}
\end{equation}

In the above, $P^*(t,y)=\sup_{x\in\Omega_H}(x\cdot y-P(x))$ for some large enough H\\
\begin{thm}
Let $1<q<\infty$, $T>0$ be given and $\nu_0\in L^q(\Lambda_0)$, where $\Lambda_0,\Lambda$ defined as above, suppose also \
\begin{equation}
D>D_0+\max_{\Omega_2}|x|\cdot (T+1)
\end{equation}
and
\begin{equation}
H>\frac{2}{\mathcal{L}^2(\Omega_2)}+\frac{2\max_{\Omega_2}|x|+2D}{\delta}\cdot diam\,\,\Omega_2
\end{equation}
Then there exists a weak solution to (2.27)-(2.29), $(h,P,R)$ with $\nu(t,\cdot):=\nabla P_{t\sharp}\sigma_{h(t,\cdot)}$, $R=P^*$ such that

$$(i)\textrm{$t\longmapsto \nu(t,\cdot)\in\mathcal{P}_{ac}(\mathbf{R}^3)$ is narrowly continuous with supp $\nu\subset\Lambda$}$$
$$\textrm{$\nu(t,\cdot)\in L^q(\Lambda)$, and $||\nu(t,\cdot)||_{L^q(\Lambda)}= ||\nu_0(\cdot)||_{L^q(\Lambda)}$, $\forall t\in[0,T]$}$$
$$(ii)\textrm{$(P(t,\cdot),R(t,\cdot))$ is the unique maximizer of $J_{\nu(t,\cdot)}^H(P,R)$}$$$$
 \textrm {with  properties (i),(ii) in Corollary 2.20}$$
$$(iii)W_1(\nu(t_1,\cdot),\nu(t_2,\cdot))\leq C|t_1-t_2|$$
$$(iv)P(t,\cdot)\in L^{\infty}([0,T],W^{1,\infty}(\Omega_{\infty}))\bigcap C([0,T], W^{1,r}(\Omega_{\infty}))$$$$
R\in L^{\infty}([0,T],W^{1,\infty}(\Lambda))\bigcap C([0,T],W^{1,r}(\Lambda))\,\,\forall r<\infty$$
$$(v)Let\,\, \gamma(t,\cdot)=(id\times\nabla P)_{\sharp}\sigma_h,\,\,(h,\gamma)\,\,are\,\, minimizers \,\,of\,\, E_{\nu}(h,\gamma) \,\,and$$$$\,\, h\in L^{\infty}([0,T],W^{1,\infty}(\Omega_2))\bigcap C([0,T]\times\bar{\Omega}_2)$$

\end{thm}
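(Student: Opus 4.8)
The plan is to construct the solution by a time-stepping (Euler) scheme, exactly in the spirit of the existence proofs in \cite{Cullen-Feldman}, but using the dual variational machinery developed in Section~2 (Theorem~2.18, Corollary~2.20, and the stability Theorem~2.25) in place of the strict-convexity arguments available in the constant-temperature case. Fix $\tau>0$ small and set $t_k=k\tau$. Starting from $\nu_0$, at each step produce $(P_k,R_k)$ as the unique maximizer of $J^H_{\nu(t_k,\cdot)}(P,R)$ with properties (i),(ii) of Corollary~2.20, let $h_k=h^H_{P_k}$ and $\gamma_k=(id\times\nabla P_k)_\sharp\sigma_{h_k}$, and then push $\nu(t_k,\cdot)$ forward by the time-$\tau$ map of the frozen velocity field $y\mapsto J(y-\nabla P_k^*(y))=J(y-\nabla R_k(y))$ to obtain $\nu(t_{k+1},\cdot)$. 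Since $J$ kills the third component and $\nabla R_k$ is bounded (Corollary~2.16), this flow map preserves the slab $\{-\tfrac1\delta\le y_3\le -\delta\}$ and, being a measure-preserving rearrangement composed with a bi-Lipschitz map with controlled constants, it keeps $\nu$ in $\mathcal P_{ac}$ with $\|\nu(t_{k+1},\cdot)\|_{L^q}=\|\nu_0\|_{L^q}$; the choice (2.30) of $D$ guarantees $\operatorname{supp}\nu(t,\cdot)\subset\Lambda$ for all $t\le T$ because the horizontal speed is bounded by $\max_{\Omega_2}|x|$ along the flow. This gives the conservation statements in (i) and the uniform-in-$\tau$ Lipschitz bound $W_1(\nu(t_{k+1},\cdot),\nu(t_k,\cdot))\le C\tau$ of (iii).

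Next I would pass to the limit $\tau\to 0$. Interpolating the discrete curve piecewise (e.g. affinely in $W_1$, or by running the frozen flow on each subinterval) yields curves $\nu^\tau(t,\cdot)$ that are uniformly $C$-Lipschitz in $W_1$ on $[0,T]$; by the Arzelà–Ascoli/refined-compactness argument for Wasserstein-valued curves (using tightness from $\operatorname{supp}\nu^\tau\subset\Lambda$) a subsequence converges uniformly in $W_1$, hence narrowly for each $t$, to a limit $\nu(t,\cdot)$, still supported in $\Lambda$ and still $C$-Lipschitz in $W_1$. By the stability Theorem~2.25, the associated maximizers $(P^\tau,R^\tau)$ of $J^H_{\nu^\tau(t,\cdot)}$ converge (for each $t$, along the subsequence) to the unique maximizers $(P(t,\cdot),R(t,\cdot))$ of $J^H_{\nu(t,\cdot)}$ in $W^{1,r}$ for all $r<\infty$, and $h^\tau(t,\cdot)\to h(t,\cdot)$ uniformly; the uniform $W^{1,\infty}$ bounds on $P^\tau,R^\tau,h^\tau$ (Corollary~2.16, Proposition~2.12, Corollary~2.13) are independent of $\tau$ and $t$, so they survive in the limit, giving (ii) and the $L^\infty_tW^{1,\infty}$ parts of (iv),(v). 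The $C_t W^{1,r}$ (resp. $C_t\bar\Omega_2$) continuity in (iv),(v) then follows from narrow continuity of $t\mapsto\nu(t,\cdot)$ combined once more with Theorem~2.25 applied in the time variable.

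It remains to check that the limit actually solves the continuity equation (2.27)–(2.28) in the weak sense. For a test function $\varphi\in C_c^\infty([0,T)\times\mathbf R^3)$ one writes the discrete identity coming from the frozen flow on each $[t_k,t_{k+1}]$, namely $\int\varphi\,d\nu(t_{k+1})-\int\varphi\,d\nu(t_k)=\int_{t_k}^{t_{k+1}}\!\!\int \nabla\varphi\cdot J(y-\nabla R_k(y))\,d\nu^\tau+o(\tau)$, sums over $k$, and passes to the limit: the left side telescopes to the desired boundary/time-derivative terms, and the right side converges because $\nabla R^\tau\to\nabla R$ in, say, $L^2(d\nu^\tau)$ (which follows from $W^{1,r}$ convergence plus the uniform $L^\infty$ bound and narrow convergence of $\nu^\tau$) while $\nu^\tau\to\nu$ narrowly uniformly in $t$. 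I expect the main obstacle to be precisely this last convergence of the nonlinear term: one must upgrade the a.e./$W^{1,r}$ convergence of the potentials to convergence of $\nabla R^\tau$ tested against the (moving, only narrowly convergent) measures $\nu^\tau$, and simultaneously control the $o(\tau)$ error from freezing the velocity over each time step — this is where the uniform Lipschitz bounds on $R^\tau$ in Corollary~2.16 and the $W_1$-Lipschitz estimate (iii) do the essential work, through an argument analogous to the stability estimates in \cite{Cullen-Feldman} and \cite{Feldman-Tudorascu}. The identification $R=P^*$ and the minimization property (2.29) at each time are immediate from Corollary~2.20 and Theorem~2.18(ii) once the convergence is in place.
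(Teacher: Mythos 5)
Your overall strategy is exactly the paper's: a time-stepping scheme in which, on each subinterval, one solves the variational problem of Section~2, freezes the resulting velocity, transports $\nu$, and then passes to the limit using the support/propagation bound, the uniform $W_1$-Lipschitz estimate, and the stability Theorem~2.25 to handle the nonlinear term $\nabla R^\tau$ against the weakly converging $\nu^\tau$. The final convergence step you flag as the main obstacle is resolved in the paper just as you suggest: Theorem~2.25 upgrades narrow convergence of $\nu_{s_j}(t,\cdot)$ to strong $L^r$ convergence of $\nabla \bar R_{s_j}(t,\cdot)$, which pairs with weak $L^r$ convergence of $\nu_{s_j}$ to pass the product to the limit; no separate $o(\tau)$ bookkeeping is needed, since the interpolated approximation solves the transport equation exactly with the piecewise-constant-in-time velocity.

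There is, however, a genuine gap in your one-step construction. The frozen velocity $y\mapsto J(y-\nabla R_k(y))$ is only bounded and of bounded variation: $R_k$ is a convex function conjugate to $P_k$, so $\nabla R_k$ is bounded (Corollary~2.16) but in general not Lipschitz, and its flow is neither classically defined nor ``a measure-preserving rearrangement composed with a bi-Lipschitz map with controlled constants.'' As stated, your claims that $\nu(t_{k+1},\cdot)$ stays absolutely continuous with $\|\nu(t_{k+1},\cdot)\|_{L^q}=\|\nu_0\|_{L^q}$ are therefore unjustified. The paper avoids this by mollifying: it replaces $\nu_0$ by $j_s*\nu_0$ and, crucially, uses on each step the smoothed field $\mathbf{u}_s^k(y)=J\bigl(y-\nabla(j_s*R_s^k)(y)\bigr)$, which is $C^\infty$ and divergence free, so the ODE flow $\Phi_s^k$ exists classically, preserves Lebesgue measure, and gives $\nu_s(t,y)=\nu_s^k((\Phi_s^k)^{-1}(t,y))$, whence exact conservation of the $L^q$ norm and of the slab constraint on the support (the mollification forces one to work temporarily in the slightly enlarged slab $[-\tfrac{2}{\delta},-\tfrac{\delta}{2}]$ and to track the horizontal support growth $D_s^k=D_0+ks\max_{\Omega_2}|x|$, which is the role of hypothesis (2.31)). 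Alternatively one could invoke Ambrosio's theory of flows for bounded divergence-free BV fields to run your unmollified step, but then the measure-preservation and stability statements must be quoted from that theory rather than from a bi-Lipschitz argument; either way, this point needs to be repaired before the rest of your argument goes through.
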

\begin{proof}
Let $j_s:\mathbf{R}^3\rightarrow\mathbf{R}$ be standard mollifier defined by
$$j_s(y)=\frac{1}{s^3}j(\frac{y}{s})$$

First we mollify the initial data by defining
$$\nu_s^0(y)=j_s*\nu_0(y)$$

Then for $0<s<\frac{\delta}{2}$, we have
$$supp\,\,\nu_s^0\subset B_{D_0+s}(0)\times(-\frac{2}{\delta},-\frac{\delta}{2})$$

Next we  construct approximate solutions, here we need to control the speed of propagation of the support of approximate solutions. Define $$D^k_s=D_0+ks\max_{\Omega_2}|x|\,\,\,0\leq k\leq \frac{T}{s}$$

Given $\nu_s^k:=\nu_s(ks,\cdot)$ with $supp\,\,\nu_s^k\subset B_{D_s^k}(0)\times[-\frac{2}{\delta},-\frac{\delta}{2}]$, we obtain $\nu_s^{k+1}$ such that $supp\,\,\nu_s^{k+1}\subset B_{D_s^{k+1}}(0)\times[-\frac{1}{\delta},-\delta]$ in the following way.

Let $(h_s^k,\gamma_s^k)$ be the minimizer of $E_{\nu_s^k}(h,\gamma)$, Let $(P_s^k,R_s^k)$ the unique  maximizer of $ J_{\nu_s^k}^H(P,R)$ given by Corollary 2.20. This is possible by our assumption on $H$ and D, see (2.17),(2.31) and (2.32).\\
We set $\gamma_s^k=(id\times\nabla P_s^k)_{\sharp}\sigma_{h_s^k}$.

Define
$$\mathbf{w}_s^k(y)=J(y-\nabla R_s^k(y))$$
$$\mathbf{u}_s^k(y)=J(y-\nabla(j_s*R_s^k)(y))$$

Observe that $\mathbf{u}_s^k(y)$ is $C^{\infty}$ and divergence free and we obtain $\nu_s^{k+1}$ by solving the transport equation
$$\partial_t\nu_s=-\nabla\cdot(\mathbf{u}_s^k\nu_s^k)\,\,\,in\,\,\,[ks,(k+1)s)\times\mathbf{R}^3$$
$$\nu_s(ks,y)=\nu_s^k(y)\,\,\,in\,\,\,\mathbf{R}^3$$
and set $\nu_s^{k+1}=\nu_s((k+1)s,y)$.

In more detail, we solve the ODE
$$\frac{d\Phi_s^k(t,y)}{dt}=\mathbf{u}_s^k(\Phi_s^k(t,y))\,\,\,t\in[ks,(k+1)s],y\in\mathbf{R}^3$$
$$\Phi_s^k(ks,y)\equiv y$$
Then $$\nu_s(t,y)=\nu_s^k((\Phi_s^k)^{-1}(t,y))\,\,\,t\in[ks,(k+1)s]$$
since $\mathbf{u}_s^k$ is divergence free.

Therefore
$$supp\,\,\nu_s(t,\cdot)\subset \Phi_s^k(t,\cdot)(supp\,\,\nu_s^k)$$
Now take $y_0\in supp\,\,\nu_s^k$, we know
$$\frac{d|\Phi_s^k(t,y_0)|}{dt}=\frac{\Phi_s^k\cdot\frac{d\Phi_s^k}{dt}}{|\Phi_s^k|}=\frac{\Phi_s^k\cdot J(\Phi_s^k-\nabla(j_s*R_s^k)(\Phi_s^k))}{|\Phi_s^k|}$$
Now notice that
$$\Phi_s^k\cdot J\Phi_s^k=0\,\,\,\,\,[J(\nabla(j_s*R_s^k))]_3=0$$
Hence
$$|\Phi_s^k\cdot J(\Phi_s^k-\nabla(j_s*R_s^k))|\leq |\Phi_s^k||\nabla_2 R_s^k|\leq |\Phi_s^k|\max_{x\in\Omega_2}|x|$$
It follows that
$$\frac{d|\Phi_s^k(t,y_0)|}{dt}\leq\max_{\Omega_2}|x|\,\,\,\,t\in[ks,(k+1)s]$$
hence $$|\Phi_s^k((k+1)s,y_0)|\leq D_s^k+s\cdot\max_{\Omega_2}|x|\leq D_s^{k+1}$$
So we obtained $$supp\,\,\nu_s^{k+1}\subset B^2_{D_s^{k+1}}\times[-\frac{1}{\delta},-\delta]$$

Also we can define $P_s^{k+1},R_s^{k+1},h_s^{k+1},\gamma_s^{k+1},\mathbf{w}_s^{k+1},\mathbf{u}_s^{k+1}$ in the same way as above. Denote $\bar{P}_s(t,\cdot),\bar{R}_s(t,\cdot),\bar{h}_s(t,\cdot),\bar{\gamma}_s(t,\cdot)$ to be above defined piecewise constant function  in time, i.e, $\bar{P}_s(t,\cdot)\equiv P_s^k\,\,t\in[ks,(k+1)s)$, similar for others.

Recall the definition of D, we have shown that  if $s<\frac{\delta}{2}$
$$supp\,\,\nu_s(t,\cdot)\subset B_{D+s}(0)\times[-\frac{2}{\delta},-\frac{\delta}{2}]$$
Similar to the proof in \cite{Cullen-Gangbo}, one has
 $$W_1(\nu_s(t_1,\cdot),\nu_s(t_2,\cdot))\leq (D+\max_{x\in\Omega_2}|x|)||\nu_0||_{L^1(\mathbf{R}^3)}|t_1-t_2|$$
 where $W_1(\cdot,\cdot)$ is the 1-Wasserstein distance.
Follow the proof of \cite{Cullen-Maroofi} Theorem 5.3, we conclude that up to a subsequence
$$\nu_{s_j}\rightarrow\nu\,\,\,weakly\,\,\,in\,\,\, L^r([0,T]\times\mathbf{R}^3)$$
and $$\nu_{s_j}(t,\cdot)\rightarrow\nu(t,\cdot)\,\,\,weakly \,\,\,in\,\,\,L^r(\Lambda)$$
Also
$$\nu(t,\cdot)\in L^q([0,T]\times \Lambda)\,\,\,||\nu(t,\cdot)||_{L^q(\Lambda)}=||\nu_0||_{L^q(\Lambda)}$$
$$W_1(\nu(t_1,\cdot),\nu(t_2,\cdot))\leq(D+\max_{x\in\Omega_2}|x|)||\nu_0||_{L^1(\mathbf{R}^3)}|t_1-t_2|$$
Since $\nu_{s_j}(t,\cdot)\rightarrow \nu(t,\cdot)$ narrowly as measures, we conclude by Theorem 2.25 that $\nabla\bar{R}_{s_j}(t,\cdot)\rightarrow \nabla R(t,\cdot)$ in $L^r(\Lambda)$ and hence $\bar{\mathbf{u}}_{s_j}\rightarrow\mathbf{w}$ in $L^r(\Lambda)$.
$$\bar{\mathbf{u}}_{s_j}\nu_{s_j}(t,\cdot)\rightarrow\mathbf{w}\nu(t,\cdot)\,\,\,weakly\,\,\,in\,\,\,L^r(\Lambda;\mathbf{R}^3)$$
so we conclude
 and $\nu$ satisfies the equation $$\partial_t\nu+\nabla\cdot(\mathbf{w}\nu)=0\,\,\,;\nu(0,\cdot)=\nu_0$$ in the sense of distribution, where $\mathbf{w}=J(y-\nabla R(t,y))$ and $(P(t,\cdot),R(t,\cdot))$  is the minimizer of $J_{\nu(t,\cdot)}^H(P,R)$.
The property of $P,h$ follows from the stability result proved above and the narrow continuity of $\nu_t$
\end{proof}
\subsection{Generalization to $\nu$ with unbounded support}
In this subsection, we will consider the case when $\nu$ may have unbounded support and generalize the properties obtained in previous subsection. The result of this section will be used only in section 4, when the initial data has only $L^2$, instead of $L^{\infty}$ gradient. The ideas are quite similar, but certain complications arise.

We will always take in this subsection $\Lambda=\mathbf{R}^2\times[-\frac{1}{\delta},-\delta]$, and we assume $\nu\in\mathcal{P}_2(\mathbf{R}^3)$ with $supp\,\,\nu\subset\Lambda$. In this setting, $E_{\nu}(h,\gamma)$ is defined the same way as (2.1), $J_{\nu}(P,R)$ is defined as in (2.2). Obvious examples as $P(x)=\frac{1}{2}(x_1^2+x_2^2),R(y)=\frac{1}{2}(y_1^2+y_2^2)$ shows \begin{equation}
\sup_{(P,R)\in\mathcal{N}_{\nu}}J_{\nu}(P,R)\geq0
\end{equation}

Recall $\mathcal{N}_{\nu}$ is the set of pairs $(P,R)$ such that $P(x)+R(y)\geq x\cdot y$ with suitable integrability condition. See subsection 2.1.

Suppose $(P,R)\in\mathcal{N}_{\nu}$, we can then use double convexification to define
$$R_0(y)=\sup_{x\in\Omega_{\infty}}(x\cdot y-P(x))$$
and
$$P_0(x)=\sup_{y\in\Lambda}(x\cdot y-R_0(y))$$
we have
$$P_0\leq P\,\,\,and\,\,\,R_0\leq R$$
Therefore
$$J_{\nu}(P,R)\leq J_{\nu}(P_0,R_0)$$
then set $\hat{P}_0(x)=\max(P_0(x),\frac{1}{2}(x_1^2+x_2^2))$ and set
$$R_1(y)=\sup_{x\in\Omega_{\infty}}(x\cdot y-\hat{P}_0(x))$$
$$P_1(x)=\sup_{y\in\Lambda}(x\cdot y-R_1(y))$$
Since $\hat{P}_0(x)\geq\frac{1}{2}(x_1^2+x_2^2)$, it`s easy to see $R_1(y)\leq\frac{1}{2}(y_1^2+y_2^2)$. Since $\hat{P}_0(x)+R_1(y)\geq x\cdot y$, we see $P_1(x)\leq \hat{P}_0(x)$ and the definition of $P_1$ shows $P_1(x)+R_1(y)\geq x\cdot y$.

The proof of  Lemma 5.1$(ii)$ in the appendix shows $P_1(x_1,x_2,0)\geq\frac{1}{2}(x_1^2+x_2^2)$. Since $R_1(y)\leq R(y)$, we have
$$\int[\frac{1}{2}(y_1^2+y_2^2)-R(y)]d\nu(y)\leq\int[\frac{1}{2}(y_1^2+y_2^2)-R_1(y)]d\nu(y)$$
Put $h_0(x)=h_{P_0}(x)$, which is well defined since $-\frac{1}{\delta}\leq\partial_{x_3}P\leq-\delta$, then we have
$$\inf_{h\geq0}\int_{\Omega_{\infty}}[\frac{1}{2}(x_1^2+x_2^2)-P_0(x)]dx=\int_{\Omega_{h_0}}[\frac{1}{2}(x_1^2+x_2^2)-P_0(x)]dx=\int_{\Omega_{h_0}}[\frac{1}{2}(x_1^2+x_2^2)-\hat{P}_0(x)]dx$$
$$=\inf_{h\geq 0}\int_{\Omega_{h}}[\frac{1}{2}(x_1^2+x_2^2)-\hat{P}_0(x)]dx\leq\inf_{h\geq 0}\int_{\Omega_{h}}[\frac{1}{2}(x_1^2+x_2^2)-P_1(x)]dx$$
Since $P_1(x)\leq\hat{P}_0(x)$, which gives $J_{\nu}(P_0,R_0)=J_{\nu}(\hat{P}_0,R_0)\leq J_{\nu}(P_1,R_1)$. Summarizing above discussion, we get the following lemma.
\begin{lem}
Let $(P,R)\in\mathcal{N}_{\nu}$, then there exists $(P_1,R_1)\in\mathcal{N}_{\nu}$ such that

$(i)\,\,J_{\nu}(P,R)\leq J_{\nu}(P_1,R_1)$

$(ii)\,\,P_1(x_1,x_2,0)\geq\frac{1}{2}(x_1^2+x_2^2)\,\,\,and\,\,R_1(y)\leq\frac{1}{2}(y_1^2+y_2^2)$

$(iii)\,\,P_1(x),R_1(y)$ are convex conjugate over $\Omega_{\infty}$,$\Lambda$
\end{lem}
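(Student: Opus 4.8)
The plan is to formalize the construction sketched in the paragraphs just above the statement, carrying it out in three stages and checking at each step that $J_{\nu}$ does not decrease. First I would perform a double convexification of $(P,R)$: set $R_0(y)=\sup_{x\in\Omega_{\infty}}(x\cdot y-P(x))$ and $P_0(x)=\sup_{y\in\Lambda}(x\cdot y-R_0(y))$. The defining inequality $P(x)+R(y)\geq x\cdot y$ gives at once $R_0\leq R$, and then, since $R_0(y)\geq x\cdot y-P(x)$ for each fixed $x$, also $P_0\leq P$; a one-line check of the identity $R_0(y)=\sup_{x\in\Omega_{\infty}}(x\cdot y-P_0(x))$ on $\Lambda$ (the two inequalities being, respectively, the definition of $P_0$ and the bound $P_0\leq P$) shows that $(P_0,R_0)$ are convex conjugate over $\Omega_{\infty},\Lambda$. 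Because $R_0\leq R$ the first integral in $J_{\nu}$ cannot decrease, and because $P_0\leq P$ the integrand $\frac{1}{2}(x_1^2+x_2^2)-P_0$ dominates $\frac{1}{2}(x_1^2+x_2^2)-P$ pointwise, so the infimum over $h\geq 0$ cannot decrease; hence $J_{\nu}(P,R)\leq J_{\nu}(P_0,R_0)$.

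Next I would truncate from below, setting $\hat{P}_0(x)=\max\bigl(P_0(x),\frac{1}{2}(x_1^2+x_2^2)\bigr)$, and convexify once more: $R_1(y)=\sup_{x\in\Omega_{\infty}}(x\cdot y-\hat{P}_0(x))$ and $P_1(x)=\sup_{y\in\Lambda}(x\cdot y-R_1(y))$. Using only that $y_3\leq-\delta<0$ on $\Lambda$ and $x_3\geq 0$ on $\Omega_{\infty}$, the elementary bound $x\cdot y-\frac{1}{2}(x_1^2+x_2^2)=(x_1y_1-\frac{1}{2}x_1^2)+(x_2y_2-\frac{1}{2}x_2^2)+x_3y_3\leq\frac{1}{2}(y_1^2+y_2^2)$ yields $R_1(y)\leq\frac{1}{2}(y_1^2+y_2^2)$; then $P_1\leq\hat{P}_0$, while $P_1(x)+R_1(y)\geq x\cdot y$ by construction, and biconjugation as before, so $(P_1,R_1)\in\mathcal{N}_{\nu}$ (the integrability requirements being immediate from the quadratic bounds) and $P_1,R_1$ are convex conjugate over $\Omega_{\infty},\Lambda$, which is (iii). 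For the remaining half of (ii), $P_1(x_1,x_2,0)\geq\frac{1}{2}(x_1^2+x_2^2)$, I would test the supremum defining $P_1(x_1,x_2,0)$ at a point $(x_1,x_2,y_3)$ with $y_3\in[-\frac{1}{\delta},-\delta]$, which lies in $\Lambda=\mathbf{R}^2\times[-\frac{1}{\delta},-\delta]$, and use the bound on $R_1$ just proved; this is exactly what the proof of Lemma 5.1(ii) in the appendix gives.

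Finally I would show $J_{\nu}(P_0,R_0)\leq J_{\nu}(P_1,R_1)$. Since $\hat{P}_0\geq P_0$ one has $R_1\leq R_0$, so the first integral cannot decrease; the delicate point is that the $h$-infimum is \emph{unchanged} in passing from $P_0$ to $\hat{P}_0$. For this I would use that $s\mapsto\Pi_{P_0}(x_1,x_2,s)$ is convex (because $\partial_{x_3}P_0\leq-\delta$) with minimizer $h_{P_0}$, that by Remark 2.4 one has $P_0>\frac{1}{2}(x_1^2+x_2^2)$ strictly below the graph of $h_{P_0}$, so $\hat{P}_0=P_0$ on $\Omega_{h_{P_0}}$, and that $s\mapsto\Pi_{\hat{P}_0}(x_1,x_2,s)$ is nonincreasing with integrand $\min\bigl(\frac{1}{2}(x_1^2+x_2^2)-P_0,0\bigr)$ vanishing for $x_3\geq h_{P_0}$, hence constant there; it follows that $h_{P_0}$ realizes both infima, so $J_{\nu}(P_0,R_0)=J_{\nu}(\hat{P}_0,R_0)$. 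Since $P_1\leq\hat{P}_0$ makes the integrand, and therefore the $h$-infimum, not decrease, $J_{\nu}(\hat{P}_0,R_0)\leq J_{\nu}(P_1,R_1)$; chaining this with the first stage gives (i).

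The step I expect to be the main obstacle is exactly this invariance of the $h$-infimum under the lower truncation $P_0\mapsto\hat{P}_0$: it rests on identifying $\Omega_{h_{P_0}}$ as precisely the region where $P_0$ already dominates $\frac{1}{2}(x_1^2+x_2^2)$, which is where Remark 2.4 and the strict convexity of $\Pi_{P_0}$ in $x_3$ are used. Everything else is routine biconjugation bookkeeping together with the sign structure $x_3\geq 0$, $y_3\leq 0$ built into the cost $c$.
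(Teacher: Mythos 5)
Your proposal is correct and follows essentially the same route as the paper: double convexification of $(P,R)$, truncation of $P_0$ by $\frac{1}{2}(x_1^2+x_2^2)$, a second convexification to produce $(P_1,R_1)$, the sign argument $x_3\geq 0$, $y_3\leq-\delta$ for the bound on $R_1$ together with the appendix Lemma 5.1(ii) computation for $P_1(x_1,x_2,0)$, and the observation that the $h$-infimum is unchanged under $P_0\mapsto\hat{P}_0$ because $\hat{P}_0=P_0$ on $\Omega_{h_{P_0}}$. Your justification of that last invariance (via $\Pi_{\hat{P}_0}(x_1,x_2,s)=\Pi_{P_0}(x_1,x_2,\min(s,h_{P_0}))$) is in fact spelled out in more detail than the paper's chain of equalities, but it is the same argument.
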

Let $(P_n,R_n)$ be a maximizing sequence of $J_{\nu}(P,R)$. By the above lemma, one can assume $(P_n,R_n)$ has additional properties $(ii),(iii)$ above. Besides, we can also assume $J_{\nu}(P_n,R_n)\geq0$ by (2.33). Next we will derive some estimates for the maximizing sequence, which allow us to pass to limit and prove the existence of at least one maximizer.
\begin{lem}
Let $(P_n,R_n)\in\mathcal{N}_{\nu}$ be a maximizing sequence of $J_{\nu}(P,R)$ with the properties $(ii),(iii)$ in previous lemma, Suppose $J_{\nu}(P_n,R_n)\geq0$,and put $h_n=h_{P_n}$, then there exists a constant $C=C(M_2(\nu),\delta,\Omega_2)$, such that
\begin{equation}
||h_n||_{L^2(\Omega_2)},||P_n(\cdot,0)||_{L^2(\Omega_2)}\leq C
\end{equation}
and constant $C_K=C(K,M_2(\nu),\delta,\Omega_2)$,such that
\begin{equation}
||P_n||_{L^2(\Omega_2\times[0,K])}\leq C_K
\end{equation}
We have also
\begin{equation}
-\max_{\Omega_2}|x||y|-\frac{2C_0}{\mathcal{L}^2(\Omega_2)}\leq R_n(y)\leq\frac{1}{2}(y_1^2+y_2^2)
\end{equation}
where $C_0$ has the same dependence as $C$ and in particular $||R_n||_{L^1(d\nu)}\leq C$
\end{lem}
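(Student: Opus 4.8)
The goal is to extract, from a maximizing sequence $(P_n,R_n)$ normalized as in Lemma 2.28 (so $P_n,R_n$ convex conjugate over $\Omega_\infty,\Lambda$, $P_n(x_1,x_2,0)\ge\frac12(x_1^2+x_2^2)$, $R_n(y)\le\frac12(y_1^2+y_2^2)$) and with $J_\nu(P_n,R_n)\ge0$, uniform $L^2$ bounds on $h_n$, $P_n(\cdot,0)$, and local $L^2$ bounds on $P_n$, plus the two-sided pointwise bound on $R_n$. The structure of the argument parallels Lemma 2.9, but now quantities are only $L^2$-controlled by $M_2(\nu)$ rather than $L^\infty$-controlled by a bounded $\Lambda$.

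\medskip

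\emph{Step 1: lower bound for $R_n$.} Since $P_n(x)=\sup_{y\in\Lambda}(x\cdot y-R_n(y))$ and $R_n$ is the conjugate, for any fixed $x^0\in\Omega_2\times\{0\}$ one has $R_n(y)\ge x^0\cdot y-P_n(x^0)$. Using $P_n(x^0_1,x^0_2,0)\ge\frac12((x^0_1)^2+(x^0_2)^2)$ is the wrong direction, so instead I would bound $P_n(x^0)$ from above using the (not yet known) $L^2$ bound — this is circular, so the right order is: first get the lower bound for $R_n$ in terms of $\inf_{\Omega_2}P_n(\cdot,0)$, then bound that infimum. Concretely, $R_n(y)\ge -\max_{\Omega_2}|x|\,|y| - \sup_{\Omega_2}P_n(\cdot,0)$; combined with $P_n(\cdot,0)$ convex (hence its average over $\Omega_2$ controls its sup on a slightly smaller set, or one uses the Lipschitz bound $|\nabla_2 P_n|\le$ const coming from $\partial P_n(\Omega_\infty)\subset\Lambda$ — but $\Lambda$ is unbounded now, so $\nabla_2 P_n$ is \emph{not} bounded). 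This is the crux: without a bounded $\Lambda$ one cannot get pointwise control of $P_n(\cdot,0)$ from its $L^2$ norm directly. The resolution is to prove the $L^2$ bound on $P_n(\cdot,0)$ \emph{first}, then the $L^1(d\nu)$-type lower bound on $R_n$ follows, and the stated pointwise lower bound $R_n(y)\ge -\max_{\Omega_2}|x|\,|y|-\frac{2C_0}{\mathcal L^2(\Omega_2)}$ uses that $P_n(\cdot,0)$, being convex on $\Omega_2$ and nonnegative-ish, has $\inf_{\Omega_2}P_n(\cdot,0)\le \frac{1}{\mathcal L^2(\Omega_2)}\int_{\Omega_2}P_n(\cdot,0)\le \frac{C_0}{\mathcal L^2(\Omega_2)}$ after controlling the integral.

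\medskip

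\emph{Step 2: the $L^2$ bounds from $J_\nu(P_n,R_n)\ge0$.} Write $0\le J_\nu(P_n,R_n)=\int_\Lambda[\frac12(y_1^2+y_2^2)-R_n]\,d\nu+\int_{\Omega_\infty}[\frac12(x_1^2+x_2^2)-P_n]\sigma_{h_n}\,dx$. For the first integral, $\frac12(y_1^2+y_2^2)-R_n(y)\ge0$ by property (ii), and it is also $\le \frac12(y_1^2+y_2^2)+\max_{\Omega_2}|x|\,|y|+\frac{2C_0}{\mathcal L^2(\Omega_2)}\le C(1+|y|^2)$, so this term is bounded above by $C(1+M_2(\nu))$ — but that requires the lower bound on $R_n$ from Step 1, which requires Step 2's $L^2$ bound on $P_n(\cdot,0)$: so these must be run \emph{simultaneously} via a closed inequality. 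I would set up the single chain: on $\Omega_{h_n}$, $\frac12(x_1^2+x_2^2)-P_n(x)\le \frac12(x_1^2+x_2^2)-P_n(x_1,x_2,0)+\frac{1}{\delta}x_3\le \frac{1}{\delta}x_3$ using (ii) and $\partial_{x_3}P_n\ge-\frac1\delta$; and $\ge -|P_n(x)|$. Meanwhile at the top, if $h_n(x_1,x_2)>0$ then $P_n(x_1,x_2,h_n)=\frac12(x_1^2+x_2^2)$ (Remark 2.4), so $P_n(x_1,x_2,0)\le \frac12(x_1^2+x_2^2)+\frac{1}{\delta}h_n(x_1,x_2)$, i.e. $0\le P_n(x_1,x_2,0)-\frac12(x_1^2+x_2^2)\le\frac1\delta h_n$. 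Integrating $\Pi_{P_n}$: $\int_{\Omega_2}\Pi_{P_n}(x_1,x_2,h_n)\,dx_1dx_2 \le -c\int_{\Omega_2}h_n^2$ roughly (uniform convexity in $s$ with modulus $\ge\delta$ gives $\Pi_{P_n}(x_1,x_2,h_n)\le \Pi_{P_n}(x_1,x_2,0)-\frac\delta2 h_n^2=-\frac\delta2 h_n^2$, since $\Pi_{P_n}(\cdot,0)=0$ and the derivative at $0$ is $\frac12(x_1^2+x_2^2)-P_n(x_1,x_2,0)\le0$). Plugging into $0\le J_\nu$: $\frac\delta2\int_{\Omega_2}h_n^2 \le \int_\Lambda[\frac12(y_1^2+y_2^2)-R_n]\,d\nu$. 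Now bound the RHS: $-R_n(y)\le \max_{\Omega_2}|x|\,|y|+\sup_{\Omega_2}P_n(\cdot,0)$, and $\sup_{\Omega_2}P_n(\cdot,0)\le \frac12(\mathrm{diam}\,\Omega_2)^2 + \frac1\delta\|h_n\|_{L^\infty}$ — but $\|h_n\|_{L^\infty}$ is not yet controlled. Better: use $\int_{\Omega_2}P_n(\cdot,0)\le \int_{\Omega_2}\frac12(x_1^2+x_2^2) + \frac1\delta\int_{\Omega_2}h_n \le C + \frac1\delta\|h_n\|_{L^2}|\Omega_2|^{1/2}$. Then $\int_\Lambda(-R_n)\,d\nu \le \max_{\Omega_2}|x|\,M_1(\nu) + \frac{1}{\mathcal L^2(\Omega_2)}\int_{\Omega_2}P_n(\cdot,0)\le C(1+\|h_n\|_{L^2})$, giving $\frac\delta2\|h_n\|_{L^2}^2\le C(1+\|h_n\|_{L^2}+M_2(\nu))$, hence $\|h_n\|_{L^2(\Omega_2)}\le C(M_2(\nu),\delta,\Omega_2)$ by an elementary quadratic-inequality argument. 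Feeding back: $\|P_n(\cdot,0)\|_{L^2(\Omega_2)}\le C$ from $P_n(x_1,x_2,0)\le\frac12(x_1^2+x_2^2)+\frac1\delta h_n$ and $\ge\frac12(x_1^2+x_2^2)$.

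\medskip

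\emph{Step 3: local $L^2$ bound on $P_n$ and the $R_n$ pointwise bounds.} For $x_3\in[0,K]$, $\frac12(x_1^2+x_2^2)\le P_n(x_1,x_2,x_3)\le P_n(x_1,x_2,0)$ is false in general ($P_n$ need not be monotone below $\frac12(x_1^2+x_2^2)$); instead use $\partial_{x_3}P_n\in[-\frac1\delta,-\delta]$: $P_n(x_1,x_2,x_3)=P_n(x_1,x_2,0)+\int_0^{x_3}\partial_{x_3}P_n\in[P_n(x_1,x_2,0)-\frac{K}{\delta},\,P_n(x_1,x_2,0)-\delta x_3]$, so $|P_n(x_1,x_2,x_3)|\le |P_n(x_1,x_2,0)|+\frac K\delta$, and integrating over $\Omega_2\times[0,K]$ gives $\|P_n\|_{L^2(\Omega_2\times[0,K])}\le \sqrt K\big(\|P_n(\cdot,0)\|_{L^2(\Omega_2)}+\frac{K}{\delta}|\Omega_2|^{1/2}\big)=:C_K$. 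Finally the explicit $R_n$ bound: the upper bound $R_n(y)\le\frac12(y_1^2+y_2^2)$ is property (ii); for the lower bound, $R_n(y)\ge x\cdot y-P_n(x)$ for all $x\in\Omega_\infty$; taking $x=(x_1,x_2,0)$ and minimizing over $\Omega_2$, and using $\inf_{\Omega_2}P_n(\cdot,0)\le\frac{1}{\mathcal L^2(\Omega_2)}\int_{\Omega_2}P_n(\cdot,0)=:\frac{C_0}{\mathcal L^2(\Omega_2)}\cdot\frac12$ — adjusting constants so that $C_0$ denotes the relevant $L^1$ bound — one gets $R_n(y)\ge -\max_{\Omega_2}|x|\,|y| - \frac{2C_0}{\mathcal L^2(\Omega_2)}$; integrating $|R_n|\le \max(\frac12(y_1^2+y_2^2),\ \max_{\Omega_2}|x||y|+\frac{2C_0}{\mathcal L^2(\Omega_2)})$ against $\nu\in\mathcal P_2$ yields $\|R_n\|_{L^1(d\nu)}\le C(M_2(\nu),\delta,\Omega_2)$.

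\medskip

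\emph{Main obstacle.} The delicate point is precisely that $\Lambda$ is now unbounded, so the gradient bounds $|\nabla_2 P_n|\le C(\Lambda)$ used freely in Subsection 2.1 are unavailable; every bound must be closed through the $L^2$ norms and the finite second moment $M_2(\nu)$. In particular Step 2 requires running the $L^2$ estimate for $h_n$ and the lower bound for $R_n$ as a single self-improving inequality (of the form $\|h_n\|_{L^2}^2\lesssim 1+\|h_n\|_{L^2}$) rather than in sequence; getting the dependencies to close — and in the right order, with $J_\nu(P_n,R_n)\ge0$ as the only quantitative input beyond the normalizations of Lemma 2.28 — is the heart of the proof. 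Everything after that ($P_n$ local bound, $R_n$ pointwise bound, $\|R_n\|_{L^1(d\nu)}$) is routine interpolation along the $x_3$-direction using $\partial_{x_3}P_n\in[-\frac1\delta,-\delta]$ and Jensen's inequality for the convex function $P_n(\cdot,0)$ on $\Omega_2$.
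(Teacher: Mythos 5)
Your proposal is correct: all the estimates you use are available in this setting (the free–boundary relation $P_n(x_1,x_2,h_n)=\frac12(x_1^2+x_2^2)$ when $h_n>0$ and $P_n(\cdot,0)\le\frac12(x_1^2+x_2^2)$ when $h_n=0$, the slope constraint $-\frac1\delta\le\partial_{x_3}P_n\le-\delta$ from conjugacy over $\Omega_\infty,\Lambda$, and the normalizations $P_n(\cdot,0)\ge\frac12(x_1^2+x_2^2)$, $R_n\le\frac12(y_1^2+y_2^2)$), and your chain closes with the right dependence $C=C(M_2(\nu),\delta,\Omega_2)$. The route, however, differs from the paper's in how the circularity is broken. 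The paper never needs your quadratic absorption: it first tests the infimum with the \emph{constant} height $h\equiv\frac{2}{\mathcal L^2(\Omega_2)}$ and integrates the conjugacy inequality $P_n(x_1,x_2,0)+R_n(y)\ge x_1y_1+x_2y_2$ against $\chi_{\Omega_2}\otimes\nu$, which cancels $\int R_n\,d\nu$ and yields the $L^1$ bound $\int_{\Omega_2}P_n(\cdot,0)\le C_0$ outright; from this it gets the pointwise lower bound on $R_n$ (choosing a base point where $P_n(\cdot,0)\le\frac{2C_0}{\mathcal L^2(\Omega_2)}$), hence the bound $C_1$ on $\int[\frac12(y_1^2+y_2^2)-R_n]\,d\nu$, and only then uses the true minimizer $h_n$ together with the $\delta$-strong convexity of $\Pi_{P_n}$ (exactly your $\Pi_{P_n}(h_n)\le-\frac\delta2 h_n^2$) to get $\|h_n\|_{L^2}^2\le\frac{2C_1}{\delta}$, finishing with $\frac12(x_1^2+x_2^2)\le P_n(\cdot,0)\le\frac12(x_1^2+x_2^2)+\frac1\delta h_n$ and the $x_3$-interpolation, as you do. You instead work with $h_n$ from the start, feed $P_n(\cdot,0)\le\frac12(x_1^2+x_2^2)+\frac1\delta h_n$ into the averaged conjugacy bound for $-R_n$, and close the self-improving inequality $\frac\delta2\|h_n\|_{L^2}^2\le C(1+\|h_n\|_{L^2})$; this is a legitimate alternative whose only cost is the absorption step, and whose benefit is that no auxiliary test height is needed. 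Two small remarks: the meandering in your Step 1 (sup versus average of $P_n(\cdot,0)$) is indeed resolved correctly once you average the inequality $-R_n(y)\le -x_1y_1-x_2y_2+P_n(x_1,x_2,0)$ over $\Omega_2$ rather than taking a supremum; and your constant in the lower bound for $R_n$ differs from the paper's $\frac{2C_0}{\mathcal L^2(\Omega_2)}$ only by relabeling, which is fine since the lemma only prescribes the dependence of $C_0$.
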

\begin{proof}
We start with the following estimate
$$0\leq J_{\nu}(P_n,R_n)=\int[\frac{1}{2}(y_1^2+y_2^2)-R_n(y)]d\nu(y)+\inf_{h\geq0}\int[\frac{1}{2}(x_1^2+x_2^2)-P_n(x)]\sigma_h(x)dx$$
By taking $h\equiv\frac{2}{\mathcal{L}^2(\Omega_2)}$, and noticing that $-\frac{1}{\delta}\leq\partial_{x_3}P_n\leq-\delta$, we see
$$above \leq\int[\frac{1}{2}(y_1^2+y_2^2)-R_n(y)]d\nu(y)+\int_{\Omega_2\times[0,\frac{2}{\mathcal{L}^2(\Omega_2)}]}[\frac{1}{2}(x_1^2+x_2^2)-P_n(x)]dx$$
$$\leq\int[\frac{1}{2}(y_2^2+y_2^2)-R_n(y)]d\nu(y)+\int_{\Omega_2\times[0,\frac{2}{\mathcal{L}^2(\Omega_2)}]}[\frac{1}{2}(x_1^2+x_2^2)-P_n(x_1,x_2,0)+\frac{1}{\delta}x_3]dx$$
\begin{equation}
\leq\frac{1}{2}[M_2(\nu)+2\max_{\Omega_2}|x|^2+\frac{4}{\delta\mathcal{L}^2(\Omega_2)}]-\int R_n(y)d\nu(y)-\frac{2}{\mathcal{L}^2(\Omega_2)}\int_{\Omega_2}P_n(x_1,x_2,0)dx_1dx_2
\end{equation}
Now we notice
$$P_n(x_1,x_2,0)+R_n(y)\geq x_1y_1+x_2y_2\geq-\frac{1}{2}(x_1^2+x_2^2+y_2^2+y_2^2)$$
Therefore integrating above inequality against $\chi_{\Omega_2}\times d\nu$, we obtain
$$\int_{\Omega_2}P_n(x_1,x_2,0)dx_1dx_2+\mathcal{L}^2(\Omega_2)\int R_n(y)d\nu(y)\geq-\frac{1}{2}M_2(\nu)\mathcal{L}^2(\Omega_2)-\frac{1}{2}\mathcal{L}^2(\Omega_2)\max_{\Omega_2}|x|^2$$
So we deduce from (2.39)
$$0\leq\frac{1}{2}[M_2(\nu)+2\max_{\Omega_2}|x|^2+\frac{4}{\delta\mathcal{L}^2(\Omega_2)}]+\frac{1}{2}(M_2(\nu)+\max_{\Omega_2}|x|^2)-\frac{1}{\mathcal{L}^2(\Omega_2)}\int_{\Omega_2}P_n(x_1,x_2,0)$$
That is
\begin{equation}
\int_{\Omega_2}P_n(x_1,x_2,0)\leq\frac{\mathcal{L}^2(\Omega_2)}{2}[M_2(\nu)+2\max_{\Omega_2}|x|^2+\frac{4}{\delta\mathcal{L}^2(\Omega_2)}]+\frac{(\mathcal{L}^2(\Omega_2))}{2}(M_2(\nu)+\max_{\Omega_2}|x|^2):=C_0
\end{equation}
Next we want to derive a bound for $R_n$. By the $L^1$ bound derived above, we can find $(x_1^n,x_2^n)\in\Omega_2$,
such that $P_n(x_1^n,x_2^n,0)\leq\frac{2C_0}{\mathcal{L}^2(\Omega_2)}$,so
$$R_n(y)=\sup_{x\in\Omega_{\infty}}(x\cdot y-P_n(x))\geq x_1^ny_1+x_2^ny_2-P_n(x_1^n.x_2^n,0)$$
$$\geq-\max_{\Omega_2}|x||y|-\frac{2C_0}{\mathcal{L}^2(\Omega_2)}$$
To summarize
$$-\max_{\Omega_2}|x||y|-\frac{2C_0}{\mathcal{L}^2(\Omega_2)}\leq R_n(y)\leq\frac{1}{2}(y_1^2+y_2^2)$$
So we have
$$\int[\frac{1}{2}(y_1^2+y_2^2)-R_n(y)]d\nu(y)\leq\int[\frac{1}{2}(y_1^2+y_2^2)+\max_{\Omega_2}|x||y|+\frac{2C_0}{\mathcal{L}^2(\Omega_2)}]d\nu(y)$$
$$\leq M_2(\nu)+\max_{\Omega_2}|x|^2+\frac{2C_0}{\mathcal{L}^2(\Omega_2)}:=C_1$$
Next we proceed to derive an $L^2$ bound for $P_n(\cdot,0)$. Put $h_n=h_{P_n}$. This is well defined since $\partial_{x_3}P_n\leq-\delta$. Thus
$$-C_1\leq\inf_{h\geq0}\int[\frac{1}{2}(x_1^2+x_2^2)-P_n(x)]\sigma_h(x)dx=\int[\frac{1}{2}(x_1^2+x_2^2)-P_n(x)]\sigma_{h_n}(x)dx$$
To proceed further, we notice that since $P_n(x_1,x_2,h_n(x_1,x_2))=\frac{1}{2}(x_1^2+x_2^2)$, and for $0\leq x_3\leq h_n$, one has
$$\frac{1}{2}(x_1^2+x_2^2)\leq P_n(x)-\delta(h_n(x_1,x_2)-x_3)$$
So we conclude
$$-C_1\leq-\delta\int[h_n(x_1,x_2)-x_3]\sigma_{h_n}(x)dx=-\frac{\delta}{2}\int_{\Omega_2}h_n^2$$
That is
$$\int_{\Omega_2}h_n^2\leq\frac{2C_1}{\delta}:=C_2$$
Finally we notice
$$\frac{1}{2}(x_1^2+x_2^2)\leq P_n(x_1,x_2,0)\leq\frac{1}{\delta}h_n+P_n(x_1,x_2,h_n)=\frac{1}{\delta}h_n+\frac{1}{2}(x_1^2+x_2^2)$$
So we have $||P_n(\cdot,0)||_{L^2(\Omega_2)}\leq C_3$.

Since $-\frac{1}{\delta}x_3+P_n(x_1,x_2,0)\leq P_n(x)\leq P_n(x_1,x_2,0)-\delta x_3$, we have $||P_n||_{L^2(\Omega_2\times[0,K])}\leq C(K,M_2(\nu),\Omega_2)\,\,\forall\,\,n\geq1\,\,K\geq0$
\end{proof}
The bounds derived in previous lemma allow us to pass to limit and prove the existence of at least one maximizer. By Lemma 5.5 in the appendix, we know $P_n(\cdot,0)$ is uniformly bounded on each compact set $F$ of $\Omega_2$ and notice $-\frac{1}{\delta}\leq\partial_{x_3}P_n\leq-\delta$,$P_n$are uniformly bounded on each set of the form $F\times[0,K]$. Also we know they are equicontinuous on such a set and we can use Arzela-Ascoli to extract a subsequence which converges uniformly on each compact set of the form $F\times[0,K]$.

Therefore, we can take a subsequence(not relabeled)$P_n,R_n,h_n$, such that

$P_n\rightarrow P$ uniformly on each compact subset of $\Omega_{\infty}$ and in $L^r(\Omega_2\times[0,K])$, for any $r\in[1,2)$ and $K\geq0$.

$h_n\rightarrow \bar{h}$ locally uniformly in $\Omega_2$ and in $L^r(\Omega_2)$, for any $r\in[1,2)$\

$R_n\rightarrow R$ locally uniformly in $\Lambda$ and in $L^1(d\nu)$(by dominated convergence)

Since for each n, we have $P_n(x_1,x_2,h_n)=\frac{1}{2}(x_1^2+x_2^2)$, it`s easy to see in the limit $P(x_1,x_2,\bar{h})=\frac{1}{2}(x_1^2+x_2^2)$. This is by uniform convergence of $h_n,P_n$ on compact sets. So $\bar{h}=h_P$.

We now need to show the limit $(P,R)$ is a maximizer.
\begin{lem}
Let $P,R,\bar{h}$ be as in previous paragraph, then we have
$$J_{\nu}(P,R)\geq\lim_{n\rightarrow\infty}\sup J_{\nu}(P_n,R_n)$$
in particular, $(P,R)$ is a maximizer. Besides,
$$P(x_1,x_2,0)\geq\frac{1}{2}(x_1^2+x_2^2)$$
$$P(x)=\sup_{y\in\Lambda}(x\cdot y-R(y))$$
\end{lem}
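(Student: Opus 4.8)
The plan is to establish the three claims by carefully passing to the limit in the maximizing sequence, handling separately the two pieces of the functional $J_\nu$. First I would treat the free-boundary (infimum over $h$) term. Since $P_n \to P$ locally uniformly and $-\frac{1}{\delta}\leq \partial_{x_3}P_n \leq -\delta$ uniformly, Lemma 2.8 (its unbounded-domain analogue, which applies because the convergence is uniform on each slab $\Omega_2\times[0,K]$) gives $h_{P_n}=h_n \to h_P = \bar h$ uniformly on $\Omega_2$; combined with the uniform $L^2$ bound on $h_n$ from Lemma 2.30 and with $h_n \to \bar h$ in $L^r(\Omega_2)$, $r<2$, one gets $\int_{\Omega_{h_n}}[\tfrac12(x_1^2+x_2^2)-P_n]\,dx \to \int_{\Omega_{\bar h}}[\tfrac12(x_1^2+x_2^2)-P]\,dx$ — here the tail in $x_3$ is controlled uniformly because the integrand on $\Omega_{h_n}$ is $\geq \tfrac12(x_1^2+x_2^2) - \delta(h_n-x_3) \geq$ a fixed quantity and the domains have uniformly bounded measure. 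Since $\bar h = h_P$ realizes the infimum for $P$, this shows the second term converges to $\inf_{h\geq 0}\int_{\Omega_h}[\tfrac12(x_1^2+x_2^2)-P]\,dx$.

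Next I would handle the dual-mass term $\int[\tfrac12(y_1^2+y_2^2)-R_n(y)]\,d\nu(y)$. From Lemma 2.30 we have the uniform two-sided bound $-\max_{\Omega_2}|x|\,|y| - \tfrac{2C_0}{\mathcal L^2(\Omega_2)} \leq R_n(y) \leq \tfrac12(y_1^2+y_2^2)$, and $R_n \to R$ locally uniformly on $\Lambda$. Since $\nu \in \mathcal P_2$, the dominating function $\tfrac12(y_1^2+y_2^2)+\max_{\Omega_2}|x|\,|y| + \tfrac{2C_0}{\mathcal L^2(\Omega_2)}$ is $\nu$-integrable, so dominated convergence gives $\int R_n\,d\nu \to \int R\,d\nu$, hence $\int[\tfrac12(y_1^2+y_2^2)-R_n]\,d\nu \to \int[\tfrac12(y_1^2+y_2^2)-R]\,d\nu$. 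Adding the two limits, $J_\nu(P_n,R_n)\to J_\nu(P,R)$; in particular $J_\nu(P,R)\geq \limsup_n J_\nu(P_n,R_n) = \sup_{\mathcal N_\nu} J_\nu$, so $(P,R)$ is a maximizer. (One must also check $(P,R)\in\mathcal N_\nu$: the pointwise inequality $P(x)+R(y)\geq x\cdot y$ passes to the limit, and the integrability conditions follow from the bounds of Lemma 2.30 together with Fatou.)

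For the remaining two assertions: $P(x_1,x_2,0)\geq \tfrac12(x_1^2+x_2^2)$ follows immediately by passing to the limit in $P_n(x_1,x_2,0)\geq\tfrac12(x_1^2+x_2^2)$ (property (ii) of Lemma 2.29), using pointwise convergence. For $P(x)=\sup_{y\in\Lambda}(x\cdot y - R(y))$, I would argue that double convexification cannot increase beyond $P$: since $P_n$ is already convex conjugate to $R_n$ over $\Omega_\infty,\Lambda$, and convergence is locally uniform, the limit $P$ satisfies $P(x)\leq \sup_{y}(x\cdot y - R(y))$ automatically from $P_n(x) = \sup_y(x\cdot y - R_n(y)) \to$; conversely, if $\tilde P(x):=\sup_{y\in\Lambda}(x\cdot y-R(y)) \gneq P$ on a set of positive measure, then replacing $P$ by $\tilde P$ would not decrease either term of $J_\nu$ (the dual-mass term is untouched, and for the infimum term $\tilde P\geq P$ with $\partial_{x_3}\tilde P$ still in $[-\tfrac1\delta,-\delta]$ after the correction argument of Lemma 2.29), contradicting maximality — so equality holds.

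The main obstacle I expect is the tail control in the $x_3$-integral when passing to the limit in the free-boundary term: because $\Omega_\infty$ is unbounded in $x_3$ and $P_n$ is only known to converge locally uniformly, one needs the uniform bound $h_n \in L^2(\Omega_2)$ from Lemma 2.30 together with the uniform upper bound $\bar h_n \to \bar h$ to ensure no mass escapes to $x_3=\infty$; this is exactly where the $\mathcal P_2$ hypothesis on $\nu$ (via the $M_2(\nu)$-dependence of the constants) is essential, and it is the only place where the argument differs substantively from the bounded-support case of Subsection 2.1.
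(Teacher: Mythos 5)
Your treatment of the dual-mass term is fine (dominated convergence using the two-sided bound on $R_n$, exactly as in the paper), but there are two genuine gaps. First, for the free-boundary term you claim full convergence $\int_{\Omega_{h_n}}[\tfrac12(x_1^2+x_2^2)-P_n]\,dx \to \int_{\Omega_{\bar h}}[\tfrac12(x_1^2+x_2^2)-P]\,dx$, justified by saying the integrand is bounded below by ``a fixed quantity.'' That is false in this unbounded-support setting: on $\Omega_{h_n}$ one only has $\tfrac12(x_1^2+x_2^2)-P_n(x)\geq -\tfrac1\delta\,(h_n-x_3)$, and $h_n$ is controlled only in $L^2(\Omega_2)$ (no uniform $L^\infty$ or Lipschitz bound exists here, unlike Subsection 2.1), so the inner $x_3$-integral is bounded only by $\tfrac{1}{2\delta}h_n^2$, which is merely bounded in $L^1$ and need not be uniformly integrable; concentration of $h_n^2$ near $\partial\Omega_2$ is not excluded, so two-sided convergence of this term is unproven (indeed the paper only recovers $L^2$-convergence of $h^n$, i.e.\ convergence of exactly this kind of quantity, under the extra hypothesis $W_2(\nu^n,\nu)\to0$ in Theorem 2.53(iv)). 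Fortunately the lemma only needs the one-sided inequality $\int[\tfrac12(x_1^2+x_2^2)-P]\sigma_{\bar h}\geq \limsup_n\int[\tfrac12(x_1^2+x_2^2)-P_n]\sigma_{h_n}$, which follows from Fatou applied to the nonnegative functions $P_n\sigma_{h_n}$ (using $P_n\geq\tfrac12(x_1^2+x_2^2)$ on $\Omega_{h_n}$) together with the $L^1$ convergence of $h_n$ — this is the paper's argument, and your proposal neither states nor uses it.

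Second, your proof of $P(x)=\sup_{y\in\Lambda}(x\cdot y-R(y))$ has the two directions interchanged, and the argument you give for each is wrong. The inequality $P\geq\sup_y(x\cdot y-R(y))$ is the \emph{easy} one: it follows by passing $P_n(x)+R_n(y)\geq x\cdot y$ to the pointwise limit (you even note this when checking admissibility, but do not use it here). The inequality $P\leq\sup_y(x\cdot y-R(y))$, which you call ``automatic,'' is precisely the direction that requires work: since $\Lambda$ is unbounded and $R_n\to R$ only locally uniformly, one cannot pass to the limit in $P_n(x)=\sup_{y\in\Lambda}(x\cdot y-R_n(y))$ without knowing that near-maximizing $y$'s stay in a fixed compact set; the paper does this by showing $\partial P_n(K)\subset F\Subset\Lambda$ uniformly in $n$ (via the appendix Lemma 5.5 on local gradient bounds for $L^1$-bounded convex functions) and then using uniform convergence of $R_n$ on $F$. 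Your substitute variational argument for the other direction also fails on signs: replacing $P$ by the larger function $\tilde P=\sup_y(x\cdot y-R(y))$ makes the term $\inf_h\int[\tfrac12(x_1^2+x_2^2)-P]\sigma_h$ smaller (more negative), so $J_\nu(\tilde P,R)\leq J_\nu(P,R)$ and no contradiction with maximality arises. The missing compactness-of-subgradients step is the essential ingredient here and must be supplied.
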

\begin{proof}
First since $R_n$ converges in $L^1(d\nu)$, we have
$$\int[\frac{1}{2}(y_1^2+y_2^2)-R_n(y)]d\nu(y)\rightarrow\int[\frac{1}{2}(y_1^2+y_2^2)-R(y)]d\nu(y)$$
Next observe that on $\Omega_{h_n}$, we have $P_n(x)\geq\frac{1}{2}(x_1^2+x_2^2)$, so $0\leq P_n(x)\sigma_{h_n}(x)\rightarrow P(x)\sigma_{\bar{h}}(x)$ pointwise. So we can apply Fatou to see
$$\int P(x)\sigma_{\bar{h}}(x)\leq\lim_{n\rightarrow\infty}\inf\int P_n(x)\sigma_{h_n}(x)dx$$
Combining the $L^1$ convergence of $h_n$, we have
$$\int[\frac{1}{2}(x_1^2+x_2^2)-P(x)]\sigma_{\bar{h}}(x)dx\geq\lim_{n\rightarrow\infty}\sup\int[\frac{1}{2}(x_1^2+x_2^2)-P_n(x)]\sigma_{h_n}(x)dx$$
Therefore we see
$$J_{\nu}(P,R)\geq\lim_{n\rightarrow\infty}\sup J_{\nu}(P_n,R_n)$$
Since for each n, $P_n(x_1,x_2,0)\geq\frac{1}{2}(x_1^2+x_2^2)$, one easily sees this is preserved in the limit.

To see $P(x)=\sup_{y\in\Lambda}(x\cdot y-R(y))$, we first observe for each n,$P_n(x)=\sup_{y\in\Lambda}(x\cdot y-R_n(y))$
So $P_n(x)+R_n(y)\geq x\cdot y$, and the equality is achieved if $y\in\partial P_n(x)$. In the limit we have $P(x)+R(y)\geq x\cdot y$. Therefore
$$P(x)\geq\sup_{y\in\Lambda}(x\cdot y-R(y))$$

To see the reverse inequality, fix $x^0\in\Omega_{\infty}$. Find a compact set K such that $x^0\in K\subset\Omega_{\infty}$. Now $\partial P_n(K)$ must be uniformly bounded in $\Lambda$, by Lemma 5.5 in the appendix, so we can assume for some F compact $\partial P_n(K)\subset F\subset\Lambda$, for any n. Given $\epsilon>0$, by uniform convergence in compact sets we can take $n=n_0$, such that
$$|P(x)-P_n(x)|\leq\epsilon\,\,\,\forall\,\,x\in K\,\,\,and\,\,\,|R_n(y)-R(y)|\leq\epsilon\,\,\,\forall \,\,y\in F$$
Let $p\in\partial P_n(x^0)\subset F$, we then have
$$P(x^0)\leq P_n(x^0)+\epsilon=x^0\cdot p-R_n(p)+\epsilon\leq x^0\cdot p-R(p)+2\epsilon$$
$$\leq\sup_{y\in\Lambda}(x^0\cdot y-R(y))+2\epsilon$$
\end{proof}

On the other hand, if we define $\hat{R}(y)=\sup_{x\in\Omega_{\infty}}(x\cdot y-P(x))$, then
$\hat{R}(y)\leq  R(y)\leq\frac{1}{2}(y_1^2+y_2^2)$, and so $J_{\nu}(P,R)\leq J_{\nu}(P,\hat{R})$ and $(P,\tilde{R})$ is still a maximizer. Also because of the bound (2.36), (2.37) this bound will also be satisfied by $P$, so we can conclude $R$ satisfies the bound (2.38).
To summarize,we get
\begin{cor}
There exists a maximizer $(P_0,R_0)$ of $J_{\nu}(P,R)$,such that

$(i)\,\,(P_0,R_0)$ convex conjugate over $\Omega_{\infty}$ and $\Lambda$

$(ii)\,\,P_0(x_1,x_2,0)\geq\frac{1}{2}(x_1^2+x_2^2)$

$(iii)\,\,$Put $h_0=h_{P_0}$,then the bound in (2.36),(2.37),(2.38) hold true for $P_0,h_0,R_0$
\end{cor}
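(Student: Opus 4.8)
The statement is essentially a summary of the three preceding lemmas together with the double-convexification step recorded just above, so the plan is to reassemble those ingredients while checking that none of the three listed properties is destroyed along the way.

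First I would fix a maximizing sequence $(P_n,R_n)\in\mathcal{N}_\nu$ for $\sup J_\nu$. By Lemma 2.34 I may assume each pair has properties $(ii),(iii)$ there, i.e.\ $P_n(x_1,x_2,0)\geq\frac{1}{2}(x_1^2+x_2^2)$, $R_n(y)\leq\frac{1}{2}(y_1^2+y_2^2)$, and $(P_n,R_n)$ convex conjugate over $\Omega_\infty,\Lambda$; by (2.33) I may also assume $J_\nu(P_n,R_n)\geq0$ for all $n$. Lemma 2.35 then supplies the a priori bounds (2.36)--(2.38): $\|h_n\|_{L^2(\Omega_2)}$, $\|P_n(\cdot,0)\|_{L^2(\Omega_2)}$ and $\|P_n\|_{L^2(\Omega_2\times[0,K])}$ are bounded uniformly in $n$, and $-\max_{\Omega_2}|x||y|-\frac{2C_0}{\mathcal{L}^2(\Omega_2)}\leq R_n(y)\leq\frac{1}{2}(y_1^2+y_2^2)$.

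Next I would extract a limit. Using $-\frac{1}{\delta}\leq\partial_{x_3}P_n\leq-\delta$, the inclusion $\partial P_n(\Omega_\infty)\subset\Lambda$ (so $\nabla_2 P_n$ is bounded on compact sets), the local $L^\infty$ bound for $P_n(\cdot,0)$ from Lemma 5.5 in the appendix, and (2.36)--(2.38), the family $\{P_n\}$ is uniformly bounded and equicontinuous on every set $F\times[0,K]$ with $F$ a compact subset of $\Omega_2$, and $\{h_n\}$ is locally uniformly bounded and equi-Lipschitz on $\Omega_2$. An Arzela--Ascoli and diagonal argument produces a subsequence along which $P_n\to P$ locally uniformly on $\Omega_\infty$ (and in $L^r(\Omega_2\times[0,K])$, $r<2$), $h_n\to\bar h$ locally uniformly on $\Omega_2$ (and in $L^r(\Omega_2)$, $r<2$), and $R_n\to R$ locally uniformly on $\Lambda$ and in $L^1(d\nu)$ by dominated convergence against the bound (2.38). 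Passing to the limit in $P_n(x_1,x_2,h_n)=\frac{1}{2}(x_1^2+x_2^2)$ and invoking Remark 2.4 gives $\bar h=h_P$, and Lemma 2.41 then says that $(P,R)$ is a maximizer with $P(x_1,x_2,0)\geq\frac{1}{2}(x_1^2+x_2^2)$ and $P(x)=\sup_{y\in\Lambda}(x\cdot y-R(y))$.

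Finally I would pass to full convex conjugacy and verify $(iii)$. Put $\hat R(y)=\sup_{x\in\Omega_\infty}(x\cdot y-P(x))$. Then $\hat R\leq R\leq\frac{1}{2}(y_1^2+y_2^2)$, which gives on one hand $J_\nu(P,\hat R)\geq J_\nu(P,R)=\sup J_\nu$, so $(P,\hat R)$ is again a maximizer, and on the other hand, combined with $P(x)=\sup_{y\in\Lambda}(x\cdot y-R(y))$, that $P(x)=\sup_{y\in\Lambda}(x\cdot y-\hat R(y))$; hence $(P,\hat R)$ are convex conjugate over $\Omega_\infty,\Lambda$, which is $(i)$. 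Property $(ii)$ is already noted. For $(iii)$: the $L^2$ bounds (2.36), (2.37) pass to $P$ and $h_P$ by Fatou applied to the pointwise a.e.\ limits, and the lower bound in (2.38) for $\hat R$ follows exactly as in the proof of Lemma 2.35 --- choose $(x_1^0,x_2^0)\in\Omega_2$ with $P(x_1^0,x_2^0,0)\leq\frac{2C_0}{\mathcal{L}^2(\Omega_2)}$ and test the conjugacy inequality --- while the upper bound is just $\hat R\leq\frac{1}{2}(y_1^2+y_2^2)$. Setting $(P_0,R_0)=(P,\hat R)$ and $h_0=h_{P_0}=h_P$ finishes the proof.

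The corollary itself contains no real obstacle; the genuine difficulty was already handled in Lemmas 2.34--2.41 and stems from $\Lambda=\mathbf{R}^2\times[-\frac{1}{\delta},-\delta]$ being unbounded --- none of $P_n$, $\nabla_2P_n$, $R_n$ is globally bounded, so one must work with compact exhaustions, control the tail of the $R$-integral by dominated convergence against (2.38), and pass to the limit in the sign-indefinite-looking term $-\int P_n\sigma_{h_n}$ by Fatou, which is legitimate precisely because $P_n\geq0$ on $\Omega_{h_n}$. Within the corollary the only point needing care is that replacing $R$ by $\hat R$ preserves $(ii)$ and $(iii)$, which it does.
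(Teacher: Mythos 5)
Your proposal is correct and follows essentially the same route as the paper: a maximizing sequence normalized via Lemma 2.34, the a priori bounds of Lemma 2.35, Arzela--Ascoli extraction on compact exhaustions, Lemma 2.41 for maximality of the limit with $P(x_1,x_2,0)\geq\frac{1}{2}(x_1^2+x_2^2)$ and $P=R^{*}$ on $\Omega_\infty$, and finally the replacement of $R$ by $\hat R(y)=\sup_{x\in\Omega_\infty}(x\cdot y-P(x))$ to upgrade to full convex conjugacy while preserving the bounds (2.36)--(2.38). Your explicit verification that the conjugacy and the bounds survive the passage to $\hat R$ is exactly what the paper does implicitly, so there is nothing to add.
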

But the most useful properties will be the following
\begin{equation}
P(x)=\sup_{y\in\Lambda}(x\cdot y-R(y))\,\,\,x\in\Omega_{\infty}
\end{equation}
and
\begin{equation}
P(x_1,x_2,0)\geq\frac{1}{2}(x_1^2+x_2^2)
\end{equation}
The first condition ensures $-\frac{1}{\delta}\leq\partial_{x_3}P\leq-\delta$ so that $h_P$ is a well-defined function on $\Omega_2$. The second condition ensures $P(x_1,x_2,h)=\frac{1}{2}(x_1^2+x_2^2)$. Besides they can be preserved in the limit.

As before, we can prove the following property of maximizers. The argument is the same as Lemma 2.11, so we will omit the proof.
\begin{lem}
Let $(P,R)\in\mathcal{N}$ be a maximizer of $J_{\nu}(P,R)$ and such that (2.43),(2.44) are satisfied. Let $h(x)=h_P(x)$, then $\nabla P_{\sharp}\sigma_h=\nu$. In particular, $\int_{\Omega_2}h=1$
\end{lem}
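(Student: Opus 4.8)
The plan is to mimic the proof of Lemma 2.11 (the bounded-support case), which was a first-variation argument against an arbitrary bounded continuous perturbation of $R$. I would fix $g\in C_b(\Lambda)$ and a small $\delta>0$ (here I should rename the perturbation parameter to avoid clash with the fixed $\delta$ in $\Lambda=\mathbf{R}^2\times[-\frac1\delta,-\delta]$; call it $\tau$), and set $R_\tau(y)=R(y)+\tau g(y)$, $P_\tau(x)=\sup_{y\in\Lambda}(x\cdot y-R_\tau(y))$. Since $|R_\tau-R|\le\tau\|g\|_\infty$ uniformly on $\Lambda$, one has $|P_\tau-P|\le\tau\|g\|_\infty$ uniformly on $\Omega_\infty$; moreover $P_\tau$ still satisfies $-\frac1\delta\le\partial_{x_3}P_\tau\le-\delta$ and $P_\tau(x_1,x_2,0)\ge\frac12(x_1^2+x_2^2)$ is \emph{not} automatic — but $h_{P_\tau}=h_{P_\tau}$ is well-defined from the $x_3$-derivative bound, and that is all that is needed for the functional $J_\nu$ via Remark 2.7.

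Next I would use maximality: $J_\nu(P_\tau,R_\tau)\le J_\nu(P,R)$, which rearranges (exactly as in Lemma 2.11) to
$$-\tau\int_\Lambda g\,d\nu\le \inf_{h\ge0}\int_{\Omega_\infty}\big[\tfrac12(x_1^2+x_2^2)-P(x)\big]\sigma_h\,dx-\inf_{h\ge0}\int_{\Omega_\infty}\big[\tfrac12(x_1^2+x_2^2)-P_\tau(x)\big]\sigma_h\,dx.$$
The second infimum is attained at $h_\tau:=h_{P_\tau}$, and using $h_\tau$ as a (non-optimal) competitor in the first infimum, the right side is bounded above by $\int_{\Omega_2}dx_1dx_2\int_0^{h_\tau}(P_\tau-P)\,dx_3$. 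By Lemma 2.8 applied on every slab $\Omega_2\times[0,K]$ (the $P_\tau$ converge to $P$ uniformly on compacts as $\tau\to0$), $h_\tau\to h$ locally uniformly on $\Omega_2$; in particular $h_\tau$ stays bounded on compact subsets, and since $\int_{\Omega_2}h_\tau\le$ a fixed constant (or one just integrates the pointwise bound), dominated convergence applies. At a point $x$ where $P$ is differentiable, if $y_\tau\in\bar\Lambda$ realizes $P_\tau(x)=x\cdot y_\tau-R_\tau(y_\tau)$ then $y_\tau\to\nabla P(x)$, and $-g(\nabla P(x))\le\frac{P_\tau(x)-P(x)}{\tau}\le-g(y_\tau)$. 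Dividing by $\tau$ and sending $\tau\to0$ gives $-\int_\Lambda g\,d\nu\le-\int_{\Omega_h}g(\nabla P(x))\,dx$, i.e. $\int g\,d\nu\ge\int g(\nabla P)\,\sigma_h\,dx$; replacing $g$ by $-g$ yields equality, hence $\nabla P_{\sharp}\sigma_h=\nu$. Taking $g\equiv1$ (admissible since it is bounded continuous) gives $\int_{\Omega_2}h=\int_{\Omega_h}1\,dx=\nu(\Lambda)=1$.

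The main obstacle, relative to the compact-support case, is that $\Omega_\infty$ and $\Lambda$ are now unbounded, so I must be careful that (a) $P_\tau\to P$ only \emph{locally} uniformly, which is why Lemma 2.8 must be invoked slab-by-slab and why the convergence $h_\tau\to h$ is only local; and (b) the dominated-convergence step for $\int_{\Omega_2}\int_0^{h_\tau}(P_\tau-P)$ needs a uniform-in-$\tau$ integrable majorant for $h_\tau$ — this follows because the argument of Lemma 2.5 (via the $L^2$ bound $\|h_{P_\tau}\|_{L^2(\Omega_2)}\le C$, or simply from $0\le h_\tau$ together with $\int_{\Omega_2}h_\tau$ controlled by the same computation that bounded $\|h_n\|_{L^2}$) gives such a bound uniformly for small $\tau$. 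Everything else is a verbatim repetition of Lemma 2.11, which is why the paper omits it; I would write only the two points above in detail and refer to Lemma 2.11 for the rest.
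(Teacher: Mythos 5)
Your argument is correct and is precisely the route the paper takes: the paper omits the proof of this lemma with the remark that it is the same first-variation argument as Lemma 2.11, and your proposal is that argument adapted to the unbounded setting (perturb $R$ by $\tau g$, reconvexify to get $P_\tau$, use maximality, sandwich $(P_\tau-P)/\tau$ between $-g(\nabla P(x))$ and $-g(y_\tau)$, pass to the limit). One cosmetic point: the uniform integrable majorant for $\sigma_{h_\tau}$ is obtained most directly from the quantitative estimate $|h_\tau-h|\leq \tau\|g\|_{\infty}/\delta$ implicit in the proof of Lemma 2.8 (strong convexity of $\Pi_P$ in $s$, using $|P_\tau-P|\leq\tau\|g\|_{\infty}$), rather than from the $L^2$-bound lemma, whose hypotheses (e.g. $P_\tau(x_1,x_2,0)\geq\frac{1}{2}(x_1^2+x_2^2)$ and a sign condition on $J_{\nu}(P_\tau,R_\tau)$) are, as you yourself note, not automatic for the perturbed pair.
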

Now we consider $E_{\nu}(h,\gamma)$. Similar to what we proved in previous subsection, we have the following.
\begin{lem}
Let $\nu\in\mathcal{P}_2(\mathbf{R}^3)$ with $supp\,\,\nu\subset\mathbf{R}^2\times[-\frac{1}{\delta},-\delta]$, then

$(i)\,\,E_{\nu}(h,\gamma)\geq J_{\nu}(P,R)$, for any $(h,\gamma)\in\mathcal{M}$ and $(P,R)\in\mathcal{N}$

$(ii)\,\,$Suppose $(P_0,R_0)$ be a maximizer of $J_{\nu}(P,R)$, and satisfies properties (2.43),(2.44), then $(i)$ takes equality if and only if $h=h_{P_0}$ and $\gamma=(id\times\nabla P_0)_{\sharp}\sigma_h$

$(iii)\,\,E_{\nu}(h,\gamma)$ has a unique minimizer $(h_0,\gamma_0)$ and there is a universal bound on h
\begin{equation}
||h_0||_{L^2(\Omega_2)}\leq C(M_2(\nu).\delta,\Omega_2)
\end{equation}

$(iv)\,\,$If $(P_0,R_0)$,$(P_1,R_1)$ are two maximizers of $J_{\nu}(P,R)$, both satisfy (2.43),(2.44), then we have $P_0=P_1$ on $\Omega_{h_0}\bigcup\{x_3=0\}$
\end{lem}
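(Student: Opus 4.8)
The plan is to transcribe the proofs of Theorem 2.18 and Corollary 2.19 almost verbatim to the present unbounded setting, with $J_{\nu}$ in place of the truncated $J_{\nu}^H$, using Corollary 2.42 and Lemma 2.45 as the analogues of Theorem 2.10 and Lemma 2.11, and with the $L^2$ bounds of Lemma 2.35 playing the role of the $L^{\infty}$ bounds of the bounded case. For $(i)$: given $(h,\gamma)\in\mathcal{M}$ and $(P,R)\in\mathcal{N}$, the constraint $P(x)+R(y)\geq x\cdot y$ gives $c(x,y)\geq\frac{1}{2}(x_1^2+x_2^2+y_1^2+y_2^2)-P(x)-R(y)$; moreover $c\geq0$ on $\Omega_{\infty}\times\Lambda$ since $-x_3y_3\geq\delta x_3\geq0$, so $E_{\nu}(h,\gamma)$ is well defined in $[0,\infty]$. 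Integrating against $\gamma$, using that the marginals are $\sigma_h$ and $\nu$ and the integrability built into $\mathcal{M},\mathcal{N}$, yields
$$E_{\nu}(h,\gamma)\geq\int_{\Lambda}[\tfrac{1}{2}(y_1^2+y_2^2)-R(y)]d\nu+\int_{\Omega_{\infty}}[\tfrac{1}{2}(x_1^2+x_2^2)-P(x)]\sigma_h\,dx\geq J_{\nu}(P,R),$$
the last step because the second integral dominates the infimum over $h\geq0$ defining $J_{\nu}$.

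For $(ii)$: fix a maximizer $(P_0,R_0)$ satisfying $(2.43),(2.44)$, and first note (as in Proposition 2.12, using that $|\nabla P_0|$ is locally bounded by Lemma 5.5) that $h_{P_0}$ is continuous, so $(h_{P_0},(id\times\nabla P_0)_{\sharp}\sigma_{h_{P_0}})\in\mathcal{M}$ by Lemma 2.45. On the graph of $\nabla P_0$ one has $\nabla P_0(x)\in\partial P_0(x)$, hence $P_0(x)+R_0(\nabla P_0(x))=x\cdot\nabla P_0(x)$, so the first inequality above is an equality; and since $h_{P_0}(x_1,x_2)$ is by definition the minimizer over $[0,\infty)$ of $s\mapsto\Pi_{P_0}(x_1,x_2,s)$, the second inequality is an equality too, giving $E_{\nu}(h_{P_0},\gamma)=J_{\nu}(P_0,R_0)$. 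Conversely, equality in $(i)$ forces both inequalities to be equalities: the second one, by the uniform convexity of $s\mapsto\Pi_{P_0}(x_1,x_2,s)$ (which holds because $\partial_{x_3}P_0\leq-\delta$), gives $h=h_{P_0}$; the first one forces $\gamma$ to be concentrated on $\{(x,y):y\in\partial P_0(x)\}$, and since the convex function $P_0$ is differentiable $\mathcal{L}^3$-a.e., hence $\sigma_h$-a.e., we conclude $\gamma=(id\times\nabla P_0)_{\sharp}\sigma_h$.

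For $(iii),(iv)$: by $(2.33)$, $\sup_{\mathcal{N}}J_{\nu}\geq0$; Corollary 2.42 provides a maximizer $(P_0,R_0)$ with $(2.43),(2.44)$, so $(i)$ together with the \emph{if} half of $(ii)$ shows that $(h_{P_0},(id\times\nabla P_0)_{\sharp}\sigma_{h_{P_0}})$ minimizes $E_{\nu}$ and that $\inf_{\mathcal{M}}E_{\nu}=\sup_{\mathcal{N}}J_{\nu}$. If $(h_0,\gamma_0)$ is any minimizer, the \emph{only if} half of $(ii)$, applied with this same $(P_0,R_0)$, gives $h_0=h_{P_0}$ and $\gamma_0=(id\times\nabla P_0)_{\sharp}\sigma_{h_{P_0}}$, proving uniqueness; the bound $(2.47)$ is then exactly $(2.36)$ of Lemma 2.35, valid for $h_0=h_{P_0}$ by Corollary 2.42$(iii)$. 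For $(iv)$, if $(P_0,R_0)$ and $(P_1,R_1)$ both satisfy $(2.43),(2.44)$, part $(ii)$ produces two minimizers of $E_{\nu}$, equal by $(iii)$; thus $h_{P_0}=h_{P_1}=:h_0$ and $(id\times\nabla P_0)_{\sharp}\sigma_{h_0}=(id\times\nabla P_1)_{\sharp}\sigma_{h_0}$, whence $\nabla P_0=\nabla P_1$ $\sigma_{h_0}$-a.e. On each connected component $U$ of $\{(x_1,x_2)\in\Omega_2:h_0(x_1,x_2)>0\}$ the open set $U_h:=\{(x_1,x_2,x_3):(x_1,x_2)\in U,\ 0<x_3<h_0(x_1,x_2)\}$ is connected, so $P_0-P_1\equiv C_U$ there; since $P_i(x_1,x_2,h_0(x_1,x_2))=\frac{1}{2}(x_1^2+x_2^2)$ on $U$ by Remark 2.4 and the $P_i$ are continuous in the interior of $\Omega_{\infty}$, letting $x_3\uparrow h_0$ gives $C_U=0$. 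Where $h_0(x_1,x_2)=0$, Remark 2.4 gives $P_i(x_1,x_2,0)\leq\frac{1}{2}(x_1^2+x_2^2)$ while $(2.44)$ gives the reverse, so $P_0=P_1=\frac{1}{2}(x_1^2+x_2^2)$ there as well; altogether $P_0=P_1$ on $\Omega_{h_0}\cup\{x_3=0\}$.

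\textbf{Main obstacle.} Relative to the bounded-support case there is no uniform $L^{\infty}$ control: one has only the $L^2$ bounds of Lemma 2.35 on $h_n$ and $P_n(\cdot,0)$ and the local boundedness of $\partial P_n$ on compacta from Lemma 5.5. The points needing care are therefore (a) verifying that all the integrals appearing above are finite, or correctly equal to $+\infty$, so that the duality chain in $(i)$ and the equality analysis in $(ii)$ are legitimate; and (b) establishing enough regularity of the maximizers — continuity of $h_{P_0}$, and continuity of $P_0,P_1$ up to the free surface $\{x_3=h_0\}$ and up to $\{x_3=0\}$, both consequences of local Lipschitz bounds — which is exactly what lets one evaluate $P_0-P_1$ on $\partial U_h$ and conclude $C_U=0$ in $(iv)$. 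Everything else is a routine repetition of the arguments in Section 2.1.
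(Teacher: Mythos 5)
Your proposal is correct and follows essentially the same route as the paper, whose proof of this lemma is precisely a pointer back to the arguments of Theorem 2.18 and Corollary 2.19, supplemented by the $L^2$ bound of Lemma 2.35 (via Corollary 2.42$(iii)$) and the observation that (2.44) together with Remark 2.4 forces $P(x_1,x_2,0)=\frac{1}{2}(x_1^2+x_2^2)$ wherever $h_0=0$ — exactly the three ingredients you use. Your write-up simply spells out in detail what the paper cites, including the duality chain for $(i)$, the equality analysis on the graph of $\nabla P_0$ for $(ii)$, and the connected-component argument plus the $\{x_3=0\}$ boundary case for $(iv)$.
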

\begin{proof}
The proof for Theorem 2.18 and Corollary 2.19 works here. The $L^2$ bound on h is a consequence of Lemma 2.35. Also recall that by Corollary 2.36 $(ii)$whenever $h(x_1,x_2)=0$ for some $(x_1,x_2)\in\Omega_2$, we have exactly $P(x_1,x_2,0)=\frac{1}{2}(x_1^2+x_2^2)$.
\end{proof}
Next we study the stability property of these optimizers under narrow convergence of $\nu$ which remain bounded in $\mathcal{P}_2(\mathbf{R}_3)$.

Suppose $\nu^n,\nu\in\mathcal{P}_2(\mathbf{R}^3)$, $supp\,\,\nu^n,\nu\subset\Lambda$, and $\nu^n\rightarrow\nu$ narrowly with $\sup_{n\geq1}M_2(\nu^n)<\infty$. Let $(h^n,\gamma^n)$ be the minimizer of $E_{\nu^n}(h,\gamma)$, $(P^n,R^n)$ a maximizer of $J_{\nu^n}(P,R)$ given by Corollary 2.42. Then $\sup_{n\geq1}W_2(\nu^n,\nu)<\infty$. Now noticing $(iii)$ of Corollary 2.36,  , we can take a subsequence such that

$P^n\rightarrow \tilde{P}$ uniformly on each compact subset of $\Omega_{\infty}$ and in $L^r(\Omega_2\times[0,K])$, for any $r\in[1,2)$ and $K\geq0$

$h^n\rightarrow \tilde{h}$ locally uniformly in $\Omega_2$ and in $L^r(\Omega_2)$,for any $r\in[1,2)$

$R^n\rightarrow \tilde{R}$ locally uniformly in $\Lambda$

Put $\gamma^n=(id\times\nabla P^n)_{\sharp}\sigma_{h^n}$ and $\tilde{\gamma}=(id\times \nabla \tilde{P})_{\sharp}\sigma_{\tilde{h}}$, then above convergence implies $\gamma^n\rightarrow\tilde{\gamma}$ narrowly. Indeed
for any $g\in C_b(\mathbf{R}^3\times\mathbf{R}^3)$, we have
$$\int g(x,y)d\gamma^n(x,y)=\int g(x,\nabla P^n(x))\sigma_{h^n}(x)dx\rightarrow\int g(x,\nabla \tilde{P}(x))\sigma_{\tilde{h}}(x)dx$$$$=\int g(x,y)d\tilde{\gamma}$$
The above convergence is due to $L^1$ convergence of $h^n$ and point-wise convergence of $\nabla P^n$

Similar as before, the following semi-continuity result holds.
\begin{lem}
Let $h^n,\gamma^n,\tilde{h},\tilde{\gamma}$ be defined as in previous paragraph, then we have
$$E_{\nu}(\tilde{h},\tilde{\gamma})\leq\lim_{n\rightarrow\infty}\inf E_{\nu^n}(h^n,\gamma^n)$$
\end{lem}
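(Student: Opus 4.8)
The plan is to exploit the explicit form $\gamma^n=(id\times\nabla P^n)_\sharp\sigma_{h^n}$, so that
$$E_{\nu^n}(h^n,\gamma^n)=\int_{\Omega_\infty}c(x,\nabla P^n(x))\,\sigma_{h^n}(x)\,dx=\int_{\Omega_2}dx_1dx_2\int_0^{h^n(x_1,x_2)}\!\!\Big[\tfrac12\big((x_1-\partial_1P^n)^2+(x_2-\partial_2P^n)^2\big)-x_3\,\partial_3P^n\Big]dx_3,$$
and likewise for $E_\nu(\tilde h,\tilde\gamma)$ with $\tilde P,\tilde h$. First I would record what the preceding paragraph already gives us: $P^n\to\tilde P$ uniformly on compacts (hence $\nabla P^n\to\nabla\tilde P$ a.e. by convexity), $h^n\to\tilde h$ locally uniformly and in $L^r(\Omega_2)$ for $r<2$, the uniform bounds $-\tfrac1\delta\le\partial_3P^n\le-\delta$ and $|\nabla_2P^n|$ bounded on compacts, the $L^2(\Omega_2)$ bound on $h^n$ from Corollary 2.42(iii), and that $\gamma^n\to\tilde\gamma$ narrowly. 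The goal is then to pass the integral to the limit, or more precisely to show it cannot jump down.

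Second, I would split the cost into its two pieces and treat them by opposite mechanisms. For the ``kinetic'' part $\tfrac12|(x_1,x_2)-\nabla_2P^n|^2\ge0$: this is nonnegative, so integrating against $\sigma_{h^n}(x)\,dx$ and using a.e. convergence of the integrand together with $h^n\to\tilde h$, Fatou's lemma yields
$$\int_{\Omega_\infty}\tfrac12|(x_1,x_2)-\nabla_2\tilde P|^2\,\sigma_{\tilde h}\,dx\ \le\ \liminf_{n\to\infty}\int_{\Omega_\infty}\tfrac12|(x_1,x_2)-\nabla_2P^n|^2\,\sigma_{h^n}\,dx,$$
exactly as in Lemma 2.40's Fatou step. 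For the ``potential'' part $-x_3\partial_3P^n$: since $-\tfrac1\delta\le\partial_3P^n\le-\delta$ we have $\delta x_3\le -x_3\partial_3P^n\le\tfrac1\delta x_3$, so on the slab $\Omega_2\times[0,\text{const}]$ this term is bounded by $\tfrac1\delta x_3\le\tfrac1\delta \tilde C$ with $\tilde C$ the universal height bound; combined with the fact that $\sigma_{h^n}$ is supported (uniformly in $n$, after the truncation/bound of Corollary 2.42(iii)) in $\Omega_2\times[0,\tilde C]$ and $\partial_3P^n\to\partial_3\tilde P$ a.e., dominated convergence gives $\int_{\Omega_\infty}(-x_3\partial_3P^n)\sigma_{h^n}\,dx\to\int_{\Omega_\infty}(-x_3\partial_3\tilde P)\sigma_{\tilde h}\,dx$. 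Here the $L^1$ (indeed $L^2$) convergence of $h^n$ handles the moving domain $\sigma_{h^n}\to\sigma_{\tilde h}$. Adding the $\liminf$ inequality for the nonnegative part to the limit identity for the bounded part yields $E_\nu(\tilde h,\tilde\gamma)\le\liminf_n E_{\nu^n}(h^n,\gamma^n)$.

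The main obstacle is controlling the domain of integration uniformly: one must be sure that $\operatorname{supp}\sigma_{h^n}$ stays inside a fixed compact slab $\Omega_2\times[0,\tilde C]$ independent of $n$, so that both the domination in the potential part and the passage $\sigma_{h^n}\to\sigma_{\tilde h}$ are legitimate. This is where I would invoke the uniform $L^2(\Omega_2)$ bound on $h^n$ of Corollary 2.42(iii) (equivalently Lemma 2.35) and, if needed, the same Lipschitz-type reasoning used for Corollary 2.13 adapted to the unbounded-support setting, to upgrade it to a uniform $L^\infty$ height bound; alternatively one argues directly that the truncation level in the definition of $h_{P^n}$ may be taken uniform. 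A secondary, purely technical point is the a.e.\ convergence $\nabla P^n\to\nabla\tilde P$ on the region $\{0<x_3<\tilde h\}$, which follows from uniform-on-compacts convergence of the convex functions $P^n$ (for fixed $(x_1,x_2)$, $x_3\mapsto P^n$ is convex, and $\nabla_2P^n$ converges a.e.\ likewise by convexity in the horizontal variables wherever $\tilde P$ is differentiable). Once the uniform slab is in hand the rest is the Fatou/dominated-convergence bookkeeping above.
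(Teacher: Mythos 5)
Your kinetic-part Fatou step is fine, but the treatment of the potential term contains a genuine gap. You invoke a uniform compact slab $\Omega_2\times[0,\tilde C]$, claiming $\sigma_{h^n}$ is supported there ``uniformly in $n$'' by Corollary 2.42(iii); but in this subsection that corollary (via Lemma 2.35) gives only an $L^2(\Omega_2)$ bound on $h^n$, not an $L^\infty$ bound, and the proposed upgrade by the Lipschitz reasoning of Proposition 2.12/Corollary 2.13 is not available: that argument uses $|\nabla_2 P|\leq \max_{\Lambda}(|y_1|+|y_2|)$, which is infinite here since $\Lambda=\mathbf{R}^2\times[-\frac1\delta,-\delta]$ is horizontally unbounded (indeed the whole point of subsection 2.2 is that $h$ is only in $L^2$). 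Moreover, the conclusion you want from that step is too strong: the exact convergence $\int(-x_3\partial_3P^n)\sigma_{h^n}\,dx\to\int(-x_3\partial_3\tilde P)\sigma_{\tilde h}\,dx$ amounts essentially to $\int_{\Omega_2}(h^n)^2\to\int_{\Omega_2}\tilde h^2$, which under mere narrow convergence with bounded second moments can fail (this is exactly why Theorem 2.53 needs the extra hypothesis $W_2(\nu^n,\nu)\to0$ to get $L^2$ convergence of $h^n$); only the liminf inequality is true in general.

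The good news is that the false step is unnecessary. On $\Omega_{h^n}$ one has $-x_3\partial_3P^n\geq\delta x_3\geq0$, so the \emph{entire} integrand $c(x,\nabla P^n(x))\sigma_{h^n}(x)$ is nonnegative and converges $\mathcal{L}^3$-a.e.\ to $c(x,\nabla\tilde P(x))\sigma_{\tilde h}(x)$ (using $h^n\to\tilde h$ locally uniformly and $\nabla P^n\to\nabla\tilde P$ a.e.), so a single application of Fatou gives $E_{\nu}(\tilde h,\tilde\gamma)\leq\liminf_n E_{\nu^n}(h^n,\gamma^n)$ with no domination needed. The paper's own proof is even more economical: it uses the already-established narrow convergence $\gamma^n\to\tilde\gamma$ together with the fact that the cost $c$ is continuous and nonnegative on $\Omega_{\infty}\times\Lambda$, so $\gamma\mapsto\int c\,d\gamma$ is lower semicontinuous under narrow convergence. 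Either of these routes closes your argument; as written, the dominated-convergence step should be removed.
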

\begin{proof}
This follows from the narrow convergence of $\gamma^n$ since the integrand is nonnegative.
\end{proof}
In the following lemma, we show that the limit $(\tilde{h},\tilde{\gamma})$ and $(\tilde{P},\tilde{R})$ obtained above are indeed optimizers of $E_{\nu}(h,\gamma)$ and $J_{\nu}(P,R)$ respectively.
\begin{lem}
The following statements are true.

$(i)\,\,$$\tilde{P}(x)=\sup_{y\in\Lambda}(x\cdot y-\tilde{R}(y))$, and $\tilde{P}(x_1,x_2,0)\geq\frac{1}{2}(x_1^2+x_2^2)$

$(ii)\,\,$$(\tilde{h},\tilde{\gamma})$ is the unique minimizer of $E_{\nu}(h,\gamma)$,

$(iii)\,\,$$(\tilde{P},\tilde{R})$ is a maximizer of $J_{\nu}(P,R)$.
\end{lem}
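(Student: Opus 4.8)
The plan is to prove the three statements in order, using the convergences set up in the paragraph preceding the lemma and the variational duality established in Lemma 2.46 (the untruncated analog of Theorem 2.18).

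\emph{Step 1: Proving $(i)$.} I would argue exactly as in Lemma 2.40. For each $n$ we have $P^n(x)=\sup_{y\in\Lambda}(x\cdot y-R^n(y))$ and $P^n(x_1,x_2,0)\geq\frac12(x_1^2+x_2^2)$; the latter passes to the limit immediately by local uniform convergence $P^n\to\tilde P$. For the conjugacy identity, one inequality is easy: from $P^n(x)+R^n(y)\geq x\cdot y$ we get $\tilde P(x)+\tilde R(y)\geq x\cdot y$ in the limit, hence $\tilde P(x)\geq\sup_{y}(x\cdot y-\tilde R(y))$. For the reverse, fix $x^0\in\Omega_\infty$, pick a compact $K\ni x^0$ inside $\Omega_\infty$; by Lemma 5.5 in the appendix the subdifferentials $\partial P^n(K)$ stay in a fixed compact $F\subset\Lambda$. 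Given $\epsilon>0$, choose $n$ so that $|P-P^n|\leq\epsilon$ on $K$ and $|R^n-R|\leq\epsilon$ on $F$; then for $p\in\partial P^n(x^0)\subset F$ one has $\tilde P(x^0)\leq P^n(x^0)+\epsilon=x^0\cdot p-R^n(p)+\epsilon\leq x^0\cdot p-\tilde R(p)+2\epsilon\leq\sup_{y\in\Lambda}(x^0\cdot y-\tilde R(y))+2\epsilon$, and let $\epsilon\to0$. As observed in the text, the first condition of $(i)$ guarantees $-\tfrac1\delta\leq\partial_{x_3}\tilde P\leq-\delta$ so that $h_{\tilde P}$ is well-defined, and combined with $\tilde P(x_1,x_2,\tilde h)=\frac12(x_1^2+x_2^2)$ (which comes from passing the corresponding identity for $P^n,h^n$ to the limit, as already noted just before Lemma 2.47) we get $\tilde h=h_{\tilde P}$.

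\emph{Step 2: Proving $(iii)$.} I want to show $J_\nu(\tilde P,\tilde R)\geq\sup_{(P,R)\in\mathcal N_\nu}J_\nu(P,R)$. The key is an upper semicontinuity estimate as in Lemma 2.37: since $R^n\to\tilde R$ in $L^1(d\nu)$ (this uses the lower bound $(2.38)$, the upper bound $R^n\leq\frac12(y_1^2+y_2^2)$, $\sup_n M_2(\nu^n)<\infty$, narrow convergence, and dominated/Vitali convergence — here one should be a little careful since the domination bound involves $|y|$ and one is integrating against the varying $\nu^n$, but uniform integrability follows from the $M_2$ bound), we get $\int[\frac12(y_1^2+y_2^2)-R^n]d\nu^n\to\int[\frac12(y_1^2+y_2^2)-\tilde R]d\nu$. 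For the $x$-term: on $\Omega_{h^n}$ we have $P^n(x)\geq\frac12(x_1^2+x_2^2)\geq0$, so $0\leq P^n\sigma_{h^n}\to\tilde P\sigma_{\tilde h}$ pointwise a.e.\ (using pointwise convergence of $P^n$ and $L^1$, hence a.e.\ along a further subsequence, convergence of $\sigma_{h^n}$), and Fatou gives $\int\tilde P\sigma_{\tilde h}\leq\liminf\int P^n\sigma_{h^n}$; combined with $\int\frac12(x_1^2+x_2^2)\sigma_{h^n}\to\int\frac12(x_1^2+x_2^2)\sigma_{\tilde h}$ (from $L^1$ convergence of $h^n$), this yields $\int[\frac12(x_1^2+x_2^2)-\tilde P]\sigma_{\tilde h}\geq\limsup\int[\frac12(x_1^2+x_2^2)-P^n]\sigma_{h^n}$. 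Since $h^n=h_{P^n}$ realizes the infimum defining $J_{\nu^n}$ and $\tilde h=h_{\tilde P}$ realizes it for $J_\nu$, adding the two estimates gives $J_\nu(\tilde P,\tilde R)\geq\limsup J_{\nu^n}(P^n,R^n)$. Finally, by Lemma 2.46$(i)$ applied to $\nu^n$, $J_{\nu^n}(P^n,R^n)=\inf_{\mathcal M_{\nu^n}}E_{\nu^n}$, which by the stability lemma of the previous subsection (Corollary 2.24, or its $L^2$ analog) converges to $\inf_{\mathcal M_\nu}E_\nu=\sup_{\mathcal N_\nu}J_\nu$; hence $J_\nu(\tilde P,\tilde R)\geq\sup_{\mathcal N_\nu}J_\nu$, so $(\tilde P,\tilde R)$ is a maximizer. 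Alternatively, if the $L^2$ version of Corollary 2.24 is not directly available, I would instead combine Lemma 2.48 (lower semicontinuity of $E$) with Lemma 2.46$(i)$--$(ii)$ to pin down $\inf E_\nu$; this is essentially the route taken in the proof of $(ii)$ below, so $(ii)$ and $(iii)$ are proved together.

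\emph{Step 3: Proving $(ii)$.} By construction $\gamma^n=(id\times\nabla P^n)_\sharp\sigma_{h^n}$ is a minimizer of $E_{\nu^n}$ (Lemma 2.46$(ii)$), and $\tilde\gamma=(id\times\nabla\tilde P)_\sharp\sigma_{\tilde h}$. From $(i)$ and Step 2, $(\tilde P,\tilde R)$ is a maximizer of $J_\nu$ satisfying $(2.43)$--$(2.44)$, so by Lemma 2.46$(ii)$ the pair $(\tilde h,\tilde\gamma)=(h_{\tilde P},(id\times\nabla\tilde P)_\sharp\sigma_{\tilde h})$ attains equality in $E_\nu\geq J_\nu$, i.e.\ $E_\nu(\tilde h,\tilde\gamma)=J_\nu(\tilde P,\tilde R)=\sup_{\mathcal N_\nu}J_\nu=\inf_{\mathcal M_\nu}E_\nu$ (the last equality is Lemma 2.46$(ii)$ again, the untruncated Theorem 2.18$(ii)$), so $(\tilde h,\tilde\gamma)$ is a minimizer; uniqueness is exactly Lemma 2.46$(iii)$. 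One must check $(\tilde h,\tilde\gamma)\in\mathcal M_\nu$: $\tilde h\geq0$ and continuity are clear from the limit, $\tilde\gamma\in\Gamma(\sigma_{\tilde h},\nu)$ follows from the narrow convergence $\gamma^n\to\tilde\gamma$ together with $\sigma_{h^n}\to\sigma_{\tilde h}$ and $\nu^n\to\nu$, and $\int_{\Omega_2}\tilde h=1$ follows from Lemma 2.47.

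\emph{Main obstacle.} The delicate point is the convergence $\int[\frac12(y_1^2+y_2^2)-R^n]d\nu^n\to\int[\frac12(y_1^2+y_2^2)-\tilde R]d\nu$ in Step 2: both the integrand and the measure vary, the integrand grows quadratically, and the lower bound for $R^n$ is only linear in $|y|$, so one genuinely needs the uniform second-moment bound on $\nu^n$ to get uniform integrability and justify passing to the limit. Everything else is a routine transcription of the bounded-support arguments from subsection 2.1 (Theorem 2.18, Corollary 2.19, Lemmas 2.22--2.25), with Fatou replacing uniform convergence on the $x$-side because $P^n$ is only locally, not globally, controlled.
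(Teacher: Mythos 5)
Your Step 1 (part $(i)$) is fine and is essentially the paper's own argument (the scheme of Lemma 2.41). The trouble is the way you reach $(ii)$ and $(iii)$: the central claim of your Step 2, namely
$\int[\frac{1}{2}(y_1^2+y_2^2)-R^n(y)]\,d\nu^n(y)\rightarrow\int[\frac{1}{2}(y_1^2+y_2^2)-\tilde{R}(y)]\,d\nu(y)$,
is not justified by the hypotheses, and you identify the difficulty yourself but then dismiss it incorrectly: uniform integrability of $|y|^2$ with respect to the varying measures $\nu^n$ does \emph{not} follow from $\sup_n M_2(\nu^n)<\infty$ (think of an atom of mass $\frac{1}{n}$ placed at horizontal distance $\sqrt{n}$ inside $\Lambda$: second moments stay bounded, the measures converge narrowly, but the quadratic part of the integral retains a contribution of order $1$ that the limit measure does not see). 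Since the bound (2.38) controls $R^n$ from below only linearly in $|y|$, the nonnegative integrand grows quadratically, and mere narrow convergence plus bounded second moments gives you only the Fatou (lower semicontinuity) direction, not the upper semicontinuity $J_{\nu}(\tilde{P},\tilde{R})\geq\limsup_n J_{\nu^n}(P^n,R^n)$ on which your chain rests. This is exactly why the paper invokes such a convergence only in Theorem 2.53$(iv)$, under the \emph{additional} assumption $W_2(\nu^n,\nu)\rightarrow0$. Your fallback ("combine Lemma 2.48 with Lemma 2.46") does not close the gap either: Lemma 2.48 gives $E_{\nu}(\tilde{h},\tilde{\gamma})\leq\liminf_n E_{\nu^n}(h^n,\gamma^n)$, i.e. only the lower-semicontinuity direction of the optimal values, and to extract minimality of $(\tilde{h},\tilde{\gamma})$ or maximality of $(\tilde{P},\tilde{R})$ from it you would again need $\liminf_n\inf E_{\nu^n}\leq\inf E_{\nu}$, which is the same missing upper-semicontinuity (and is false in general under narrow convergence with only second-moment bounds); moreover your Step 3 explicitly presupposes the conclusion of Step 2, so it cannot rescue it.

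The paper's proof avoids any convergence of functional values. It passes the pointwise conjugacy identity $P^n(x)+R^n(\nabla P^n(x))=x\cdot\nabla P^n(x)$, valid $\mathcal{L}^3$-a.e., to the limit (using local uniform convergence of $P^n,R^n$ and a.e. convergence of $\nabla P^n$), obtaining $\tilde{P}(x)+\tilde{R}(\nabla\tilde{P}(x))=x\cdot\nabla\tilde{P}(x)$ a.e., hence $\tilde{P}(x)+\tilde{R}(y)=x\cdot y$ for $\tilde{\gamma}$-a.e. $(x,y)$. Splitting $E_{\nu}(\tilde{h},\tilde{\gamma})$ along the marginals of $\tilde{\gamma}$ and using $\tilde{h}=h_{\tilde{P}}$ (so that $\sigma_{\tilde{h}}$ realizes the infimum in the $x$-part of $J_{\nu}(\tilde{P},\cdot)$), one gets $E_{\nu}(\tilde{h},\tilde{\gamma})=J_{\nu}(\tilde{P},\tilde{R})$; weak duality, Lemma 2.46$(i)$, then forces $(\tilde{h},\tilde{\gamma})$ to be the (unique, by Lemma 2.46$(iii)$) minimizer and $(\tilde{P},\tilde{R})$ a maximizer simultaneously. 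If you replace your Step 2 by this complementary-slackness computation, your Step 1 and the marginal/feasibility checks you already made can be kept as they are.
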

\begin{proof}
We first observe $\tilde{P}(x)=\sup_{y\in\Lambda}(x\cdot y-\tilde{R}(y))$. The argument is the same as Lemma 2.41.

To see $(\tilde{h},\tilde{\gamma})$ is the minimizer,  and $(\tilde{P},\tilde{R})$ is a maximizer, we prove it by showing
$E_{\nu}(\tilde{h},\tilde{\gamma})=J_{\nu}(\tilde{P},\tilde{R})$.
Indeed, for each $n$,
\begin{equation}
P^n(x)+R^n(\nabla P^n(x))=x\cdot\nabla P^n(x)\,\,\,\,\textrm{$\mathcal{L}^3$ a.e $x\in\Omega_{\infty}$}
\end{equation}
Passing both sides of (2.50) to limit, we then have
\begin{equation}
\tilde{P}(x)+\tilde{R}(\nabla \tilde{P}(x))=x\cdot\nabla\tilde{P}(x)\,\,\,\,\textrm{$\mathcal{L}^3$ a.e $x\in\Omega_{\infty}$}
\end{equation}
This means
\begin{equation}
\tilde{P}(x)+\tilde{R}(y)=x\cdot y\,\,\,\,\textrm{$\tilde{\gamma}$ a.e $(x,y)\in\Omega_{\infty}\times\Lambda$}
\end{equation}
Therefore
$$E_{\nu}(\tilde{h},\tilde{\gamma})=\int[\frac{1}{2}(x_1^2+x_2^2+y_1^2+y_2^2)-x\cdot y]d\tilde{\gamma}(x,y)$$
$$=\int[\frac{1}{2}(y_1^2+y_2^2)-\tilde{R}(y)]d\nu(y)+\int[\frac{1}{2}(x_1^2+x_2^2)-\tilde{P}(x)]\sigma_{\tilde{h}}(x)dx$$
Since $\tilde{h}=h_{\tilde{P}}$
$$\int[\frac{1}{2}(x_1^2+x_2^2)-\tilde{P}(x)]\sigma_{\tilde{h}}(x)dx=\inf_{h\geq0}\int[\frac{1}{2}(x_1^2+x_2^2)-\tilde{P}(x)]\sigma_h(x)dx$$
We obtain $E_{\nu}(\tilde{h},\tilde{\gamma})=J_{\nu}(\tilde{P},\tilde{R})$ \\
\end{proof}

We are now ready to prove the following stability result.
\begin{thm}
Let $\nu^n,\nu\in\mathcal{P}_2(\mathbf{R}^3)$, $supp\,\,\nu^n,\nu\subset\Lambda$, with $\nu^n\rightarrow\nu$ narrowly and $\sup_{n\geq1}M_2(\nu^n)<\infty$. Let $(h^n,\gamma^n)$,$(\tilde{h},\tilde{\gamma})$ be the unique minimizer of $E_{\nu}(h,\gamma)$, $E_{\nu}(h,\gamma)$ respectively, let $(P^n,R^n)$,$(\tilde{P},\tilde{R})$  be a maximizer of $J_{\nu^n}(P,R)$, $J_{\nu}(P,R)$ respectively which satisfies (2.43),(2.44), then the following convergence are true:

$(i)\,\,h^n\rightarrow \tilde{h}$ in $L^r(\Omega_2)$, for any $r\in[1,2)$.

$(ii)\,\,\gamma^n\rightarrow \tilde{\gamma}$ narrowly.

$(iii)\,\,\xi(P^n,\nabla P^n)\sigma_{h^n}\rightarrow\xi(\tilde{P},\nabla \tilde{P})\sigma_{\tilde{h}}$ in $L^1(\Omega_{\infty})$, for any $\xi\in C_b(\mathbf{R}\times\mathbf{R}^3)$

If one further assmes $W_2(\nu^n,\nu)\rightarrow0$, then $(i)$ can be improved to

$(iv)$$h^n\rightarrow \tilde{h}\in L^2(\Omega_2)$
\end{thm}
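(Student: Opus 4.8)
The plan is a compactness-and-uniqueness argument for $(i)$--$(iii)$ together with a quantitative use of the strong convexity of $s\mapsto\Pi_{P}(x_1,x_2,s)$ for $(iv)$. First I would take an arbitrary subsequence of the indices and, along it, apply the compactness estimates recorded just before the semi-continuity lemma (Corollary~2.42, the uniform $L^2$ bounds, and $-\tfrac1\delta\le\partial_{x_3}P^n\le-\delta$) to extract a further subsequence along which $P^n\to\tilde P$ locally uniformly on $\Omega_\infty$, $h^n\to\tilde h$ locally uniformly on $\Omega_2$, $R^n\to\tilde R$ locally uniformly on $\Lambda$, and $\gamma^n\to\tilde\gamma:=(\mathrm{id}\times\nabla\tilde P)_\sharp\sigma_{\tilde h}$ narrowly. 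By the lemma identifying these limits as optimizers, $(\tilde h,\tilde\gamma)$ is the unique minimizer of $E_\nu$ and $(\tilde P,\tilde R)$ a maximizer of $J_\nu$ satisfying $(2.43)$--$(2.44)$; hence $\tilde h$, $\tilde\gamma$, and the restrictions of $\tilde P$ and $\nabla\tilde P$ to $\Omega_{\tilde h}$ are independent of the chosen subsequence, so the full sequence converges. This yields $(ii)$, and $(i)$ follows by combining the local uniform convergence of $h^n$ on the bounded set $\Omega_2$ with the uniform bound $\|h^n\|_{L^2(\Omega_2)}\le C$, which gives equi-integrability and hence $L^r$ convergence for every $r<2$.

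For $(iii)$, using $\|\sigma_{h^n}-\sigma_{\tilde h}\|_{L^1(\Omega_\infty)}=\|h^n-\tilde h\|_{L^1(\Omega_2)}$ I would estimate
\[
\big\|\xi(P^n,\nabla P^n)\sigma_{h^n}-\xi(\tilde P,\nabla\tilde P)\sigma_{\tilde h}\big\|_{L^1(\Omega_\infty)}\le\|\xi\|_\infty\,\|h^n-\tilde h\|_{L^1(\Omega_2)}+\int_{\Omega_{\tilde h}}\big|\xi(P^n,\nabla P^n)-\xi(\tilde P,\nabla\tilde P)\big|\,dx.
\]
The first term tends to $0$ by $(i)$. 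For the second, $P^n\to\tilde P$ pointwise on $\Omega_\infty$, and since the $P^n$ are convex and converge locally uniformly, $\nabla P^n\to\nabla\tilde P$ a.e.; as $\xi$ is bounded and continuous and $\Omega_{\tilde h}$ has measure $1$, dominated convergence gives $0$. (The limiting integrand is unambiguous since all maximizers of $J_\nu$ coincide on $\Omega_{\tilde h}$.)

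For $(iv)$ I would again work along a subsequence as above; the resulting limit $\|h^n-\tilde h\|_{L^2}\to0$ is subsequence-independent. Since equality holds in the duality $\inf E_{\nu^n}=\sup J_{\nu^n}$,
\[
E_{\nu^n}(h^n,\gamma^n)=J_{\nu^n}(P^n,R^n)=B_n+A_n,
\]
where $B_n:=\int_\Lambda[\tfrac12(y_1^2+y_2^2)-R^n]\,d\nu^n$ and $A_n:=\inf_{h\ge0}\int_{\Omega_h}[\tfrac12(x_1^2+x_2^2)-P^n]\,dx$, the infimum attained at $h^n=h_{P^n}$. Because $W_2(\nu^n,\nu)\to0$ the second moments of $\nu^n$ converge, so together with $R^n\to\tilde R$ locally uniformly and the a priori bounds $-C(1+|y|)\le R^n(y)\le\tfrac12(y_1^2+y_2^2)$ a Vitali argument gives $B_n\to B:=\int_\Lambda[\tfrac12(y_1^2+y_2^2)-\tilde R]\,d\nu$. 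A second Vitali argument gives $\int_{\Omega_2}\Pi_{P^n}(x_1,x_2,\tilde h(x_1,x_2))\,dx_1dx_2\to\int_{\Omega_{\tilde h}}[\tfrac12(x_1^2+x_2^2)-\tilde P]\,dx=:A$: the integrand converges a.e.\ by local uniform convergence of $P^n$, and is dominated by an $n$-independent $L^1$ function via $|P^n(x)|\le|P^n(x_1,x_2,0)|+\tfrac1\delta x_3$, the uniform $L^2$ bound on $P^n(\cdot,0)$, and $\tilde h\in L^2$. Comparing $A_n$ with the admissible competitor $h=\tilde h$ gives $\limsup_n A_n\le A$, whereas the semi-continuity lemma together with $E_\nu(\tilde h,\tilde\gamma)=B+A$ gives $\liminf_n A_n=\liminf_n E_{\nu^n}(h^n,\gamma^n)-B\ge E_\nu(\tilde h,\tilde\gamma)-B=A$; hence $A_n\to A$. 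Finally, $s\mapsto\Pi_{P^n}(x_1,x_2,s)$ is $\delta$-strongly convex, its second derivative being $-\partial_{x_3}P^n\ge\delta$, and is minimized over $[0,\infty)$ at $h^n$, so $\Pi_{P^n}(x_1,x_2,\tilde h)\ge\Pi_{P^n}(x_1,x_2,h^n)+\tfrac{\delta}{2}(\tilde h-h^n)^2$; integrating over $\Omega_2$,
\[
\frac{\delta}{2}\,\|h^n-\tilde h\|_{L^2(\Omega_2)}^2\le\int_{\Omega_2}\Pi_{P^n}(x_1,x_2,\tilde h)\,dx_1dx_2-A_n\longrightarrow A-A=0,
\]
which is $(iv)$.

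The one genuinely delicate point is the convergence $A_n\to A$: its lower bound is just the semi-continuity lemma rewritten, but obtaining it forces one to know $B_n\to B$, and it is precisely here that the hypothesis $W_2(\nu^n,\nu)\to0$ — rather than mere narrow convergence — is used, through the convergence of the second moments of $\nu^n$ and the equi-integrability this provides for passing the quadratically growing dual potentials $R^n$ to the limit. Some care is also needed to make the two Vitali arguments rigorous, i.e.\ to dominate the integrands by a fixed $L^1$ function, which is exactly where $\tilde h\in L^2$ and the uniform $L^2$ control of $P^n(\cdot,0)$ enter. Parts $(i)$--$(iii)$ are routine compactness combined with the lemmas already proved.
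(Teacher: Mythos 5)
Your argument for $(i)$--$(iii)$ is essentially the paper's: subsequence compactness via the bounds of Corollary 2.42, identification of the limit as the (unique) optimizer by Lemma 2.49, and uniqueness to upgrade to full-sequence convergence; your $L^1$ upgrade in $(iii)$ (splitting off $\|\xi\|_\infty\|h^n-\tilde h\|_{L^1}$ and using dominated convergence on the fixed finite-measure set $\Omega_{\tilde h}$) replaces the paper's tail estimate $\int_{\{x_3\ge K\}}\sigma_{h^n}\le\|h^n\|_{L^2}^2/K$, which is an equivalent and slightly cleaner bookkeeping. For $(iv)$ you take a genuinely different route. The paper proves $E_{\nu^n}(h^n,\gamma^n)\to E_{\nu}(\tilde h,\tilde\gamma)$ (using the $W_2$ hypothesis and Fatou on the dual side), then isolates the term $\int(-y_3)x_3\,d\gamma^n$ to deduce $\int_{\Omega_2}(h^n)^2\to\int_{\Omega_2}\tilde h^2$, and concludes strong $L^2$ convergence by combining norm convergence with the weak convergence from $(i)$. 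You instead split $J_{\nu^n}=B_n+A_n$, show $B_n\to B$ and $A_n\to A$ (the lower bound on $A_n$ being exactly the semicontinuity Lemma 2.48 plus duality), and then exploit the $\delta$-strong convexity of $s\mapsto\Pi_{P^n}(x_1,x_2,s)$, minimized at $h^n$ over $[0,\infty)$, to get the quantitative inequality $\tfrac{\delta}{2}\|h^n-\tilde h\|_{L^2}^2\le\int_{\Omega_2}\Pi_{P^n}(\cdot,\tilde h)-A_n\to0$; this is correct (the first-order term is nonnegative also when $h^n=0$, since $\tilde h\ge0$), gives an explicit modulus in terms of the energy defect, and bypasses the norm-plus-weak-convergence step, at the cost of the two Vitali limits. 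Both proofs use $W_2$-convergence at the same place, to pass the quadratically growing dual potentials. Two small points you should make explicit: as in the paper, first replace the given maximizers $(P^n,R^n)$ by the canonical ones of Corollary 2.42 (Lemma 2.46(iv) shows they agree on $\Omega_{h^n}\cup\{x_3=0\}$, so none of the quantities in $(i)$--$(iv)$ change), since an arbitrary maximizer satisfying only (2.43)--(2.44) need not obey the bounds (2.36)--(2.38) you invoke; and your ``$n$-independent dominating function'' for $\int_{\Omega_{\tilde h}}[\tfrac12(x_1^2+x_2^2)-P^n]$ is not literally available (the natural bound $0\le P^n(x)\le P^n(x_1,x_2,0)\le\tfrac12(x_1^2+x_2^2)+h^n/\delta$ is $n$-dependent), but the uniform $L^2$ bounds on $P^n(\cdot,0)$ and $h^n$ do give uniform integrability over $\Omega_{\tilde h}$, so the Vitali argument you announce goes through.
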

\begin{proof}
We can see from previous lemma that for any subsequence of $(h^n,\gamma^n)$, there is a further subsequence, say $(h^{n_j},\gamma^{n_j})$, such that $h^{n_j}$ converges in $L^r(\Omega_2)$, for any $r\in[1,2)$, $\gamma^{n_j}$ converges narrowly, and the limit is the unique minimizer of $E_{\nu}(h,\gamma)$. This is sufficient to conclude the whole sequence must converge. This proves $(i),(ii)$.

The argument for $(iii)$ is similar. We first show any sequence has a further subsequence which converges $\mathcal{L}^3-a.e$ to $\xi(\tilde{P},\nabla \tilde{P})\sigma_{\tilde{h}}$. Indeed, let $(\bar{P}^n,\bar{R}^n)$ be maximizers of $J_{\nu^n}(P,R)$ given by Corollary 2.42. Let $\xi(\bar{P}^n,\nabla \bar{P}^n)\sigma_{h^n}$ be a subsequence(not relabeled), by what has been discussed, we can take a further subsequence, say $(\bar{P}^{n_j},\bar{R}^{n_j})$ which converges locally uniformly on $\Omega_{\infty}$,$\Lambda$ respectively to a maximizer $(\bar{P},\bar{R})$, this maximizer will satisfy (2.43),(2.44), also $\xi(\bar{P}^{n_j},\nabla \bar{P}^{n_j})\sigma_{h^{n_j}}$ will converge $\mathcal{L}^3-a.e$ to $ \xi(\bar{P},\nabla\bar{ P})\sigma_{\tilde{h}}$, but $\xi(\bar{P}^n,\nabla \bar{P}^n)\sigma_{h^n}=\xi(P^n,\nabla P^n)\sigma_{h^n}$, $ \xi(\bar{P},\nabla\bar{ P})\sigma_{\tilde{h}}=\xi(\tilde{P},\nabla\tilde{ P})\sigma_{\tilde{h}}$  because of the uniqueness property proved in lemma 2.46 (iv).

To see such a convergence is in $L^1$, we just need to observe that $\sup_{n\geq1}\int_{\{x_3\geq K\}}\sigma_{h^n}dx\rightarrow0$ as $K\rightarrow\infty$. Indeed
$$\int_{\{x_3\geq K\}}\sigma_{h^n}dx=\int_{\Omega_2}(h_n-K)^+dx_1dx_2\leq\frac{||h_n||_{L^2(\Omega_2)}^2}{K}$$
Recall the universal bound on $||h^n||_{L^2(\Omega_2)}$ asserted in (2.47), we proved $(iii)$.

Now we assume $W_2(\nu^n,\nu)\rightarrow0$. Let $(\bar{P}^n,\bar{R}^n)$ be the above chosen maximizers of $J_{\nu^n}(P,R)$ given by Corollary 2.42 and $n_j$ is the above chosen subsequence which converges uniformly on compact sets. Because of the bound (2.38), and the assumed convergence $W_2(\nu^n,\nu)\rightarrow0$, we have
$$\int[\frac{1}{2}(y_1^2+y_2^2)-\bar{R}_{n_j}(y)]d\nu^n(y)\rightarrow\int[\frac{1}{2}(y_1^2+y_2^2)-\bar{R}(y)]d\nu(y)$$
while by Fatou(notice the integrand is nonpositive)
$$\int[\frac{1}{2}(x_1^2+x_2^2)-\bar{P}(x)]\sigma_{\tilde{h}}(x)dx\geq\lim_{n\rightarrow\infty}\sup\int[\frac{1}{2}(x_1^2+x_2^2)-\bar{P}^n(x)]\sigma_{h^n}(x)dx$$
Hence
$$J_{\nu}(\bar{P},\bar{R})\geq\lim_{n\rightarrow\infty}\sup J_{\nu^n}(\bar{P}^n,\bar{R}^n)$$
Recall Lemma 2.46$(i),(ii)$ and Lemma 2.48, we actually have under the stronger convergence of $\nu^n$
$$E_{\nu}(\tilde{h},\tilde{\gamma})=\lim_{n\rightarrow\infty}E_{\nu^n}(h^n,\gamma^n)$$
As a result of this,and noticing that $|x_1y_1+x_2y_2|\leq\frac{1}{2}(x_1^2+x_2^2+y_1^2+y_2^2)$,we have
$$\int(-y_3)x_3d\gamma^n(x,y)\rightarrow\int(-y_3)x_3\sigma_{\tilde{h}}d\tilde{\gamma}(x,y)$$
Recall $0\leq\delta x_3\leq(-y_3)x_3$,we obtain
$$\int x_3\sigma_{h^n}(x)dx\rightarrow\int x_3\sigma_{\tilde{h}}(x)dx$$
which implies
$$\int_{\Omega_2} (h^n)^2\rightarrow\int_{\Omega_2}(\tilde{h})^2$$
Combined with $(i)$, we get $L^2$ convergence of $h^n$
\end{proof}
To conclude this section, we observe that for the maximizers found for $J_{\nu}^H(P,R)$ in subsection 2.1 are also maximizers for $J_{\nu}(P,R)$, upon suitable extension. This fact is contained in the following lemma.
\begin{lem}
Let $\nu\in\mathcal{P}(\mathbf{R}^3)$ with $supp\,\,\nu\subset\Lambda$, where $\Lambda$ is as in (2.26). Let $H$ be chosen as (2.32). Let $(P,R)$ be the unique pair of maximizer given by Corollary 2.20. Define $\hat{P}(x)=\sup_{y\in\Lambda}(x\cdot y-R(y))\,\,\,x\in\Omega_{\infty}$ to be the extension of P to $\Omega_\infty$. Then

$(i)\,\,(\hat{P},R)$ are convex conjugate over $\Omega_{\infty},\Lambda$

$(ii)\,\,(\hat{P},R)$ is a maximizer for $J_{\nu}(P,R)$
\end{lem}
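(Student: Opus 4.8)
The plan is to obtain (i) directly from the definition of $\hat P$ together with the convex conjugacy of $(P,R)$ over $\Omega_H$ guaranteed by Corollary 2.20, and to obtain (ii) by comparison with the truncated functional $J_\nu^H$: every competitor for $J_\nu$ restricts to a competitor for $J_\nu^H$, while the maximal value $J_\nu^H(P,R)$ is realized by $(\hat P,R)$ because $\hat P=P$ on $\Omega_H$ and the associated height stays strictly below $H$. For (i), note that $\hat P(x)=\sup_{y\in\Lambda}(x\cdot y-R(y))$ holds by construction, so only $R(y)=\sup_{x\in\Omega_\infty}(x\cdot y-\hat P(x))$ needs checking. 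From $\hat P(x)\ge x\cdot y-R(y)$ for all $y\in\Lambda$ we get $x\cdot y-\hat P(x)\le R(y)$ for all $x\in\Omega_\infty$, hence $\sup_{x\in\Omega_\infty}(x\cdot y-\hat P(x))\le R(y)$; conversely, since $(P,R)$ are convex conjugate over $\Omega_H,\Lambda$, we have $\hat P=P$ on $\Omega_H$, so $\sup_{x\in\Omega_\infty}(x\cdot y-\hat P(x))\ge\sup_{x\in\Omega_H}(x\cdot y-P(x))=R(y)$. Also $\partial\hat P(\Omega_\infty)\subset\Lambda$ since $\Lambda$ is compact and convex, so $\hat P$ is Lipschitz with $-\frac1\delta\le\partial_{x_3}\hat P\le-\delta$, and $h_{\hat P}$ is a well-defined function on $\Omega_2$.

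The crux of (ii) is the identity $h_{\hat P}=h_P^H=:h_0$. By Corollary 2.13 (equivalently Corollary 2.15), the choice of $H$ in (2.32) forces $h_P^H<H$ pointwise; hence on $\{h_P^H>0\}$ the value $h_P^H$ is an interior minimizer of $s\mapsto\Pi_P(x_1,x_2,s)$ over $[0,H]$, so $P(x_1,x_2,h_P^H)=\frac12(x_1^2+x_2^2)$, while on $\{h_P^H=0\}$ we have $P(x_1,x_2,0)\le\frac12(x_1^2+x_2^2)$. Since $\hat P=P$ on $\Omega_H\supset\Omega_{h_0}$, $\hat P$ is defined on all of $\Omega_\infty$, and $\partial_{x_3}\hat P\le-\delta$, the converse statement in Remark 2.4 gives $h_{\hat P}=h_P^H$. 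Using $\hat P=P$ on $\Omega_H$ and $h_0<H$ once more,
\[
J_\nu(\hat P,R)=\int_\Lambda\Big[\frac12(y_1^2+y_2^2)-R(y)\Big]d\nu+\int_{\Omega_\infty}\Big[\frac12(x_1^2+x_2^2)-\hat P(x)\Big]\sigma_{h_0}(x)\,dx=J_\nu^H(P,R).
\]
Moreover $(\hat P,R)\in\mathcal N_\nu$: the inequality $\hat P(x)+R(y)\ge x\cdot y$ on $\Omega_\infty\times\Lambda$ holds by definition; $R$ is bounded on the compact set $\Lambda$, so $R\in L^1(d\nu)$; $\hat P$ is locally bounded, so $\hat P\in L^1(\Omega_K)$ for every $K$; and $h_0$ is bounded (Corollary 2.13), so $h_0\in L^1(\Omega_2)$ and $\hat P\sigma_{h_0}\in L^1(\Omega_\infty)$.

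It remains to show $(\hat P,R)$ is maximal. Let $(\bar P,\bar R)\in\mathcal N_\nu$ be arbitrary; its restriction $(\bar P|_{\Omega_H},\bar R)$ satisfies $\bar P(x)+\bar R(y)\ge x\cdot y$ for $x\in\Omega_2\times[0,H]$, $y\in\Lambda$, together with the requisite integrability, hence is admissible for $J_\nu^H$. Since the $\Lambda$-integral terms in $J_\nu(\bar P,\bar R)$ and $J_\nu^H(\bar P|_{\Omega_H},\bar R)$ coincide and $\inf_{h\ge0}\int_{\Omega_\infty}[\frac12(x_1^2+x_2^2)-\bar P]\sigma_h\,dx\le\inf_{0\le h\le H}\int_{\Omega_\infty}[\frac12(x_1^2+x_2^2)-\bar P]\sigma_h\,dx$, we get $J_\nu(\bar P,\bar R)\le J_\nu^H(\bar P|_{\Omega_H},\bar R)\le J_\nu^H(P,R)=J_\nu(\hat P,R)$, the middle inequality being the maximality of $(P,R)$ for $J_\nu^H$ from Theorem 2.10 and Corollary 2.20. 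Thus $(\hat P,R)$ maximizes $J_\nu$. The only delicate point is the identity $h_{\hat P}=h_P^H$, which is exactly where the quantitative choice of $H$ in (2.32) enters; everything else is routine bookkeeping with the two constraint sets $\mathcal N_\nu$ and its truncated analogue.
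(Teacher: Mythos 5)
Your proof is correct, and for part (i) and for the key identity $J_{\nu}^H(P,R)=J_{\nu}(\hat P,R)$ you fill in exactly the details the paper leaves implicit: $\hat P=P$ on $\Omega_H$ by conjugacy, and $h_{\hat P}=h_P^H<H$ via Corollaries 2.13/2.15 together with the converse direction of Remark 2.4. Where you genuinely diverge from the paper is in how maximality over $\mathcal{N}_{\nu}$ is concluded. The paper routes the argument through the primal functional: since $(P,R)$ maximizes $J^H_{\nu}$, Theorem 2.18 gives $\inf_{\mathcal{M}_{\nu}}E_{\nu}=J^H_{\nu}(P,R)$, and then weak duality for the untruncated problem (Lemma 2.46(i)) bounds every $J_{\nu}(\bar P,\bar R)$ by $\inf E_{\nu}=J_{\nu}(\hat P,R)$. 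You instead compare the two dual problems directly: restricting any competitor $(\bar P,\bar R)\in\mathcal{N}_{\nu}$ to $\Omega_H$ produces a competitor for $J^H_{\nu}$ whose value is no smaller (the infimum over $h\ge 0$ is taken over a larger class than $0\le h\le H$), so $J_{\nu}(\bar P,\bar R)\le J^H_{\nu}(\bar P|_{\Omega_H},\bar R)\le J^H_{\nu}(P,R)=J_{\nu}(\hat P,R)$. This bypasses Theorem 2.18 and the primal problem entirely and only uses maximality of $(P,R)$ for the truncated dual over its full admissible class (which the double-convexification step in Theorem 2.10 justifies), so it is more elementary and self-contained; the paper's route, by contrast, makes the result sit naturally inside the duality framework $\sup J_{\nu}=\inf E_{\nu}$ that it exploits elsewhere. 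Your admissibility bookkeeping for $(\hat P,R)\in\mathcal{N}_{\nu}$ (boundedness of $R$ on $\Lambda$, local boundedness of $\hat P$, boundedness of $h_0$) is fine and is a point the paper does not spell out.
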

\begin{proof}
To prove $(i)$, we just need to check
$$R(y)=\sup_{x\in\Omega_{\infty}}(x\cdot y-\hat{P}(x))$$ This is routine.
To see $(\hat{P},R)$ is a maximizer for $J_{\nu}(P,R)$, we recall since $(P,R)$ is assumed to be a maximizer of $J_{\nu}^H(P,R)$, $\inf E_{\nu}(\gamma,h)=J_{\nu}^H(P,R)$ by Theorem 2.18. But we have $J_{\nu}^H(P,R)=J_{\nu}(\hat{P},R)$ because $h_{\tilde{P}}=h_{P}^H<H$ as guaranteed by the choice of sufficiently large H.
\end{proof}

\section{Existence of Lagrangian solutions}

In this section, we prove the existence of weak Lagrangian solutions when the initial dual density is absolutely continuous, using the properties of geostrphic functional $E_{\nu}(h,\gamma)$ already proved in subsection 2.1. The proof here is similar to \cite{Cullen-Feldman}.
\subsection{Basic definitions and main result}
First let`s define the notion of admissible initial data and weak Lagrangian solution.

 Fix $H_0,\delta>0$,
\begin{defn}
Given $(h_0,P_0)$ with $h_0\in W^{1,\infty}(\Omega_2)$ and $P_0\in W^{1,\infty}(\Omega_{H_0})$ with $H_0>\frac{\max_{x\in\Omega_2}P_0(x_1.x_2,0)}{\delta}$, we say it is an admissible initial data if the following holds true.

$(i)$$0\leq h_0\leq H_0$, $\int_{\Omega_2}h_0dx_1dx_2=1$,

$(ii)$$P_0$ is convex, and
$\partial P_0(\Omega_2\times[0,H_0])\subset \mathbf{R}^2\times[-\frac{1}{\delta},-\delta]$ and bounded.

$(iii)$$P(x_1,x_2,h_0(x_1,x_2))=\frac{1}{2}(x_1^2+x_2^2)$.

\end{defn}
\begin{rem}
Above assumption $H_0>\frac{\max_{x\in\Omega_2}P_0(x_1.x_2,0)}{\delta}$ actually guarantees $h_0<H_0$
\end{rem}
The above definition guarantees at least on $\Omega_{h_0}$, $P_0$ is the restriction of some maximizer of $J_{\nu_0}(P,R)$, where $\nu_0=\nabla P_{0\sharp}\sigma_{h_0}$. This is shown by the following proposition.
\begin{prop}
Let $D_0=||\nabla P_0||_{L^{\infty}}+1$. let $\Lambda_0=B_{D_0}\times[-\frac{1}{\delta},-\delta]$ , H as in (2.32). Let $(h_0,P_0)$ be admissible initial data. Let $\nu_0=\nabla P_{0\sharp}\sigma_{h_0}$ so that $supp\,\,\nu_0\subset \Lambda_0$, Let $(\bar{P}_0,\bar{R}_0)$ be the unique maximizer of $J_{\nu_0}^H(P,R)$ which satisfies the properties $(i),(ii)$ in Corollary 2.20, $(\tilde{h}_0,\gamma)$ be the unique minimizer of $E_{\nu_0}(h,\gamma)$, then
$P_0=\bar{P}_0$ on $\Omega_{h_0}\bigcup\{x_3=0\}$, and $\tilde{h}_0=h_0$,$\gamma=(id\times\nabla P_0)_{\sharp}\sigma_{h_0}$
\end{prop}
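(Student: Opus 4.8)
The plan is to produce an explicit feasible pair for the dual problem whose value equals $E_{\nu_0}$ at the natural competitor $(h_0,(id\times\nabla P_0)_{\sharp}\sigma_{h_0})$, and then invoke the uniqueness statements of Corollaries 2.19 and 2.20; we take $H$ large enough to satisfy (2.32) and also $H\geq H_0$. \emph{Step 1: extend $P_0$.} First I would extend $P_0$ past $\Omega_{H_0}$ by double convexification: set $R_0(y)=\sup_{x\in\Omega_{H_0}}(x\cdot y-P_0(x))$ for $y\in\Lambda_0$ and $\tilde{P}_0(x)=\sup_{y\in\Lambda_0}(x\cdot y-R_0(y))$ for $x\in\Omega_H$. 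Since $P_0$ is convex on the convex set $\Omega_{H_0}$ and $\partial P_0(\Omega_{H_0})\subset\Lambda_0$ by admissibility (ii), a routine Fenchel argument gives $\tilde{P}_0=P_0$ on $\Omega_{H_0}$ and shows $(\tilde{P}_0,R_0)$ are convex conjugate over $\Omega_H$ and $\Lambda_0$; moreover, because $y_3\in[-\frac{1}{\delta},-\delta]$ on $\Lambda_0$, the extension still obeys $-\frac{1}{\delta}\leq\partial_{x_3}\tilde{P}_0\leq-\delta$, so $h^H_{\tilde{P}_0}$ is well defined. I would then check $h^H_{\tilde{P}_0}=h_0$: on $\{h_0>0\}$ combine admissibility (iii), $\tilde{P}_0=P_0$ on $\Omega_{h_0}$, and Remark 2.4; on $\{h_0=0\}$ use $P_0(x_1,x_2,0)=\frac{1}{2}(x_1^2+x_2^2)$ together with $\partial_{x_3}\tilde{P}_0<0$, which makes $s\mapsto\Pi_{\tilde{P}_0}(x_1,x_2,s)$ strictly increasing on $[0,H]$.

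\emph{Step 2: match the primal and dual values.} Put $\gamma_0=(id\times\nabla P_0)_{\sharp}\sigma_{h_0}$. Because $\int_{\Omega_2}h_0=1$ and $\nabla P_{0\sharp}\sigma_{h_0}=\nu_0$ we have $(h_0,\gamma_0)\in\mathcal{M}_{\nu_0}$, and $(\tilde{P}_0,R_0)\in\mathcal{N}_{\nu_0}$. From the conjugacy identity $x\cdot\nabla P_0(x)=\tilde{P}_0(x)+R_0(\nabla P_0(x))=P_0(x)+R_0(\nabla P_0(x))$, valid for $\mathcal{L}^3$-a.e.\ $x\in\Omega_{h_0}$, the cost splits:
\[
c\big(x,\nabla P_0(x)\big)=\Big[\frac{1}{2}(x_1^2+x_2^2)-P_0(x)\Big]+\Big[\frac{1}{2}|\nabla_2 P_0(x)|^2-R_0(\nabla P_0(x))\Big].
\]
Integrating against $\sigma_{h_0}$, pushing the second bracket forward by $\nabla P_0$, and applying Remark 2.5 together with $h^H_{\tilde{P}_0}=h_0$ to the first bracket, I would obtain $E_{\nu_0}(h_0,\gamma_0)=J^H_{\nu_0}(\tilde{P}_0,R_0)$.

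\emph{Step 3: conclude.} Weak duality (Theorem 2.18(i)) and optimality of $(\bar{P}_0,\bar{R}_0)$ then give
\[
E_{\nu_0}(h_0,\gamma_0)\ \geq\ J^H_{\nu_0}(\bar{P}_0,\bar{R}_0)\ \geq\ J^H_{\nu_0}(\tilde{P}_0,R_0)\ =\ E_{\nu_0}(h_0,\gamma_0),
\]
so every inequality is an equality; hence $(h_0,\gamma_0)$ is a minimizer of $E_{\nu_0}$ and $(\tilde{P}_0,R_0)$ a maximizer of $J^H_{\nu_0}$. Uniqueness of the minimizer (Corollary 2.19(i)) forces $\tilde{h}_0=h_0$ and $\gamma=\gamma_0=(id\times\nabla P_0)_{\sharp}\sigma_{h_0}$. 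Since $(\tilde{P}_0,R_0)$ and $(\bar{P}_0,\bar{R}_0)$ are both convex conjugate maximizers over $\Omega_H,\Lambda_0$, Corollary 2.19(ii) yields $\bar{P}_0=\tilde{P}_0=P_0$ on $\Omega_{h_0}$; the equality persists on $\{x_3=0\}\cap\{h_0>0\}$ by continuity (both functions are Lipschitz and these points lie in $\overline{\Omega_{h_0}}$), while on $\{x_3=0\}\cap\{h_0=0\}$ both $\bar{P}_0$ and $P_0$ equal $\frac{1}{2}(x_1^2+x_2^2)$, by Corollary 2.20(ii) and admissibility (iii) respectively. This gives $P_0=\bar{P}_0$ on $\Omega_{h_0}\cup\{x_3=0\}$.

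\emph{Main obstacle.} The only nontrivial point is Step 1: one must verify that the convex $W^{1,\infty}$ function $P_0$ coincides with its $\Lambda_0$-biconjugate on $\Omega_{H_0}$ and that the extension keeps $\partial_{x_3}$ pinched in $[-\frac{1}{\delta},-\delta]$ — this is exactly where admissibility (ii) enters — and then that $h^H_{\tilde{P}_0}=h_0$. Once those are in hand, the duality sandwich in Steps 2--3 is bookkeeping resting entirely on Theorem 2.18 and the uniqueness corollaries.
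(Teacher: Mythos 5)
Your overall strategy is the same as the paper's: build a dual-feasible pair out of $P_0$ by conjugation, show its value of $J^H_{\nu_0}$ equals $E_{\nu_0}(h_0,(id\times\nabla P_0)_{\sharp}\sigma_{h_0})$ via the pointwise conjugacy identity $P_0(x)+R_0(\nabla P_0(x))=x\cdot\nabla P_0(x)$ a.e.\ on $\Omega_{h_0}$ (the paper uses the global supporting-plane inequality for the convex $P_0$ in exactly this way), and then conclude by weak duality plus the uniqueness statements of Corollaries 2.19--2.20, treating the $\{x_3=0\}$ set by admissibility (iii) and Corollary 2.20(ii). Your Step 1 is in fact more careful than the paper, which tacitly treats $P_0$ as a convex function on all of $\Omega_\infty$ and sets $\hat R_0(y)=\sup_{x\in\Omega_H}(x\cdot y-P_0(x))$ without addressing that Definition 3.1 only gives $P_0$ on $\Omega_{H_0}$; your double convexification is a legitimate way to supply that extension, and your verification that $h^H_{\tilde P_0}=h_0$ matches the paper's observation $h_0=h^H_{P_0}$.

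One concrete imprecision to fix: you conjugate over $\Lambda_0=B_{D_0}\times[-\frac1\delta,-\delta]$, but the functional $J^H_{\nu_0}$ and the Corollary 2.20 maximizer $(\bar P_0,\bar R_0)$ are set up over a possibly larger $\Lambda=B_D\times[-\frac1\delta,-\delta]$ (note that (2.32) involves $D$, that Corollary 2.20 needs $\Omega_2\subset B_D$, which need not hold for $D_0$, and that the application in Section 3.3 uses $D=D_0+(T+1)\max_{\Omega_2}|x|+1$). As written, your middle inequality $J^H_{\nu_0}(\bar P_0,\bar R_0)\geq J^H_{\nu_0}(\tilde P_0,R_0)$ requires $(\tilde P_0,R_0)\in\mathcal N_{\nu_0}$ for that same $\Lambda$, i.e.\ $\tilde P_0(x)+R_0(y)\geq x\cdot y$ also for $y\in\Lambda\setminus\Lambda_0$ and $x_3>H_0$, which does not follow from a biconjugation over $\Lambda_0$ alone; likewise Corollary 2.19(ii) is stated for maximizers conjugate over $\Omega_H$ and $\Lambda$, not $\Lambda_0$. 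The repair is a one-word change: define $\tilde P_0(x)=\sup_{y\in\Lambda}(x\cdot y-R_0(y))$ with the same $R_0(y)=\sup_{\bar x\in\Omega_{H_0}}(\bar x\cdot y-P_0(\bar x))$; since $\partial P_0(\Omega_{H_0})\subset\Lambda_0\subset\Lambda$ one still gets $\tilde P_0=P_0$ on $\Omega_{H_0}$, the slope bounds, $h^H_{\tilde P_0}=h_0$, and now feasibility over $\Omega_H\times\Lambda$ is automatic, so Steps 2--3 go through verbatim (alternatively, bypass the comparison with $(\bar P_0,\bar R_0)$ entirely, as the paper does, by using weak duality against all of $\mathcal M_{\nu_0}$ to see $(h_0,\gamma_0)$ is the minimizer and then invoking Theorem 2.18(ii)). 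With that adjustment your argument is correct and coincides with the paper's proof.
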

\begin{proof}
Define
 $$\hat{R}_0(y)=\sup_{x\in\Omega_{H}}(x\cdot y-P_0(x))$$
Then one has
$$P_0(x)+\hat{R}_0(y)\geq x\cdot y\,\,\,\,x\in\Omega_{H}\,\,y\in\Lambda$$
Also observe that
$$h_0(x_1,x_2)=h_{P_0}^H(x_1,x_2)$$
By $(iii)$ of above definition.\\
We just need to show $E_{\nu_0}(h_0,\gamma_0)=J_{\nu_0}^H(P_0,\hat{R}_0)$

Indeed, by definition
$$J_{\nu_0}^H(P_0,\hat{R}_0)=\int_{\Lambda}[\frac{1}{2}(y_1^2+y_2^2)-\hat{R}_0(y)]\nu_0(y)dy+\inf_{0\leq h\leq H}\int_{\Omega_{\infty}}[\frac{1}{2}(x_1^2+x_2^2)-P_0(x)]\sigma_h(x)dx$$
$$=\int_{\Lambda}[\frac{1}{2}(y_1^2+y_2^2)-\hat{R}_0(y)]\nu_0(y)dy+\int_{\Omega_{\infty}}[\frac{1}{2}(x_1^2+x_2^2)-P_0(x)]\sigma_{h_0}(x)dx$$
$$=\int_{\Omega_{\infty}\times\Lambda}[\frac{1}{2}(x_1^2+x_2^2)-P_0(x)+\frac{1}{2}(y_1^2+y_2^2)-\hat{R}_0(y)]d\gamma_0(x,y)$$

The second equality is due to above observation, and the third equality is because $\gamma_0$ has $\sigma_{h_0}$ and $\nu_0$ as marginals.

We will have shown $J_{\nu_0}^H(P_0,\hat{R}_0)=E_{\nu_0}(h_0,\gamma_0)$ provided we can show $P_0(x)+\hat{R}_0(y)=x\cdot y\,\,\gamma_0\,\,a.e$

Indeed, let`s show for $x_0\in\Omega_{h_0}$, such that $P_0$ is differentiable at $x_0$, we have
$$P_0(x_0)+\hat{R}_0(\nabla P_0(x_0))=x_0\cdot\nabla P_0(x_0)$$
This is implied by
$$P_0(x)\geq P_0(x_0)+\nabla P_0(x_0)(x-x_0)\,\,\,x\in\Omega_{\infty}$$
Since $P_0$ is convex in $\Omega_{\infty}$. So we have shown $J_{\nu}^H(P_0,\hat{R}_0)=E_{\nu_0}(h_0,\gamma_0)$.
So $(h_0,\gamma_0)$ defined above is the unique minimizer.

Now let $(\bar{P}_0,\bar{R}_0)$ be the unique maximizer of $J_{\nu_0}^H(P,R)$ given by corollary 2.20, then we have
$$\gamma_0=(id\times\nabla P_0)_{\sharp}\sigma_{h_0}=(id\times\nabla \bar{P}_0)_{\sharp}\sigma_{h_0}$$
and
$$h_0=h_{P_0}^H=h_{\bar{P}_0}^H$$
The same argument as in Corollary 2.19 implies $P_0=\bar{P}_0\,\,\,on\,\,\,\Omega_{h_0}$. Also for $(x_1,x_2)\in\Omega_2$ such that $h_0(x_1,x_2)=0$, we have $P(x_1,x_2,0)=\frac{1}{2}(x_1^2+x_2^2)$\\
\end{proof}
Next we define Eulerian solutions for the system in physical space.
\begin{defn}
Let $T>0$,$H>0$, Let $P(t,x)\in L^{\infty}([0,T],W^{1,\infty}(\Omega_H))$, such that
$$t\longmapsto P(t,\cdot)\,\,is \,\,convex\,\,\,\forall t\in[0,T]$$

Let $\mathbf{u}\in L^1([0,T]\times\Omega_{H})$.

Let $h\in L^{\infty}([0,T],W^{1,\infty}(\Omega_2))$be such that
 $$0\leq h<H\,\,\,\int_{\Omega_2}h=1$$

Then we say the triple $(P,\mathbf{u},h)$ is a weak Eulerian solution if the following is satisfied.\\
$(i)$$$\int_0^T\int_{\Omega_{h(t,\cdot)}}\mathbf{u}\cdot\nabla \phi(x,t)dxdt+\int_0^T\int
_{\Omega_{h(t,\cdot)}}\partial_t\phi(t,x)dxdt+\int_{\Omega_{h_0}}\phi(0,x)
dx=0$$
$\forall \phi\in C_c^1([0,T)\times\Omega_H)$\\

$(ii)$$P(t,x_1,x_2,h)=\frac{1}{2}(x_1^2+x_2^2)$ \\
$(iii)$$$\int_0^T\int_{\Omega_{h(t,\cdot)}}\nabla P(t,x)\partial_t\psi(t,x)dxdt+\int_0^T
\int_{\Omega_{h(t,\cdot)}}\nabla P(t,x)\cdot(\mathbf{u}\cdot\nabla)\psi(t,x)dxdt$$$$+\int_0^T\int_{\Omega_{h(t,\cdot)}}
[J(\nabla P(t,x)-x)]\cdot\psi(t,x)dxdt+\int_{\Omega_{h_0}}\nabla P_0(x)\psi(0,x)dx=0$$
$\forall \psi\in C_c^1([0,T)\times\Omega_H;\mathbf{R}^3)$.
\end{defn}

In the following we show that a weak Eulerian solution with sufficient regularity gives a classical solution. This justifies our definition.
\begin{prop}
Suppose $u\in C^1([0,T]\times\Omega_H)$, $h\in C^1([0,T]\times\Omega_2)$, $P\in C^2([0,T]\times\Omega_H)$ and $(P,\mathbf{u},h)$is a weak Eulerian solution, then they solve the equation in the classical sense.
\end{prop}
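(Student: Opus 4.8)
The plan is to take each of the three weak-formulation identities in Definition 3.5 and integrate by parts using the assumed $C^1$/$C^2$ regularity, then invoke the fundamental lemma of the calculus of variations on each. First I would work with identity (i). Since $u\in C^1$ and $h\in C^1$, the set $\Omega_{h(t,\cdot)}$ varies smoothly in $t$, so I can write $\int_{\Omega_{h(t,\cdot)}}\partial_t\phi\,dx = \frac{d}{dt}\int_{\Omega_{h(t,\cdot)}}\phi\,dx - \int_{\{x_3=h(t,\cdot)\}}\phi\,\partial_t h\,dS$ (Reynolds transport), and similarly integrate $\mathbf{u}\cdot\nabla\phi$ by parts over the fixed-in-space region $\Omega_{h(t,\cdot)}$ to pull a divergence onto $\mathbf{u}$ plus a boundary term $\int_{\partial\Omega_{h(t,\cdot)}}(\mathbf{u}\cdot\mathbf{n})\phi\,dS$. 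Testing with $\phi$ supported in the interior forces $\nabla\cdot\mathbf{u}=0$ in $\Omega_{h(t,\cdot)}$; then testing with $\phi$ supported near the top boundary $\{x_3=h\}$ and near the fixed boundary $\partial\Omega_2\times[0,H]$ separately yields the free-boundary kinematic condition $\partial_t h + \mathbf{u}_1\partial_{x_1}h + \mathbf{u}_2\partial_{x_2}h = \mathbf{u}_3$ on $\{x_3=h\}$ and the no-penetration condition $\mathbf{u}\cdot\mathbf{n}=0$ on the fixed lateral/bottom boundary; the $t=0$ term pins down $h(0,\cdot)=h_0$.

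Next I would treat identity (iii) analogously, componentwise. Expanding $\nabla P\cdot(\mathbf{u}\cdot\nabla)\psi + \nabla P\,\partial_t\psi$ and integrating by parts in both $t$ and $x$ (again with the Reynolds term for the moving domain), using $\nabla\cdot\mathbf{u}=0$ already established, one collects the material derivative $D_t(\nabla P) = \partial_t(\nabla P) + (\mathbf{u}\cdot\nabla)(\nabla P)$ paired against $\psi$ in the interior, plus the forcing term $J(\nabla P - x)$; the interior test functions then give $D_t(\nabla P) = J(\nabla P - x)$ pointwise in $\Omega_{h(t,\cdot)}$, which is exactly (1.11). Boundary terms from this integration by parts involve $(\mathbf{u}\cdot\mathbf{n})$ on the fixed boundary (which vanishes) and a term on $\{x_3=h\}$; I would check that the latter is consistent and imposes no extra constraint, or absorb it using the kinematic condition. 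Condition (ii), $P(t,x_1,x_2,h)=\frac12(x_1^2+x_2^2)$, is already pointwise, hence (1.13) holds as stated, and together with $\nabla P = (\mathbf{u}_2^g,-\mathbf{u}_1^g,-\rho)$ serving as the definition of $\mathbf{u}^g$ and $\rho$ via (1.4), the full system (1.1)--(1.9) is recovered: (1.1) is the horizontal part of $D_t(\nabla P)=J(\nabla P-x)$ after substituting the definitions of $P$ and $\mathbf{u}^g$, (1.3) follows from the third component $D_t(\partial_{x_3}P)=0$, and (1.2), (1.5)--(1.7) were obtained from identity (i).

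The main obstacle I anticipate is the careful bookkeeping of the boundary integrals over the free surface $\{x_3=h(t,x_1,x_2)\}$ when integrating by parts over the time-dependent domain — in particular making sure the surface terms generated from the $\partial_t\phi$ (Reynolds) contribution and from the spatial $\mathbf{u}\cdot\nabla\phi$ contribution combine to give precisely the kinematic free-boundary condition rather than an over- or under-determined relation, and likewise that in (iii) no spurious condition on $\nabla P$ at the free surface is forced. A secondary technical point is justifying the localization argument: one must choose test functions adapted to neighborhoods of the interior, the fixed boundary, and the free boundary respectively, which is routine given $h<H$ and $h\in C^1$ so that $\{x_3=h\}$ is a smooth graph bounded away from $\{x_3=H\}$. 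Once the localization is set up, each conclusion follows from the standard lemma that a continuous function integrating to zero against all compactly supported test functions vanishes.
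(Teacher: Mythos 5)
Your plan is correct and follows essentially the same route as the paper's proof: integrate the weak identity (i) by parts with the Leibniz/Reynolds rule for the moving domain $\Omega_{h(t,\cdot)}$, localize test functions to extract $\nabla\cdot\mathbf{u}=0$, the no-penetration condition and the kinematic free-boundary condition, and then integrate identity (iii) by parts (writing $\nabla P\cdot(\mathbf{u}\cdot\nabla)\psi$ as a divergence and using the already-derived divergence-free and boundary conditions, with the free-surface terms cancelling via the kinematic condition) to recover $D_t(\nabla P)=J(\nabla P-x)$. This matches the paper's argument step for step.
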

\begin{proof}
First we wish to deduce from $(i)$ above the divergence free of $\mathbf{u}$ and
the free boundary condition. Indeed under our regularity assumption
$$\int_0^T\int_{\Omega_h}\mathbf{u}\cdot\nabla \phi(t,x)dxdtdxdt$$
$$=\int_0^T\int_{\partial\Omega_h-\{x_3=h\}}(\mathbf{u}\cdot\mathbf{n})\phi(t,x)dSdt
+\int_0^T\int_{\{x_3=h\}}(\mathbf{u}\cdot\mathbf{n})\phi(t,x)dSdt$$$$-\int_0^T\int_{\Omega_h}(\nabla\cdot\mathbf{u})\phi(t,x)dxdt$$
The second term above can be written as
$$\int_0^T\int_{\{x_3=h\}}(\mathbf{u}\cdot\mathbf{n})\phi(t,x)dSdt=\int_0^T\int_{\Omega_2}(-u_1\partial_{x_1}h
-u_2\partial_{x_2}h+u_3)\phi(x,t)|_{x_3=h}dxdt$$
For the rest of the terms
$$\int_0^T\int
_{\Omega_h}\partial_t\phi(t,x)dxdt
dx$$$$=\int_0^T\int_{\Omega_2}(\partial_t\int_0^{h(t,x_1,x_2)}\phi dx_3)dt-\int_0^T\int_{\Omega_2}\partial_th\phi|_{x_3=h}dx_1dx_2dt$$
$$=-\int_{\Omega_{h_0}}\phi dx-\int_0^T\int_{\Omega_2}\partial_th\phi|_{x_3=h}dx_1dx_2dt$$
Combining terms to obtain
$$\int_0^T\int_{\partial\Omega_h-\{x_3=h\}}(\mathbf{u}\cdot\mathbf{n})\phi(t,x)dSdt
-\int_0^T\int_{\Omega_h}(\nabla\cdot\mathbf{u})\phi(t,x)dxdt$$$$+\int_0^T\int_{\Omega_2}(-\partial_th-u_1\partial_{x_1}-u_2\partial_{x_2}h+u_3)\phi(x,t)|_{x_3=h}dxdt=0$$
By choosing appriorate test funtions we see that
$$\nabla\cdot\mathbf{u}=0\,\,\,in\,\,\,\Omega_h$$As well as
$$\mathbf{u}\cdot\mathbf{n}=0\,\,on\,\,\partial_{\Omega_h}-\{x_3=h\}$$and
$$\partial_th+u_1\partial_{x_1}h+u_2\partial_{x_2}h=u_3\,\,\,on\,\,\,\{x_3=h\}$$
Finally we recover the equation $D_t(\nabla P)=J(\nabla P-x)$\\
First observe that
$$\int_0^T\int_{\Omega_h}\nabla P(t,x)\partial_t\psi(t,x)dxdt$$$$
=\int_{\Omega_2}dx_1dx_2\int_0^T(\partial_t\int_0^{h(t,x_1,x_2)}\nabla P(t,x)\psi(t,x)dx_3-\int_0^{h(t,x_1,x_2)}\partial_t(\nabla P)(t,x)\psi(t,x)dx_3)$$$$-\int_{\Omega_2}dx_1dx_2\int_0^T\nabla P(t,x)\psi(t,x)\partial_t h(t,x)|_{x_3=h}$$
$$=-\int_{\Omega_{h_0}}\int_0^T\nabla P_0(x)\psi(0,x)dx-\int_0^T\int_{\Omega_h}\partial_t(\nabla P)(t,x)\psi(t,x)dxdt-$$$$\int_{\Omega_2}dx_1dx_2\int_0^T\nabla P(t,x)\psi(t,x)\partial_t h(t,x)|_{x_3=h}$$
while the term
$$\int_0^T\int_{\Omega_h}\nabla P(t,x)(\mathbf{u}\cdot\nabla)\psi(t,x)dxdt$$$$=\int_0^T\int_{\Omega_h}\nabla
\cdot[(\nabla P\cdot\psi)\mathbf{u}]-(\mathbf{u}\cdot\nabla)(\nabla P(t,x))\cdot\psi(t,x)dxdt$$$$=\int_0^T\int_{\partial\Omega_h}(\nabla P\cdot\psi)\mathbf{u}\cdot\mathbf{n}dSdt-\int_0^T\int_{\Omega_h}(\mathbf{u}\cdot\nabla)(\nabla P(t,x))\cdot\psi(t,x)dxdt$$$$=\int_0^T\int_{\Omega_2}(-u_1\partial_{x_1}h-u_2\partial_{x_2}h+u_3)(\nabla P\cdot\psi)(t,x)|_{x_3=h}dxdt-\int_0^T\int_{\Omega_h}(\mathbf{u}\cdot\nabla)(\nabla P(t,x))\cdot\psi(t,x)dxdt$$
In the above ,we  used divergence free of $\mathbf{u}$ as well as the boundary condition $\mathbf{u}\cdot\mathbf{n}=0\,\,\,on\,\,\,\partial\Omega_h-\{x_3=h\}$

Now collect terms and use the free boundary condition to see the resulting equation is
exactly what we want.
\end{proof}
Now we define the notion of weak Lagrangian solution with admissible initial data.
\begin{defn}
Let $T>0$,$q\in(1,\infty)$, Let h be such that $$h(t,x_1,x_2)\in L^{\infty}([0,T),W^{1,\infty}(\Omega_2))\bigcap C^0([0,T]\times\bar{\Omega}_2)$$
Let $P(t,x_1,x_2,x_3)$ defined  on $\Omega_H$ be such that $$t\longmapsto P(t,\cdot)\,\,\, is \,\,\, convex\,\,\,\forall t\in[0,T]$$and additionally
$$P\in L^{\infty}([0,T],W^{1,\infty}(\Omega_H)\bigcap C([0,T]\times W^{1,r}(\Omega_H))$$
which assumes initial data $(h_0,P_0)$ on $\Omega_{h_0}\bigcup\{x_3=0\}$, i.e $h(0,x)=h_0(x)\,\,\,x\in\Omega_2$ and $P(0,x)=P_0(x)\,\,\,x\in\Omega_{h_0}\bigcup\{x_3=0\}$\\
Let $F:[0,T]\times\Omega_{h_0}\rightarrow \Omega_H$ be a Borel map and such that for some $1<r<\infty$.
$$F\in C([0,T];L^r(\sigma_{h_0}d\mathcal{L}^3,\mathbf{R}^3))$$
Then we say the triple $(h,P,F)$ is a weak Lagrangian solution with initial data$(h_0,P_0)$(admissible in the sense above) if the following holds:

$(i)$For each $t\in[0,T]$,$$0\leq h(t,x_1,x_2)<H\,\,\,and\,\,\,\int_{\Omega_2}h=1$$

$(ii)$$P(t,x_1,x_2,h(t,x_1,x_2))=\frac{1}{2}(x_1^2+x_2^2)$

$(iii)$$F(0,x)=x\,\,\sigma_{h_0}d\mathcal{L}^3\,\,a.e$ and $F_{t\sharp}\sigma_{h_0}=\sigma_{h_t}$

$(iv)$There exists Borel map $$F^*:\bigcup_{t\in[0,T]}\{t\}\times\Omega_{h(t,\cdot)}\rightarrow\Omega_H$$ such that $$F_t(F_t^*(x))=x\,\,\sigma_h\,\,a.e\,\,\,\,and \,\,\,F_t^*(F_t(x))=x\,\,\sigma_{h_0}\,\,a.e$$

$(v)$Put $$Z(t,x)=\nabla P_t(F_t(x))$$then the equation
$$\partial_tZ(t,x)=J(Z(t,x)-F(t,x))\,\,\,in\,\,\,\Omega_{h_0}$$is satisfied in the weak sense.i.e
$$\int_0^T\int_{\Omega_{h_0}}Z(t,x)\partial_t\phi(t,x)dxdt+\int_0^T\int_{\Omega_{h_0}}
J(Z(t,x)-F(t,x))\phi(t,x)dxdt$$$$+\int_{\Omega_{h_0}}Z(0,x)\phi(0,x)dx=0$$
$\forall \phi(t,x)\in C_c^1([0,T)\times \Omega_H)$
\end{defn}
\begin{rem}
$(i)$We only assume $P$ achieves the initial data on $\Omega_{h_0}\bigcup\{x_3=0\}$ instead of $\Omega_H$ because only the region $\Omega_{h_0}$ is physically relevant.

$(ii)$In the $(v)$ of above definition, it is possible to choose test function $\phi$ such that $\partial_t\phi\in L^{\infty}([0,T)\times\Omega_H)$ without assuming $\nabla_x\phi$ exists. Indeed, we may define $\phi(-t,x)\equiv\phi(0,x)$ for $t>0$ and convolve with $J_s(t,x)$ with s small such that $\phi_s:=\phi*j_s$ is a legitimate test function. Then $\partial_t\phi_s,\phi_s$ converges a.e in $[0,T]\times\Omega_{\infty}$ to $\partial_t\phi,\phi$ as $s\rightarrow0$ then the result follows from dominated convergence.

$(iii)$In the $(iv)$ above, note that
$$\bigcup_{t\in[0,T]}\{t\}\times\Omega_{h(t,\cdot)}=\{(t,x)\in[0,T]\times\Omega_H|0<x_3<h(t,x_1,x_2)\}$$
\end{rem}
Of course we must show above definition does not lose any information, i.e we need to show with additional regularity assumption, weak Lagrangian solution gives weak Eulerian
solution.\\
\begin{prop}
Suppose $(h,P,F)$ is a weak Lagrangian solution, suppose also that $\partial_t F_t\in L^{\infty}([0,T]\times\Omega_H)$, Define $\mathbf{u}(t,x)=\partial_tF_t(F_t^*(x))$, then $(P,\mathbf{u},h)$ gives a weak Eulerian solution.
\end{prop}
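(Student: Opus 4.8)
Throughout, write $Z(t,a)=\nabla P_t(F_t(a))$ as in the definition of weak Lagrangian solution. The strategy is to pull every integral over the moving domain $\Omega_{h(t,\cdot)}$ back to the fixed domain $\Omega_{h_0}$ along the flow $F_t$, using $F_{t\sharp}\sigma_{h_0}=\sigma_{h_t}$ as the change of variables, and to exploit the pointwise identity $\partial_tF_t(a)=\mathbf{u}(t,F_t(a))$ for $\sigma_{h_0}$-a.e.\ $a$, which follows from $\mathbf{u}(t,x)=\partial_tF_t(F_t^*(x))$ together with $F_t^*(F_t(a))=a$ a.e. Note first that condition $(ii)$ of the Eulerian definition, $P(t,x_1,x_2,h)=\frac{1}{2}(x_1^2+x_2^2)$, is literally condition $(ii)$ of the Lagrangian definition; the requirement $\mathbf{u}\in L^1([0,T]\times\Omega_H)$ follows from $\partial_tF\in L^\infty$ (so $\mathbf{u}$ is essentially bounded on $\Omega_{h_t}$, and may be set to $0$ elsewhere); and the convexity and $W^{1,\infty}$/continuity conditions on $P$ and $h$ are inherited. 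So only the weak identities $(i)$ and $(iii)$ need proof. I would also record at the outset that, since the distributional time derivative $\partial_tF$ lies in $L^\infty([0,T]\times\Omega_{h_0})$, a Fubini argument shows that for $\sigma_{h_0}$-a.e.\ $a$ the curve $t\mapsto F_t(a)$ is Lipschitz with a.e.\ derivative $\partial_tF_t(a)$.

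For $(i)$, fix $\phi\in C_c^1([0,T)\times\Omega_H)$ and set $\Phi(t,a):=\phi(t,F_t(a))$. For a.e.\ $a$, $t\mapsto\Phi(t,a)$ is Lipschitz and, by the chain rule, $\partial_t\Phi(t,a)=(\partial_t\phi)(t,F_t(a))+\nabla\phi(t,F_t(a))\cdot\partial_tF_t(a)$ for a.e.\ $(t,a)$. Integrating in $t$, using $\Phi(T,\cdot)=0$ (compact support in $[0,T)$) and $\Phi(0,a)=\phi(0,a)$ (since $F_0=\mathrm{id}$), gives $-\phi(0,a)=\int_0^T\partial_t\Phi(t,a)\,dt$. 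Integrating over $a\in\Omega_{h_0}$ (Fubini applies, all integrands being bounded on a set of finite measure), substituting $\partial_tF_t(a)=\mathbf{u}(t,F_t(a))$, and pushing forward by $F_{t\sharp}\sigma_{h_0}=\sigma_{h_t}$ turns the right-hand side into $\int_0^T\int_{\Omega_{h(t,\cdot)}}[\partial_t\phi+\mathbf{u}\cdot\nabla\phi]\,dx\,dt$, which is exactly the identity in $(i)$ after rearrangement.

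For $(iii)$, fix $\psi\in C_c^1([0,T)\times\Omega_H;\mathbf{R}^3)$ and set $\Psi(t,a):=\psi(t,F_t(a))$ componentwise. Changing variables $x=F_t(a)$ in the four integrals in $(iii)$: the $\partial_t\psi$ term and the $(\mathbf{u}\cdot\nabla)\psi$ term combine, via the chain rule and $\mathbf{u}(t,F_t(a))=\partial_tF_t(a)$, into $\int_0^T\int_{\Omega_{h_0}}Z\cdot\partial_t\Psi\,da\,dt$; the $J(\nabla P-x)$ term becomes $\int_0^T\int_{\Omega_{h_0}}J(Z-F)\cdot\Psi\,da\,dt$; and the initial term becomes $\int_{\Omega_{h_0}}Z(0,a)\cdot\Psi(0,a)\,da$, using $F_0=\mathrm{id}$ and $P(0,\cdot)=P_0$ on $\Omega_{h_0}$, so that $Z(0,a)=\nabla P_0(a)$ and $\Psi(0,a)=\psi(0,a)$. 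Hence the whole Eulerian expression in $(iii)$ equals $\sum_i\big[\int_0^T\!\int_{\Omega_{h_0}}Z^i\partial_t\Psi^i+\int_0^T\!\int_{\Omega_{h_0}}(J(Z-F))^i\Psi^i+\int_{\Omega_{h_0}}Z^i(0)\Psi^i(0)\big]$, and each bracket vanishes by condition $(v)$ of the weak Lagrangian solution (which, tested against scalar $\phi$, is an $\mathbf{R}^3$-valued identity, i.e.\ one scalar identity per component) applied with $\phi=\Psi^i$.

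The main obstacle — the only nontrivial point — is that $\Psi^i(t,a)=\psi^i(t,F_t(a))$ is \emph{not} of class $C_c^1$ in $(t,a)$: the regular Lagrangian flow $F_t$ is not classically differentiable in $a$, so $\Psi^i$ is merely bounded Borel in $a$ and Lipschitz in $t$. This is precisely the situation anticipated in the remark following the definition of weak Lagrangian solution: extending $\Psi^i$ to negative times by its value at $t=0$, mollifying in time, and passing to the limit by dominated convergence — legitimate because $Z$ (from $P\in L^\infty W^{1,\infty}$), $F$, $\Psi^i$, $\partial_t\Psi^i$ and $\mathbf{u}$ all lie in $L^\infty$ on the relevant finite-measure space-time domains — shows that identity $(v)$ remains valid for all bounded Borel test functions that are Lipschitz in $t$ and compactly supported in $[0,T)$, which is exactly the class containing each $\Psi^i$. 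With this justification each bracket above is zero, so $(iii)$ holds, and together with $(i)$ and $(ii)$ this shows $(P,\mathbf{u},h)$ is a weak Eulerian solution.
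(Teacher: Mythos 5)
Your proposal is correct and follows essentially the same route as the paper: pull all integrals back to $\Omega_{h_0}$ via $F_{t\sharp}\sigma_{h_0}=\sigma_{h(t,\cdot)}$, use the chain rule along the Lipschitz curves $t\mapsto F_t(a)$ together with $\partial_tF_t(a)=\mathbf{u}(t,F_t(a))$, and verify $(iii)$ by plugging the composed test function $\psi(t,F_t(x))$ into condition $(v)$, justified by the remark that $(v)$ extends to test functions that are merely Lipschitz in $t$ and bounded Borel in $x$ via mollification in time and dominated convergence. This is precisely the paper's argument, so no further comparison is needed.
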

\begin{proof}
We only need to check that $(i)$ and $(iii)$ of the definition of Eulerian solution are satisfied.\\First notice that $F_{t\sharp}\sigma_{h_0}=\sigma_{h}$. Thus for $\phi\in
C_c^1([0,T)\times\Omega_{\infty})$, we have
$$\int_0^T\int_{\Omega_h}\mathbf{u}\cdot\nabla\phi(x,t)dxdt
=\int_0^T\int_{\Omega_h}\partial_tF_t(F_t^*(x))\nabla\phi(x,t)dxdt$$
$$=\int_0^T\int_{\Omega_{h_0}}\partial_tF_t(x)\nabla\phi(F_t(x),t)dxdt$$
$$=\int_0^T\int_{\Omega_{h_0}}\partial_t(\phi(F_t(x),t))-\partial_t\phi(F_t(x),t)dxdt$$
$$=-\int_{\Omega_{h_0}}\phi(x,0)dx-\int_0^T\int_{\Omega_{h_0}}\partial_t\phi(F_t(x),t)
dxdt$$$$=-\int_{\Omega_{h_0}}\phi(x,0)dx-
\int_0^T\int_{\Omega_{h}}\partial_t\phi(x,t)dxdt$$
This verifies $(i)$.\\In the second line above, we used $F_{t\sharp}\sigma_{h_0}=\sigma_{h
(t,\cdot)}$ and also $F_t^*(F_t(x))=x\,\,\sigma_{h_0}\,\,a.e$
In the third line above, we used $$for\,\,\sigma_{h_0}\,\,a.e\,\,x\,\\,\,\,\partial_t(\phi(F_t(x),t))=\partial_t\phi(F_t(x),t)+\nabla\phi(F_t(x),t)\partial
_tF_t(x)$$since $t\longmapsto F_t(x)$ is Lipschitz.

Now we verify $(iii)$. Indeed
$$\int_0^T\int_{\Omega_h}\nabla P(t,x)\partial_t\psi(t,x)dxdt=
\int_0^T\int_{\Omega_{h_0}}Z(t,x)\partial_t\psi(t,F_t(x))dxdt$$
$$=\int_0^T\int_{\Omega_{h_0}}Z(t,x)[\partial_t(\psi(t,F_t(x))-\nabla\psi(t,F_t(x))\partial_tF_t(x)]dxdt$$
$$=\int_0^T\int_{\Omega_{h_0}}Z(t,x)\partial_t(\psi(t,F_t(x)))dxdt-\int_0^T\int_{\Omega_h}
(\mathbf{u}\cdot\nabla )\psi(t,x)\nabla P(t,x)dxdt$$
In the second line above, we used the fact that since $\partial_tF_t(x)\in L^{\infty}([0,T]\times\Omega_H)$,$\partial_t(\psi(t,F_t(x)))\in L^{\infty}([0,T]\times\Omega_H)$ and usual chain rule holds.\\
Now we  choose test function as $\psi(t,F_t(x))$ in the definition$(v)$ above to get the following. This is justified because of the remark after the definition above.
$$\int_0^T\int_{\Omega_{h_0}}Z(t,x)\partial_t(\psi(t,F_t(x))dxdt$$$$=
-\int_0^T\int_{\Omega_{h_0}}J(Z(t,x)-F(t,x))\psi(t,F_t(x))dxdt-\int_{\Omega_{h_0}}Z(0,x)\psi(0,x)dx$$
$$=-\int_0^T\int_{\Omega_h}J(\nabla P(t,x)-x)\psi(t,x)dxdt-\int_{\Omega_{h_0}}\nabla P_0(x)\psi(0,x)dx$$
Put things together, we get $(iii)$.
\end{proof}
Now we can state the existence result of weak Lagrangian solutions.
\begin{thm}
Let $T>0$,$,1<q<\infty$, and admissible initial data $(h_0,P_0)$ be given, suppose also $$\nu_0:=\nabla P_{0\sharp}\sigma_{h_0}\in L^q(\mathbf{R}^3)\,\,$$
Suppose also that
\begin{equation}
H>\frac{2}{\mathcal{L}^2(\Omega_2)}+\frac{2diam\,\,\Omega_2}{\delta}[||\nabla P_0||_{L^{\infty}}T+\max_{\Omega_2}|x|(T+2)+2]
\end{equation}
then there exists a weak Lagrangian solution $(h,P,F)$ on $[0,T]\times\Omega_H$. Moreover the function $Z(\cdot,x)\in W^{1,\infty}(\mathbf{R}^3)$ for a.e $x\in\Omega_{h_0}$ and the equations are satisfied in the following sense:
$$\partial_tZ(t,x)=J(Z(t,x)-F(t,x))\,\,\,\,for\,\,\,\mathcal{L}^4-a.e\,\,\,in\,\,\,(t,x)\in[0,T]\times\Omega_{h_0}$$
$$Z(0,x)=\nabla P_0(x)\,\,\,for\,\,\,\mathcal{L}^3-a.e\,\,in\,\,x\in\Omega_{h_0}$$
\end{thm}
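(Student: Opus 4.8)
\emph{Proof proposal.} The plan, following \cite{Cullen-Feldman}, is to solve the problem first in the dual variables, produce a regular Lagrangian flow for the (BV, divergence free) dual velocity field, and then conjugate it with the Legendre maps $\nabla P_t$, $\nabla R_t$ to obtain the physical flow $F$. First I would set $D_0=\|\nabla P_0\|_{L^\infty}+1$, so that $\nu_0:=\nabla P_{0\sharp}\sigma_{h_0}$ is supported in $\Lambda_0=B_{D_0}(0)\times[-\frac{1}{\delta},-\delta]$, and choose $D$, $H$ satisfying (2.31)--(2.32), which (3.7) makes possible, with $\Lambda=B_D(0)\times[-\frac{1}{\delta},-\delta]$. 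Theorem 2.27 then yields $(h,P,R)$ on $[0,T]$ with $\nu(t,\cdot)=\nabla P_{t\sharp}\sigma_{h(t,\cdot)}\in L^q(\Lambda)$ narrowly continuous, $R(t,\cdot)=P^*(t,\cdot)$, the regularity of (iv)--(v) there, and
\[
\partial_t\nu+\nabla\cdot(\nu\mathbf{w})=0,\qquad \mathbf{w}(t,y)=J(y-\nabla R(t,y)).
\]
By Proposition 3.3, $h(0,\cdot)=h_0$ and $P(0,\cdot)=P_0$ on $\Omega_{h_0}\cup\{x_3=0\}$, so the initial data of Definition 3.6 is matched; Corollary 2.13 gives $0\le h(t,\cdot)<H$; and, since $\nu(t,\cdot)\ll\mathcal{L}^3$, Theorem 2.18(iii) (and its proof) gives $\nabla R_{t\sharp}\nu(t,\cdot)=\sigma_{h(t,\cdot)}$ together with $\nabla R_t\circ\nabla P_t=\mathrm{id}$ $\sigma_{h_t}$-a.e.\ and $\nabla P_t\circ\nabla R_t=\mathrm{id}$ $\nu_t$-a.e.

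Next I would analyze the dual velocity $\mathbf{w}(t,\cdot)$. Each $R(t,\cdot)$ is convex on the fixed bounded set $\Lambda$ with gradient bounded uniformly in $t$, hence $\nabla R(t,\cdot)\in BV(\Lambda;\mathbf{R}^3)$ with the total variation of $D^2R(t,\cdot)$ bounded uniformly in $t$; thus $\mathbf{w}(t,\cdot)$ is bounded, lies in $L^1([0,T];BV(\Lambda))$, and is divergence free since $\mathrm{tr}\,J=0$ and the distributional mixed derivatives of $R$ commute. By Theorem 2.27(iv), $t\mapsto\mathbf{w}(t,\cdot)$ is continuous into $L^r(\Lambda)$. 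The theory of \cite{BV vector} then provides the unique regular Lagrangian flow $\mathcal{F}:[0,T]\times\mathbf{R}^3\to\mathbf{R}^3$ of $\mathbf{w}$ (extended boundedly off $\Lambda$), which admits an a.e.\ inverse $\mathcal{F}_t^{-1}$ and, being divergence free, preserves $\mathcal{L}^3$; moreover $t\mapsto\mathcal{F}_t(y)$ is Lipschitz with constant $\|\mathbf{w}\|_{\infty}$ for a.e.\ $y$. Because $\nu(t,\cdot)\in L^q$ with $q>1$ is a bounded-density weak solution of the continuity equation with velocity $\mathbf{w}$ and datum $\nu_0$, uniqueness in the DiPerna--Lions and Ambrosio theory gives $\nu(t,\cdot)=\mathcal{F}_{t\sharp}\nu_0$.

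I would then define, for $\sigma_{h_0}$-a.e.\ $x\in\Omega_{h_0}$ and $\sigma_{h_t}$-a.e.\ $x\in\Omega_{h(t,\cdot)}$,
\[
F_t(x):=\nabla R_t\bigl(\mathcal{F}_t(\nabla P_0(x))\bigr),\qquad F_t^*(x):=\nabla R_0\bigl(\mathcal{F}_t^{-1}(\nabla P_t(x))\bigr).
\]
These are well defined a.e.: the set where $\mathcal{F}_t$ (resp.\ $\mathcal{F}_t^{-1}$) is undefined is $\mathcal{L}^3$-null, hence $\nu_0$-null (as $\nu_0\ll\mathcal{L}^3$), hence has $\sigma_{h_0}$-null preimage under $\nabla P_0$ (since $\nabla P_{0\sharp}\sigma_{h_0}=\nu_0$); and the non-differentiability sets of $\nabla R_t$, $\nabla R_0$, $\nabla P_t$ are null under $\nu_t=\mathcal{F}_{t\sharp}\nu_0$, $\nu_0$, $\sigma_{h_t}$ respectively. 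Pushing forward gives $F_{t\sharp}\sigma_{h_0}=\nabla R_{t\sharp}\mathcal{F}_{t\sharp}\nu_0=\nabla R_{t\sharp}\nu_t=\sigma_{h_t}$, which is (iii); (i) follows from $\int_{\Omega_2}h=1$ (Theorem 2.27) and Corollary 2.13; (ii) is property (ii) of Corollary 2.20 for $(P_t,R_t)$; and composing the a.e.\ identities $\nabla R_0\circ\nabla P_0=\mathrm{id}$ ($\sigma_{h_0}$-a.e.), $\nabla P_t\circ\nabla R_t=\mathrm{id}$ ($\nu_t$-a.e.), $\mathcal{F}_t^{-1}\circ\mathcal{F}_t=\mathrm{id}$, using invariance of the relevant measures, yields $F_t^*\circ F_t=\mathrm{id}$ $\sigma_{h_0}$-a.e.\ and $F_t\circ F_t^*=\mathrm{id}$ $\sigma_{h_t}$-a.e., i.e.\ (iv); also $F_0=\nabla R_0\circ\nabla P_0=\mathrm{id}$. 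Continuity $F\in C([0,T];L^r(\sigma_{h_0}))$ follows from the $L^r$-continuity of $\mathcal{F}_t$ and of $\nabla R_t$, the uniform Lipschitz bound on $R_t$, and the $L^q$ bound on $\nu_t$. Finally, setting $Z(t,x)=\nabla P_t(F_t(x))$ and using $\nabla P_t\circ\nabla R_t=\mathrm{id}$ $\nu_t$-a.e.\ together with the fact that $\mathcal{F}_t(\nabla P_0(\cdot))$ has law $\nu_t$, I get $Z(t,x)=\mathcal{F}_t(\nabla P_0(x))$ for $\mathcal{L}^4$-a.e.\ $(t,x)$ by Fubini; hence $Z(\cdot,x)\in W^{1,\infty}$, $Z(0,x)=\nabla P_0(x)$, and
\[
\partial_tZ(t,x)=\mathbf{w}(t,\mathcal{F}_t(\nabla P_0(x)))=J\bigl(Z(t,x)-\nabla R_t(\mathcal{F}_t(\nabla P_0(x)))\bigr)=J(Z(t,x)-F(t,x))
\]
for $\mathcal{L}^4$-a.e.\ $(t,x)$; integrating this pointwise identity against a test function and integrating by parts in $t$ produces the weak form in Definition 3.6(v).

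\textbf{Main obstacle.} I expect the crux to be two intertwined points. First, the identification $\nu(t,\cdot)=\mathcal{F}_{t\sharp}\nu_0$: this is where $q>1$ is used, via the uniqueness of bounded-density (renormalized) solutions of the continuity equation for the BV, bounded-divergence field $\mathbf{w}$, and one must check $\mathbf{w}$ genuinely meets those structural hypotheses uniformly in $t$. Second, the null-set bookkeeping in Steps 3--4: every exceptional set---non-differentiability of the Legendre potentials $P_t,R_t,P_0,R_0$, non-definition or non-invertibility of $\mathcal{F}_t$---must be shown negligible for the particular measure against which it is used, so that the conjugated maps $F_t,F_t^*$ and the identity $Z_t=\mathcal{F}_t\circ\nabla P_0$ are legitimate and (iv) holds exactly. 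The asserted $L^r$-in-time continuity of $F$ is a secondary technical point requiring the stability of regular Lagrangian flows combined with the $W^{1,r}$-continuity of $R_t$ from Theorem 2.27.
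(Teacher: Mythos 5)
Your overall architecture is exactly the paper's: push the initial data to the dual space ($\nu_0=\nabla P_{0\sharp}\sigma_{h_0}$, supported in $\Lambda_0$), take the dual-space solution $(h,P,R)$ from the time-stepping existence theorem (Theorem 2.30) with $D,H$ chosen as in (2.31)--(2.32) (which the hypothesis on $H$ permits), build the Ambrosio regular Lagrangian flow $\Phi$ for the bounded, divergence-free BV extension of $\mathbf w=J(y-\nabla R)$, define $F_t=\nabla R_t\circ\Phi_t\circ\nabla P_0$ and $F_t^*=\nabla R_0\circ\Phi_t^*\circ\nabla P_t$, and verify Definition 3.7 using Proposition 3.3 for the initial data and the marginal/inverse identities, with $Z_t=\Phi_t\circ\nabla P_0$ giving the a.e.\ ODE. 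The null-set bookkeeping you outline (non-differentiability sets killed by $\nu_0,\nu_t\ll\mathcal L^3$ and by the push-forward relations) is the same as in the paper's Lemmas 3.19--3.25.

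The one place you genuinely deviate is the identification $\nu(t,\cdot)=\Phi_{t\sharp}\nu_0$, and as written this step has a gap. You call $\nu(t,\cdot)$ a ``bounded-density weak solution'' and invoke DiPerna--Lions/Ambrosio uniqueness for the continuity equation; but $\nu(t,\cdot)$ is only in $L^q$, $q>1$, not $L^\infty$, and the velocity $\mathbf w$ is only BV (a gradient of a convex function), not $W^{1,q'}$. Ambrosio's uniqueness/comparison theorem for BV fields with bounded divergence is stated for bounded densities, while the DiPerna--Lions $L^p$ well-posedness requires Sobolev regularity of the field; neither covers your situation directly, so uniqueness of the continuity equation in the class $L^\infty_t L^q$ needs an extra argument (e.g.\ the superposition principle combined with uniqueness of the regular Lagrangian flow). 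The paper avoids this entirely: in Proposition 3.20 (following Cullen--Feldman, Prop.\ 2.11) the identity $\nu_t=\Phi_{t\sharp}\nu_0$ is obtained from Ambrosio's stability theorem (Theorem 5.5 of the BV paper) applied to the smooth, divergence-free mollified velocities $\bar{\mathbf u}_{s_j}$ of the time-stepping scheme, whose flows push $\nu^0_{s_j}$ to $\nu_{s_j}(t,\cdot)$ by construction, together with the strong $L^r$ convergence $\bar{\mathbf u}_{s_j}\to\mathbf w$ established in the proof of Theorem 2.30 (via the stability Theorem 2.25 for the optimizers). So either import that stability argument, or supply the superposition-principle argument explicitly; the uniqueness claim as you state it is not justified by the cited theory.
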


\subsection{Lagrangian flow in dual space}
Next we study the Lagrangian flow in dual space. It is similar to \cite{Cullen-Feldman} section 2.3\\
Recall  that $\nu_0=\nabla P_{0\sharp}\sigma_{h_0}$,so
\begin{equation}
supp\,\,\nu_0\subset B_{D_0}(0)\times[-\frac{1}{\delta},-\delta]:=\Lambda_0
\end{equation}
where
\begin{equation}
D_0={||\nabla P_0||_{L^{\infty}}+1}
\end{equation}
Put
\begin{equation}
D=D_0+T\max_{\Omega_2}|x|+1
\end{equation}
 and define
\begin{equation}
\Lambda=B_D(0)\times[-\frac{1}{\delta},-\delta]
\end{equation}
Choose
$H$ as in (3.10).
Let $h,P,R,\nu$ be the dual space solution given by Theorem 2.30 where we have chosen the parameters $T,H,D,D_0$ as here, recall that $\nu(t,\cdot)$ satisfies the transport equation
$$\partial_t\nu+\mathbf{w}\cdot\nabla\nu=0\,\,\,\nu(0,\cdot)=\nu_0$$
 where $\mathbf{w}=J(y-\nabla R(t,y))$ is divergence free.

Then it follows from Theorem 2.30 that
$$\mathbf{w}\in L_{loc}^{\infty}([0,T]\times\mathbf{R}^3)\,\,\,\mathbf{w}\in L^{\infty}([0,T],BV(B(0,R)))\,\,\forall R>0$$
Here we naturally extend $\mathbf{w}$ to $\mathbf{R}^3$ by the same formula above.

Since $\nu$ is  supported in $B_D(0)\times[-\frac{1}{\delta},-\delta]\times[0,T]$
 we can modify $\mathbf{w}$ outside $B_D(0)\times[-\frac{1}{\delta},-\delta]$ such that the modified $\tilde{\mathbf{w}}\in L^{\infty}([0,T]\times\mathbf{R}^3)$ and $\tilde{\mathbf{w}}\in L^{\infty}([0,T],BV(B(0,R)))\,\,\,\forall R>0$ and $\nabla\cdot( \tilde{\mathbf{w}}(t,\cdot))=0$.
To construct $\tilde{w}$, let $\zeta\in C^{\infty}(\mathbf{R}^3)$ be a cut-off function such that
$$\zeta(s)\equiv1\,\,for\,\,|s|\leq D+\frac{1}{\delta}\,\,\,\,\zeta(s)\equiv0\,\,for\,\,|s|\geq D+\frac{1}{\delta}+1\,\,\,0\leq\zeta\leq1$$
Then define
$$H(y)=(\zeta(|y_1|)y_1,\zeta(|y_2|)y_2,\zeta(|y_3|)y_3)$$
Then we take
$$\tilde{\mathbf{w}}=J(H(y)-\nabla R(t,y))\,\,\,y\in\mathbf{R}^3$$
Now we can apply Ambrosio`s theory \cite{BV vector} to $\tilde{\mathbf{w}}$ to get a Lagrangian flow $\Phi$ in dual space and establish the following lemma.
\begin{lem}
Let $\tilde{\mathbf{w}}$ be defined as above. Then there exists a unique locally bounded Borel measurable map $\Phi:[0,T]\times\mathbf{R}^3\rightarrow\mathbf{R}^3$, satisfying

$(i)$$\Phi(\cdot,y)\in W^{1,\infty}([0,T];\mathbf{R}^3)$ for a.e $y\in\mathbf{R}^3$

$(ii)$$\Phi(0,y)=y$ for $\mathcal{L}^3$-a.e $y\in\mathbf{R}^3$

$(iii)$For a.e $(t,y)\in\mathbf{R}^3\times(0,T)$
$$\partial_t\Phi(t,y)=\tilde{\mathbf{w}}(t,\Phi(t,y))$$

$(iv)$ $\Phi(t,\cdot):\mathbf{R}^3\rightarrow\mathbf{R}^3$ is a $\mathcal{L}^3$ measure preserving map for every $t\in[0,T]$
\end{lem}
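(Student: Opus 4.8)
The plan is to recognize Lemma~3.12 as a direct application of the DiPerna--Lions--Ambrosio theory of regular Lagrangian flows for $BV$ vector fields with bounded divergence, in the form established in \cite{BV vector}. First I would check that $\tilde{\mathbf{w}}$ belongs to the required class. The cut-off function $\zeta$ ensures that $|H(y)|\leq |y|$ and that $H$ is bounded on all of $\mathbf{R}^3$, so together with the uniform bound $\sup_{t\in[0,T]}\|\nabla R(t,\cdot)\|_{L^\infty}\leq C$ coming from Theorem~2.30 (and preserved under the extension to $\mathbf{R}^3$), we obtain $\tilde{\mathbf{w}}\in L^\infty([0,T]\times\mathbf{R}^3;\mathbf{R}^3)$; in particular the growth condition $\tfrac{|\tilde{\mathbf{w}}(t,\cdot)|}{1+|y|}\in L^1_t(L^1_y+L^\infty_y)$ of \cite{BV vector} holds trivially. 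For the $BV$ regularity in space, for each fixed $t$ the function $R(t,\cdot)$ is convex, hence $\nabla R(t,\cdot)\in BV_{loc}(\mathbf{R}^3)$ with distributional derivative the nonnegative Hessian measure $D^2R(t,\cdot)$, and for each $\rho>0$, integrating $|D^2R(t,\cdot)|=\Delta R(t,\cdot)$ against a fixed cut-off supported in $B_{2\rho}$ and integrating by parts once gives $|D^2R(t,\cdot)|(B_\rho)\leq C(\rho)\|\nabla R(t,\cdot)\|_{L^\infty(B_{2\rho})}$, which is bounded uniformly in $t$; thus $\tilde{\mathbf{w}}\in L^\infty([0,T];BV_{loc}(\mathbf{R}^3))$, exactly the property already recorded in the excerpt. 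Finally, since $J$ is antisymmetric with vanishing third row, one has $\nabla\cdot(JH(y))=0$ and $\nabla\cdot(J\nabla R(t,\cdot))=0$ distributionally, so $\nabla\cdot\tilde{\mathbf{w}}(t,\cdot)=0$; in particular the divergence is absolutely continuous with density in $L^\infty$ (namely $0$).

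With these hypotheses verified, the well-posedness theory of \cite{BV vector} yields a unique locally bounded Borel map $\Phi:[0,T]\times\mathbf{R}^3\to\mathbf{R}^3$ satisfying (ii) and the ODE (iii) for a.e.\ $(t,y)$, together with the uniqueness assertion. Property (i) then follows from the global bound on $\tilde{\mathbf{w}}$: since $|\partial_t\Phi(t,y)|\leq\|\tilde{\mathbf{w}}\|_{L^\infty([0,T]\times\mathbf{R}^3)}$ for a.e.\ $t$, the curve $t\mapsto\Phi(t,y)$ is Lipschitz on $[0,T]$, so $\Phi(\cdot,y)\in W^{1,\infty}([0,T];\mathbf{R}^3)$ for $\mathcal{L}^3$-a.e.\ $y$. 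For the measure-preserving property (iv), the compressibility constant of the regular Lagrangian flow equals $1$ because $\nabla\cdot\tilde{\mathbf{w}}=0$, giving $\Phi(t,\cdot)_\sharp\mathcal{L}^3\leq\mathcal{L}^3$; applying the same construction to the reversed field $-\tilde{\mathbf{w}}(T-\cdot,\cdot)$, whose flow is the inverse of $\Phi$, gives the opposite inequality, so $\Phi(t,\cdot)_\sharp\mathcal{L}^3=\mathcal{L}^3$. This identity, a priori valid for a.e.\ $t$, upgrades to every $t\in[0,T]$: for $\varphi\in C_c(\mathbf{R}^3)$ the map $t\mapsto\int\varphi(\Phi(t,y))\,dy$ is continuous by dominated convergence (using continuity of $t\mapsto\Phi(t,y)$ for a.e.\ $y$ and the $L^\infty$ bound), hence constant on all of $[0,T]$.

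I expect the only genuinely delicate point to be the uniform-in-time control of the $BV$ seminorm of $\tilde{\mathbf{w}}$, i.e.\ bounding $|D^2R(t,\cdot)|$ on balls independently of $t$; but as indicated this is immediate from the convexity of $R(t,\cdot)$ together with the uniform Lipschitz estimate of Theorem~2.30. Everything else is a verbatim invocation of the Lagrangian flow theory of \cite{BV vector}, so no further work is required.
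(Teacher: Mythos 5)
Your proposal is correct and follows exactly the route the paper takes: the paper proves this lemma simply by invoking Ambrosio's theory of Lagrangian flows for bounded, divergence-free $BV_{loc}$ vector fields from \cite{BV vector}, after having recorded the very properties of $\tilde{\mathbf{w}}$ (global boundedness, $L^{\infty}([0,T];BV(B(0,R)))$ regularity, vanishing divergence) that you verify. Your additional details -- the convexity-based bound on $|D^2R(t,\cdot)|$, the time-reversal argument for measure preservation, and the upgrade to every $t$ -- are consistent fill-ins of steps the paper (following \cite{Cullen-Feldman}) leaves implicit.
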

\begin{rem}
Notice that by our definition of $\tilde{\mathbf{w}}$, we have $\tilde{\mathbf{w}}_3=0$. By $(i)$ and $(iii)$ above, one sees that
$$\Phi_3(t,y)\equiv y\,\,\,\forall t\in[0,T]$$

for $\mathcal{L}^3-a.e \,\,y$
\end{rem}
\begin{lem}
Let $\tilde{\mathbf{w}}$ be defined as above, and let $\Phi$ be the flow in previous lemma, then\\
$(i)$
$$\Phi(t,y)\subset B^2_D(0))\times[-\frac{1}{\delta},-\delta]\,\,\,for\,\,\,a.e\,\,(t,y)\in[0,T]\times\nabla P_0(\Omega_{\infty})$$
In particular,
$$\partial_t\Phi(t,y)=\mathbf{w}(t,\Phi(t,y))\,\,\,for\,\,\,a.e\,\,(t,y)\in[0,T]\times\nabla P_0(\Omega_{\infty})$$

$(ii)$There exists a Borel map $\Phi^*:[0,T]\times\mathbf{R}^3\rightarrow\mathbf{R}^3$ such that for every $t\in[0,T]$ the map $\Phi_t^*:\mathbf{R}^3\rightarrow\mathbf{R}^3$ is $\mathcal{L}^3$ measure preserving, and such that $\Phi_t^*\circ\Phi_t(y)=y$ and $\Phi_t\circ\Phi_t^*(y)=y$ for a.e $y\in\mathbf{R}^3$
\end{lem}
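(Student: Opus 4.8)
The plan is to establish (i) by a Gronwall/continuity bootstrap that confines the trajectory to $\Lambda=B_D(0)\times[-\frac1\delta,-\delta]$, and (ii) by applying the theory of \cite{BV vector} a second time, now to flows with arbitrary initial time.

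For (i), fix $y$ in the full-measure set on which $\Phi(\cdot,y)$ is absolutely continuous, solves the ODE of the previous lemma, and satisfies $\Phi_3(t,y)\equiv y_3$ (the Remark above). Since $y\in\nabla P_0(\Omega_\infty)$ and admissibility of $(h_0,P_0)$ forces $\partial_{x_3}P_0\in[-\frac1\delta,-\delta]$, we get $y_3\in[-\frac1\delta,-\delta]$, hence $\Phi_3(t,y)\in[-\frac1\delta,-\delta]$ for all $t\in[0,T]$. Set $g(t)=|(\Phi_1(t,y),\Phi_2(t,y))|$ and let $\tau=\sup\{t\in[0,T]: g(s)<D\text{ for all }s\le t\}$; this is positive since $g(0)=|(y_1,y_2)|\le\|\nabla P_0\|_{L^\infty}=D_0-1<D$. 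On $[0,\tau)$ every coordinate of $\Phi(t,y)$ has modulus $\le D+\frac1\delta$, so the cutoff gives $H(\Phi)=\Phi$ and $\tilde{\mathbf w}(t,\Phi)=\mathbf w(t,\Phi)=J(\Phi-\nabla R(t,\Phi))$, which is meaningful because $\Phi(t,y)\in\Lambda$ there. Writing $\Phi$ for $\Phi(t,y)$, the term quadratic in $\Phi$ cancels (since $v\cdot Jv=0$) and
$$\frac{d}{dt}\tfrac12 g(t)^2=(\Phi_1,\Phi_2)\cdot(\mathbf w_1,\mathbf w_2)(t,\Phi)=\Phi_1\,\partial_{x_2}R(t,\Phi)-\Phi_2\,\partial_{x_1}R(t,\Phi)\le g(t)\,|\nabla_2 R(t,\Phi)|.$$
Because $(P(t,\cdot),R(t,\cdot))$ are convex conjugate over $\Omega_H$ and $\Lambda$ (Theorem 2.30), $\partial R(t,\cdot)(\Lambda)\subset\bar\Omega_H$, so $|\nabla_2 R(t,\Phi)|\le\max_{\Omega_2}|x|$. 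Hence $\dot g\le\max_{\Omega_2}|x|$ on $[0,\tau)$, so $g(t)\le D_0-1+T\max_{\Omega_2}|x|=D-2<D$ for $t<\tau$; by continuity of $g$ this is incompatible with $\tau<T$ (which would force $g(\tau)=D$), so $\tau=T$. Therefore $\Phi(t,y)\in B_D(0)\times[-\frac1\delta,-\delta]$ for a.e. $(t,y)$, and on this set $\tilde{\mathbf w}=\mathbf w$, which with part (iii) of the previous lemma gives $\partial_t\Phi(t,y)=\mathbf w(t,\Phi(t,y))$.

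For (ii), I would invoke \cite{BV vector} not only from the initial time $0$ but for the whole two-parameter family of regular Lagrangian flows $X(t,s,\cdot)$ of $\tilde{\mathbf w}$ determined by $X(s,s,\cdot)=\mathrm{id}$ and $\partial_t X(t,s,\cdot)=\tilde{\mathbf w}(t,X(t,s,\cdot))$; these exist, are unique, and are $\mathcal L^3$-measure preserving for exactly the reasons in the previous lemma, since $\tilde{\mathbf w}\in L^\infty([0,T]\times\mathbf R^3)$, $\tilde{\mathbf w}\in L^\infty([0,T];BV(B(0,\rho)))$ for all $\rho$, and $\nabla\cdot\tilde{\mathbf w}(t,\cdot)=0$. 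Uniqueness of regular Lagrangian flows yields the semigroup identity $X(t,s,X(s,r,y))=X(t,r,y)$ for a.e. $y$ and all $r,s,t$. Setting $\Phi_t^*:=X(0,t,\cdot)$ (and noting $\Phi_t=X(t,0,\cdot)$ by uniqueness), we obtain $\Phi_t^*\circ\Phi_t=X(0,t,X(t,0,\cdot))=X(0,0,\cdot)=\mathrm{id}$ and $\Phi_t\circ\Phi_t^*=X(t,0,X(0,t,\cdot))=X(t,t,\cdot)=\mathrm{id}$, both $\mathcal L^3$-a.e.; each $\Phi_t^*$ is measure preserving, and a Borel representative of $(t,y)\mapsto X(0,t,y)$ is supplied by the construction in \cite{BV vector}.

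The main obstacle is the apparent circularity in (i): the identity $\tilde{\mathbf w}=\mathbf w$ along the trajectory — needed both to run the estimate and to make sense of $\nabla R(t,\Phi)$ — presupposes that the trajectory has not yet left $\Lambda$, which is precisely the conclusion. This is handled by organizing the argument as a stopping-time bootstrap and using that the a priori bound $D-2$ is strictly below $D$, together with the $t$-continuity of $\Phi(\cdot,y)$ (valid for a.e. $y$). A secondary point requiring care is the bookkeeping of the null sets of $y$ on which the flow, the ODE, or the relation $\Phi_3\equiv y_3$ may fail; and for (ii), checking that the version of \cite{BV vector} being quoted provides flows from arbitrary starting times together with the semigroup property.
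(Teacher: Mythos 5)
Your proposal is correct and follows essentially the route the paper intends: the proof is omitted with a citation to Lemma 2.9 of \cite{Cullen-Feldman}, and your part $(i)$ is exactly that confinement argument (the cancellation $v\cdot Jv=0$ together with $|\nabla_2 R(t,\cdot)|\le\max_{\Omega_2}|x|$ from $\partial R(t,\cdot)\subset\bar\Omega_H$, i.e.\ the same computation the paper uses in the proof of Theorem 2.30 to control the propagation of the support), organized as a stopping-time bootstrap with the choice of $D$ leaving the needed slack. Part $(ii)$ is the same inverse-flow construction based on the uniqueness and measure-preservation results of \cite{BV vector}; Cullen--Feldman phrase it via the flow of the time-reversed vector field rather than your two-parameter flow with the semigroup identity, which is an equivalent presentation.
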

The proof of this lemma is almost identical to the lemma 2.9 of\cite{Cullen-Feldman}, so we omit the proof.

\begin{prop}
Let $\Omega_H$,T,q be as in Theorem 2.30. Let $h,P,R,\nu$ be the dual space solution obtained in that theorem. Let $\tilde{\mathbf{w}},\Phi$ be as in above definition. Let $D=||\nabla P_0||_{L^{\infty}}+(T+1)\max_{\Omega_2}|x|+1$, then for $t\in[0,T]$
$$\nu(t,\cdot)=\Phi_{t\sharp}\nu_0$$
Moreover, for $t\in[0,T]$, we have
$$\nu(t,y)=\nu_0(\Phi_t^*(y))\,\,\,a.e\,\, y$$
\end{prop}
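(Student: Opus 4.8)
The plan is to realize both $\nu(t,\cdot)$ and the push\-forward $\Phi_{t\sharp}\nu_0$ as weakly continuous, $L^q$\-bounded solutions of one and the same continuity equation with the globally bounded, divergence\-free field $\tilde{\mathbf{w}}$, and then invoke the uniqueness theory of \cite{BV vector}.

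First I would check that replacing $\mathbf{w}$ by $\tilde{\mathbf{w}}$ is harmless. From the speed\-of\-propagation estimate in the proof of Theorem 2.30 (the bound $\frac{d}{dt}|\Phi_s^k(t,y_0)|\le\max_{\Omega_2}|x|$, passed to the limit), one has $\mathrm{supp}\,\nu(t,\cdot)\subset B_D(0)\times[-\frac1\delta,-\delta]=\Lambda$ for every $t\in[0,T]$ with the present choice of $D$. Since the cut\-off $\zeta$ equals $1$ on $B_{D+1/\delta}(0)$, the map $H$ is the identity on $\Lambda$, so $\tilde{\mathbf{w}}=\mathbf{w}$ there; as $\nu$ solves $\partial_t\nu+\nabla\cdot(\mathbf{w}\nu)=0$, $\nu(0,\cdot)=\nu_0$, in the sense of distributions, it also solves $\partial_t\nu+\nabla\cdot(\tilde{\mathbf{w}}\nu)=0$, $\nu(0,\cdot)=\nu_0$. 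Moreover $t\mapsto\nu(t,\cdot)$ is narrowly continuous and $\|\nu(t,\cdot)\|_{L^q}=\|\nu_0\|_{L^q}$ for all $t$ by Theorem 2.30.

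Next I would verify that $\mu_t:=\Phi_{t\sharp}\nu_0$ is another solution of the same problem. For $\varphi\in C_c^1(\mathbf{R}^3)$, using that $\Phi(\cdot,y)\in W^{1,\infty}([0,T];\mathbf{R}^3)$ and $\partial_t\Phi(t,y)=\tilde{\mathbf{w}}(t,\Phi(t,y))$ for a.e.\ $(t,y)$ (Lemma 3.13), together with the chain rule and dominated convergence ($\tilde{\mathbf{w}}\in L^\infty$), one gets $\frac{d}{dt}\int\varphi\,d\mu_t=\int\nabla\varphi(\Phi_t(y))\cdot\tilde{\mathbf{w}}(t,\Phi_t(y))\,d\nu_0(y)=\int\nabla\varphi\cdot\tilde{\mathbf{w}}\,d\mu_t$, and $\mu_0=\nu_0$, so $\mu_t$ solves $\partial_t\mu+\nabla\cdot(\tilde{\mathbf{w}}\mu)=0$, $\mu_0=\nu_0$ weakly, with $t\mapsto\mu_t$ narrowly continuous. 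Since $\Phi_t$ is $\mathcal{L}^3$\-measure preserving with measure\-preserving inverse $\Phi_t^*$ (Lemma 3.14), $\mu_t$ is absolutely continuous with density $\nu_0(\Phi_t^*(\cdot))$ and $\|\mu_t\|_{L^q}=\|\nu_0\|_{L^q}$. Now $\tilde{\mathbf{w}}\in L^\infty([0,T]\times\mathbf{R}^3)\cap L^\infty([0,T];BV_{loc}(\mathbf{R}^3))$ with $\nabla\cdot\tilde{\mathbf{w}}(t,\cdot)=0$, so the continuity equation $\partial_t\rho+\nabla\cdot(\tilde{\mathbf{w}}\rho)=0$ has at most one narrowly continuous, $L^\infty([0,T];L^q)$\-bounded solution with given initial datum — this is the uniqueness part of the DiPerna--Lions/Ambrosio theory for locally $BV$ vector fields with bounded divergence \cite{BV vector} (equivalently: $\Phi$ is the unique regular Lagrangian flow of $\tilde{\mathbf{w}}$ and represents every such solution by push\-forward). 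Hence $\nu(t,\cdot)=\mu_t=\Phi_{t\sharp}\nu_0$. Finally, for $\varphi\in C_c(\mathbf{R}^3)$ the substitution $y=\Phi_t(z)$, i.e.\ $z=\Phi_t^*(y)$, together with the measure\-preserving property, gives $\int\varphi(y)\nu(t,y)\,dy=\int\varphi(\Phi_t(z))\nu_0(z)\,dz=\int\varphi(y)\nu_0(\Phi_t^*(y))\,dy$, whence $\nu(t,y)=\nu_0(\Phi_t^*(y))$ for a.e.\ $y$.

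I expect the main obstacle to be the precise invocation of the uniqueness statement for the continuity equation with the $BV$, bounded, divergence\-free field $\tilde{\mathbf{w}}$ in the class where $\nu$ actually lives (narrowly continuous and $L^q$\-bounded with $q>1$ possibly finite), and, preliminary to that, confirming that the support\-localization estimate holds with the $D$ chosen here so that the surgery producing $\tilde{\mathbf{w}}$ does not alter $\nu$.
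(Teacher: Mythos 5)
Your reductions at the beginning and end are fine: with the stated choice of $D$ the support of $\nu(t,\cdot)$ stays in $\Lambda$, so $\tilde{\mathbf{w}}=\mathbf{w}$ there and $\nu$ does solve the continuity equation for $\tilde{\mathbf{w}}$; the verification that $\mu_t:=\Phi_{t\sharp}\nu_0$ is a narrowly continuous solution with density $\nu_0(\Phi_t^*(\cdot))$ is also correct (using $\nu_0\ll\mathcal{L}^3$ to transfer the $\mathcal{L}^3$-a.e.\ statements of Lemma 3.13--3.14 to $\nu_0$-a.e.\ statements), and the final change of variables giving $\nu(t,y)=\nu_0(\Phi_t^*(y))$ a.e.\ is immediate once the push-forward identity is known. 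The gap is exactly at the step you yourself flagged as the ``main obstacle,'' and it is not a citation technicality: the uniqueness statement you invoke --- at most one narrowly continuous solution bounded in $L^\infty([0,T];L^q)$, $q>1$ possibly close to $1$, for a bounded, divergence-free field that is only $BV_{loc}$ in $y$ --- is not provided by \cite{BV vector}. Ambrosio's renormalization/comparison theory for $BV$ fields gives uniqueness in the class of \emph{bounded} densities (the commutator estimate requires $w\in L^\infty_{loc}$); DiPerna--Lions would instead require Sobolev regularity $W^{1,q'}$ of the field, which $\nabla R_t$ (gradient of a convex function, hence merely $BV$) does not have. In the regime of low integrability of the density and low regularity of the field, uniqueness for the continuity equation is genuinely delicate (and fails for some divergence-free Sobolev fields with $L^q$ densities), so your argument as written does not close. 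Likewise, the superposition-principle variant of this argument would need the time marginals to have bounded densities to compare with the regular Lagrangian flow, which again is not available here since Theorem 2.30 only gives $\nu(t,\cdot)\in L^q$.

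The paper avoids this issue by a different mechanism, which is why only $L^q$ control is needed: the solution $\nu$ of Theorem 2.30 is \emph{constructed} as a limit of $\nu_{s_j}(t,\cdot)=\Phi^{s_j}_{t\sharp}\nu^{s_j}_0$, where $\Phi^{s_j}$ are the (smooth) flows of the mollified velocities $\mathbf{u}_{s_j}$, and the proof of Theorem 2.30 provides strong convergence of $\mathbf{u}_{s_j}$ to $\mathbf{w}$. One then applies Ambrosio's \emph{stability} theorem for regular Lagrangian flows (Theorem 5.5 of \cite{BV vector}, as in Proposition 2.11 of \cite{Cullen-Feldman}) to get $\Phi^{s_j}\to\Phi$ (e.g.\ in measure, locally), and passes to the limit in the identity $\nu_{s_j}(t,\cdot)=\Phi^{s_j}_{t\sharp}\nu^{s_j}_0$ to conclude $\nu(t,\cdot)=\Phi_{t\sharp}\nu_0$; no uniqueness for $L^q$ solutions of the continuity equation is ever used. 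To repair your proof you should either switch to this stability argument, or restrict your uniqueness claim to a class in which it actually holds (bounded densities), which the present hypotheses do not guarantee.
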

The proof is again similar to Proposition 2.11\cite{Cullen-Feldman}. Keep in mind that we have strong convergence of the transporting vector $\mathbf{w}_{s_j}$ to $\mathbf{w}$ according to the proof of Theorem 2.30. This enables us to use the stability result Theorem 5.5 of \cite{BV vector}

\subsection{Lagrangian flow in physical space}

We want to define a Lagrangian flow in the physical space $F:[0,T]\times\Omega_{h_0}\rightarrow\Omega_H$ by
defining $F_t:\Omega_{h_0}\rightarrow\Omega_H$ by
$$F_t:=\nabla R_t\circ\Phi_t\circ\nabla P_0$$
Of course, we need to check above formula is well defined.
\begin{lem}
The right hand side above is defined $\mathcal{L}^4-a.e$. For any $t\in[0,T]$, above is defined $\mathcal{L}^3-a.e.$
and $F:[0,T]\times\Omega_{h_0}\rightarrow\Omega_H$ is a Borel map.
\end{lem}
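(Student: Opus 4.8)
The plan is to use the fact that each of the three maps composing $F_t=\nabla R_t\circ\Phi_t\circ\nabla P_0$ is defined almost everywhere, together with the measure‑theoretic facts already in hand: $P_0$ and $R_t$ are convex (a convex conjugate in the case of $R_t$), hence differentiable $\mathcal{L}^3$‑a.e.; $\Phi$ is a Borel map and $\Phi_t$ is defined $\mathcal{L}^3$‑a.e. by the dual‑space flow lemma; and the pushforward identities $\nabla P_{0\sharp}\sigma_{h_0}=\nu_0$ and $\Phi_{t\sharp}\nu_0=\nu(t,\cdot)$ (established above) hold, where both $\nu_0$ and $\nu(t,\cdot)$ are absolutely continuous with respect to $\mathcal{L}^3$ since they lie in $L^q$ (Theorem 2.30). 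The whole point is that an intermediate map may destroy "a.e.\ definedness'' of a composition, but here it does not because these intermediate maps transport the relevant reference measures to absolutely continuous measures.

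First I would prove the $\mathcal{L}^3$‑a.e.\ statement for fixed $t\in[0,T]$. Let $N_0\subset\Omega_{h_0}$ be the $\mathcal{L}^3$‑null set where $\nabla P_0$ fails to exist, $N_1\subset\mathbf{R}^3$ the $\mathcal{L}^3$‑null set where $\Phi_t$ fails to exist, and $N_2\subset\mathbf{R}^3$ the $\mathcal{L}^3$‑null set where $\nabla R_t$ fails to exist. Then $F_t(x)$ is defined for every $x$ in the complement of $N_0\cup(\nabla P_0)^{-1}\big(N_1\cup\Phi_t^{-1}(N_2)\big)$. By the pushforwards, $\sigma_{h_0}\big((\nabla P_0)^{-1}(N_1)\big)=\nu_0(N_1)=0$ because $\nu_0\ll\mathcal{L}^3$, and $\sigma_{h_0}\big((\nabla P_0)^{-1}(\Phi_t^{-1}(N_2))\big)=(\Phi_t\circ\nabla P_0)_\sharp\sigma_{h_0}(N_2)=\nu(t,\cdot)(N_2)=0$ because $\nu(t,\cdot)\ll\mathcal{L}^3$; together with $\sigma_{h_0}(N_0)=0$ this gives that $F_t$ is defined $\sigma_{h_0}$‑a.e., i.e.\ $\mathcal{L}^3$‑a.e.\ on $\Omega_{h_0}$. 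That the values lie in $\Omega_H$ (more precisely $\overline{\Omega_H}$, up to a further null set) follows from $\partial R_t(\Lambda)\subseteq\overline{\Omega_H}$, a consequence of $(P_t,R_t)$ being convex conjugate over $\Omega_H$ and $\Lambda$.

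For the $\mathcal{L}^4$‑a.e.\ statement I would run a Fubini argument. Write the exceptional set $E\subset[0,T]\times\Omega_{h_0}$ where $F_t(x)$ is undefined as a union of $[0,T]\times N_0$, of $E_2:=\{(t,x):\nabla P_0(x)\text{ exists},\ \Phi_t(\nabla P_0(x))\text{ undefined}\}$, and of $E_3:=\{(t,x):\Phi_t(\nabla P_0(x))\text{ exists},\ \nabla R_t(\Phi_t(\nabla P_0(x)))\text{ undefined}\}$. For each fixed $t$ the $t$‑slices of these three sets are exactly the null sets analysed in the previous paragraph, so all slices are $\mathcal{L}^3$‑null; hence, once $E_2,E_3$ are known to be $\mathcal{L}^4$‑measurable, Fubini gives $\mathcal{L}^4(E)=0$. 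The required joint measurability reduces to two routine points: $(t,x)\mapsto(t,\Phi_t(\nabla P_0(x)))$ is Borel, since $\Phi$ is Borel on $[0,T]\times\mathbf{R}^3$ and $\nabla P_0$ is Borel; and the set $\{(t,y):\nabla_y R(t,y)\text{ exists}\}$ is Borel with $\nabla_y R$ Borel on it, because $R$ is jointly continuous on $[0,T]\times\overline{\Lambda}$ (from $R\in C([0,T],W^{1,r}(\Lambda))$ with $r>3$ via Morrey) and convex in $y$ for each $t$, so differentiability in $y$ is a Borel condition and the gradient is a pointwise limit of difference quotients. Finally, on $\big([0,T]\times\Omega_{h_0}\big)\setminus E$ the map $F=\nabla R\circ\Phi\circ\nabla P_0$ is a composition of Borel maps hence Borel, and extending it by an arbitrary constant on $E$ yields a Borel map $F:[0,T]\times\Omega_{h_0}\to\Omega_H$.

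The only genuinely delicate ingredient is the interaction of the almost‑everywhere domains of the three factors, which works precisely because $\nu_0$ and $\nu(t,\cdot)=\Phi_{t\sharp}\nu_0$ are absolutely continuous, so that preimages of $\mathcal{L}^3$‑null sets under $\nabla P_0$ and under $\Phi_t\circ\nabla P_0$ are $\sigma_{h_0}$‑null; the joint $(t,y)$‑measurability needed to invoke Fubini is straightforward given the time‑continuity of $R$ supplied by Theorem 2.30, but it should be recorded explicitly rather than taken for granted.
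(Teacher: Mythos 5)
Your argument is correct and is essentially the argument the paper has in mind: the paper omits the proof, referring to Lemma 2.12 of \cite{Cullen-Feldman}, whose verification is exactly this push-forward/absolute-continuity reasoning (preimages under $\nabla P_0$ and $\Phi_t\circ\nabla P_0$ of $\mathcal{L}^3$-null sets are $\sigma_{h_0}$-null because $\nu_0=\nabla P_{0\sharp}\sigma_{h_0}$ and $\nu(t,\cdot)=\Phi_{t\sharp}\nu_0$ are in $L^q$), combined with the Borel measurability of the composed maps. Your explicit Fubini and joint-measurability discussion for the $\mathcal{L}^4$ statement fills in details the paper leaves to the reader, and is consistent with its framework (e.g.\ $R$ extended to all of $\mathbf{R}^3$ and $\partial R_t(\Lambda)\subset\overline{\Omega}_H$).
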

This is checked in the same way as Lemma 2.12 of \cite{Cullen-Feldman}. So we omit the proof.\\
In the following, we collect some results which can be proved in the same way as \cite{Cullen-Feldman}, they correspond to Proposition 2.13-2.16 in that paper.
\begin{lem}
$$F(0,x)=x\,\,a.e\,\,x\in\Omega_{h_0}$$
\end{lem}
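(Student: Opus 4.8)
The plan is to evaluate the defining formula $F_t = \nabla R_t\circ\Phi_t\circ\nabla P_0$ at $t=0$ and identify each factor with the optimal objects associated to $\nu_0$. By Lemma 3.19 the composition defining $F_0$ is $\mathcal{L}^3$-a.e. meaningful on $\Omega_{h_0}$, so it suffices to prove the equality $\sigma_{h_0}$-a.e. First, property $(ii)$ of the Lagrangian flow $\Phi$ (Lemma 3.15) gives $\Phi(0,y)=y$ for $\mathcal{L}^3$-a.e. $y\in\mathbf{R}^3$. Since $\nu_0\in L^q(\mathbf{R}^3)$ is absolutely continuous, the exceptional set here is $\nu_0$-null, and since $\nabla P_{0\sharp}\sigma_{h_0}=\nu_0$ its preimage under $\nabla P_0$ is $\sigma_{h_0}$-null; hence $\Phi_0\circ\nabla P_0 = \nabla P_0$ and therefore $F_0 = \nabla R_0\circ\nabla P_0$, $\sigma_{h_0}$-a.e.

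Second, I identify $R_0$ and $\nabla P_0$ with the optimizers furnished earlier. By Theorem 2.30 $(ii)$, $(P(0,\cdot),R(0,\cdot))$ is the unique maximizer $(\bar{P}_0,\bar{R}_0)$ of $J^H_{\nu_0}(P,R)$ provided by Corollary 2.20 (with properties $(i),(ii)$ there); in particular $R_0=\bar{R}_0$, which is convex conjugate to $\bar{P}_0$ over $\Omega_H$ and $\Lambda$. By Proposition 3.3, $P_0=\bar{P}_0$ on $\Omega_{h_0}\cup\{x_3=0\}$; as $\Omega_{h_0}$ is open and $P_0,\bar{P}_0$ are convex (hence differentiable $\mathcal{L}^3$-a.e.), $\nabla P_0=\nabla\bar{P}_0$ $\mathcal{L}^3$-a.e. on $\Omega_{h_0}$. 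Proposition 3.3 also gives that the unique minimizer of $E_{\nu_0}(h,\gamma)$ is $(h_0,(id\times\nabla P_0)_{\sharp}\sigma_{h_0})$, whence by Theorem 2.18 $(ii)$ we are in the situation where $h_0=h^H_{\bar{P}_0}$ and $\gamma=(id\times\nabla\bar{P}_0)_{\sharp}\sigma_{h_0}$.

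Now apply Theorem 2.18 $(iii)$ with $\nu=\nu_0$ (absolutely continuous), $(P,R)=(\bar{P}_0,\bar{R}_0)$ and $h=h_0$: as in the proof of that part, the set $E\subset\Lambda$ where $\nabla\bar{R}_0$ fails to exist satisfies $\sigma_{h_0}\big((\nabla\bar{P}_0)^{-1}(E)\big)=\nu_0(E)=0$, and wherever $\nabla\bar{P}_0(x)$ and $\nabla\bar{R}_0(\nabla\bar{P}_0(x))$ both exist one has $\nabla\bar{R}_0(\nabla\bar{P}_0(x))=x$ by convex duality. Combining this with $R_0=\bar{R}_0$ and $\nabla P_0=\nabla\bar{P}_0$ $\mathcal{L}^3$-a.e. on $\Omega_{h_0}$ yields
$$F(0,x)=\nabla R_0(\nabla P_0(x))=\nabla\bar{R}_0(\nabla\bar{P}_0(x))=x$$
for $\sigma_{h_0}$-a.e., equivalently $\mathcal{L}^3$-a.e., $x\in\Omega_{h_0}$, which is the claim.

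The argument carries no analytic difficulty; the one point demanding care is the repeated use of absolute continuity of $\nu_0$ — it is needed both to push the identity $\Phi_0=\mathrm{id}$ through $\nabla P_0$ and to invert $\nabla\bar{P}_0$ by $\nabla\bar{R}_0$ via Theorem 2.18 $(iii)$ — together with the bookkeeping that the $R_0$ appearing in the flow formula is exactly the dual potential $\bar{R}_0$ of the optimizer for $\nu_0$.
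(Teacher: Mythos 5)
Your proof is correct and follows essentially the same route as the paper, which omits the argument by referring to Proposition 2.13 of Cullen--Feldman: evaluate $F_0=\nabla R_0\circ\Phi_0\circ\nabla P_0$, use $\Phi_0=\mathrm{id}$ a.e.\ together with $\nu_0\ll\mathcal{L}^3$ to drop $\Phi_0$, identify $(P_0,R_0)$ on $\Omega_{h_0}$ with the conjugate maximizer pair via Proposition 3.3 and Theorem 2.30$(ii)$, and conclude $\nabla\bar R_0\circ\nabla\bar P_0=\mathrm{id}$ $\sigma_{h_0}$-a.e.\ exactly as in the proof of Theorem 2.18$(iii)$. The bookkeeping you flag (absolute continuity of $\nu_0$ and the identification $R_0=\bar R_0$, $\nabla P_0=\nabla\bar P_0$ on $\Omega_{h_0}$) is precisely what the omitted proof requires, so no gap remains.
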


\begin{lem}
For every $t\in[0,T]$, we have $$F_{t\sharp}\sigma_{h_0}=\sigma_{h(t,\cdot)}$$

\end{lem}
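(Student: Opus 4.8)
The plan is to unwind the definition $F_t:=\nabla R_t\circ\Phi_t\circ\nabla P_0$ and push the measure $\sigma_{h_0}$ through one factor at a time, invoking the three transport identities already in hand. Fix $t\in[0,T]$. As recorded in the well-definedness lemma above, the composition is defined $\sigma_{h_0}$-a.e.\ on $\Omega_{h_0}$; concretely, $P_0$ is convex so $\nabla P_0$ exists $\mathcal{L}^3$-a.e.\ and $(\nabla P_0)_\sharp\sigma_{h_0}=\nu_0$ is absolutely continuous because $\nu_0\in L^q$, the dual flow $\Phi$ constructed above is Borel and defined $\mathcal{L}^3$-a.e.\ hence $\nu_0$-a.e.\ and sends $\nu_0$-a.e.\ point into $\Lambda$, and $R_t=R(t,\cdot)$ is convex so $\nabla R_t$ exists $\mathcal{L}^3$-a.e.\ hence $\nu(t,\cdot)$-a.e.\ because $\nu(t,\cdot)=\Phi_{t\sharp}\nu_0$ is again absolutely continuous ($\nu(t,\cdot)\in L^q$ by Theorem 2.30$(i)$). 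Since each map in the chain is defined a.e.\ with respect to the measure to which it is applied, the pushforward of the composition equals the composition of the pushforwards:
$$F_{t\sharp}\sigma_{h_0}=(\nabla R_t)_\sharp(\Phi_t)_\sharp(\nabla P_0)_\sharp\sigma_{h_0}.$$

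Next I would apply the three identities in turn. First, $(\nabla P_0)_\sharp\sigma_{h_0}=\nu_0$ by definition of the admissible initial data (see Proposition 3.3). Second, $(\Phi_t)_\sharp\nu_0=\nu(t,\cdot)$, which is the content of the proposition on the dual-space Lagrangian flow established above. Third, and this is the only substantive point, $(\nabla R_t)_\sharp\nu(t,\cdot)=\sigma_{h(t,\cdot)}$: this is Theorem 2.18$(iii)$, applicable at the time slice $t$ because, by Theorem 2.30$(ii)$ and $(v)$ together with Theorem 2.18$(ii)$, the pair $(P(t,\cdot),R(t,\cdot))$ is a convex conjugate maximizer of $J^H_{\nu(t,\cdot)}$ with $h(t,\cdot)=h^H_{P(t,\cdot)}$ and $\gamma(t,\cdot)=(id\times\nabla P_t)_\sharp\sigma_{h(t,\cdot)}$, while $\nu(t,\cdot)\in L^q$ supplies the required absolute continuity. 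Composing the three identities gives $F_{t\sharp}\sigma_{h_0}=\sigma_{h(t,\cdot)}$, which is the assertion.

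The only step requiring genuine care is the third identity: one must check that $(P(t,\cdot),R(t,\cdot))$ is indeed a convex conjugate maximizer of the truncated dual problem at \emph{every} $t\in[0,T]$, that $h(t,\cdot)$ coincides with the attached free boundary $h^H_{P(t,\cdot)}$, and that $\nu(t,\cdot)$ is absolutely continuous --- all of which are part of the statement of Theorem 2.30. Everything else is the standard functoriality of pushforwards under a.e.-defined Borel maps, so the argument is the direct analog of the corresponding step in \cite{Cullen-Feldman}.
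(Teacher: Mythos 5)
Your argument is correct and is exactly the proof the paper intends: the paper omits it by deferring to the corresponding statement in \cite{Cullen-Feldman}, whose proof is precisely this factorization $F_{t\sharp}\sigma_{h_0}=(\nabla R_t)_{\sharp}(\Phi_t)_{\sharp}(\nabla P_0)_{\sharp}\sigma_{h_0}$ combined with $(\nabla P_0)_{\sharp}\sigma_{h_0}=\nu_0$, $\Phi_{t\sharp}\nu_0=\nu(t,\cdot)$, and $\nabla R_{t\sharp}\nu(t,\cdot)=\sigma_{h(t,\cdot)}$ from Theorem 2.18$(iii)$, with the absolute continuity of $\nu(t,\cdot)$ justifying both the last identity and the a.e.\ well-definedness of the composition. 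No gaps; this matches the paper's approach.
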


\begin{prop}
For any $t_0\in[0,T]$,any $r\in[1,\infty)$, one has
$$\lim_{t\rightarrow t_0,t\in[0,T]}\int_{\Omega_{h_0}}|F_t(x)-F_{t_0}(x)|^rdx=0$$
\end{prop}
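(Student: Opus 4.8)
The plan is to exploit the factorization $F_t=\nabla R_t\circ\Phi_t\circ\nabla P_0$ and split
$$F_t(x)-F_{t_0}(x)=\big[(\nabla R_t-\nabla R_{t_0})(\Phi_t\nabla P_0(x))\big]+\big[\nabla R_{t_0}(\Phi_t\nabla P_0(x))-\nabla R_{t_0}(\Phi_{t_0}\nabla P_0(x))\big].$$
Since $|a+b|^r\le 2^{r-1}(|a|^r+|b|^r)$ and $\Omega_{h_0}\subset\Omega_2\times[0,H_0]$ has finite measure, it suffices to show the $L^r(\Omega_{h_0})$-norm of each bracketed term tends to $0$ as $t\to t_0$; it is enough to argue along an arbitrary sequence $t_n\to t_0$. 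Note also that $\Phi_t\nabla P_0(x)$ lies in the bounded set $\Lambda=B_D(0)\times[-\frac{1}{\delta},-\delta]$ for $\mathcal L^4$-a.e.\ $(t,x)$ (by the lemma confining the dual flow to this set) and $R_t\in W^{1,\infty}(\Lambda)$ with a uniform bound (Theorem~2.30), so all quantities below are uniformly bounded and dominated convergence will be available.

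\emph{First term.} Using $\nabla P_{0\sharp}\sigma_{h_0}=\nu_0$ (from admissibility of the data) and the identity $\nu(t,\cdot)=\Phi_{t\sharp}\nu_0$ established above, we have $(\Phi_t\circ\nabla P_0)_\sharp\sigma_{h_0}=\nu(t,\cdot)$, whence for any Borel $g\ge0$,
$$\int_{\Omega_{h_0}}g(\Phi_t\nabla P_0(x))\,dx=\int_{\Lambda}g(y)\,\nu(t,y)\,dy.$$
Taking $g=|\nabla R_t-\nabla R_{t_0}|^r$ and applying Hölder's inequality with exponents $q$ and $q'=q/(q-1)<\infty$, together with $\|\nu(t,\cdot)\|_{L^q(\Lambda)}=\|\nu_0\|_{L^q}$ (Theorem~2.30),
$$\int_{\Omega_{h_0}}\big|(\nabla R_t-\nabla R_{t_0})(\Phi_t\nabla P_0(x))\big|^r\,dx\le\|\nabla R_t-\nabla R_{t_0}\|_{L^{rq'}(\Lambda)}^r\,\|\nu_0\|_{L^q}.$$
By the narrow continuity of $t\mapsto\nu(t,\cdot)$ (Theorem~2.30), $\nu(t_n,\cdot)\to\nu(t_0,\cdot)$ narrowly, and the stability result Theorem~2.25 gives $R_{t_n}\to R_{t_0}$ in $W^{1,rq'}(\Lambda)$, so the right-hand side tends to $0$.

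\emph{Second term.} This is the main obstacle: $\nabla R_{t_0}$ is only bounded and measurable, so a.e.\ convergence of the arguments does not pass through the composition for free; it is resolved using convexity of $R_{t_0}$ and absolute continuity of the push-forwards. Since $\Phi(\cdot,y)$ is continuous on $[0,T]$ for $\mathcal L^3$-a.e.\ $y$ and $\nu_0=\nabla P_{0\sharp}\sigma_{h_0}\ll\mathcal L^3$, we get $\Phi_{t_n}\nabla P_0(x)\to\Phi_{t_0}\nabla P_0(x)$ for $\sigma_{h_0}$-a.e.\ $x$. Since $(\Phi_{t_0}\circ\nabla P_0)_\sharp\sigma_{h_0}=\nu(t_0,\cdot)\ll\mathcal L^3$ and the convex function $R_{t_0}$ is differentiable $\mathcal L^3$-a.e., the point $\Phi_{t_0}\nabla P_0(x)$ is a point of differentiability of $R_{t_0}$ for $\sigma_{h_0}$-a.e.\ $x$. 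For a convex function, $y_n\to y_0$ with $R_{t_0}$ differentiable at $y_0$ forces $p_n\to\nabla R_{t_0}(y_0)$ for any $p_n\in\partial R_{t_0}(y_n)$; applying this with $y_n=\Phi_{t_n}\nabla P_0(x)$ yields $\nabla R_{t_0}(\Phi_{t_n}\nabla P_0(x))\to\nabla R_{t_0}(\Phi_{t_0}\nabla P_0(x))$ for $\sigma_{h_0}$-a.e.\ $x$. Dominated convergence then sends the $L^r(\Omega_{h_0})$-norm of the second term to $0$, and combining the two estimates proves the proposition. (The argument is parallel to Proposition~2.16 of \cite{Cullen-Feldman}, the free upper boundary entering only through the already-established stability of $R_t$ and the dual flow $\Phi$.)
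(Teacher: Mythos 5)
Your proof is correct and takes essentially the same route as the paper, which omits the argument and refers to Proposition 2.16 of \cite{Cullen-Feldman}: the same splitting of $F_t-F_{t_0}$ through the factorization $\nabla R_t\circ\Phi_t\circ\nabla P_0$, with the first term handled by the push-forward identity, H\"older, and the $W^{1,r}$-stability of $R_t$, and the second by a.e.\ continuity of the dual flow together with the subdifferential-convergence property of the convex function $R_{t_0}$ and absolute continuity of $\nu(t,\cdot)$, followed by dominated convergence.
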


\begin{prop}
There exists a Borel map $$F^*:\bigcup_{t\in[0,T]}\{t\}\times\Omega_{h(t,\cdot)}\rightarrow\Omega_H$$
such that for any $t\in[0,T]$
$$F_t(F_t^*(x))=x\,\,\sigma_{h(t,\cdot)}-a.e\,\,\, and \,\,\, F_t^*(F_t(x))=x\,\,\sigma_{h_0}-a.e$$
\end{prop}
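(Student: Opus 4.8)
The plan is to define $F_t^*$ as the natural ``inverse composition'' of $F_t=\nabla R_t\circ\Phi_t\circ\nabla P_0$. Each of the three factors has an almost-everywhere inverse: the gradient $\nabla R_0$ of the convex conjugate $R_0:=R(0,\cdot)$ of the initial potential inverts $\nabla P_0$, the reverse flow $\Phi_t^*$ from Lemma 3.17$(ii)$ inverts $\Phi_t$, and $\nabla P_t$ inverts $\nabla R_t$. So I would set
$$F_t^*:=\nabla R_0\circ\Phi_t^*\circ\nabla P_t\qquad\text{on }\Omega_{h(t,\cdot)}.$$
Here Proposition 3.3 identifies $P_0$ with $P(0,\cdot)$ on $\Omega_{h_0}\cup\{x_3=0\}$, and since $\nu_0\in L^q$, hence $\nu_0\ll\mathcal L^3$, the standard convex-conjugate inversion gives $\nabla R_0\circ\nabla P_0=\mathrm{id}$ $\sigma_{h_0}$-a.e.\ and $\nabla P_0\circ\nabla R_0=\mathrm{id}$ $\nu_0$-a.e. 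The first (routine) point is to check that $F_t^*$ is defined $\mathcal L^3$-a.e.\ for each $t$ and that $(t,x)\mapsto F_t^*(x)$ is Borel on $\bigcup_{t\in[0,T]}\{t\}\times\Omega_{h(t,\cdot)}$; this is proved exactly as Lemma 3.19 (the analogue of Lemma 2.12 of \cite{Cullen-Feldman}), using that $\nabla P_t$ is defined $\mathcal L^3$-a.e.\ by convexity of $P_t$, that $\Phi_t^*$ is $\mathcal L^3$-measure preserving so it pulls $\mathcal L^3$-null sets back to $\mathcal L^3$-null sets, and that $\nabla R_0$ is defined $\nu_0$-a.e.

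Next I would record the elementary identities underlying the verification. Since $\nu(t,\cdot)\ll\mathcal L^3$ for all $t$ (Theorem 2.30$(i)$) and $(P_t,R_t)$ and $(P(0,\cdot),R_0)$ are pairs of convex conjugates, the argument in the proof of Theorem 2.18$(iii)$ gives $\nabla R_t\circ\nabla P_t=\mathrm{id}$ $\sigma_{h(t,\cdot)}$-a.e., $\nabla P_t\circ\nabla R_t=\mathrm{id}$ $\nu(t,\cdot)$-a.e., together with the analogous relations at $t=0$. From Lemma 3.17$(ii)$, $\Phi_t^*\circ\Phi_t=\mathrm{id}$ and $\Phi_t\circ\Phi_t^*=\mathrm{id}$ $\mathcal L^3$-a.e. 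The relevant push-forwards are $\nabla P_{0\sharp}\sigma_{h_0}=\nu_0$, $\Phi_{t\sharp}\nu_0=\nu(t,\cdot)$ (Proposition 3.18), and $\nabla P_{t\sharp}\sigma_{h(t,\cdot)}=\nu(t,\cdot)$ (Theorem 2.18$(ii)$ applied with the minimizer from Theorem 2.30$(v)$), along with their reverses $\Phi_{t\sharp}^*\nu(t,\cdot)=\nu_0$, $\nabla R_{t\sharp}\nu(t,\cdot)=\sigma_{h(t,\cdot)}$, $\nabla R_{0\sharp}\nu_0=\sigma_{h_0}$.

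With these, both $F_t^*\circ F_t=\mathrm{id}$ $\sigma_{h_0}$-a.e.\ and $F_t\circ F_t^*=\mathrm{id}$ $\sigma_{h(t,\cdot)}$-a.e.\ follow by composing the identities along the chain, the only thing needing care being that each intermediate image is distributed according to an absolutely continuous measure, so that a chain of almost-everywhere identities remains valid almost everywhere. Concretely: for $\sigma_{h_0}$-a.e.\ $x$, the point $\nabla P_0(x)$ is $\nu_0$-distributed, hence $\Phi_t(\nabla P_0(x))$ is $\nu(t,\cdot)$-distributed, so $\nabla P_t\circ\nabla R_t=\mathrm{id}$ applies at $\Phi_t(\nabla P_0(x))$, then $\Phi_t^*\circ\Phi_t=\mathrm{id}$ applies at $\nabla P_0(x)$, and then $\nabla R_0\circ\nabla P_0=\mathrm{id}$, which collapses $F_t^*(F_t(x))$ to $x$; the reverse composition is symmetric, reading the chain in the other order and using instead $\nabla P_0\circ\nabla R_0=\mathrm{id}$, $\Phi_t\circ\Phi_t^*=\mathrm{id}$, $\nabla R_t\circ\nabla P_t=\mathrm{id}$. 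This is the same mechanism as in Proposition 2.16 of \cite{Cullen-Feldman}; the main (and essentially only) obstacle is exactly this bookkeeping of which absolutely continuous measure governs each intermediate point, which works precisely because $\Phi_t,\Phi_t^*$ are measure preserving and $\nu(t,\cdot)\ll\mathcal L^3$.
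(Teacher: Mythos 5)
Your proposal is correct and is exactly the argument the paper has in mind: the paper omits the proof and defers to Propositions 2.13--2.16 of \cite{Cullen-Feldman}, where the inverse Lagrangian map is defined, as you do, by $F_t^*=\nabla R_0\circ\Phi_t^*\circ\nabla P_t$ and verified by chaining the a.e.\ inversion identities for the convex conjugates and the dual flow, with the same measure-theoretic bookkeeping (absolute continuity of $\nu(t,\cdot)$, measure preservation of $\Phi_t,\Phi_t^*$, and the push-forward relations). Nothing essential is missing from your outline.
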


\begin{prop}
Put $Z(t,x)=\nabla P_t(F_t(x))$, then $Z(t,x)$ satisfies
$$\int_0^T\int_{\Omega_{h_0}}Z(t,x)\partial_t\phi(t,x)dxdt+\int_0^T\int_{\Omega_{h_0}}
J(Z(t,x)-F(t,x))\phi(t,x)dxdt$$$$+\int_{\Omega_{h_0}}Z(0,x)\phi(0,x)dx=0$$
$\forall \phi(t,x)\in C_c^1([0,T)\times \Omega_H)$ \\
Moreover, possibly after changing $Z(t,x)$ on a negligible subset of $[0,T]\times\Omega_{h_0}$, we have $Z(\cdot,x)\in W^{1,\infty}([0,T];\mathbf{R}^3)$ for $a.e-x\in\Omega_{h_0}$ and
$$\partial_tZ(t,x)=J(Z(t,x)-F(t,x))\,\,\,\,for\,\,\,\mathcal{L}^4-a.e\,\,\,in\,\,\,(t,x)\in[0,T]\times\Omega_{h_0}$$
$$Z(0,x)=\nabla P_0(x)\,\,\,for\,\,\,\mathcal{L}^3-a.e\,\,in\,\,x\in\Omega_{h_0}$$
\end{prop}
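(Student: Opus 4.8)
The plan is to first identify $Z$ explicitly in terms of the dual-space Lagrangian flow $\Phi$, then differentiate along $\Phi$, and finally obtain the weak identity by integrating by parts in time. \textbf{Step 1: the identity $Z(t,x)=\Phi_t(\nabla P_0(x))$ for $\mathcal{L}^4$-a.e. $(t,x)$.} Recall $F_t=\nabla R_t\circ\Phi_t\circ\nabla P_0$, so $Z(t,x)=\nabla P_t\big(\nabla R_t(\Phi_t(\nabla P_0(x)))\big)$. Since $\nu_0\in L^q$ we have $\nu_0\ll\mathcal{L}^3$, and $\nu(t,\cdot)\in L^q$ likewise gives $\nu(t,\cdot)\ll\mathcal{L}^3$; moreover Theorem 2.18$(iii)$ yields $\nabla R_{t\sharp}\nu(t,\cdot)=\sigma_{h(t,\cdot)}\ll\mathcal{L}^3$. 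Because $P_t$ and $R_t$ are convex conjugate over $\Omega_H$ and $\Lambda$, at any $y$ where $\nabla R_t(y)$ exists and $\nabla P_t$ exists at $\nabla R_t(y)$ one has $\nabla P_t(\nabla R_t(y))=y$; the two absolute-continuity statements show this holds for $\nu(t,\cdot)$-a.e. $y$. Now by the push-forward identity $\nu(t,\cdot)=\Phi_{t\sharp}\nu_0=\Phi_{t\sharp}\nabla P_{0\sharp}\sigma_{h_0}$, the point $\Phi_t(\nabla P_0(x))$ is distributed according to $\nu(t,\cdot)$ when $x$ ranges over $(\Omega_{h_0},\sigma_{h_0})$; hence for each $t\in[0,T]$ and $\sigma_{h_0}$-a.e. $x$,
$$Z(t,x)=\nabla P_t\big(\nabla R_t(\Phi_t(\nabla P_0(x)))\big)=\Phi_t(\nabla P_0(x)),$$
and by Fubini this holds for $\mathcal{L}^4$-a.e. $(t,x)$. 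In particular $Z$ is bounded: $\nabla P_0(\Omega_{h_0})$ is bounded because $P_0\in W^{1,\infty}$, and $\Phi_t$ maps it into $B^2_D(0)\times[-\frac{1}{\delta},-\delta]$ by the lemma establishing $\Phi(t,y)\in B^2_D(0)\times[-\frac{1}{\delta},-\delta]$ and $\partial_t\Phi(t,y)=\mathbf{w}(t,\Phi(t,y))$ for a.e. $(t,y)\in[0,T]\times\nabla P_0(\Omega_\infty)$.

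\textbf{Step 2: the pointwise differential equation.} By that same lemma, for a.e. $(t,y)\in[0,T]\times\nabla P_0(\Omega_\infty)$ we have $\partial_t\Phi(t,y)=\mathbf{w}(t,\Phi(t,y))$, and $\Phi(\cdot,y)\in W^{1,\infty}([0,T];\mathbf{R}^3)$ for a.e. such $y$. Since $\nu_0\ll\mathcal{L}^3$, any $\mathcal{L}^3$-null subset of $\nabla P_0(\Omega_\infty)$ pulls back under $\nabla P_0$ to a $\sigma_{h_0}$-null set; combining this with Fubini in $t$ and Step 1 we conclude that, after replacing $Z$ by the modification $(t,x)\mapsto\Phi_t(\nabla P_0(x))$, the map $t\mapsto Z(t,x)$ lies in $W^{1,\infty}([0,T];\mathbf{R}^3)$ for $\sigma_{h_0}$-a.e. $x$ and
$$\partial_tZ(t,x)=\partial_t\Phi(t,\nabla P_0(x))=\mathbf{w}\big(t,\Phi_t(\nabla P_0(x))\big)=J\big(Z(t,x)-\nabla R_t(Z(t,x))\big)$$
for $\mathcal{L}^4$-a.e. $(t,x)$. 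But $\nabla R_t(Z(t,x))=\nabla R_t(\Phi_t(\nabla P_0(x)))=F_t(x)$ by the definition of $F$, so $\partial_tZ(t,x)=J(Z(t,x)-F(t,x))$. Finally $Z(0,x)=\Phi_0(\nabla P_0(x))=\nabla P_0(x)$ since $\Phi_0=\mathrm{id}$ (equivalently $Z(0,x)=\nabla P_0(F_0(x))=\nabla P_0(x)$ because $F_0(x)=x$ $\sigma_{h_0}$-a.e.). This proves the "moreover" assertions.

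\textbf{Step 3: the weak formulation, and the main obstacle.} Fix $\phi\in C_c^1([0,T)\times\Omega_H)$. For $\sigma_{h_0}$-a.e. $x$ the function $Z(\cdot,x)$ is absolutely continuous, so integration by parts in $t$ gives
$$\int_0^TZ(t,x)\,\partial_t\phi(t,x)\,dt=-Z(0,x)\phi(0,x)-\int_0^TJ\big(Z(t,x)-F(t,x)\big)\phi(t,x)\,dt,$$
the boundary term at $t=T$ vanishing since $\phi$ is supported in $[0,T)$. Both $Z$ and $F=\nabla R_t\circ Z$ are bounded — recall $R\in L^\infty([0,T],W^{1,\infty}(\Lambda))$ by Theorem 2.30 — so Fubini applies; integrating the above over $x\in\Omega_{h_0}$ and rearranging, with $Z(0,x)=\nabla P_0(x)$, yields exactly the stated weak identity. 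Everything here reduces, once the identity $Z=\Phi\circ\nabla P_0$ is in hand, to a routine chain-rule computation along the flow, exactly as in Proposition 2.17 of \cite{Cullen-Feldman}; the only point requiring care is the measure-theoretic bookkeeping in Steps 1 and 2 — promoting "for a.e. $y$" statements about $\Phi$ to "for $\mathcal{L}^4$-a.e. $(t,x)$" statements about $Z$ — which rests entirely on the absolute continuity of $\nu_0$ (hence of $\nu(t,\cdot)$ and of $\sigma_{h(t,\cdot)}$) guaranteed by the hypothesis $\nu_0\in L^q$.
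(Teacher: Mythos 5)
Your proposal is correct and follows essentially the same route as the paper: identify $Z(t,x)=\Phi_t(\nabla P_0(x))$ up to an $\mathcal{L}^4$-null set, use the dual-space flow properties together with $\nu_0\ll\mathcal{L}^3$ to transfer the a.e.\ statements about $\Phi$ to $Z$, obtain the pointwise ODE and initial condition, and then integrate by parts in $t$ to get the weak identity. The only difference is cosmetic: you spell out the justification of $\nabla P_t\circ\nabla R_t=\mathrm{id}$ $\nu_t$-a.e.\ via Theorem 2.18$(iii)$ and $\nu_t=\Phi_{t\sharp}\nu_0$, which the paper delegates to its earlier lemmas, and you use $\Phi_0=\mathrm{id}$ directly instead of the integral representation of the flow for the initial condition.
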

\begin{proof}
By Lemma 3.20, we have
$$Z(t,x)=\nabla P_t\circ F_t(x)=\Phi_t\circ \nabla P_0(x)$$
Except for a $\mathcal{L}^4$ negligible set.

So after redefining $Z$ on a negligible set, we may redefine
$Z(t,x)=\Phi_t\circ\nabla P_0(x)$ and we will prove this version of Z has all the properties claimed.

By  Lemma 3.16, we know that $\Phi(\cdot,y)\in W^{1,\infty}([0,T];\mathbf{R}^3)\,\,for\,\,a.e\,\,y$. Since $\nabla P_{0\sharp}\sigma_{h_0}<<\mathcal{L}^3$, one has
$$t\longmapsto \Phi_t(\nabla P_0(x))\in W^{1,\infty}([0,T];\mathbf{R}^3)\,\,\,for\,\,\mathcal{L}^3-a.e\,\,x\in\Omega_{h_0}$$
Let $\tilde{N}_3$ be such that $\mathcal{L}^3(\tilde{N}_3)=0$ and for $y\in\tilde{N}_3^c$,$(i),(ii)$ of Lemma 3.16 holds, and such that for $a.e\,\,t\in [0,T]$, $\partial_t\Phi(t,y)=\tilde{\mathbf{w}}(t,\Phi(t,y))$ holds. Then for such y, one has
$$\Phi(t,y)=y+\int_0^t\tilde{\mathbf{w}}(s,\Phi(s,y))ds\,\,\,\forall t\in[0,T]$$
Due to the same reason as above and note that $|\nabla P_0(x)|\leq D\,\,\forall x\in\Omega_{h_0}$ by our choice of D, one can conclude for $\mathcal{L}^3-a.e\,\, x\in\Omega_{h_0}$
$$\Phi(t,\nabla P_0(x))=\nabla P_0(x)+\int_0^t\mathbf{w}(s,\Phi(s,\nabla P_0(x))ds\,\,\,\forall t\in[0,T]$$
Therefore $$Z(0,x)=x\,\,\,\mathcal{L}^3-a.e\,\,x\in\Omega_{h_0}$$
Also for such $x$, one has
$$\partial_t\Phi(t,\nabla P_0(x))=\mathbf{w}(t,\Phi(t,\nabla P_0(x)))$$
$$=J(\Phi(t,\nabla P_0(x))-\nabla R_t\circ\Phi_t\circ\nabla P_0(x))$$
$$=J(Z(t,x)-F(t,x))\,\,a.e\,\,t\in[0,T]$$
So we have shown now
$$\partial_tZ(t,x)=J(Z(t,x)-F(t,x))\,\,\,\,for\,\,\,\mathcal{L}^4-a.e\,\,\,in\,\,\,(t,x)\in[0,T]\times\Omega_{h_0}$$

Next we notice since we assumed $P_0$ to be Lipschitz on $\Omega_H$. $\nabla P_0(\Omega_H)$ will remain in a bounded set. And the flow map $\Phi$ is locally bounded on $\mathbf{R}^3\times[0,T]$, therefore $Z(t,x)=\Phi_t\circ\nabla P_0(x)$ will be bounded for $\Omega_H\times[0,T]$. So we can multiply a test function to above $a.e$ identity and integrate by parts in $t$ to get the distributional identity.
\end{proof}
$\mathbf{Proof\,\,of\,\,theorem\,\,3.9}$:

Let $D_0=||\nabla P_0||_{L^{\infty}}+1$, set $\Lambda_0=B_{D_0}(0)\times[-\frac{1}{2\delta},-\frac{\delta}{2}]$, and put $\nu_0:=\nabla P_{0\sharp}\sigma_{h_0}$. Then $supp\,\,\nu_0\subset \Lambda_0$. Also put $D=D_0+(T+1)\max_{\Omega_2}|x|+1$ and $\Lambda=B_D(0)\times[-\frac{1}{\delta},-\delta]$.

Choose $H$ as stated in Theorem 2.3. Then such H also satisfies
$$H>\frac{2}{\mathcal{L}^2(\Omega_2)}+\frac{2\max_{\Omega_2}|x|+2D}{\delta}\cdot diam\,\,\Omega_2$$
as required by Theorem 2.30.

Now let $(h,P,R)$ be the dual space  solution corresponding to the initial data $\nu_0$ given by Theorem 2.30, and let $F$ be as defined as in the begining of this subsection. Then $(h,P,F)$ gives a weak Lagrangian solution with initial data $(h_0,P_0)$.\\
Indeed, that $(h,P)$ assumes initial data $(h_0,P_0)$ on $\Omega_{h_0}\bigcup\{x_3=0\}$ is guaranteed by Proposition 3.3. property $(i),(ii)$ comes from our construction of dual space solution, other properties follows from the lemmas and propositions listed in this subsection.

\section{Existence of relaxed Lagrangian solution with general initial data($\nabla P_0\in L^2$)}
\subsection{Definition of generalized data and main result}
In this section, we prove the existence of relaxed Lagrangian solution in a similar way as was done in \cite{Feldman-Tudorascu}.

 The relaxed Lagrangian solution is an even weaker notion than weak Lagrangian solutions defined in previous sections, but it will allow for more general initial data $(h_0,P_0)$, in particular, we will no longer require $\nabla P_{0\sharp}\sigma_{h_0}<<\mathcal{L}^3$ or have compact support (still we assume $supp\,\,\subset\{-\frac{1}{\delta}\leq x_3\leq-\delta\}$). To motivate the definition, recall that the weak Lagrangian solution given by Theorem  3.10 will satisfy the additional property
$$\partial_tZ(t,x)=J(Z(t,x)-F(t,x))\,\,\,\,for\,\,\,\mathcal{L}^4-a.e\,\,\,in\,\,\,(t,x)\in[0,T]\times\Omega_{h_0}$$
$$Z(0,x)=\nabla P_0(x)\,\,\,for\,\,\,\mathcal{L}^3-a.e\,\,in\,\,x\in\Omega_{h_0}$$

Recalling the definition of $Z(t,x)$, this implies for $\xi\in C^1(\mathbf{R}^3)$, we have
$$\partial_t\xi(\nabla P_t(F_t(x))=\nabla\xi(\nabla P_t(F_t(x))\cdot J(\nabla P_t(F_t(x))-F(t,x))\,\,\,\,for\,\,\,\mathcal{L}^4-a.e\,\,\,in\,\,\,(t,x)\in[0,T]\times\Omega_{h_0}$$
and has initial data $\xi(\nabla P_0(x))$.

Thus if we define a Borel family of  measures $[0,T]\ni t\longmapsto d\alpha_t(x,\hat{x}):=(id\times F_t)_{\sharp}\sigma_{h_0}$ and put $d\alpha=d\alpha_tdt$, then at least formally we can obtain from above almost everywhere defined equality that
\begin{equation}
\int_0^T\int_{\Omega_{\infty}^2}\xi(\nabla P_t(\hat{x}))\partial_t\psi(t,x)d\alpha_t(x,\hat{x})dt+\int_0^T\int_{\Omega_{\infty}^2}\nabla \xi(\nabla P_t(\hat{x}))\cdot J(\nabla P_t(\hat{x})-\hat{x})\psi(t,x)d\alpha_t(x,\hat{x})dt
\end{equation}
$$+\int_{\Omega_{h_0}}\xi(\nabla P_0(x))\psi(0,x)dx=0\,\,\,\forall \xi\in C_c^1(\mathbf{R}^3)$$
Above discussion motivates the following definition of relaxed Lagrangian solutions.
\begin{defn}
$(h_0,P_0)$ be admissible initial data in the sense of Definition 4.3 below. Consider  a Borel function $P:\Omega_{\infty}\times[0,T]\rightarrow\mathbf{R}$, such that $P(t,\cdot)$ is convex for any $t\in[0,T]$ and $h:\Omega_2\times[0,T]\rightarrow\mathbf{R}$ such that $h(t,\cdot)\in L^2(\Omega_2)$  and a family of Borel measures $[0,T]\ni t\longmapsto \alpha_t(x,\hat{x})\in\mathcal{P}(\Omega_{\infty}\times\Omega_{\infty})$. Let $\alpha$ be given by $d\alpha=d\alpha_tdt$. We say that the triple $(P,h,\alpha)$ is a relaxed Lagrangian solution if the following holds

$(i)$$\,\,h\geq0\,\,\,\int_{\Omega_2}hdx_1dx_2=1\,\,\,\forall t\in[0,T]$

$(ii)$$\,\, P_t(x_1,x_2,h)=\frac{1}{2}(x_1^2+x_2^2)$,$\nabla P_t\in L^2(\Omega_{h_t})$, and $-\frac{1}{\delta}\leq\partial_{x_3}P_t\leq-\delta$ for any $t\in[0,T]$

$(iii)$$proj_x\alpha_t=\sigma_{h_0}\,\,\,\,\,proj_{\hat{x}}\alpha_t=\sigma_{h(t,\cdot)}$, $\alpha_0=(id\times id)_{\sharp}\sigma_{h_0}$

$(iv)$(4.1) above holds true.

$(v)$$P(0,\cdot)=P_0\,\,\,on\,\,\,\Omega_{h_0}\,\,\,h(0,\cdot)=h_0$
\end{defn}
Since we derived (4.1) only formally, we need to check (4.1) makes sense.
\begin{lem}
The left hand side of (4.1) is well-defined for any $d\alpha(t,x,\hat{x})=d\alpha_t(x,\hat{x})dt$ with $\alpha_t$ satisfying $(iii)$ in Definition 4.2  .
\end{lem}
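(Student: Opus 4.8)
The plan is to verify that each of the three terms on the left-hand side of (4.1) is an absolutely convergent integral. The only delicate points are the joint measurability and $\alpha$-a.e. definedness of the map $(t,\hat x)\mapsto\nabla P_t(\hat x)$, and the integrability of the factor $\nabla P_t(\hat x)-\hat x$ over the unbounded strip $\Omega_\infty$; everything else is a size estimate.

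First I would address measurability. Since $P:\Omega_\infty\times[0,T]\to\mathbf R$ is Borel and each $P_t:=P(t,\cdot)$ is convex, the difference quotients $r^{-1}(P_t(\hat x+re_i)-P_t(\hat x))$ are Borel functions of $(t,\hat x)$, their one-sided limits as $r\to 0^{\pm}$ exist everywhere by convexity, and on the Borel set $G$ where the two one-sided limits agree for $i=1,2,3$ the gradient $\nabla P_t(\hat x)$ is defined and is a Borel function of $(t,\hat x)$. For each $t$, convexity forces $\Omega_\infty\setminus G_t$ to be $\mathcal L^3$-null, and since $proj_{\hat x}\alpha_t=\sigma_{h(t,\cdot)}\ll\mathcal L^3$ by $(iii)$ we get $\alpha_t(\Omega_\infty^2\setminus G_t)=\sigma_{h_t}(\Omega_\infty\setminus G_t)=0$ for every $t$; hence $(t,x,\hat x)\mapsto\nabla P_t(\hat x)$ is defined $d\alpha_t\,dt$-a.e. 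Composing with the continuous maps $\xi$ and $\nabla\xi$ and multiplying by the continuous functions $\psi$, $\partial_t\psi$ and $\hat x$ shows that both integrands in (4.1) are $d\alpha_t\,dt$-measurable, and the integrand of the third term is $\mathcal L^3$-measurable on $\Omega_{h_0}$ since $\nabla P_0$ exists $\mathcal L^3$-a.e. there.

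Next the size estimates. Because $\xi\in C_c^1(\mathbf R^3)$, both $\xi$ and $\nabla\xi$ are bounded, $\psi$ and $\partial_t\psi$ are bounded, each $\alpha_t$ is a probability measure, and $\mathcal L^3(\Omega_{h_0})=\int_{\Omega_2}h_0=1$; hence the first term of (4.1) is bounded in absolute value by $T\|\xi\|_\infty\|\partial_t\psi\|_\infty$ and the third by $\|\xi\|_\infty\|\psi(0,\cdot)\|_\infty$. In the second term the only unbounded factor is $\nabla P_t(\hat x)-\hat x$ (note $|J v|\le|v|$), and using $proj_{\hat x}\alpha_t=\sigma_{h_t}$ and $\mathcal L^3(\Omega_{h_t})=1$,
\[
\int_{\Omega_\infty^2}|\nabla P_t(\hat x)-\hat x|\,d\alpha_t\ \le\ \int_{\Omega_{h_t}}|\nabla P_t|\,dx+\int_{\Omega_{h_t}}|\hat x|\,dx\ \le\ \|\nabla P_t\|_{L^2(\Omega_{h_t})}+\max_{\Omega_2}|x|+\frac{1}{2}\|h_t\|_{L^2(\Omega_2)}^2 ,
\]
the first integral being estimated by Cauchy--Schwarz and the second by $\int_{\Omega_{h_t}}|\hat x|\,dx\le\max_{\Omega_2}|x|\int_{\Omega_2}h_t+\frac{1}{2}\int_{\Omega_2}h_t^2$; all three terms on the right are finite by $(ii)$ and $h_t\in L^2(\Omega_2)$. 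Integrating in $t$, the second term of (4.1) is absolutely convergent provided $t\mapsto\|\nabla P_t\|_{L^2(\Omega_{h_t})}$ and $t\mapsto\|h_t\|_{L^2(\Omega_2)}$ lie in $L^1(0,T)$; this is part of the data of a relaxed Lagrangian solution, and for the solutions constructed in the sequel it follows from the uniform bounds of Section 2 (e.g. (2.47), together with $\|\nabla P_t\|_{L^2(\Omega_{h_t})}^2=M_2(\nabla P_{t\sharp}\sigma_{h_t})$ staying bounded on $[0,T]$). The main obstacle is thus the joint measurability and $\alpha$-a.e. definedness of $\nabla P$ carried out in the first step; the remaining bounds are routine.
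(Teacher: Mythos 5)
Your argument is correct, but it proves more than the paper does and takes a different route. The paper reads ``well-defined'' narrowly: it fixes $t$, takes two Borel representatives $\Psi_1,\Psi_2$ of $\nabla P_t$, and observes that the set where the two integrands differ has $\alpha_t$-measure $\sigma_{h_t}(\{\Psi_1\neq\Psi_2\})=0$ because the $\hat x$-marginal of $\alpha_t$ is $\sigma_{h_t}\ll\mathcal{L}^3$; that single observation, applied term by term, is the entire proof --- there is no joint-measurability construction and no absolute-convergence estimate. You instead build a canonical jointly Borel version of $(t,\hat x)\mapsto\nabla P_t(\hat x)$ from difference quotients and then verify that each term of (4.1) is an absolutely convergent integral; the representative-independence that the paper checks is implicit in your first step, since any Borel representative agrees with your canonical gradient $\mathcal{L}^3$-a.e., hence $\alpha_t$-a.e.\ by condition $(iii)$. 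What your approach buys is a stronger conclusion (the space--time integral exists as a Lebesgue integral, with Fubini available), which is closer to what is actually needed when (4.1) is passed to the limit later. The price is visible in your second-term estimate: you need $t\mapsto\|\nabla P_t\|_{L^2(\Omega_{h_t})}$ and $t\mapsto\|h_t\|_{L^2(\Omega_2)}^2$ to lie in $L^1(0,T)$, and, contrary to your parenthetical, Definition 4.2 only requires these quantities to be finite for each fixed $t$, not integrable in $t$; so this is an extra (harmless) hypothesis, satisfied by the solutions constructed in Section 4 via the uniform dual-space bounds, rather than ``part of the data''. Since the lemma as stated assumes only condition $(iii)$, the paper's minimal argument is exactly calibrated to that hypothesis, while your quantitative step also invokes condition $(ii)$; flagging that explicitly would make your write-up airtight.
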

\begin{proof}
The main issue comes from the fact that for each fixed t, $\nabla P_t$ is not an honest function but is defined only $a.e$, We have to check that different choice of Borel representative of $\nabla P_t$ does not affect the integral in the left hand side. Actually we will show for each fixed time slice t, the inner integral is well-defined.

Fix $t\in[0,T]$, let $\Psi_1,\Psi_2$ be two Borel representatives of $\nabla P_t$ ,i.e $\Psi_1,\Psi_2$ are Borel functions and $\Psi_i=\nabla P_t\,\,\mathcal{L}^3-a.e$. Concerning the first term, consider the set
$$E=\{(x,\hat{x})\in\Omega_{\infty}^2|\xi(\Psi_1(\hat{x}))\partial_t\psi(t,x)\neq \xi(\Psi_2(\hat{x}))\partial_t\psi(t,x)\}$$

Then $\alpha_t(E)=\sigma_h(\{\hat{x}\in\Omega_{\infty}|\Psi_1(\hat{x}))\neq\Psi_2(\hat{x}))\})=0$. The argument for other terms are the same.
\end{proof}
Now we define a more general class of initial data, and state the existence result of renormalized solutions with such data. To prove the existence, we need to study the functional $E_{\nu}(h,\gamma)$ and $J_{\nu}(P,R)$ with more general $\nu$ than considered in section 3. This is done in subsection 2.2. As a byproduct of our proof, we will get the dual space existence result with a general measure valued initial data whose support is contained in $\mathbf{R}^2\times[-\frac{1}{\delta},-\delta]$ for some $\delta>0$
\begin{defn}
Let $P_0:\Omega_{\infty}\rightarrow\mathbf{R}$ be a convex function, and $h_0\in L^2(\Omega_2)$. Then we say $(P_0,h_0)$ is a generalized data if the following holds:\\
$(i)$$\nabla P_0\in L^2(\Omega_{h_0})$ and $-\frac{1}{\delta}\leq\partial_{x_3}P_0\leq-\delta$\\
$(ii)$$h_0\geq0$ and $\int h_0dx_1dx_2=1$\\
$(iii)$$P_0(x_1,x_2,h_0(x_1,x_2))=\frac{1}{2}(x_1^2+x_2^2)\,\,\forall (x_1,x_2)\in\Omega_2$\\
\end{defn}
Next we show that the notion of relaxed Lagrangian solution defined here is consistent with weak Lagrangian solution when the measure $\alpha_t$ is induced by some physical flow map, except possibly the inverse map may not exist.
\begin{lem}
Let $(P,h,\alpha)$ be relaxed Lagrangian solution such that $P\in C([0,T];H^1(\Omega_{\infty}))$ with admissible initial data. Suppose there exists a Borel map $F:[0,T]\times\Omega_{h_0}\rightarrow\Omega_{\infty}$, which is weakly continuous in the sense that $$\int_{\Omega_{h_0}}F_t(x)\psi(x)dx\rightarrow\int_{\Omega_{h_0}}F_{t_0}(x)\psi(x)dx\,\,\,\forall\psi\in C_b(\mathbf{R}^3;\mathbf{R}^3)\,\,\,as\,\,t\rightarrow t_0\in[0,T]$$
such that $\alpha_t=(id\times F_t)_{\sharp}\sigma_{h_0}$. Then $(P,h,F)$ is a weak Lagrangian solution except possibly $(iv)$ of Definition 3.7.
\end{lem}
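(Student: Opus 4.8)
The plan is to verify, one at a time, that the triple $(P,h,F)$ meets conditions $(i)$, $(ii)$, $(iii)$ and $(v)$ of Definition 3.7, reading the ambient slab $\Omega_H$ there as $\Omega_{\infty}$ (and as $\Omega_{h_0}$ where appropriate), since in the present generality $h(t,\cdot)$ lies only in $L^2(\Omega_2)$; the clause ``except possibly $(iv)$'' is precisely the statement that the existence of an inverse flow $F^*$ is the one item not being asserted. Conditions $(i)$ and $(ii)$, as well as the identifications $h(0,\cdot)=h_0$ and $P(0,\cdot)=P_0$ on $\Omega_{h_0}$, are literally conditions $(i)$, $(ii)$ and $(v)$ in the definition of a relaxed Lagrangian solution, so there is nothing to do for them.

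For $(iii)$ I would push the hypothesis $\alpha_t=(\mathrm{id}\times F_t)_\sharp\sigma_{h_0}$ forward under the two coordinate projections: the first gives $\mathrm{proj}_x\alpha_t=\sigma_{h_0}$ automatically, the second gives $\mathrm{proj}_{\hat x}\alpha_t=F_{t\sharp}\sigma_{h_0}$, which compared with condition $(iii)$ of the relaxed solution forces $F_{t\sharp}\sigma_{h_0}=\sigma_{h(t,\cdot)}$ for every $t$. For the initial normalization $F_0=\mathrm{id}$, combine $\alpha_0=(\mathrm{id}\times\mathrm{id})_\sharp\sigma_{h_0}$ with $\alpha_0=(\mathrm{id}\times F_0)_\sharp\sigma_{h_0}$ (the latter holds at $t=0$ directly, or, if the factorization of $\alpha_t$ is known only for a.e.\ $t$, follows from the assumed weak continuity of $t\mapsto F_t$), and test against $g(x,\hat x)=\eta(|x-\hat x|)$ for a bounded continuous $\eta\ge0$ vanishing only at the origin; this yields $\int_{\Omega_{h_0}}\eta(|x-F_0(x)|)\,dx=0$, hence $F_0(x)=x$ for $\sigma_{h_0}$-a.e.\ $x$. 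Since $F_{t\sharp}\sigma_{h_0}=\sigma_{h(t,\cdot)}\ll\mathcal{L}^3$, the composition $Z(t,x):=\nabla P_t(F_t(x))$ is well defined $\sigma_{h_0}$-a.e.\ for each $t$ (and $\mathcal{L}^4$-a.e.\ in $(t,x)$), independently of the Borel representative chosen for $\nabla P_t$, exactly as in the lemma on the well-definedness of (4.1).

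The substantive part is $(v)$. Inserting $\alpha_t=(\mathrm{id}\times F_t)_\sharp\sigma_{h_0}$ into (4.1) and changing variables $\hat x=F_t(x)$ in the inner integrals turns (4.1), for every $\xi\in C_c^1(\mathbf{R}^3)$ and every $\psi\in C_c^1([0,T)\times\Omega_{\infty})$, into
\[
\int_0^T\!\!\int_{\Omega_{h_0}}\!\Big(\xi(Z)\,\partial_t\psi+\nabla\xi(Z)\cdot J(Z-F)\,\psi\Big)\,dx\,dt+\int_{\Omega_{h_0}}\xi\big(\nabla P_0(x)\big)\,\psi(0,x)\,dx=0,
\]
with the abbreviations $Z=Z(t,x)$, $F=F_t(x)$, $\psi=\psi(t,x)$, and using $Z(0,\cdot)=\nabla P_0(F_0(\cdot))=\nabla P_0$ from the previous step. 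It remains to drop the compact-support requirement on $\xi$ and replace it by the coordinate maps $\xi(z)=z_i$, $i\in\{1,2,3\}$. Fix $i$, pick a cut-off $\chi_R\in C_c^\infty(\mathbf{R}^3)$ with $\chi_R\equiv1$ on $B_R(0)$, $\mathrm{supp}\,\chi_R\subset B_{2R}(0)$, $|\nabla\chi_R|\le C/R$, and apply the displayed identity with $\psi=\phi$ and $\xi(z)=\chi_R(z)\,z_i$, so $\nabla\xi(z)=\chi_R(z)e_i+z_i\nabla\chi_R(z)$. Letting $R\to\infty$: the piece carrying $z_i\nabla\chi_R(z)$ is supported in $\{R\le|z|\le 2R\}$ and bounded there, so its contribution is at most $C\int_0^T\!\int_{\Omega_{h_0}}\chi_{\{|Z|\ge R\}}(|Z|+|F|)|\phi|\,dx\,dt\to0$, while the other three terms converge by dominated convergence with majorants $|Z|\,|\partial_t\phi|$, $(|Z|+|F|)|\phi|$ and $|\nabla P_0|\,|\phi(0,\cdot)|$; all of these are integrable because $Z,F\in L^\infty([0,T];L^1(\Omega_{h_0}))$ and $\nabla P_0\in L^2(\Omega_{h_0})$. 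Here $\|F_t\|_{L^1(\Omega_{h_0})}=\int_{\Omega_{h(t,\cdot)}}|\hat x|\,d\hat x\le \mathrm{diam}(\Omega_2)+\tfrac12\|h(t,\cdot)\|_{L^2(\Omega_2)}^2$ and $\|Z(t,\cdot)\|_{L^1(\Omega_{h_0})}=\int_{\Omega_{h(t,\cdot)}}|\nabla P_t|\,d\hat x\le \|\nabla P_t\|_{L^2(\Omega_{h(t,\cdot)})}$ (using $\int_{\Omega_2}h(t,\cdot)=1$), and both right-hand sides are bounded in $t$ by the universal $L^2(\Omega_2)$-bound on $h$ and by the continuity hypothesis $P\in C([0,T];H^1(\Omega_{\infty}))$. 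The limit identity is the $i$-th component of the equation in Definition 3.7$(v)$; since $i$ is arbitrary, every $\phi\in C_c^1([0,T)\times\Omega_H)$ may be used above after extension by zero to $[0,T)\times\Omega_{\infty}$, and $Z(0,\cdot)=\nabla P_0$, condition $(v)$ follows.

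The main obstacle is the last passage to the limit $R\to\infty$, which is the only place where the regularity hypothesis $P\in C([0,T];H^1(\Omega_{\infty}))$ (through uniform-in-time $L^2$ control of $\nabla P_t$ on $\Omega_{h(t,\cdot)}$, hence of $Z$) and the support condition on $\nu$ are genuinely used; everything else is bookkeeping, apart from the cosmetic point, noted above, that the slab $\Omega_H$ of Definition 3.7 has to be replaced by $\Omega_{\infty}$ because $h(t,\cdot)$ is only known to lie in $L^2(\Omega_2)$.
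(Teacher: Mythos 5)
There is a genuine gap: you never verify the time--continuity requirements that are built into the preamble of Definition 3.7, and these are in fact the main content of the statement and of the paper's proof. Being a ``weak Lagrangian solution except possibly $(iv)$'' requires, besides $(i)$--$(iii)$ and $(v)$, that $F\in C([0,T];L^r(\sigma_{h_0}d\mathcal{L}^3;\mathbf{R}^3))$ for some $r>1$ and that $P\in L^{\infty}([0,T],W^{1,\infty})\cap C([0,T],W^{1,r})$, $h\in C^0([0,T]\times\bar{\Omega}_2)$. Your proposal checks the marginal identities, $F_0=\mathrm{id}$, and identity $(v)$, but says nothing about the strong continuity in time of $F$. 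This is precisely why the weak-continuity hypothesis on $t\mapsto F_t$ is assumed in the lemma: by itself it is far too weak to be the conclusion, and the paper uses it as follows. Since $F_{t\sharp}\sigma_{h_0}=\mathrm{proj}_{\hat x}\alpha_t=\sigma_{h(t,\cdot)}$, one has
$$\int_{\Omega_{h_0}}|F_t(x)|^2dx=\int_{\Omega_{h(t,\cdot)}}|\hat x|^2d\hat x\longrightarrow\int_{\Omega_{h(t_0,\cdot)}}|\hat x|^2d\hat x=\int_{\Omega_{h_0}}|F_{t_0}(x)|^2dx$$
as $t\to t_0$, by the continuity of $h$; weak convergence together with convergence of the $L^2$ norms yields strong $L^2(\sigma_{h_0})$ convergence of $F_t$ to $F_{t_0}$, and boundedness of $F_t$ upgrades this to $L^r$ for all $r<\infty$. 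Likewise, the hypothesis $P\in C([0,T];H^1(\Omega_{\infty}))$ is used in the paper, interpolated against the uniform-in-time Lipschitz bound on $P$, to obtain the $C([0,T],W^{1,r})$ continuity of $P$ demanded by Definition 3.7; in your argument this hypothesis only enters through an $L^1$ bound on $Z$, so its real role is missed. Without these continuity verifications the conclusion of the lemma is not established.

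On the parts you do treat: the identification of the marginals, the argument that $F_0(x)=x$ for $\sigma_{h_0}$-a.e.\ $x$, and the derivation of $(v)$ by substituting $\alpha_t=(\mathrm{id}\times F_t)_{\sharp}\sigma_{h_0}$ into (4.1) and then passing from $\xi\in C_c^1(\mathbf{R}^3)$ to the coordinate functions $\xi(z)=z_i$ via a cut-off $\chi_R$ are correct; the cut-off step is a legitimate and welcome fleshing-out of the paper's one-line remark that one ``takes $\xi(y)=y_k$ in (4.1)''. But as it stands the proposal proves only the easy half of the lemma and omits the half for which the stated hypotheses were introduced.
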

\begin{proof}
First we observe that the assumption $P\in L^{\infty}([0,T],W^{1,\infty}(\Omega_{\infty}))\bigcap C([0,T],H^1(\Omega_{\infty}))$ implies that $P\in C([0,T],W^{1,r}(\Omega_{\infty}))\,\,\,\forall r<\infty$. Also observe the following
$$\int_{\Omega_{h_0}}|F_t(x)|^2dx=\int_{\Omega_{h(t,\cdot)}}|\hat{x}|^2d\hat{x}\rightarrow\int_{\Omega_{h(t_0,\cdot)}}|\hat{x}|^2dx=\int_{\Omega_{h_0}}|F_{t_0}(x)|^2dx\,\,as\,\,t\rightarrow t_0$$
The equality above is because
$$\sigma_{h(t,\cdot)}=proj_{\hat{x}}\alpha_t=F_{t\sharp}\sigma_{h_0}$$
and the limit happens is because of the assumption $h\in C([0,T]\times\bar{\Omega}_2)$. Since $F_t$ is always bounded, this combines with weak continuity implies $F\in C([0,T],L^r(\sigma_{h_0}\mathcal{L}^3))$,$\forall r<\infty$

Now it only remains to check that for $Z(t,x)=\nabla P_t(F_t(x))$(well-defined thanks to the above observed $F_{t\sharp}\sigma_{h_0}=\sigma_{h(t,\cdot)}$), it satisfies $(v)$, but this is seen by taking $\xi(y)=y_k$ in (4.1).
\end{proof}
Here is the main result of this section.
\begin{thm}
Let $(P_0,h_0)$ be generalized data in the sense of Definition 4.4, then there exists a relaxed Lagrangian solution $(P,h,\alpha)$ having $(P_0.h_0)$ as initial data. Besides, we have the following continuity in time:

$(i)\,\,h\in C([0,T];L^2(\Omega_2))$

$(ii)\,\,\xi(P,\nabla P)\sigma_h\in C([0,T];L^1(\Omega_{\infty}))\,\,\forall\xi\in C_b(\mathbf{R}\times\mathbf{R}^3)$

\end{thm}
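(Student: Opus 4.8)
The plan is to follow \cite{Feldman-Tudorascu}: approximate the generalized data by admissible data, invoke the weak Lagrangian existence of Section~3 for each approximation, and pass to the limit, with the stability theorem of subsection~2.2 (Theorem~2.52) as the main engine. First I would record that a generalized datum $(P_0,h_0)$ is automatically, on $\Omega_{h_0}\cup\{x_3=0\}$, the optimizer attached to $\nu_0:=(\nabla P_0)_\sharp\sigma_{h_0}\in\mathcal P_2(\mathbf R^3)$: the convexity of $P_0$ and conditions $(i)$--$(iii)$ of Definition~4.4 let the argument of Proposition~3.3 run verbatim, with the uniqueness statement of Lemma~2.46$(iv)$ supplying the rest. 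Then I would choose $\nu_0^n\in L^2(\mathbf R^3)$, compactly supported in $\mathbf R^2\times[-\frac1\delta,-\delta]$, with $W_2(\nu_0^n,\nu_0)\to0$ and $\sup_nM_2(\nu_0^n)<\infty$ (truncate the support of $\nu_0$ and mollify). Letting $(\bar P_0^n,\bar R_0^n)$ be the maximizer of $J_{\nu_0^n}^H$ from Corollary~2.20 and $h_0^n:=h_{\bar P_0^n}^H$, Proposition~2.12 and Corollary~2.13 make $(\bar P_0^n,h_0^n)$ admissible initial data in the sense of Definition~3.1 with $(\nabla\bar P_0^n)_\sharp\sigma_{h_0^n}=\nu_0^n\in L^2$, and Theorem~2.52 applied to the static problem gives $h_0^n\to h_0$ in $L^2(\Omega_2)$ and $\bar P_0^n\to P_0$ locally uniformly on $\Omega_\infty$ with $\nabla\bar P_0^n\to\nabla P_0$ a.e.

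For each $n$, Theorem~3.9 produces a weak Lagrangian solution $(h^n,P^n,F^n)$ on $[0,T]\times\Omega_H$ with dual companion $\nu^n(t,\cdot)=(\nabla P^n_t)_\sharp\sigma_{h^n(t,\cdot)}$, and (extending via the concluding lemma of subsection~2.2) I may take $(P^n(t,\cdot),R^n(t,\cdot))$ to be a maximizer of $J_{\nu^n(t,\cdot)}$ satisfying $(2.43)$--$(2.44)$. Recall from Section~3 that $Z^n(t,x):=\nabla P^n_t(F^n_t(x))=\Phi^n_t(\nabla\bar P_0^n(x))$ solves $\partial_tZ^n=J(Z^n-F^n)$ with $Z^n(0,\cdot)=\nabla\bar P_0^n$, and that $F^n_{t\,\sharp}\sigma_{h_0^n}=\sigma_{h^n(t,\cdot)}$, $F^n_0=\mathrm{id}$. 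Setting $\alpha^n_t:=(\mathrm{id}\times F^n_t)_\sharp\sigma_{h_0^n}$, testing the $Z^n$-equation with $\psi$ and composing with $\xi\in C^1_c(\mathbf R^3)$ yields the $n$-version of $(4.1)$. The key $n$-uniform estimate is a speed bound: since $\nabla R^n_t$ takes values in $\overline{\Omega_\infty}$ one has $|\nabla_2R^n_t|\le\max_{\Omega_2}|x|$ and $[J\nabla R^n_t]_3=0$, so $\frac{d}{dt}|(\Phi^n_{t,1},\Phi^n_{t,2})|\le\max_{\Omega_2}|x|$ and hence $|\partial_t\Phi^n_t(y)|\le|y|+C_0(T+1)$; integrating against $\nu_0^n$ gives $W_2(\nu^n(t_1,\cdot),\nu^n(t_2,\cdot))\le C|t_1-t_2|$ with $C=C(M_2(\nu_0),\Omega_2,T)$ independent of $n$. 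Combined with $\|h^n(t,\cdot)\|_{L^2(\Omega_2)}\le C$ (Lemma~2.46$(iii)$) and the uniform-on-compacts bounds for $P^n$ (Corollary~2.42), the slices $\nu^n(t,\cdot)$ are uniformly tight and $2$-bounded and the family is $W_2$-equicontinuous, so by Arzel\`a--Ascoli in $(\mathcal P_2,W_2)$ on a bounded set and a diagonal argument, along a subsequence $\nu^n(t,\cdot)\to\nu(t,\cdot)$ in $W_2$ for every $t\in[0,T]$, with $t\mapsto\nu(t,\cdot)$ being $W_2$-Lipschitz.

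To pass to the limit in $(4.1)$ I would glue the relevant objects: put $\lambda^n_t:=(\mathrm{id}\times F^n_t\times Z^n(t,\cdot))_\sharp\sigma_{h_0^n}$ on $\Omega_\infty\times\Omega_\infty\times\Lambda$ and $d\lambda^n=d\lambda^n_t\,dt$. Its $x$-marginal is $\sigma_{h_0^n}$, its $(x,\hat x)$-marginal is $\alpha^n_t$, and its $(\hat x,y)$-marginal is $\gamma^n(t,\cdot)=(\mathrm{id}\times\nabla P^n_t)_\sharp\sigma_{h^n(t,\cdot)}$; all marginals are uniformly tight (from $\int h^2\le C$, boundedness of $\Omega_2$, and $\mathrm{supp}\,\nu^n\subset\Lambda$ with uniformly bounded second moment), so along a further subsequence $\lambda^n\to\lambda$ narrowly on $[0,T]\times\Omega_\infty\times\Omega_\infty\times\Lambda$. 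Disintegrating $\lambda=\lambda_t\,dt$, let $\alpha_t$ be the $(x,\hat x)$-marginal of $\lambda_t$; by Theorem~2.52$(ii)$ the $(\hat x,y)$-marginal of $\lambda_t$ equals $(\mathrm{id}\times\nabla P_t)_\sharp\sigma_{h(t,\cdot)}$, where $(P(t,\cdot),R(t,\cdot))$ is the maximizer of $J_{\nu(t,\cdot)}$ satisfying $(2.43)$--$(2.44)$ and $h(t,\cdot)=h_{P(t,\cdot)}$, so $y=\nabla P_t(\hat x)$ holds $\lambda_t$-a.e.---precisely what is needed to interpret the nonlinear terms. Rewriting the $n$-version of $(4.1)$ as integrals against $\lambda^n$ (first term $\int\xi(y)\,\partial_t\psi(t,x)\,d\lambda^n$; second term $\int\nabla\xi(y)\cdot J(y-\hat x)\,\psi(t,x)\,d\lambda^n$, whose integrand is bounded and continuous on the relevant set because $\nabla\xi$ is compactly supported, so $y$ stays in a fixed compact set while $J(y-\hat x)$ has vanishing third component and first two built from the bounded $y_{1,2}$ and $\hat x_{1,2}\in\Omega_2$; third term handled by $\nabla\bar P_0^n\to\nabla P_0$ a.e.\ with uniform integrability), narrow convergence of $\lambda^n$ and dominated convergence in $t$ deliver $(4.1)$ for $(P,h,\alpha)$. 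Conditions $(i)$--$(iii)$ and $(v)$ of Definition~4.2 are then read off: the marginal and $t=0$ relations from the marginal structure of $\lambda$ and $h_0^n\to h_0$; condition $(ii)$ (the free-boundary identity, $-\frac1\delta\le\partial_{x_3}P_t\le-\delta$, and $\nabla P_t\in L^2(\Omega_{h_t})$ since $\int_{\Omega_{h_t}}|\nabla_2P_t|^2=\int(y_1^2+y_2^2)\,d\nu(t,\cdot)<\infty$) from the maximizer's properties; condition $(v)$ because $P(0,\cdot),h(0,\cdot)$ are the optimizers for $\nu(0,\cdot)=\nu_0$, which agree with $P_0$ on $\Omega_{h_0}$ and with $h_0$ by the opening remark. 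Finally the continuity statements $(i)$, $(ii)$ of the theorem follow from the $W_2$-Lipschitz continuity of $t\mapsto\nu(t,\cdot)$ together with Theorem~2.52$(iii)$ and $(iv)$.

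The hardest part will be the passage to the limit in the nonlinear terms of $(4.1)$: $\nabla P^n_t$ is unbounded and $\xi(\nabla P^n_t(\hat x))$ and $\nabla\xi(\nabla P^n_t(\hat x))\cdot J(\nabla P^n_t(\hat x)-\hat x)$ are evaluated along the transported measures $\alpha^n_t$, whose second marginals converge only weakly. The gluing device $\lambda^n_t$, together with the identification of its $(\hat x,y)$-marginal with the optimal plan via the stability theorem (so that $y=\nabla P_t(\hat x)$ in the limit), is what resolves this; the supporting facts---the $n$-uniform $W_2$-Lipschitz bound in time for $\nu^n$, which yields narrow convergence at \emph{every} $t$ rather than merely for a.e.\ $t$, and the uniform tightness of all marginals of $\lambda^n$---are the technical core of the argument.
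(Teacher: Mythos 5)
Your proposal is correct and follows the same overall architecture as the paper: identify $(P_0,h_0)$ with the optimizers for $\nu_0=(\nabla P_0)_\sharp\sigma_{h_0}$ (the paper's Lemma 4.18), approximate $\nu_0$ by nice compactly supported data, solve via Theorem 3.9, and pass to the limit using the $\mathcal P_2$-stability results of subsection 2.2 together with the uniform $W_2$-Lipschitz-in-time bound (the paper's Lemma 4.8 plus Gronwall; your pointwise flow estimate $\frac{d}{dt}|(\Phi_1,\Phi_2)|\le\max_{\Omega_2}|x|$ is a legitimate variant for the approximating solutions, which are genuine flows). The genuine difference is in the hardest step, the passage to the limit in the nonlinear terms of (4.1). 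The paper isolates this in a separate stability lemma (Lemma 4.21) and argues directly on the compositions $\xi(\nabla P^n_t(\hat x))$ and $\nabla\xi(\nabla P^n_t(\hat x))\cdot J\nabla P^n_t(\hat x)$: it uses the $L^1(\Omega_\infty)$ convergence of $\xi(P^n_t,\nabla P^n_t)\sigma_{h^n_t}$ from the stability theorem, a Lusin-type continuous approximation $g$ of $\nabla P_t$, and a cut-off $H(\nabla P^n_t)$ with tail estimates $\int_{\{|y|\ge K\}}|y|\,d\nu^n_t\le M_2(\nu^n_t)/K$ to tame the unboundedness. Your gluing device $\lambda^n_t=(\mathrm{id}\times F^n_t\times Z^n_t)_\sharp\sigma_{h_0^n}$ replaces all of this: once lifted, both integrands $\xi(y)\partial_t\psi(t,x)$ and $\nabla\xi(y)\cdot J(y-\hat x)\psi(t,x)$ are bounded continuous on $[0,T]\times\Omega_\infty^2\times\Lambda$ (because $\xi\in C^1_c$ and $J\hat x$ involves only $\hat x_1,\hat x_2\in\Omega_2$), so narrow convergence of $\lambda^n$ does the whole job, and the identification $y=\nabla P_t(\hat x)$ $\lambda_t$-a.e.\ comes from matching the $(\hat x,y)$-marginal with the optimal graph measure via the stability theorem. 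This is arguably cleaner and closer in spirit to the gluing arguments of optimal transport; what it costs is the extra bookkeeping of disintegrating the limit $\lambda$ and showing its slices have the right marginals.

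Two small points you should make explicit if you write this up. First, narrow convergence of $\lambda^n=\lambda^n_t\,dt$ only pins down the disintegration $\lambda_t$ for a.e.\ $t$, whereas Definition 4.2(iii) demands the marginal identities $\mathrm{proj}_x\alpha_t=\sigma_{h_0}$, $\mathrm{proj}_{\hat x}\alpha_t=\sigma_{h(t,\cdot)}$ for every $t$; you recover the every-$t$ statement from the pointwise-in-$t$ narrow convergence $\alpha^n_t\to\alpha_t$, $\gamma^n_t\to\gamma_t$ supplied by the stability theorem applied slice by slice (the paper is equally terse here, so this is a shared, fixable imprecision rather than a gap). Second, the identification of the $(\hat x,y)$-marginal of $\lambda_t$ with $(\mathrm{id}\times\nabla P_t)_\sharp\sigma_{h_t}$ should be justified by testing against $\chi(t)\varphi(\hat x,y)$ and using dominated convergence in $t$ of the slice-wise limits, since the marginal of a narrow limit need not a priori be computed slice by slice. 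With these caveats spelled out, the argument is complete.
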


\subsection{Measure-valued solution in dual space }
Recall that when $\nu_0\in\mathcal{P}_{ac}(\mathbf{R}^3)$, the system in dual space can be written as
$$\partial_t\nu+\nabla\cdot(\nu\mathbf{w})=0$$
$$\mathbf{w}=J(y-\nabla P^*(t,y))\,\,\,y\in\mathbf{R}^3$$
$$(h,(id\times\nabla P)_{\sharp}\sigma_h)\,\,\,minimizes\,\,\,E_{\nu}(h,\gamma)$$
$$\nu|_{t=0}=\nu_0$$
See the end of section 2.\\
Since the second equation involves $\nabla P^*(t,y)$, it is not well-defined if $\nu$ is singular,we need to find an substitute for $\nabla P^*$. A natural candidate for this is to use barycentric projection.

\begin{defn}
Let $\mu,\nu\in\mathcal{P}(\mathbf{R}^3)$,$\lambda\in\Gamma(\mu,\nu)$, we say the Borel map (defined $\nu-a.e$ a,e ), $\bar{\gamma}:\mathbf{R}^3\rightarrow\mathbf{R}^3$ is the barycentric projection of $\lambda$ to $\nu$ if the following is true
$$\int_{\mathbf{R}^3}\xi(y)\cdot\bar{\gamma}(y)d\nu(y)=\int\int_{\mathbf{R}^3\times\mathbf{R}^3}\xi(y)\cdot xd\lambda(x,y)\,\,\,\forall\xi\in C_b(\mathbf{R}^3;\mathbf{R}^3)$$
\end{defn}
Now if $P_t$ be a family of convex functions and set $\nu_t=\nabla P_{t\sharp}\sigma_{h(t,\cdot)}$ , then we denote
$\bar{\gamma}_t(y)$ to be the barycentric projection of $(id\times\nabla P_t)_{\sharp}\sigma_h$ onto $\nu_t$, or equivalently
$$\int_{\mathbf{R}^3}\xi(y)\cdot\bar{\gamma}_t(y)d\nu_t(y)=\int_{\Omega_{\infty}}\xi(\nabla P_t(x))\cdot x\sigma_h(x)dx\,\,\,\forall\xi\in C_b(\mathbf{R}^3;\mathbf{R}^3)$$
It`s easy to see when $\nu_t<<\mathcal{L}^3$,then $\nabla P_t^*(y)=\bar{\gamma}_t(y)\,\,\,\nu_t-a.e$

Thus in the general case when $\nu_t$ is not necessarily absolutely continuous, a natural way to write the system in the dual space is(where $[0,T]\ni t\longmapsto\nu_t$ is a Borel family of measures)
\begin{equation}
\partial_t\nu+\nabla\cdot(\nu\mathbf{w})=0\,\,\,in\,\,\,[0,T]\times\mathbf{R}^3
\end{equation}
\begin{equation}
\mathbf{w}(t,y)=J(y-\bar{\gamma}_t(y))\,\,\,in\,\,\,[0,T]\times\mathbf{R}^3
\end{equation}
\begin{equation}
(h,(id\times\nabla P_t)_{\sharp}\sigma_h)\,\,\,minimizes\,\,\,E_{\nu}(h,\gamma)
\end{equation}
\begin{equation}
\nu|_{t=0}=\nu_0
\end{equation}
For immediate use, we first prove a lemma.
\begin{lem}
Let $\nu$ be a solution of above system, fix $t\in[0,T]$ and  $\gamma:=(id\times\nabla P_t)_{\sharp}\sigma_h$ be the optimal measure with marginals $\sigma_h$ and $\nu_t$ under quadratic cost, let $\bar{\gamma_t}$ be the barycenter projection defined in Definition 4.7, then
$$||J[id-\bar{\gamma}_t]||_{L^2(\nu_t)}\leq2( \max_{\Omega_2}|x|+W_2(\nu_t,\delta_0))$$
\end{lem}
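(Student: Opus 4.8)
The plan is to bound the $L^2(\nu_t)$ norm of $J[\operatorname{id} - \bar\gamma_t]$ by splitting it via the triangle inequality into $\|J\,\operatorname{id}\|_{L^2(\nu_t)} = \|y\|_{L^2(\nu_t)}$ plus $\|J\bar\gamma_t\|_{L^2(\nu_t)} \le \|\bar\gamma_t\|_{L^2(\nu_t)}$ (using that $J$ has operator norm $1$, since it is a rotation in the $(y_1,y_2)$-plane composed with the zero map on $y_3$). Actually, it is cleaner to first note $|J(y - \bar\gamma_t(y))| = |J\,\nabla_2(y_1 - \bar\gamma_{t,1}(y), y_2 - \bar\gamma_{t,2}(y))| \le |(y_1,y_2) - (\bar\gamma_{t,1},\bar\gamma_{t,2})|$, so the third coordinate plays no role, but for the bound as stated the crude triangle inequality suffices.

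The main point is to control $\|\bar\gamma_t\|_{L^2(\nu_t)}$. Here I would use the defining property of the barycentric projection in Definition 4.7: for the optimal plan $\gamma = (\operatorname{id}\times\nabla P_t)_\sharp\sigma_h \in \Gamma(\sigma_h,\nu_t)$, and for any $\xi \in C_b(\mathbf{R}^3;\mathbf{R}^3)$,
$$\int \xi(y)\cdot\bar\gamma_t(y)\,d\nu_t(y) = \int_{\Omega_\infty} \xi(\nabla P_t(x))\cdot x\,\sigma_h(x)\,dx.$$
By a standard approximation/truncation argument (or directly via Jensen's inequality applied to the disintegration $\gamma = \int \gamma_y\,d\nu_t(y)$, where $\bar\gamma_t(y) = \int x\,d\gamma_y(x)$), this gives
$$\int |\bar\gamma_t(y)|^2\,d\nu_t(y) \le \int_{\Omega_\infty \times \mathbf{R}^3} |x|^2\,d\gamma(x,y) = \int_{\Omega_\infty} |x|^2\,\sigma_h(x)\,dx \le \max_{\Omega_2}|x|^2 \cdot \int_{\Omega_\infty}\sigma_h\,dx = \max_{\Omega_2}|x|^2,$$
since $\sigma_h$ has total mass $\int_{\Omega_2} h = 1$ (by Lemma 2.46 or Lemma 2.11) and $x$ ranges over $\Omega_\infty = \Omega_2 \times [0,\infty)$ — wait, this last inequality needs care because $x_3$ is unbounded. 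However, $|x|^2 = x_1^2 + x_2^2 + x_3^2$ and $x_3 \le h(x_1,x_2)$ on $\operatorname{supp}\sigma_h$, so $\int |x|^2\sigma_h\,dx \le \max_{\Omega_2}|x_1^2+x_2^2| + \int_{\Omega_2} h^3/3$, which is finite but not obviously bounded by $\max_{\Omega_2}|x|$. Therefore the precise constant in the statement suggests a different route: recall $W_2(\nu_t,\delta_0)^2 = \int |y|^2 d\nu_t$ and $\|\bar\gamma_t\|_{L^2(\nu_t)} \le \|\operatorname{id}\|_{L^2(\sigma_h)}$, and instead bound $\|\operatorname{id}\|_{L^2(\sigma_h)} = W_2(\sigma_h,\delta_0)$; then use the optimal plan $\gamma$ to write $W_2(\sigma_h,\delta_0) \le W_2(\sigma_h,\nu_t) + W_2(\nu_t,\delta_0)$ and $W_2(\sigma_h,\nu_t)^2 = \int |x-y|^2 d\gamma \le$ ... this still has the same issue. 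The cleanest fix: observe $\|\bar\gamma_t\|_{L^2(\nu_t)} \le (\int |x|^2 d\gamma)^{1/2}$, and since $\gamma$ is \emph{optimal} for quadratic cost between $\sigma_h$ and $\nu_t$, $(\int|x-y|^2 d\gamma)^{1/2} = W_2(\sigma_h,\nu_t) \le W_2(\sigma_h,\delta_0) + W_2(\delta_0,\nu_t)$; combined with $(\int|x|^2 d\gamma)^{1/2} \le (\int|x-y|^2d\gamma)^{1/2} + (\int|y|^2 d\gamma)^{1/2} = W_2(\sigma_h,\nu_t) + W_2(\nu_t,\delta_0)$. So $\|\bar\gamma_t\|_{L^2(\nu_t)} \le W_2(\sigma_h,\delta_0) + 2W_2(\nu_t,\delta_0)$.

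Putting the pieces together: $\|J[\operatorname{id}-\bar\gamma_t]\|_{L^2(\nu_t)} \le \|y\|_{L^2(\nu_t)} + \|\bar\gamma_t\|_{L^2(\nu_t)} = W_2(\nu_t,\delta_0) + \|\bar\gamma_t\|_{L^2(\nu_t)}$. The remaining task is to identify $W_2(\sigma_h,\delta_0)$ with $\max_{\Omega_2}|x|$, or rather bound it: $W_2(\sigma_h,\delta_0)^2 = \int_{\Omega_h}|x|^2\,dx$. Since $h = h_{P_t}$ is controlled — $\|h\|_{L^2(\Omega_2)} \le C(M_2(\nu_t),\delta,\Omega_2)$ by Lemma 2.46 — and more to the point, on $\operatorname{supp}\sigma_h$ one has $x_3 \le h(x_1,x_2)$. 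I expect the main obstacle is precisely reconciling the $x_3$-contribution with the clean constant $\max_{\Omega_2}|x|$ claimed; the honest statement (which is all that is needed later) is $\|J[\operatorname{id}-\bar\gamma_t]\|_{L^2(\nu_t)} \le 2(\|x\|_{L^2(\sigma_h)} + W_2(\nu_t,\delta_0))$ with $\|x\|_{L^2(\sigma_h)}$ finite and universally bounded — and if $P_t$ has $W^{1,\infty}$ bounds as in Theorem 2.30 then $h \le C$ and $\|x\|_{L^2(\sigma_h)} \le \max_{\Omega_\infty \cap \{x_3 \le C\}}|x|$, absorbed into the constant. So I would state the proof as: triangle inequality; $|J| \le 1$; optimality of $\gamma$ plus Jensen for the barycenter; and the elementary bound $\|x\|_{L^2(\sigma_h)} \le \max_{\Omega_2}|x|$ under the (implicitly assumed) boundedness of $h$, with the mass-one normalization of $\sigma_h$ supplying the constant $1$.
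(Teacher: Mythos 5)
There is a genuine gap, and it is exactly at the point where you wrote that ``for the bound as stated the crude triangle inequality suffices.'' It does not. Once you bound $\|J\bar\gamma_t\|_{L^2(\nu_t)}\leq\|\bar\gamma_t\|_{L^2(\nu_t)}$ and then apply Jensen, you are forced to control $\int|x|^2\,d\gamma=\int_{\Omega_{h_t}}|x|^2\,dx=\int_{\Omega_2}\bigl[(x_1^2+x_2^2)h_t+\tfrac{1}{3}h_t^3\bigr]dx_1dx_2$, and the $h_t^3$ term is not bounded by $\max_{\Omega_2}|x|$; in the setting where this lemma is actually used (Section 4: $\nu_t$ possibly singular with unbounded support, $h_t$ only in $L^2(\Omega_2)$ with the universal bound of Lemma 2.46, no $W^{1,\infty}$ bound on $P_t$) it is not even clear that it is finite, and the ``implicitly assumed boundedness of $h$'' you invoke at the end is simply not available. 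Nor can you weaken the conclusion to $2(\|x\|_{L^2(\sigma_{h_t})}+W_2(\nu_t,\delta_0))$: the stated constant $2(\max_{\Omega_2}|x|+W_2(\nu_t,\delta_0))$, depending on $\nu_t$ only through $W_2(\nu_t,\delta_0)$, is precisely what the Gronwall arguments in Propositions 4.10 and 4.20 need, so replacing it by a quantity involving $h_t$ destroys the application.

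The observation you made and then discarded --- that $J$ annihilates the third component --- is the whole point, and it must be used \emph{before} any absolute values or Jensen step. The paper argues by duality: writing $\|J[id-\bar\gamma_t]\|_{L^2(\nu_t)}$ as a supremum over test fields $\phi$ with $\|\phi\|_{L^2(\nu_t)}\leq1$, the definition of the barycentric projection together with $\nu_t=\nabla P_{t\sharp}\sigma_{h_t}$ gives
$$\int J(y-\bar\gamma_t(y))\cdot\phi(y)\,d\nu_t(y)=\int J(\nabla P_t(x)-x)\cdot\phi(\nabla P_t(x))\,\sigma_{h_t}(x)\,dx,$$
and then Cauchy--Schwarz with $|J(\nabla P_t(x)-x)|^2=(x_1-\partial_1P_t)^2+(x_2-\partial_2P_t)^2\leq2\bigl(\max_{\Omega_2}|x|^2+|\nabla P_t(x)|^2\bigr)$ and $\int_{\Omega_{h_t}}|\nabla P_t|^2\,dx=\int|y|^2\,d\nu_t=W_2^2(\nu_t,\delta_0)$; the unbounded variable $x_3$ never enters. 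If you prefer your triangle-inequality decomposition, it can be repaired: by linearity $J\bar\gamma_t$ is the barycentric projection of $x\mapsto Jx$, so Jensen applied \emph{after} composing with $J$ gives $\|J\bar\gamma_t\|_{L^2(\nu_t)}^2\leq\int|Jx|^2\,d\gamma=\int_{\Omega_{h_t}}(x_1^2+x_2^2)\,dx\leq\max_{\Omega_2}|x|^2$, using $\int_{\Omega_2}h_t=1$; combined with $\|J\,id\|_{L^2(\nu_t)}\leq W_2(\nu_t,\delta_0)$ this yields the claim. Either way the fix is the same: keep $J$ inside until the bounded components $(x_1,x_2)\in\Omega_2$ are the only $x$-variables left.
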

\begin{proof}
Observe that
$$||J[id-\bar{\gamma}_t]||_{L^2(\nu_t)}=\sup_{\phi\in C_c(\mathbf{R}^3),||\phi||_{L^2}\leq1}\int J(y-\bar{\gamma}_t(y))\phi(y)d\nu_t(y)$$
But $$\int J(y-\bar{\gamma}_t(y))\phi(y)d\nu_t(y)=\int J(\nabla P_t(x)-x)\phi(\nabla P_t(x))d\sigma_{h(t,\cdot)}(x)$$$$\leq(\int(x_1-\partial_1P_t(x))^2+(x_2-\partial_2P_t(x))^2d\sigma_{h(t,\cdot)}(x))^{\frac{1}{2}}(\int\phi^2(\nabla P_t(x))d\sigma_{h(t,\cdot)})^{\frac{1}{2}}$$$$\leq[2(\max_{\Omega_2}|x|^2+\int_{\Omega_{h(t,\cdot)}}|\nabla P_t(x)|^2dx)]^{\frac{1}{2}}||\phi||_{L^2(\nu_t)}$$
But since $\nabla P_{t\sharp}\sigma_{h(t,\cdot)}=\nu_t$, we know
$$\int_{\Omega_{h(t,\cdot)}}|\nabla P_t(x)|^2dx=W_2^2(\nu_t,\delta_0)$$
\end{proof}

We denote by $AC^p(0,T;\mathcal{P}_2(\mathbf{R}^3))$(See also \cite{gradient-flows} chapter 8) to be the set of all paths $\mu:[0,T]\ni t\rightarrow \mu_t\in\mathcal{P}_2(\mathbf{R}^3)$ for which there exists $\beta\in L^p(0,T)$ such that
$$W_2(\mu_s,\mu_t)\leq\int_s^t\beta(\tau)d\tau\,\,\,\forall s,t\in[0,T]$$
First we want to study the stability of the measure valued dual space solution under perturbations of initial data.
\begin{prop}
Suppose $\{\nu_0^n,\nu_0\}_{n\geq1}\subset\mathcal{P}_2(\mathbf{R}^3) $ with $supp\,\,\nu_0^n,\nu_0\subset\mathbf{R}^2\times[-\frac{1}{\delta},-\delta]$ and such that
$$\nu_0^n\rightarrow\nu_0\,\,\,narrowly\,\,\,and\,\,\,\sup_{n\geq1}M_2(\nu_0^n)<\infty$$
Let $\nu^n\in AC^{\infty}(0,T;\mathcal{P}_2(\mathbf{R}^3))$ be solutions to the dual space system corresponding to initial data $\nu_0^n$, and also that $supp\,\,\nu_t^n\subset\mathbf{R}^2\times[-\frac{1}{\delta},-\delta]$. Then a subsequence $\nu^n$ converges to a solution $\nu\in AC^{\infty}(0,T;\mathcal{P}_2(\mathbf{R}^3))$ with initial data $\nu_0$,i.e

$(i)\,\,W_p(\nu_t^n,\nu_t)\rightarrow0$ for all $t\in[0,T]$, and all $p\in[1,2)$

$(ii)\,\,\nu\in AC^{\infty}(0,T;\mathcal{P}_2(\mathbf{R}^3))$ is a solution to dual space system with initial data $\nu_0$
\end{prop}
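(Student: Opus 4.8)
The plan is a compactness argument, extracting a convergent subsequence, passing to the limit in the optimality condition by means of the stability result Theorem 2.45, and then passing to the limit in the continuity equation; the one delicate point will be the barycentric‑projection term in the velocity field.

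\textbf{Step 1 (uniform estimates and a limit curve).} Since $\nu^n$ solves the continuity equation (4.2) with velocity $\mathbf{w}^n_t = J(y-\bar{\gamma}^n_t(y))$, Lemma 4.9 applies, because by (4.4) and Lemma 2.46 the plan $(\mathrm{id}\times\nabla P^n_t)_{\sharp}\sigma_{h^n_t}$ is optimal between $\sigma_{h^n_t}$ and $\nu^n_t$; it gives $\|\mathbf{w}^n_t\|_{L^2(\nu^n_t)}\le 2(\max_{\Omega_2}|x|+W_2(\nu^n_t,\delta_0))$. Writing $m_n(t)=M_2(\nu^n_t)^{1/2}=W_2(\nu^n_t,\delta_0)$ and using that the metric speed of an $AC^\infty$ solution of the continuity equation is dominated by $\|\mathbf{w}^n_t\|_{L^2(\nu^n_t)}$ (see \cite{gradient-flows}), one obtains $m_n(t)\le m_n(0)+\int_0^t 2(\max_{\Omega_2}|x|+m_n(s))\,ds$; Grönwall together with $\sup_n M_2(\nu^n_0)<\infty$ then yields $R:=\sup_{n,\,t\in[0,T]}M_2(\nu^n_t)<\infty$, hence $\|\mathbf{w}^n_t\|_{L^2(\nu^n_t)}\le C:=2(\max_{\Omega_2}|x|+\sqrt R)$ and $W_2(\nu^n_s,\nu^n_t)\le C|t-s|$. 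Thus the curves $t\mapsto\nu^n_t$ are equi‑Lipschitz into $K_R=\{\mu\in\mathcal{P}_2(\mathbf{R}^3):M_2(\mu)\le R,\ \mathrm{supp}\,\mu\subset\mathbf{R}^2\times[-\frac{1}{\delta},-\delta]\}$, which is compact and metrizable for the narrow topology by the moment bound; Arzelà–Ascoli gives a subsequence (not relabelled) and a $C$‑Lipschitz curve $t\mapsto\nu_t\in K_R$ with $\nu^n_t\to\nu_t$ narrowly, uniformly in $t$. Since narrow convergence with $\sup_n M_2(\nu^n_t)\le R$ forces convergence of moments of order $p<2$, this is exactly (i); lower semicontinuity of $W_2$ shows $\nu\in AC^\infty(0,T;\mathcal{P}_2(\mathbf{R}^3))$, and at $t=0$ the hypothesis $\nu^n_0\to\nu_0$ identifies $\nu_0$ as the initial datum.

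\textbf{Step 2 (stability of the optimizers at each time).} Fix $t\in[0,T]$. As $\nu^n_t\to\nu_t$ narrowly with $\sup_n M_2(\nu^n_t)\le R$, Theorem 2.45, together with the canonical maximizers of Corollary 2.42, gives, after passing to these representatives, $P^n_t\to P_t$ locally uniformly on $\Omega_\infty$ (hence $\nabla P^n_t\to\nabla P_t$ $\mathcal{L}^3$‑a.e.\ by convexity), $h^n_t\to h_t$ in $L^r(\Omega_2)$ for all $r<2$ (hence $\sigma_{h^n_t}\to\sigma_{h_t}$ in $L^1(\Omega_\infty)$), and $\gamma^n_t=(\mathrm{id}\times\nabla P^n_t)_{\sharp}\sigma_{h^n_t}\to\gamma_t=(\mathrm{id}\times\nabla P_t)_{\sharp}\sigma_{h_t}$ narrowly, where $(h_t,\gamma_t)$ minimizes $E_{\nu_t}$ and $\nabla P_{t\,\sharp}\sigma_{h_t}=\nu_t$. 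In particular the limiting triple satisfies (4.4)–(4.5), and Lemma 4.9 applied to $\nu_t$ gives the bound $\|J(\mathrm{id}-\bar{\gamma}_t)\|_{L^2(\nu_t)}\le 2(\max_{\Omega_2}|x|+W_2(\nu_t,\delta_0))$, so $\nu$ has the structure required of an $AC^\infty$ solution. Joint measurability in $(t,x)$ of the chosen representatives can be arranged since the canonical maximizers are uniquely determined by $\nu_t$ and $t\mapsto\nu_t$ is continuous.

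\textbf{Step 3 (passage to the limit in the equation; main obstacle).} Fix $\phi\in C_c^\infty([0,T)\times\mathbf{R}^3)$. In the distributional form of (4.2) the $\partial_t\phi$‑term and the initial term pass to the limit by narrow convergence of $\nu^n_t$ for each $t$ followed by dominated convergence in $t$, the inner integrals being uniformly bounded. For the drift term write $\mathbf{w}^n_t\cdot\nabla\phi=(y-\bar{\gamma}^n_t)\cdot(J^\top\nabla\phi)=y\cdot(J^\top\nabla\phi)-\bar{\gamma}^n_t\cdot(J^\top\nabla\phi)$; the first summand is a fixed $C_c$ function of $y$, treated as above. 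For the second, the defining identity of the barycentric projection (Definition 4.7) with $\xi=(J^\top\nabla\phi)(t,\cdot)$ gives $\int\bar{\gamma}^n_t\cdot(J^\top\nabla\phi)\,d\nu^n_t=\int_{\Omega_\infty}(J^\top\nabla\phi)(t,\nabla P^n_t(x))\cdot x\,\sigma_{h^n_t}(x)\,dx$. The key structural remark is that $J^\top\nabla\phi=(\partial_2\phi,-\partial_1\phi,0)$ has vanishing third component, so the integrand equals $\big(x_1\,\partial_2\phi-x_2\,\partial_1\phi\big)(t,\nabla P^n_t(x))\,\sigma_{h^n_t}(x)$, in which $|x_1|,|x_2|\le\mathrm{diam}\,\Omega_2$ is bounded on all of $\Omega_\infty$; combining $\nabla P^n_t\to\nabla P_t$ a.e., $\sigma_{h^n_t}\to\sigma_{h_t}$ in $L^1$ and the boundedness of $\partial_i\phi$, the integral converges for each $t$, stays uniformly bounded, and dominated convergence in $t$ finishes. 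With Step 2 this shows $\nu$ solves (4.2)–(4.5) with initial datum $\nu_0$, proving (ii). The main obstacle is precisely this last term: barycentric projections do not converge pointwise, so one must genuinely invoke the stability of the geostrophic optimizers (Theorem 2.45) to transfer the integral to the physical variables, and one must exploit the skew‑symmetry of $J$ to eliminate the only unbounded coordinate $x_3$; the remaining work — the jointly measurable choice of representatives and the dominated‑convergence bookkeeping in $t$ — is routine.
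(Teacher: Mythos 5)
Your proposal is correct and follows essentially the same route as the paper: the $L^2$ bound on $J(\mathrm{id}-\bar{\gamma}^n_t)$ plus the metric-derivative estimate from the gradient-flows theory and Gr\"onwall give uniform second moments and equi-Lipschitz continuity in $W_2$, compactness yields the narrowly convergent subsequence with $W_p$ convergence for $p<2$, and the barycentric-projection term is passed to the limit by rewriting it in physical variables and invoking the stability theorem for the optimizers, using that $J$ kills the $x_3$-component so the integrand is bounded. Apart from guessed reference numbers (your ``Theorem 2.45''/``Lemma 4.9'' are the paper's stability theorem and barycentric-projection lemma), the argument matches the paper's proof.
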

\begin{proof}
First we have for each n, and $0\leq s\leq t\leq T$
\begin{equation}
W_2(\nu_s^n,\nu_t^n)\leq\int_s^t||J[id-\bar{\gamma}_{\tau}^n||_{L^2(\nu_{\tau}^n;\mathbf{R}^3)}d\tau\leq2\int_s^tW_2(\nu_{\tau}^n,\delta_0)d\tau+2\max_{\Omega_2}|x|(t-s)
\end{equation}
The first inequality used \cite{gradient-flows} Theorem 8.3.1. The second inequality is by Lemma 4.12.

Now take $s=0$, we then have
$$W_2(\delta_0,\nu_t^n)\leq\sup_{n\geq1} W_2(\delta_0,\nu_0^n)+W_2(\nu_0^n,\nu_t^n)$$$$\leq\sup_{n\geq1}W_2(\delta_0,\nu_0^n)+2\int_0^tW_2(\nu_{\tau}^n,\delta_0)d\tau+2\max_{\Omega_2}|x|t$$
Then we can apply Gronwall to obtain
\begin{equation}
W_2(\delta_0,\nu_t^n)\leq\sup_{n\geq1}W_2(\delta_0,\nu_0^n)e^{2T}+(e^{2T}-1)T\max_{\Omega_2}|x|\,\,\,0\leq t\leq T
\end{equation}
which means $\{\nu_t^n\}_{t\in[0,T],n\geq1}$ are uniformly bounded in $\mathcal{P}_2(\mathbf{R}^3)$, so it is tight.\\
Combining (4.14),(4.15), we get
\begin{equation}
W_2(\nu_s^n,\nu_t^n)\leq2e^{2T}[\sup_{n\geq1}W_2(\delta_0,\nu_0^n)+(T+1)\max_{\Omega_2}|x|]|t-s|
\end{equation}
which means $\{\nu_t^n\}_{n\geq1}$ are equi-continuous in t under the distance $W_2$. This allows us to take a subsequence(not relabeled), such that
$$\nu_t^n\rightarrow\nu_t\,\,\,narrowly\,\,\,\forall t\in[0,T]$$
Since $\sup_{n\geq1}\int|y|^2d\nu_t^n<\infty$, we see
$$\int |y|^pd\nu_t^n\rightarrow\int|y|^pd\nu_t\,\,\,\forall p\in[1,2)$$
this implies $W_p(\nu_t^n,\nu_t)\rightarrow0$ by \cite{gradient-flows} Proposition 7.1.5. Also it`s easy to see $\nu_t\in AC^{\infty}(0,T;\mathcal{P}_2(\mathbf{R}^3))$ by (4.12). Finally let`s check $\nu_t$ solve the dual space system. Take $\phi(y,t)=\chi(t)\eta(y)$ be test function such that $\chi(T)\equiv0$, then for each n, we have
$$\int_0^T\int_{\mathbf{R}^3}\chi^{\prime}(t)\eta(y)d\nu_t^n(y)+\int_0^T\int_{\mathbf{R}^3}J(y-\bar{\gamma}_t^n(y))\cdot\nabla\eta(y)d\nu_t^n(y)$$
$$+\int_{\mathbf{R}^3}\chi(0)\eta(y)d\nu_0^n(y)=0$$
where $\bar{\gamma}_t^n(y)$ is the barycentric projection of $(id\times\nabla P_t^n)_{\sharp}\sigma_{h_t^n}$ onto $\nu_t^n$.

Because of narrow convergence, and $W_p$ convergence for $p\in[1,2)$ already noted,we see $\int_0^T\int_{\mathbf{R}^3}\chi^{\prime}(t)\eta(y)d\nu_t^n(y)$,$\int_{\mathbf{R}^3}\chi(0)\eta(y)d\nu_0^n(y)$,$\int_0^T\int_{\mathbf{R}^3}Jy\cdot\nabla\eta(y)d\nu_t^n(y)$ will converge to the right limit,while
$$\int_0^T\int_{\mathbf{R}^3}J\bar{\gamma}_t^n(y)\cdot\nabla\eta(y)d\nu_t^n(y)=\int_0^T\int_{\Omega_{\infty}}Jx\cdot
\nabla\eta(\nabla P^n_t(x))\sigma_{h_t^n}(x)dx$$
Notice that above integrand does not involve $x_3$, by the definition of $J$, and $x_1,x_2$ are bounded, we can conclude by using Theorem 2.53$(iii)$ that
$$\int_0^T\int_{\Omega_{\infty}}Jx\cdot\nabla \eta(\nabla P_t^n(x))\sigma_{h_t^n}(x)dx\rightarrow\int_0^T\int_{\Omega_{\infty}}Jx\cdot\nabla \eta(\nabla P_t(x))\sigma_{h_t}(x)dx$$$$=\int_0^T\int_{\mathbf{R}^3}J\bar{\gamma}_t(y)\cdot\nabla\eta(y)d\nu_t^n(y)$$
\end{proof}
As a corollary to above proposition, we can deduce the existence of measure-valued solution in dual space with possibly unbounded support.
\begin{cor}
Let $\nu_0\in\mathcal{P}_2(\mathbf{R}^3)$ and such that $supp\,\,\nu_0\subset\mathbf{R}^2\times[-\frac{1}{\delta},-\delta]$, then there exists a family of measures $[0,T]\ni t\longmapsto\nu_t$ such that $\nu\in AC^{\infty}(0,T;\mathcal{P}_2(\mathbf{R}^3))$,$supp\,\,\nu_t\subset\mathbf{R}^2\times[-\frac{1}{\delta},-\delta]$ and solves the dual space system.
\end{cor}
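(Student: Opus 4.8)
The plan is to build the solution as a narrow limit of the absolutely continuous dual-space solutions furnished by Theorem 2.30, using the stability statement of Proposition 4.11 to pass to the limit. Concretely I would (a) approximate $\nu_0$ by a sequence $\nu_0^n$ of $C^\infty_c$ probability densities supported in the \emph{exact} slab $\mathbf{R}^2\times[-\frac{1}{\delta},-\delta]$, with $\nu_0^n\to\nu_0$ narrowly and $\sup_n M_2(\nu_0^n)<\infty$; (b) apply Theorem 2.30 on $[0,T]$ to each $\nu_0^n$ to get a dual-space solution $\nu^n$; (c) verify that each $\nu^n$ lies in $AC^\infty(0,T;\mathcal{P}_2(\mathbf{R}^3))$ and keeps its support in the slab; and (d) invoke Proposition 4.11.

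For step (a) I would first push $\nu_0$ forward by the affine map of the $y_3$-variable sending $[-\frac{1}{\delta},-\delta]$ onto the slightly shorter interval $[-\frac{1}{\delta}+\frac{1}{n},\,-\delta-\frac{1}{n}]$ (legitimate for $n$ large), then restrict to the horizontal ball $B_n(0)$ and renormalise, and finally mollify with a kernel $j_{\epsilon_n}$, $\epsilon_n<\frac{1}{n}$. The resulting $\nu_0^n$ is $C^\infty_c$ (hence in $L^q$ for every $q$), is supported in $B_{n+1}(0)\times[-\frac{1}{\delta},-\delta]$, converges narrowly to $\nu_0$, and satisfies $M_2(\nu_0^n)\le M_2(\nu_0)+1$ for $n$ large. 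Applying Theorem 2.30 (with, say, $q=2$, and $D_0\sim n$, $D,H$ chosen as required there) yields $\nu^n$ with $\nu^n(t,\cdot)$ absolutely continuous, $\nu^n(0,\cdot)=\nu_0^n$, $(P^n_t,R^n_t)$ the associated maximiser with $R^n_t=(P^n_t)^*$, and $\mathbf{w}^n(t,y)=J(y-\nabla (P^n_t)^*(y))=J(y-\bar\gamma^n_t(y))$ for $\nu^n_t$-a.e.\ $y$; since the third component of any such $J(\cdot)$ vanishes, the $y_3$-marginal of $\nu^n_t$ equals that of $\nu_0^n$ for all $t$, so $supp\,\nu^n_t\subset\mathbf{R}^2\times[-\frac{1}{\delta},-\delta]$ on $[0,T]$.

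Step (c) is where the hypotheses of Proposition 4.11 must be checked. Because $\nu^n_t$ is absolutely continuous the transporting field is $J(\mathrm{id}-\bar\gamma^n_t)$, so Lemma 4.12 bounds $\|\mathbf{w}^n_t\|_{L^2(\nu^n_t)}$ by $2(\max_{\Omega_2}|x|+W_2(\nu^n_t,\delta_0))$; combining this with Theorem 8.3.1 of \cite{gradient-flows} and the Gronwall argument already spelled out in the proof of Proposition 4.11 gives $W_2(\nu^n_t,\delta_0)\le \sqrt{M_2(\nu_0)+1}\,e^{2T}+(e^{2T}-1)T\max_{\Omega_2}|x|$ and hence a $W_2$-Lipschitz estimate for $t\mapsto\nu^n_t$ that is uniform in $n$. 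In particular each $\nu^n$ lies in $AC^\infty(0,T;\mathcal{P}_2(\mathbf{R}^3))$. Proposition 4.11 then yields, along a subsequence, $W_p(\nu^n_t,\nu_t)\to0$ for every $t\in[0,T]$ and every $p\in[1,2)$, with $\nu\in AC^\infty(0,T;\mathcal{P}_2(\mathbf{R}^3))$ a solution of the dual-space system having initial datum $\nu_0$. The support statement for the limit is immediate: the $y_3$-marginal of $\nu_t$ is the narrow limit of those of $\nu^n_t$, all supported in the closed set $[-\frac{1}{\delta},-\delta]$, so $supp\,\nu_t\subset\mathbf{R}^2\times[-\frac{1}{\delta},-\delta]$ for all $t$.

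Thus the corollary is essentially a packaging of Proposition 4.11, and I expect the only genuinely delicate point to be the construction of the approximating data: they must be absolutely continuous (so that Theorem 2.30 applies and $\nabla(P^n_t)^*=\bar\gamma^n_t$), have bounded horizontal support and uniformly bounded second moments, and be supported in the \emph{exact} slab rather than an enlarged one — which is the reason for the preliminary affine squeeze of the $y_3$-coordinate before mollifying. The uniform $AC^\infty$ estimate and the limit passage are then supplied verbatim by Lemma 4.12 and Proposition 4.11.
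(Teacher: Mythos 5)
Your proposal follows essentially the same route as the paper: approximate $\nu_0$ by compactly supported smooth densities in the slab, apply Theorem 2.30 to obtain absolutely continuous dual-space solutions, use Lemma 4.12 together with the Gronwall argument to get the uniform $AC^{\infty}(0,T;\mathcal{P}_2(\mathbf{R}^3))$ bound, and pass to the limit via the stability proposition. Your extra step (the affine squeeze in the $y_3$-variable so that the mollified data stay in the exact slab $\mathbf{R}^2\times[-\frac{1}{\delta},-\delta]$) is a harmless refinement of the paper's construction $\nu_0^n=\frac{1}{\nu_0(B(0,n))}[\nu_0\chi_{B(0,n)}]*j_{1/n}$, which otherwise enlarges the support slightly; the argument is otherwise identical.
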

\begin{proof}
Define $\nu_0^n=\frac{1}{\nu_0(B(0,n))}[\nu_0\chi_{B(0,n)}]*j_{\frac{1}{n}}$, then $\nu_0^n\in\mathcal{P}_2(\mathbf{R}_3)$, $supp\,\,\nu_0^n\subset\mathbf{R}^2\times[-\frac{1}{\delta},-\delta]$ and compact. Also we have $\nu_0^n\rightarrow\nu_0$ narrowly. Then we can use theorem 2.30 to get a weak solution $\nu_t^n$ in dual space solution. The argument used to show (4.12) shows $\nu_t^n\in AC^{\infty}(0,T;\mathcal{P}_2(\mathbf{R}^3))$ and has $\nu_0^n$ as initial data with $supp\,\,\nu_t^n\subset\mathbf{R}^2\times[-\frac{1}{\delta},-\delta]$. The previous proposition gives us a solution $\nu_t$ with initial data $\nu_0$\\
\end{proof}
\subsection{Existence of relaxed Lagrangian solution with generalized data}
In this section, we will prove the existence of relaxed solutions with generalized initial data.
First we show that for the generalized data $(P_0,h_0)$ defined previously, $P_0|_{\Omega_{h_0}}$ gives a maximizer. This is a generalization of Proposition 3.3. More precisely, we have
\begin{lem}
Let $(P_0,h_0)$ be a generalized data, put $\nu_0=\nabla P_{0\sharp}\sigma_{h_0}$. Let $(\tilde{P}_0,\tilde{R}_0)$ be a pair of maximizer of $J_{\nu_0}(P,R)$, which satisfies (3.39),(3.40) , then $P_0=\tilde{P}_0$ on $\Omega_{h_0}$ and $(h_0,(id\times\nabla P_0)_{\sharp}\sigma_{h_0})$ is the minimizer of $E_{\nu_0}(h,\gamma)$
\end{lem}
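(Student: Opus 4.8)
The plan is to mirror the proof of Proposition 3.3, replacing the bounded-support results of Subsection 2.1 by their unbounded-support analogues from Subsection 2.2 (in particular Corollary 2.42 and Lemma 2.46). Write $\Lambda=\mathbf{R}^2\times[-\frac1\delta,-\delta]$ and set
\[
\hat{R}_0(y):=\sup_{x\in\Omega_\infty}(x\cdot y-P_0(x)),\qquad \gamma_0:=(id\times\nabla P_0)_\sharp\sigma_{h_0}.
\]
First I would record that $\nu_0=\nabla P_{0\sharp}\sigma_{h_0}\in\mathcal{P}_2(\mathbf{R}^3)$ with $supp\,\nu_0\subset\Lambda$: $\sigma_{h_0}$ is a probability measure because $\int_{\Omega_2}h_0=1$, one has $M_2(\nu_0)=\int_{\Omega_{h_0}}|\nabla P_0|^2<\infty$ since $\nabla P_0\in L^2(\Omega_{h_0})$, and $\nabla P_0(x)\in\Lambda$ a.e.\ because $-\frac1\delta\le\partial_{x_3}P_0\le-\delta$. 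Hence the machinery of Subsection 2.2 applies to $\nu_0$.

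Next I would check that $(P_0,\hat{R}_0)\in\mathcal{N}_{\nu_0}$ and that
\[
P_0(x)+\hat{R}_0(\nabla P_0(x))=x\cdot\nabla P_0(x)\qquad\text{for a.e. }x\in\Omega_{h_0}.
\]
The constraint $P_0(x)+\hat{R}_0(y)\ge x\cdot y$ is immediate from the definition of $\hat{R}_0$, and the identity holds because, $P_0$ being convex on $\Omega_\infty$, the supremum defining $\hat{R}_0(\nabla P_0(x))$ is attained at $z=x$ whenever $P_0$ is differentiable at $x$. For the integrability conditions in $\mathcal{N}_{\nu_0}$ I would use the pointwise bound $\frac12(x_1^2+x_2^2)\le P_0(x)\le\frac12(x_1^2+x_2^2)+\frac1\delta h_0(x_1,x_2)$ on $\Omega_{h_0}$, a consequence of Definition 4.4$(iii)$ and $-\frac1\delta\le\partial_{x_3}P_0\le-\delta$, together with $h_0\in L^2(\Omega_2)$ and $\nabla P_0\in L^2(\Omega_{h_0})$; these give $P_0\sigma_{h_0}\in L^1(\Omega_\infty)$, $P_0\in L^1(\Omega_K)$ for every $K$, $h_0\in L^1(\Omega_2)$, and, via $\hat{R}_0\circ\nabla P_0=x\cdot\nabla P_0-P_0$, also $\hat{R}_0\in L^1(d\nu_0)$. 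The same bound gives $P_0(x_1,x_2,0)\ge\frac12(x_1^2+x_2^2)$; and since $x_3\mapsto P_0(x_1,x_2,x_3)$ is strictly decreasing with $P_0(x_1,x_2,h_0(x_1,x_2))=\frac12(x_1^2+x_2^2)$, the map $s\mapsto\Pi_{P_0}(x_1,x_2,s)$ is minimized over $[0,\infty)$ at $s=h_0(x_1,x_2)$, i.e.\ $h_0=h_{P_0}$ (cf.\ Remark 2.4).

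The central computation then runs just as in Proposition 3.3. Because the infimum over $h\ge0$ of $\int_{\Omega_\infty}[\frac12(x_1^2+x_2^2)-P_0(x)]\sigma_h(x)\,dx$ is attained at $h_{P_0}=h_0$, and $\gamma_0$ has marginals $\sigma_{h_0}$ and $\nu_0$, rewriting $J_{\nu_0}(P_0,\hat{R}_0)$ as a single integral against $\gamma_0$ and using the identity above on $supp\,\gamma_0$ gives $J_{\nu_0}(P_0,\hat{R}_0)=E_{\nu_0}(h_0,\gamma_0)$. Lemma 2.46$(i)$ yields $E_{\nu_0}(h_0,\gamma_0)\ge\sup J_{\nu_0}\ge J_{\nu_0}(P_0,\hat{R}_0)$, so all inequalities are equalities; by Lemma 2.46$(ii)$--$(iii)$ this forces $(h_0,\gamma_0)$ to be the unique minimizer of $E_{\nu_0}(h,\gamma)$ and $(P_0,\hat{R}_0)$ to be a maximizer of $J_{\nu_0}(P,R)$. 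It remains to note that $(P_0,\hat{R}_0)$ also satisfies the two hypotheses imposed on $(\tilde{P}_0,\tilde{R}_0)$, namely $P_0(x)=\sup_{y\in\Lambda}(x\cdot y-\hat{R}_0(y))$ for $x\in\Omega_\infty$ (the $\Lambda$-restricted biconjugate recovers the convex function $P_0$ because $\nabla P_0\in\Lambda$ a.e., by the argument in the proof of Lemma 2.41) and $P_0(x_1,x_2,0)\ge\frac12(x_1^2+x_2^2)$ as above. Lemma 2.46$(iv)$, applied to the two maximizers $(P_0,\hat{R}_0)$ and $(\tilde{P}_0,\tilde{R}_0)$, then gives $P_0=\tilde{P}_0$ on $\Omega_{h_0}\cup\{x_3=0\}$, in particular on $\Omega_{h_0}$.

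The only mildly delicate points I foresee are the membership $(P_0,\hat{R}_0)\in\mathcal{N}_{\nu_0}$ and the biconjugacy identity $P_0(x)=\sup_{y\in\Lambda}(x\cdot y-\hat{R}_0(y))$ under merely $L^2$ (rather than $L^\infty$) control of $\nabla P_0$ and with the boundary irregularity at $\{x_3=0\}$; both are routine variants of arguments already carried out in the proofs of Proposition 3.3 and Lemma 2.41, the lower bound $-\frac1\delta\le\partial_{x_3}P_0$ being exactly what keeps the relevant subgradients inside $\Lambda$.
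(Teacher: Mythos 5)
Your overall route is the paper's: build a dual competitor from $P_0$ itself, check the Fenchel equality $\gamma_0$-a.e.\ and $h_0=h_{P_0}$, conclude $J_{\nu_0}=E_{\nu_0}(h_0,\gamma_0)$ so that $(h_0,\gamma_0)$ is the unique minimizer, and then identify $P_0$ with $\tilde P_0$ on $\Omega_{h_0}$ by the uniqueness mechanism of Lemma 2.46. However, you skip the one modification that the paper singles out as the difference from Proposition 3.3 in this unbounded setting: the conjugate must be taken of $\hat P_0(x):=\max\bigl(P_0(x),\tfrac12(x_1^2+x_2^2)\bigr)$, not of $P_0$, precisely ``to ensure $\hat R_0$ is finite''. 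With your definition $\hat R_0(y)=\sup_{x\in\Omega_\infty}(x\cdot y-P_0(x))$ finiteness genuinely fails in general: take $h_0\equiv 1/\mathcal{L}^2(\Omega_2)$ and $P_0(x)=\tfrac12(x_1^2+x_2^2)+\tfrac1\delta\,(h_0-x_3)$, which is admissible generalized data; then for every $y\in\Lambda$ with $y_3>-\tfrac1\delta$,
\begin{equation*}
x\cdot y-P_0(x)\;\ge\;-\tfrac12(x_1^2+x_2^2)-\tfrac{h_0}{\delta}+x_3\bigl(y_3+\tfrac1\delta\bigr)+x_1y_1+x_2y_2\;\longrightarrow\;+\infty\quad(x_3\to\infty),
\end{equation*}
so $\hat R_0\equiv+\infty$ on $\{y_3>-1/\delta\}$. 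The bound $\tfrac12(x_1^2+x_2^2)\le P_0\le\tfrac12(x_1^2+x_2^2)+\tfrac1\delta h_0$ that you invoke for admissibility only controls $P_0$ below the free surface, whereas the obstruction comes from the behaviour of $P_0$ above it. Hence $(P_0,\hat R_0)$ need not belong to $\mathcal{N}_{\nu_0}$ as defined (real-valued $R$ with the constraint on all of $\Omega_\infty\times\Lambda$), and your subsequent steps --- declaring it ``a maximizer satisfying (2.43),(2.44)'' and feeding it into Lemma 2.46$(iv)$ --- apply lemmas of Subsection 2.2 that are proved for finite conjugate pairs to an object outside that class.

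The gap is localized and the repair is exactly the paper's device: conjugate $\hat P_0=\max(P_0,\tfrac12(x_1^2+x_2^2))$, so that $\hat R_0(y)\le\tfrac12(y_1^2+y_2^2)$ is finite on $\Lambda$, while nothing you use on $\Omega_{h_0}$ changes: there $P_0\ge\tfrac12(x_1^2+x_2^2)$, so $\hat P_0=P_0$, the supremum defining $\hat R_0(\nabla P_0(x))$ is still attained at $x$, and the infimum over $h$ in $J_{\nu_0}(\hat P_0,\hat R_0)$ is still achieved at $h_0$ since $\tfrac12(x_1^2+x_2^2)-\hat P_0\le0$ with equality above the free surface. (If you want a competitor actually satisfying (2.43),(2.44) you then double-convexify as in Lemma 2.34/Lemma 5.1 with $H=\infty$; alternatively, and more simply, once minimality of $(h_0,\gamma_0)$ is established you do not need your own pair to satisfy (2.43),(2.44) at all --- apply Lemma 2.46$(ii)$ to the given maximizer $(\tilde P_0,\tilde R_0)$ to get $h_{\tilde P_0}=h_0$ and $\nabla\tilde P_0=\nabla P_0$ $\sigma_{h_0}$-a.e., and conclude $P_0=\tilde P_0$ on $\Omega_{h_0}$ by the connected-component argument of Corollary 2.19, which is what ``the rest of the argument works as in Proposition 3.3'' amounts to.) With that correction the rest of your write-up matches the paper's proof.
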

\begin{proof}
The argument is similar to Proposition 3.3. The difference is that one need to define
$$\hat{P}_0(x)=\max(P_0(x),\frac{1}{2}(x_1^2+x_2^2))$$
and
$$\hat{R}_0(y)=\sup_{x\in\Omega_{\infty}}(x\cdot y-\hat{P}_0(x))$$
This definition ensures $\hat{R}_0$ is finite. The rest of the argument works in the same way as Proposition 3.3.
\end{proof}
\begin{rem}
Recall Definition 4.2, the same argument as in Proposition 3.3 shows that for each fixed $t\in[0,T]$, if we put $\nu_t=\nabla P_{t\sharp}\sigma_{h(t,\cdot)}$, then $(h_t,(id\times\nabla P_t)_{\sharp}\sigma_{h_t})$ is the minimizer of $E_{\nu_t}(h,\gamma)$, $P_t$ is the restriction of some maximizer of $J_{\nu_t}(P,R)$ restrcted on $\Omega_{h_t}$
\end{rem}

First we observe that, similar to the fixed boundary case, the relaxed Lagrangian solution will give rise to a measure valued solution in dual space.
\begin{prop}
Let $(P,h,\alpha)$ be a relaxed Lagrangian solution and let $\nu_t=\nabla P_{t\sharp}\sigma_{h(t,\cdot)}$, then

$(i)$$\nu$ solves the dual space system with initial data $\nu_0=\nabla P_{0\sharp}\sigma_{h_0}$,

$(ii)$$\nu_t\in AC^{\infty}(0,T;\mathcal{P}_2(\mathbf{R}^3))$
\end{prop}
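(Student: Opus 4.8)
The plan is to read off the dual–space system from the weak formulation (4.1) for part (i), and to combine Lemma 4.12 with an a priori energy estimate and \cite{gradient-flows} Theorem 8.3.1 for part (ii). For part (i) two of the four conditions are immediate: the initial datum $\nu_0 = \nabla P_{0\sharp}\sigma_{h_0}$ is exactly Definition 4.2 (v) (since $P(0,\cdot)=P_0$ on the open set $\Omega_{h_0}$ forces $\nabla P(0,\cdot)=\nabla P_0$ a.e.\ there, and $h(0,\cdot)=h_0$), and the minimization property (4.5) holds at every $t$ by Remark 4.16, because Definition 4.2 (i)--(ii) say precisely that $(P_t,h_t)$ is generalized data in the sense of Definition 4.4, so Lemma 4.15 applies with $\nu=\nu_t$. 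To produce the continuity equation I would test (4.1) with $\psi(t,x)=\chi(t)\,\theta_R(x_3)\,\omega_K(x_1,x_2)$, where $\chi\in C_c^1([0,T))$, $\theta_R$ is a smooth cutoff equal to $1$ on $[0,R]$ and vanishing past $R+1$, and $\omega_K$ a cutoff equal to $1$ on a compact $K\subset\Omega_2$; this is admissible. Since $\mathrm{proj}_x\alpha_t=\sigma_{h_0}$ is one fixed probability measure, letting $R\to\infty$ and $K\uparrow\Omega_2$ and applying dominated convergence removes the spatial cutoffs from every term. The first term becomes $\int_0^T\chi'(t)\int\xi(\nabla P_t(\hat x))\,d\sigma_{h_t}(\hat x)\,dt=\int_0^T\chi'(t)\int\xi\,d\nu_t\,dt$ (using $\mathrm{proj}_{\hat x}\alpha_t=\sigma_{h_t}$ and $\nu_t=\nabla P_{t\sharp}\sigma_{h_t}$) and the third term becomes $\chi(0)\int\xi\,d\nu_0$.

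The heart of part (i) is the second term, which after cutoff removal equals $\int_0^T\chi(t)\int\nabla\xi(\nabla P_t(\hat x))\cdot J(\nabla P_t(\hat x)-\hat x)\,d\sigma_{h_t}(\hat x)\,dt$. Writing $J(\nabla P_t(\hat x)-\hat x)=J\nabla P_t(\hat x)-J\hat x$ and using skew-symmetry of $J$ in the form $a\cdot Jb=-(Ja)\cdot b$, the integrand splits as $\nabla\xi(\nabla P_t(\hat x))\cdot J\nabla P_t(\hat x)+(J\nabla\xi)(\nabla P_t(\hat x))\cdot\hat x$. The first piece is a function of $\nabla P_t(\hat x)$ alone and pushes forward to $\int\nabla\xi(y)\cdot Jy\,d\nu_t(y)$; to the second piece I apply the defining property of the barycentric projection $\bar\gamma_t$ (Definition 4.7) with the bounded continuous field $\zeta=J\nabla\xi$, getting $\int(J\nabla\xi)(\nabla P_t(\hat x))\cdot\hat x\,d\sigma_{h_t}(\hat x)=\int(J\nabla\xi)(y)\cdot\bar\gamma_t(y)\,d\nu_t(y)=-\int\nabla\xi(y)\cdot J\bar\gamma_t(y)\,d\nu_t(y)$, again by skew-symmetry. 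Adding the two pieces, the second term is $\int_0^T\chi(t)\int\nabla\xi(y)\cdot J(y-\bar\gamma_t(y))\,d\nu_t(y)\,dt$, i.e.\ $\int_0^T\chi(t)\int\nabla\xi\cdot\mathbf{w}\,d\nu_t\,dt$ with $\mathbf{w}(t,y)=J(y-\bar\gamma_t(y))$. Hence for every $\xi\in C_c^1(\mathbf{R}^3)$ one has $\tfrac{d}{dt}\int\xi\,d\nu_t=\int\nabla\xi\cdot\mathbf{w}\,d\nu_t$ in $\mathcal{D}'(0,T)$ with initial value $\nu_0$; this is exactly (4.2)--(4.5). (Any Borel representative of $\nabla P_t$ may be used, the value of each integral being independent of the choice, as in Lemma 4.3.)

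For part (ii), since $\nabla P_t\in L^2(\Omega_{h_t})$ and $\nabla P_{t\sharp}\sigma_{h_t}=\nu_t$ we have $M_2(\nu_t)=\int_{\Omega_{h_t}}|\nabla P_t|^2<\infty$, so $\nu_t\in\mathcal{P}_2(\mathbf{R}^3)$ for each $t$; the point is the uniform bound $\sup_{t\in[0,T]}W_2(\nu_t,\delta_0)<\infty$. To obtain it I would test (4.1) with $\xi=\xi_M(y)=\phi_M(|y|)$, a smooth compactly supported approximation of $|y|^2$ with $0\le\phi_M'(s)\le 2s$ and $\xi_M\uparrow|y|^2$. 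The quadratic term drops out, $\nabla\xi_M(y)\cdot Jy\equiv 0$, so by the computation above $\tfrac{d}{dt}\int\xi_M\,d\nu_t=\int(J\nabla\xi_M)(\nabla P_t(\hat x))\cdot\hat x\,d\sigma_{h_t}(\hat x)$; since $J$ kills the third coordinate and $\hat x_1,\hat x_2$ are bounded on $\Omega_2$, this is bounded in absolute value by $4\max_{\Omega_2}|x|\int|\nabla P_t|\,d\sigma_{h_t}\le 4\max_{\Omega_2}|x|\,W_2(\nu_t,\delta_0)$ (using $\sigma_{h_t}(\Omega_\infty)=\int_{\Omega_2}h_t=1$), \emph{uniformly in $M$}. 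Letting $M\to\infty$ yields that $t\mapsto W_2^2(\nu_t,\delta_0)$ has distributional derivative controlled by $4\max_{\Omega_2}|x|\,W_2(\nu_t,\delta_0)$, hence (dividing by $W_2(\nu_t,\delta_0)\ge\delta$) $W_2(\nu_t,\delta_0)\le W_2(\nu_0,\delta_0)+2\max_{\Omega_2}|x|\,T$ on $[0,T]$. Then Lemma 4.12 gives $\|\mathbf{w}(t,\cdot)\|_{L^2(\nu_t)}\le 2(\max_{\Omega_2}|x|+W_2(\nu_t,\delta_0))\in L^\infty(0,T)$, and \cite{gradient-flows} Theorem 8.3.1 applied to the continuity equation from part (i) shows $\nu\in AC^\infty(0,T;\mathcal{P}_2(\mathbf{R}^3))$ with metric speed dominated by this $L^\infty$ function.

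The main obstacle is making the last energy estimate rigorous: $|y|^2$ is not an admissible test function in (4.1), and with only the pointwise-in-$t$ information $\nabla P_t\in L^2(\Omega_{h_t})$ one does not know a priori that $t\mapsto M_2(\nu_t)$ is even locally integrable, so the formal Gronwall argument risks circularity. The way around it is to stay with the truncations $\xi_M$: for each fixed $M$, (4.1) itself bounds the relevant time integral, so $t\mapsto\int\xi_M\,d\nu_t$ is genuinely absolutely continuous, and the bound on its derivative is uniform in $M$ precisely because the dangerous term $\nabla\xi_M(y)\cdot Jy$ vanishes identically; one then runs a continuation argument on the largest subinterval of $[0,T]$ on which $M_2(\nu_\cdot)$ stays bounded, closing it up to $t=T$ by the lower semicontinuity of $t\mapsto M_2(\nu_t)=\sup_M\int\xi_M\,d\nu_t$. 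Everything else is routine: the cutoff removal is dominated convergence against a fixed probability measure, and the algebra in part (i) is just bookkeeping with the skew-symmetric $J$ and the definition of the barycentric projection.
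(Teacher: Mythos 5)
Your part (i) is essentially the paper's own argument: the paper likewise tests the dual continuity equation with $\phi(t,y)=\chi(t)\psi(y)$, rewrites each term through the marginal conditions on $\alpha_t$, invokes (4.1), and identifies $\int\nabla\psi(\nabla P_t(\hat x))\cdot J\hat x\,d\sigma_{h(t,\cdot)}$ with $\int\nabla\psi(y)\cdot J\bar\gamma_t(y)\,d\nu_t(y)$ via the defining property of the barycentric projection; your explicit skew-symmetry bookkeeping and the spatial cutoffs are a more careful writing of the same computation, and the minimization condition at each time slice is supplied, as you say, by the remark preceding the proposition (each $(P_t,h_t)$ is generalized data, so the restriction of a maximizer).

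For part (ii) the paper is terser: it writes $W_2(\nu_s,\nu_t)\le\int_s^t\Vert J(\mathrm{id}-\bar\gamma_\tau)\Vert_{L^2(\nu_\tau)}d\tau\le\int_s^t 2\bigl(\max_{\Omega_2}|x|+W_2(\nu_\tau,\delta_0)\bigr)d\tau$ and applies Gronwall, taking for granted the time-integrability of $\tau\mapsto W_2(\nu_\tau,\delta_0)$ that you rightly flag; your truncated-moment estimate is a legitimate, more careful way to supply it. However, the closing device you propose — a continuation argument on the largest subinterval where $M_2(\nu_\cdot)$ stays bounded, sealed by lower semicontinuity — does not work as stated. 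Pointwise finiteness of $M_2(\nu_t)$ gives no local boundedness in $t$, and beyond the endpoint of that subinterval the only derivative bound you control uniformly in $M$ is $|\frac{d}{dt}\int\xi_M\,d\nu_t|\le C\sqrt{M_2(\nu_t)}$, whose integrability is precisely what is at stake, while the unconditional bound $\Vert\nabla\xi_M\Vert_{L^\infty}\max_{\Omega_2}|x|$ blows up with $M$; so the interval cannot be extended and could a priori reduce to $\{0\}$. The repair is simple and removes the continuation altogether: close the Gronwall at each fixed $M$. Since $\nabla\xi_M$ is supported in $\{|y|\le M+1\}$ and $\xi_M\ge M^2$ on $\{M\le|y|\le M+1\}$, one has $|\nabla\xi_M(y)|\le 2|y|\le 4(1+\xi_M(y))$ with constant independent of $M\ge1$, hence $u_M(t):=\int\xi_M\,d\nu_t$ satisfies $|u_M'(t)|\le \max_{\Omega_2}|x|\int|\nabla\xi_M|\,d\nu_t\le 4\max_{\Omega_2}|x|\,(1+u_M(t))$, and Gronwall gives $u_M(t)\le (M_2(\nu_0)+1)e^{Ct}$ uniformly in $M$; taking the supremum over $M$ bounds $M_2(\nu_t)$ on all of $[0,T]$, after which Lemma 4.12 and the cited Wasserstein estimate yield (ii) exactly as you intend. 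With that replacement your argument is complete, and in fact more careful than the paper's two-line treatment of this point.
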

\begin{proof}
We choose test function as $\phi(t,y)=\chi(t)\psi(y)$, then we can compute
$$\int_0^T\int_{\mathbf{R}^3}\chi^{\prime}(t)\psi(y)d\nu_t(y)dt=\int_0^T\int_{\Omega_{\infty}}\chi^{\prime}(t)\psi(\nabla P_t(x))d\sigma_{h(t,\cdot)}(x)dt=\int_0^T\int_{\Omega_{\infty}^2}\psi(\nabla P_t(\hat{x}))\chi^{\prime}(t)d\alpha_t(x,\hat{x})$$$$=-\int_0^T\int_{\Omega_{\infty}^2}\nabla\psi(\nabla P_t(\hat{x}))\cdot J(\nabla P_t(\hat{x})-\hat{x})\chi(t)d\alpha_t(x,\hat{x})dt-\int_{\Omega_{h_0}}\psi(\nabla P_0(x))\chi(0)dx$$
In the above, we have used (4.1) and the marginal properties of $d\alpha_t(x,\hat{x})$.
Now $$\int_0^T\int_{\Omega_{\infty}^2}\nabla\psi(\nabla P_t(\hat{x}))\cdot J(\nabla P_t(\hat{x}))\chi(t)d\alpha_t(x,\hat{x})=\int_0^T\int_{\Omega_{h(t,\cdot)}}\nabla\psi(\nabla P_t(\hat{x}))\cdot J(\nabla P_t(\hat{x}))\chi(t)dxdt$$$$=\int_0^T\int_{\mathbf{R}^3}\nabla\psi(y)\cdot
Jy\chi(t)d\nu_t(y)dt$$
while the term
$$\int_0^T\int_{\Omega_{\infty}^2}\nabla\psi(\nabla P_t(\hat{x}))\cdot J\hat{x}\chi(t)d\alpha_t(x,\hat{x})dt
=\int_0^T\int_{\Omega_{\infty}}\nabla\psi(\nabla P_t(x))\cdot Jx\chi(t)d\sigma_{h(t,\cdot)}(x)dt$$
$$=\int_0^T\int_{\mathbf{R}^3}\nabla\psi(y)\cdot J\bar{\gamma}_t(y)\chi(t)d\nu_t(y)dt$$
The last term
$$\int_{\Omega_{h_0}}\psi(\nabla P_0(x))\chi(0)dx=\int_{\mathbf{R}^3}\psi(y)\chi(0)d\nu_0(y)$$
So we proved $(i)$.
To see $(ii)$ is true, we notice
$$W_2(\nu_s,\nu_t)\leq\int_s^t||J(id-\bar{\gamma}_{\tau})||_{L^2(\nu_{\tau})}d\tau\leq\int_s^t2(\max_{\Omega_2}|x|+W_2(\nu_{\tau},\delta_0))d\tau$$
In the above, we used Lemma 4.12. Now we can use Gronwall inequality to conclude that $\{\nu_t\}_{t\in[0,T]}$ is bounded in $\mathcal{P}_2(\mathbf{R}^3)$.
\end{proof}
The existence result of relaxed solution is an easy consequence of the following lemma.
\begin{lem}
Let $(P_0^n,h_0^n)$,$(P_0,h_0)$ be generalized initial data in the sense of Definition 4.4, Suppose also that
$$\sup_{n\geq1}||\nabla P_0^n||_{L^2(\Omega_{h_0^n})}<\infty$$
and
$$h_0^n\rightarrow h_0\,\,in\,\,L^1(\Omega_2)\,\,\,\,\xi(P_0^n,\nabla P_0^n)\sigma_{h_0^n}\rightarrow\xi(P_0,\nabla P_0)\sigma_{h_0}\,\,\,in\,\,L^1(\Omega_{\infty})$$
$\forall \xi\in C_b(\mathbf{R}\times\mathbf{R}^3)$

Let $(P^n,h^n,\alpha^n)$ be relaxed Lagrangian solution with initial data $(P_0^n,h_0^n)$, then there exists a subsequence
which converges to a relaxed solution with initial data $(P_0,h_0)$. More precisely, for any $t\in[0,T]$

$(i)\,\,h^n_t\rightarrow h_t\,\,\,in\,\,L^r(\Omega_2)\,\,\forall r\in[1,2)$;

$(ii)\,\,\alpha^n\rightarrow\alpha$ narrowly;

$(iii)\,\,\xi(P^n_t,\nabla P_t^n)\sigma_{h^n}\rightarrow\xi(P_t,\nabla P_t)\sigma_h$ in $L^1(\Omega_{\infty})\,\,\forall\xi\in C_b(\mathbf{R}\times\mathbf{R}^3)$
\end{lem}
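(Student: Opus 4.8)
The plan is to transfer the problem to the dual side, where stability is already available (Proposition 4.11 and Theorem 2.53), and then to lift the resulting convergences back to the relaxed formulation (4.1).

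\emph{Step 1: dual reduction and extraction.} By Proposition 4.17, for each $n$ the curve $\nu^n_t:=\nabla P^n_{t\,\sharp}\sigma_{h^n(t,\cdot)}$ lies in $AC^\infty(0,T;\mathcal P_2(\mathbf R^3))$, solves the dual system with datum $\nu^n_0:=\nabla P^n_{0\,\sharp}\sigma_{h^n_0}$, and is supported in $\mathbf R^2\times[-\frac1\delta,-\delta]$ since $-\frac1\delta\le\partial_{x_3}P^n_t\le-\delta$. Taking test functions depending only on the gradient variable in the hypothesis $\xi(P^n_0,\nabla P^n_0)\sigma_{h^n_0}\to\xi(P_0,\nabla P_0)\sigma_{h_0}$ in $L^1$ shows $\nu^n_0\to\nu_0:=\nabla P_{0\,\sharp}\sigma_{h_0}$ narrowly, while $M_2(\nu^n_0)\le\|\nabla P^n_0\|_{L^2(\Omega_{h^n_0})}^2+\delta^{-2}$ is bounded. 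Hence Proposition 4.11 gives a subsequence (not relabeled) with $\nu^n_t\to\nu_t$ narrowly for every $t\in[0,T]$, with $\nu\in AC^\infty(0,T;\mathcal P_2)$ a dual solution with datum $\nu_0$, and, by the Gronwall bound (4.15), $\sup_{n,t}M_2(\nu^n_t)<\infty$. I then define $h(t,\cdot)$ to be the $h$-component of the unique minimizer of $E_{\nu_t}$ and $P_t$ to be the maximizer of $J_{\nu_t}$ furnished by Corollary 2.42. By Remark 4.16 each $P^n_t$ agrees on $\Omega_{h^n_t}$ with the Corollary 2.42 maximizer $\bar P^n_t$ of $J_{\nu^n_t}$, so I freely replace $P^n_t$ by $\bar P^n_t$ in (4.1) throughout (this changes nothing, as $\sigma_{h^n_t}$ is the $\hat x$-marginal of $\alpha^n_t$).

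\emph{Step 2: time-slice convergence and extraction of $\alpha$.} For each fixed $t$, Theorem 2.53 applies ($\nu^n_t\to\nu_t$ narrowly, uniformly bounded second moments), yielding $h^n(t,\cdot)\to h(t,\cdot)$ in $L^r(\Omega_2)$ for all $r\in[1,2)$ — this is $(i)$ — and $\eta(\bar P^n_t,\nabla\bar P^n_t)\sigma_{h^n_t}\to\eta(P_t,\nabla P_t)\sigma_{h_t}$ in $L^1(\Omega_\infty)$ for every $\eta\in C_b(\mathbf R\times\mathbf R^3)$ — this is $(iii)$. Combining the latter with $\|\sigma_{h^n_t}-\sigma_{h_t}\|_{L^1}=\|h^n_t-h_t\|_{L^1}\to0$ also gives, for every $t$ and every bounded continuous $\eta$, the slice-wise convergence $\int_{\Omega_\infty}|\eta(\nabla\bar P^n_t(\hat x))-\eta(\nabla P_t(\hat x))|\,\sigma_{h^n_t}(\hat x)\,d\hat x\to0$. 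For $(ii)$ I extract a further subsequence with $\alpha^n\to\alpha$ narrowly on $[0,T]\times\Omega_\infty\times\Omega_\infty$: tightness holds because $x_1,x_2,\hat x_1,\hat x_2$ are confined to $\bar\Omega_2$ while $\int_0^T\!\int(x_3+\hat x_3)\,d\alpha^n_t\,dt\le\frac12T(\|h^n_0\|_{L^2}^2+\sup_t\|h^n_t\|_{L^2}^2)$ is bounded via the universal $L^2$-bound (2.47) and $\sup_{n,t}M_2(\nu^n_t)<\infty$. Disintegrating $d\alpha=d\alpha_t\,dt$ and testing against products $\varphi(t)\psi(\cdot)$, the identities $\mathrm{proj}_x\alpha^n_t=\sigma_{h^n_0}\to\sigma_{h_0}$ and $\mathrm{proj}_{\hat x}\alpha^n_t=\sigma_{h^n_t}\to\sigma_{h_t}$ pass to the limit, so $\alpha_t$ has the marginals of Definition 4.2$(iii)$ for a.e.\ $t$ (and one fixes the representative so that $\alpha_0=(\mathrm{id}\times\mathrm{id})_\sharp\sigma_{h_0}$). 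Properties $(i),(ii),(v)$ of Definition 4.2 then follow: the first two from the minimizer/maximizer structure together with $\int_{\Omega_2}h^n_t=1$, $P^n_t(x_1,x_2,h^n_t)=\frac12(x_1^2+x_2^2)$ and $-\frac1\delta\le\partial_{x_3}P_t\le-\delta$ passing to the limit, and $(v)$ from Lemma 4.15 applied to $(P_0,h_0)$, which identifies $P_0$ on $\Omega_{h_0}$ with the $J_{\nu_0}$-maximizer and $h_0$ with the $E_{\nu_0}$-minimizer since $\nu_0=\nabla P_{0\,\sharp}\sigma_{h_0}$.

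\emph{Step 3: passage to the limit in (4.1), and the main obstacle.} The initial term converges by the $L^1$ hypothesis at $t=0$ (with $\xi$ depending only on the gradient). In each time-integral term of (4.1) I write the integrand as $\eta(\nabla\bar P^n_t(\hat x))\,c(t,x,\hat x)$, where $\eta$ ranges over $\{\xi,\ q\mapsto\nabla\xi(q)\cdot Jq,\ \partial_k\xi\}$ and $c$ is one of $\partial_t\psi(t,x)$, $\psi(t,x)$, $\psi(t,x)(J\hat x)_k$; the key point is that $c$ is bounded, because $\nabla\xi$ has compact support and $J\hat x=(-\hat x_2,\hat x_1,0)$ involves only the confined coordinates of $\hat x$. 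Splitting $\eta(\nabla\bar P^n_t(\hat x))=[\eta(\nabla\bar P^n_t(\hat x))-\eta(\nabla P_t(\hat x))]+\eta(\nabla P_t(\hat x))$, the contribution of the bracket is, for each $t$, bounded by $\|c\|_\infty$ times the slice-wise quantity of Step 2, hence $\to0$, and being uniformly bounded in $t$ it vanishes after integrating $dt$ by dominated convergence. In the remaining integral the factor $\eta(\nabla P_t(\hat x))$ is fixed; since $t\mapsto P_t$ is continuous for local uniform convergence (Theorem 2.53 along $s\to t$, using $W_2(\nu_s,\nu_t)\to0$), the map $(t,\hat x)\mapsto\eta(\nabla P_t(\hat x))$ is a bounded Borel function, so by Lusin's theorem — localized to $[0,T]\times\Omega_2\times[0,L]$ and using the uniform tail estimate $\sup_n\int_0^T\sigma_{h^n_t}(\hat x_3>L)\,dt\le L^{-1}T\sup_{n,t}\|h^n_t\|_{L^2}^2$ — it is approximated in $L^1(d\alpha^n_t\,dt)$, uniformly in $n$, by bounded continuous functions, against which $\alpha^n\to\alpha$ narrowly passes to the limit, producing exactly the corresponding term of (4.1) for $(P,h,\alpha)$. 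Thus (4.1) holds and $(P,h,\alpha)$ is a relaxed Lagrangian solution with data $(P_0,h_0)$, with $(i)$–$(iii)$ as extracted above. I expect this last passage to be the main obstacle: the narrow convergence $\alpha^n\to\alpha$ sees nothing of $\nabla P^n_t$, and $\nabla P^n_t$ is defined only $\mathcal L^3$-a.e.\ and varies with $t$; the resolution is the slice-wise $L^1$-convergence from Theorem 2.53$(iii)$, which lets one replace $\nabla\bar P^n_t$ by the fixed limiting gradient, after which boundedness of the coefficients (forced by $\Omega_2$ bounded and $\nabla\xi$ compactly supported) together with a Lusin approximation closes the argument.
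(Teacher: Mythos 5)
Your proposal is correct and follows essentially the same route as the paper: reduce to the dual-space stability result for measure-valued solutions, apply the slice-wise stability theorem (the paper's Theorem 2.53, via the identification of $P^n_t$ with the $J_{\nu^n_t}$-maximizer on $\Omega_{h^n_t}$) to get $(i)$ and $(iii)$, obtain $(ii)$ by tightness from the uniform $L^2$ bound on $h^n_t$, and pass (4.1) to the limit by first replacing $\nabla P^n_t$ by the limiting gradient using the slice-wise $L^1$ convergence and then combining a Lusin approximation of $(t,\hat{x})\mapsto\nabla P_t(\hat{x})$ with the tail estimate $\int_{\Omega_2}(h^n_t-K)^+\leq \|h^n_t\|_{L^2}^2/K$ and the narrow convergence of $\alpha^n$. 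The only (harmless) deviations are that you use the compact support of $\nabla\xi$ to avoid the paper's extra moment-based cut-off $H(\nabla P^n_t)$ in the $J\nabla P^n_t$ term, and you are somewhat more careful about joint Borel measurability and about deducing the convergence of the initial term directly from the $L^1$ hypothesis.
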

\begin{proof}
Define $\nu_t^n=\nabla P^n_{t\sharp}\sigma_{h_t^n}$, then by Proposition 4.20, we know $\nu^n$ solves the dual space system with $\nu^n\in AC^{\infty}(0,T;\mathcal{P}_2(\mathbf{R}^3))$. Since $-\frac{1}{\delta}\leq\partial_{x_3}P_t\leq-\delta$, we also know that $supp\,\,\nu_t^n\subset\mathbf{R}^2\times[-\frac{1}{\delta},-\delta]$

 Define $\nu_0=\nabla P_{0\sharp}\sigma_{h_0}$.
By assumption, one has $supp\,\,\nu_0^n,\nu_0\subset\mathbf{R}^2\times[-\frac{1}{\delta},-\delta]$, Also
$$\int_{\mathbf{R}^3}\eta(y)d\nu_0^n(y)=\int_{\Omega_{\infty}}\eta(\nabla P_0^n(x))\sigma_{h_0^n}(x)dx\rightarrow\int_{\Omega_{\infty}}\eta(\nabla P_0(x))\sigma_{h_0}(x)dx$$$$=\int_{\mathbf{R}^3}\eta(y)d\nu_0(y)\,\,\,\forall\eta\in C_b(\mathbf{R}^3)$$
So $\nu_0^n\rightarrow\nu_0$ narrowly. Also we have $\sup_{n\geq1}M_2(\nu_0^n)<\infty$, because we assumed $\sup_{n\geq1}||\nabla P_0^n||_{L^2(\Omega_{h_0^n})}<\infty$.

We are now in a position to apply Proposition 4.13 to get a subsequence of $\nu_t^n$ which converges to a solution $\nu\in AC^{\infty}(0,T;\mathcal{P}_2(\mathbf{R}^3))$ which solves the dual space system with initial data $\nu_0$.\\
By (4.11) in the proof of Proposition 4.13, we know $\{\nu_t^n\}_{t\in[0,T],n\geq1}$ is uniformly bounded in $\mathcal{P}_2(\mathbf{R}^3)$, hence $\{||\nabla P_t^n||_{L^2(\Omega_{h_t^n})}\}_{t\in[0,T],n\geq1}$ are uniformly bounded. Remark 4.19 shows for each $t\in[0,T]$,$(h_t^n,(id\times\nabla P_t^n)_{\sharp}\sigma_{h_t^n})$ is the unique minimizer of $E_{\nu_t^n}(h,\gamma)$. Now let $(\tilde{P}_t^n,\tilde{R}_t^n)$,$(\tilde{P},\tilde{R})$ be  maximizers of $J_{\nu_t^n}(P,R)$,$J_{\nu_t}(P,R)$ respectively satisfying (2.39),(2.40), then by Theorem 2.53, we conclude for each $t\in[0,T]$
$$h_t^n\rightarrow h_t\,\,in\,\,L^r(\Omega_2)\,\,\,for\,\,\,any\,\,\,r\in[1,2)$$
$$\xi(\tilde{P}_t^n,\nabla\tilde{P}_t^n)\sigma_{h_t^n}\rightarrow\xi(\tilde{P}_t
.
,\nabla\tilde{P}_t)\,\,in\,\,L^1(\Omega_{\infty})\,\,\,\forall \xi\in C_b(\mathbf{R}\times\mathbf{R}^3)$$

Also we know that $\{||h_t^n||_{L^2(\Omega_2)}\}_{t\in[0,T],n\geq1}$ is bounded by a universal constant by Lemma 2.46.
By Remark 4.19, we actually have $\tilde{P}_t^n=P_t^n$ on $\Omega_{h_t^n}$, so we really have
$$\xi(P_t^n,\nabla P_t^n)\sigma_{h_t^n}\rightarrow\xi({P}_t,\nabla{P}_t)\sigma_{h_t}\,\,in\,\,L^1(\Omega_{\infty})\,\,\,\forall \xi\in C_b(\mathbf{R}\times\mathbf{R}^3)$$
Finally since for each n,  $proj_x\alpha^n_t=\sigma_{h_0^n},proj_{\hat{x}}\alpha_t^n=\sigma_{h_t^n}$. Because of the $L^2$ bound of $h_t^n$, we know $\{\alpha^n\}$, as a measure on $[0,T]\times\Omega_{\infty}\times\Omega_{\infty}$, is tight. Therefore up to a subsequence, we can assume $\alpha^n\rightarrow\alpha$ narrowly, and the limit $\alpha$ must disintegrate as $d\alpha(x,\hat{x})=d\alpha_tdt$, with
$proj_x\alpha_t=\sigma_{h_0}$ and $proj_{\hat{x}}\alpha_t=\sigma_{h_t}$. So far we have checked point (i)-(iii) in Definition 4.2. (v) follows from the assumed convergence of the initial data and the fact that $P_0^n\rightarrow P_0$ locally uniformly on $\Omega\times\{0\}$.  It only remains to check $(\tilde{P},h,\alpha)$ satisfies point (iv). For each fixed n, we have
\begin{equation}\int_0^T\int_{\Omega_{\infty}^2}\xi(\nabla P^n_t(\hat{x}))\partial_t\psi(t,x)d\alpha^n_t(x,\hat{x})dt+\int_0^T\int_{\Omega_{\infty}^2}\nabla \xi(\nabla P^n_t(\hat{x}))\cdot J(\nabla P^n_t(\hat{x})-\hat{x})\psi(t,x)d\alpha^n_t(x,\hat{x})dt$$$$+\int_{\Omega_{h_0^n}}\xi(\nabla P^n_0(x))\psi(0,x)dx=0\,\,\,\forall \xi\in C_c^1(\mathbf{R}^3)
\end{equation}
We wish to pass each term to the limit. First it`s obvious that
$$\int_{\Omega_{h_0^n}}\xi(\nabla P^n_0(x))\psi(0,x)dx
\rightarrow \int_{\Omega_{h_0}}\xi(\nabla P_0(x))\psi(0,x)dx$$
since this convergence happens $\mathcal{L}^3-$a.e on $\Omega_{h_0}$ and $h_0^n\rightarrow h_0$ uniformly as noted above.
Next we look at first term. Fix $\epsilon>0$, choose $K>0$, so that $\frac{\sup_{n,t}||h_t^n||_{L^2(\Omega_2)}}{K}\leq \epsilon$. This is possible because of the universal $L^2$ bound on $h$, see Lemma 2.46.  We choose $g\in C_b(\mathbf{R}^3,\mathbf{R}^3)$, such that
$$\mathcal{L}^4(\{(t,\hat{x})\in[0,T]\times\Omega_K|\nabla P_t(\hat{x})\neq g(t,\hat{x})\})<\epsilon$$
Then we can write
$$\int_0^T\int_{\Omega_{\infty}^2}\xi(\nabla P_t^n(\hat{x}))\partial_t\psi(t,x)d\alpha^n(t,x,\hat{x})-\int_0^T\int_{\Omega_{\infty}^2}\xi(\nabla P_t(\hat{x}))\partial_t\psi(t,x)d\alpha(t,x,\hat{x})$$
\begin{equation}
=\int_0^T\int_{\Omega_{\infty}^2}(\xi(\nabla P_t^n(\hat{x}))-\xi(\nabla P_t(\hat{x})))\partial_t\psi(t,x)d\alpha^n(t,x,\hat{x})
\end{equation}
$$+\int_0^T\int_{\Omega_{\infty}^2}(\xi(\nabla P_t(\hat{x}))-\xi(g(t,\hat{x})))\partial_t\psi(t,x)d\alpha^n(t,x,\hat{x})$$
$$+\int_0^T\int_{\Omega_{\infty}^2}\xi(g(t,\hat{x}))\partial_t\psi(t,x)d(\alpha^n(t,x,\hat{x})-\alpha(t,x,\hat{x}))$$
$$+\int_0^T\int_{\Omega_{\infty}^2}(\xi(g(t,\hat{x}))-\xi(\nabla P_t(\hat{x})))\partial_t\psi(t,x)d\alpha(t,x,\hat{x})=A+B+C+D$$
First we estimate $B$. We notice that
\begin{equation}
\int_0^T\int_{\Omega_{\infty}^2}(\xi(\nabla P_t(\hat{x}))-\xi(g(t,\hat{x})))\partial_t\psi(t,x)d\alpha^n(t,x,\hat{x})$$$$\leq||\partial_t\Psi||_{L^{\infty}}\int_0^T\int_{\Omega_{\infty}}|\xi(\nabla P_t(\hat{x}))-\xi(g(t,\hat{x}))|\sigma_{h_t^n}(\hat{x})
\end{equation}
$$\leq||\partial_t\psi||_{L^{\infty}}[\int_0^T\int_{\Omega_K}|\xi(\nabla P_t(\hat{x}))-\xi(g(t,\hat{x}))|\sigma_{h_t^n}(\hat{x})+||\xi||_{L^{\infty}}\int_0^T\int_{\Omega_2}(h_t^n-K)^+dx_1dx_2]$$
$$\leq||\partial_t\psi||_{L^{\infty}}[||\xi||_{L^{\infty}}\epsilon+||\xi||_{L^{\infty}}\frac{||h^n_t||_{L^2(\Omega_2)}}{K}]\leq2||\partial_t\psi||_{L^{\infty}}||\xi||_{L^{\infty}}\epsilon$$
The same estimate also works for $D$.
while $C\rightarrow0$ by narrow convergence.\\
It only remains to estimate $A$. $$\int_0^T\int_{\Omega_{\infty}^2}(\xi(\nabla P_t^n(\hat{x}))-\xi(\nabla P_t(\hat{x})))\partial_t\psi(t,x)d\alpha^n(t,x,\hat{x})\leq$$$$||\partial_t\psi||_{L^{\infty}}
\int_0^T\int_{\Omega_{\infty}}|\xi(\nabla P_t^n(\hat{x}))-\xi(\nabla\tilde{P}_t(\hat{x}))|\sigma_{h_t^n}(\hat{x})d\hat{x}dt$$
Then we can write
$$|\xi(\nabla P_t^n(\hat{x}))-\xi(\nabla\tilde{P}_t(\hat{x}))|\sigma_{h_t^n}(x)\leq|\xi(\nabla P_t^n(\hat{x}))\sigma_{h_t^n}(\hat{x})-\xi(\nabla\tilde{P}_t(\hat{x}))\sigma_{h_t}(\hat{x})|+$$$$||\xi||_{L^{\infty}}|\sigma_{h_t^n}(\hat{x})-\sigma_{h_t}(\hat{x})|$$
By the convergence already noted, their integral goes to zero.  So we can pass the first term of (4.22) to limit.

For the second term, the term $\int_0^T\int_{\Omega_{\infty}^2}\nabla\xi(\nabla P_t^n(\hat{x}))\cdot J\hat{x}\psi(t,x)d\alpha_t^n(x,\hat{x})dt$ can be passed to limt in the same way as first term, if one notices that $J\hat{x}$ does not involve $\hat{x}_3$. We only need to deal with $\int_0^T\int_{\Omega_{\infty}^2}\nabla\xi(\nabla P_t^n(\hat{x}))\cdot J\nabla P_t^n(\hat{x})\psi(t,x)d\alpha_t^n(x,\hat{x})dt$ Let $H\in C_b(\mathbf{R}^3)$ be a cut-off function such that $H(y)=1$ if $|y|\leq K$, $H(y)=0$ if $|y|\geq K+1$, $K$ to be determined below. Then we can write
\begin{equation}
\int_0^T\int_{\Omega_{\infty}^2}\nabla\xi(\nabla P_t^n(\hat{x}))\cdot J\nabla P_t^n(\hat{x})\partial_t\psi d\alpha_t^n(\hat{x},x)-\int_0^T\int_{\Omega_{\infty}^2}\nabla\xi(\nabla P_t(\hat{x}))\cdot J\nabla P_t(\hat{x})\partial_t\psi d\alpha_t(\hat{x},x)
\end{equation}
$$\leq ||\partial_t\psi||_{L^{\infty}}||\nabla\xi||_{L^{\infty}}[\int_0^T\int_{\Omega_{\infty}}|\nabla P_t^n(\hat{x})(1-H(\nabla P_t^n(\hat{x}))|\sigma_{h_t^n}(\hat{x})d\hat{x}$$$$+\int_0^T\int_{\Omega_{\infty}}|\nabla P_t(\hat{x})(1-H(\nabla P_t(\hat{x}))|\sigma_{h_t}(\hat{x})d\hat{x}]$$$$+[\int_0^T\int_{\Omega_{\infty}^2}\nabla\xi(\nabla P_t^n(\hat{x}))\cdot J\nabla P_t^n(\hat{x})H(\nabla P_t^n(\hat{x}))\partial_t\psi d\alpha_t^n(\hat{x},x)$$$$-\int_0^T\int_{\Omega_{\infty}^2}\nabla\xi(\nabla P_t(\hat{x}))\cdot J\nabla P_t(\hat{x})H(\nabla P_t(\hat{x}))\partial_t\psi d\alpha_t(\hat{x},x)]:=A+B+C$$
For $A$, since $\nu_t^n=\nabla P_t^n\sharp\sigma_{h_t^n}$, we can choose $K$ large, so that
$$A=\int_0^T\int_{\mathbf{R}^3}|y(1-H(y)|d\nu_t^n(y)\leq\int_0^T\int_{\{|y|\geq K\}}|y|d\nu_t^ndt\leq\frac{T\sup_{n,t}M_2(\nu_t^n)}{K}\leq\epsilon$$
The term $B$ is exactly the same as above. For term $C$, it can be dealt with in the same way as first term of (4.22) already shown above, since the integrands in $C$ is bounded because of the cut-off.
\end{proof}
Now we are ready to prove Theorem 4.6, the existence of relaxed Lagrangian solutions.\\
\begin{proof}
Let $\nu_0=\nabla P_{0\sharp}\sigma_{h_0}$, then $\nu_0\in\mathcal{P}_2(\mathbf{R}^3)$ and $supp\,\,\nu_0\subset\mathbf{R}^2\times[-\frac{1}{\delta},-\delta]$. Define
$\nu_0^n=\frac{1}{\nu_0(B(0,n))}[\chi_{B(0,n)}\nu_0]*j_{\frac{1}{n}}$. Then $\nu_0^n$ has compact support and smooth and $\nu_0^n\rightarrow\nu_0$ narrowly. Let $(P_0^n,R_0^n)$ be the maximizer of $J^{H_n}_{\nu_0^n}(P,R)$ with $H_n$ taken large enough. $(h_0^n,\gamma_0^n)$ be the minimizer of $E_{\nu_0^n}(h,\gamma)$, then $(P_0^n,h_0^n)$  are admissible initial data. We know by Lemma 2.54 that it is also the maximizer of $J_{\nu_0^n}(P,R)$ after suitable extension of $P_0^n$, hence they are also generalized data in the sense of Definition 4.2. Also by Theorem 2.53, we have $h_0^n\rightarrow h_0$ in $L^r(\Omega_2)$, for any $r\in[1,2)$, and $\xi(P^n,\nabla P_0^n)\sigma_{h_0^n}\rightarrow\xi(P_0,\nabla P_0)\sigma_{h_0}$ in $L^1(\Omega_{\infty})$. Let $(P^n,h^n,\alpha^n)$ be the Lagrangian solution with initial data $(P_0^n,h_0^n)$ given by Theorem 3.9. Then previous lemma gives us a relaxed solution with initial data $(P_0,h_0)$\\
The continuity property in time of $h$ is given by Theorem 2.53(iv) upon noticing that $\nu\in AC^{\infty}(0,T;\mathcal{P}_2(\mathbf{R}^3))$
\end{proof}

\section{Appendix}
Here we prove that
\begin{lem}
Given $(P_0,R_0)$ be a pair of convex conjugate maximizer of $J_{\nu}^H(P,R)$, let $h_0=h_{P_0}^H$ be such that $0\leq h_0<H$, Suppose also $\Omega_2\subset B_D(0) $, where $\Lambda=B_D(0)\times[-\frac{1}{\delta},-\delta]$, define
$$R_1(y)=\sup_{x\in\Omega_H}[x\cdot y-\max(P_0(x),\frac{1}{2}(x_1^2+x_2^2))]$$
$$P_1(x)=\sup_{y\in\Lambda}(x\cdot y-R_1(y))$$
Then the following holds

$(i)$$(P_1,R_1)$ are convex conjugate over $\Omega_H,\Lambda$

$(ii)$$P_1(x_1,x_2,0)\geq\frac{1}{2}(x_1^2+x_2^2)\,\,\,\forall (x_1,x_2)\in\Omega_2$

$(iii)$$P_1(x_1,x_2,0)\leq\frac{1}{2}(x_1^2+x_2^2)\,\,\,whenever\,\,\,h_0(x_1,x_2)=0$

$(iv)$$(P_1,R_1)$ is a maximizer of $J_{\nu}^H(P,R)$ and $h_{P_1}^H=h_0$

\end{lem}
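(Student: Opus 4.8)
The plan is to dispatch the four assertions in order, (i)--(iii) being short and (iv) carrying all the weight. Throughout write $\hat P_0(x)=\max\bigl(P_0(x),\frac12(x_1^2+x_2^2)\bigr)$, a convex function on $\Omega_H$ bounded below by $0$. For (i): $R_1$ is a supremum of affine functions of $y$ over the bounded set $\Omega_H$ on which $\hat P_0$ is bounded, hence $R_1$ is a finite convex function on $\Lambda$; by the definition of $P_1$ we have $P_1(x)+R_1(y)\ge x\cdot y$ on $\Omega_H\times\Lambda$, which forces $P_1\le\hat P_0$ on $\Omega_H$, and therefore $R_1(y)=\sup_{x\in\Omega_H}(x\cdot y-\hat P_0(x))\le\sup_{x\in\Omega_H}(x\cdot y-P_1(x))\le R_1(y)$, so $(P_1,R_1)$ are convex conjugate over $\Omega_H,\Lambda$. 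I will record for later use the two consequences $P_1\le\hat P_0$ on $\Omega_H$ and $\partial P_1(\Omega_H)\subset\Lambda$, the latter giving $-\frac1\delta\le\partial_{x_3}P_1\le-\delta$ so that $\Pi_{P_1}(x_1,x_2,\cdot)$ is uniformly convex and $h_{P_1}^H$ is a well-defined function.

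For (ii), fix $(x_1,x_2)\in\Omega_2\subset B_D(0)$ and test the supremum defining $P_1(x_1,x_2,0)$ at the single point $y=(x_1,x_2,-\delta)\in\Lambda$ (any admissible third component $<0$ works). Using $-R_1(y)=\inf_{z\in\Omega_H}(\hat P_0(z)-z\cdot y)$ one gets $P_1(x_1,x_2,0)\ge\inf_{z\in\Omega_H}\bigl(\hat P_0(z)+((x_1,x_2,0)-z)\cdot y\bigr)$, and since $\hat P_0(z)\ge\frac12(z_1^2+z_2^2)$ and $z_3\ge0$, completing the square in $z_1,z_2$ yields $\hat P_0(z)+((x_1,x_2,0)-z)\cdot y\ge\frac12(z_1-x_1)^2+\frac12(z_2-x_2)^2+\frac12(x_1^2+x_2^2)+\delta z_3\ge\frac12(x_1^2+x_2^2)$ for every $z\in\Omega_H$, which is (ii). For (iii): if $h_0(x_1,x_2)=0$ then (as in Remark 2.4, applied to $h_0=h_{P_0}^H$) the derivative $\frac12(x_1^2+x_2^2)-P_0(x_1,x_2,0)$ of $\Pi_{P_0}(x_1,x_2,\cdot)$ at $0$ is $\ge0$, so $\hat P_0(x_1,x_2,0)=\frac12(x_1^2+x_2^2)$, and hence $P_1(x_1,x_2,0)\le\hat P_0(x_1,x_2,0)=\frac12(x_1^2+x_2^2)$.

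For (iv) the key claim is $P_1=P_0$ on the region $\Omega_{h_0}$, together with $h_{P_1}^H=h_0$. First, since $\partial_{x_3}P_0\le-\delta$ and $P_0(x_1,x_2,h_0(x_1,x_2))=\frac12(x_1^2+x_2^2)$ when $h_0(x_1,x_2)>0$, monotonicity in $x_3$ gives $P_0>\frac12(x_1^2+x_2^2)$ on $\Omega_{h_0}$, hence $\hat P_0=P_0$ there, so $P_1\le P_0$ on $\Omega_{h_0}$. For the reverse inequality pick $x_0\in\Omega_{h_0}$ at which $P_0$ is differentiable and set $p=\nabla P_0(x_0)\in\Lambda$; convexity of $P_0$ over $\Omega_H$ gives $R_0(p)=x_0\cdot p-P_0(x_0)$, while $\hat P_0(x_0)=P_0(x_0)$ and $P_1\le\hat P_0$ give $R_1(p)=x_0\cdot p-P_0(x_0)$ as well, whence $P_1(x_0)\ge x_0\cdot p-R_1(p)=P_0(x_0)$; thus $P_1=P_0$ a.e.\ on $\Omega_{h_0}$ and then everywhere on $\Omega_{h_0}$ by continuity of convex functions, and $P_1(x_1,x_2,h_0)=\frac12(x_1^2+x_2^2)$ when $h_0>0$ by passing to the limit $x_3\to h_0^-$. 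Now $\Pi_{P_1}(x_1,x_2,\cdot)$ is uniformly convex with derivative $\frac12(x_1^2+x_2^2)-P_1(x_1,x_2,s)$, which on $[0,h_0)$ equals $\frac12(x_1^2+x_2^2)-P_0(x_1,x_2,s)<0$, vanishes at $s=h_0$, and for $s>h_0$ is $>0$ because $P_1(x_1,x_2,h_0)\le\frac12(x_1^2+x_2^2)$ and $\partial_{x_3}P_1\le-\delta$; when $h_0(x_1,x_2)=0$ the same derivative is $\ge0$ at $s=0$ by (iii). Hence $h_{P_1}^H=h_0$. Finally, compare the two summands of $J_\nu^H$: from $\hat P_0\ge P_0$ we get $R_1\le R_0$, so the $\nu$-term does not decrease in passing from $(P_0,R_0)$ to $(P_1,R_1)$; and by Remark 2.5, together with $h_{P_1}^H=h_0$ and $P_1=P_0$ on $\Omega_{h_0}$, the infimum term equals $\int_{\Omega_H}[\frac12(x_1^2+x_2^2)-P_0]\sigma_{h_0}$, which is exactly the infimum term for $(P_0,R_0)$. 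Therefore $J_\nu^H(P_1,R_1)\ge J_\nu^H(P_0,R_0)$, and since $(P_0,R_0)$ is a maximizer, so is $(P_1,R_1)$.

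The main obstacle is (iv), and within it the identity $P_1=P_0$ on $\Omega_{h_0}$: one must combine the pointwise inequality $P_1\le\hat P_0$ with the subgradient/conjugacy argument to get the reverse inequality a.e., then upgrade to a genuine pointwise equality and handle the boundary slice $x_3=h_0$ carefully, and only afterward can one exploit that $P_1$ --- unlike $\hat P_0$, which is merely non-increasing in $x_3$ --- is strictly decreasing in $x_3$, so that $\Pi_{P_1}(x_1,x_2,\cdot)$ has a unique minimizer and $h_{P_1}^H$ is pinned to $h_0$. Steps (i)--(iii) and the final energy comparison are then routine bookkeeping with the definitions and Remarks 2.4--2.5.
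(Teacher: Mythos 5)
Your proposal is correct, and its skeleton is the paper's: form $\hat P_0=\max(P_0,\tfrac12(x_1^2+x_2^2))$, double-convexify to get $(P_1,R_1)$, verify conjugacy, establish the two one-sided bounds at $x_3=0$, show $P_1=P_0$ on $\Omega_{h_0}$ and $h_{P_1}^H=h_0$, and conclude maximality. Parts (i)--(ii) are essentially identical to the paper's computations (your test point $y=(x_1,x_2,-\delta)$ and completion of the square is the paper's estimate in different clothing), and your (iii) is in fact a small simplification: you read the inequality directly off $P_1\le\hat P_0$ together with $\hat P_0(x_1,x_2,0)=\tfrac12(x_1^2+x_2^2)$ when $h_0=0$, whereas the paper argues through a lower bound on $R_1$ over $\Omega_H\setminus\Omega_{h_0}$.

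The only genuinely different routes are inside (iv). For $P_1\ge P_0$ on $\Omega_{h_0}$ the paper has a one-line argument you could have reused: $\hat P_0\ge P_0$ gives $R_1\le R_0$, and since $(P_0,R_0)$ are conjugate, $P_1(x)=\sup_{y\in\Lambda}(x\cdot y-R_1(y))\ge\sup_{y\in\Lambda}(x\cdot y-R_0(y))=P_0(x)$ on all of $\Omega_H$; your argument via subgradients at differentiability points plus continuity reaches the same conclusion but only on $\Omega_{h_0}$ and with more effort (and it quietly uses $\nabla P_0(x_0)\in\Lambda$, harmless here since $\Lambda$ is taken compact). For the maximality you compare the two summands of $J_\nu^H$ directly ($R_1\le R_0$ for the $\nu$-term, and equality of the infimum terms via $h_{P_1}^H=h_0$ and $P_1=P_0$ on $\Omega_{h_0}$), which is more elementary than the paper's route of computing $J_\nu^H(P_1,R_1)=E_\nu(h_0,\gamma_0)$ and invoking the duality of Theorem 2.18; your version buys independence from that theorem, while the paper's version also records the complementary-slackness identity $P_1(x)+R_1(y)=x\cdot y$ $\gamma_0$-a.e., which is what it reuses later. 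Your explicit sign analysis of $\partial_s\Pi_{P_1}$ to pin $h_{P_1}^H=h_0$ is just the unpacking of the paper's "combined with (ii),(iii) and the definition of $h_P^H$", so no gap there.
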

\begin{proof}
First we prove $(i)$, denote $\hat{P}_0(x)=\max(P_0(x),\frac{1}{2}(x_1^2+x_2^2))$, we need to show
$$R_1(y)=\sup_{x\in\Omega_H}(x\cdot y-P_1(x))$$
By definition of $R_1(y)$, we know
$$R_1(y)+\hat{P}_0(x)\geq x\cdot y\,\,\,\forall x\in\Omega_H\,\,,\forall y\in\Lambda$$Hence
$$\hat{P}_0(x)\geq P_1(x)\,\,\,x\in\Omega_H$$
So
$$R_1(y)\leq\sup_{x\in\Omega_H}(x\cdot y-P_1(x))$$
By the definition of $P_1$, we have
$$P_1(x)+R_1(y)\geq x\cdot y\,\,\,\forall x\in\Omega_H\,\,,\forall y\in\Lambda$$
So
$$R_1(y)\geq\sup_{x\in\Omega_H}(x\cdot y-P_1(x))$$
$(i)$ is proved. Now we prove $(ii)$. We can observe that
$$R_1(y)\leq\sup_{x\in\Omega_H}(x\cdot y-\frac{1}{2}(x_1^2+x_2^2))$$
Fix $(x_1^0,x_2^0)\in\Omega_2$, by assumption, we can find $z_0\in[-\frac{1}{\delta},-\delta]$, such that $(x_1^0,x_2^0,z_0)\in\Lambda$, then
$$R_1(x_1^0,x_2^0,z_0)\leq\sup_{x\in\Omega_H}(x\cdot(x_1^0,x_2^0,z_0)-\frac{1}{2}(x_1^2+x_2^2))$$
$$=\sup_{\Omega_2}(x_1x_1^0+x_2x_2^0-\frac{1}{2}(x_1^2+x_2^2))\leq\frac{1}{2}[(x_1^0)^2+(x_2^0)^2]$$
In the equality above, we noticed $x_3z_0\leq0$. Therefore,
$$P_1(x_1^0,x_2^0,0)=\sup_{y\in\Lambda}(x_1^0y_1+x_2^0y_2-R_1(y))\geq(x_1^0)^2+(x_2^0)^2-R_1(x_1^0,x_2^0,z_0)\geq\frac{1}{2}[(x_1^0)^2+(x_2^0)^2]$$
$(ii)$ is proved. Now we prove $(iii)$. Fix $(x_1^0,x_2^0)\in\Omega_2$ such that $h_0(x_1^0,x_2^0)=0$, then
$(x_1^0,x_2^0,0)\notin\Omega_{h_0}$\\
But
$$R_1(y)=\max[\sup_{x\in\Omega_{h_0}}(x\cdot y-\hat{P}_0(x)),\sup_{x\notin\Omega_{h_0}}(x\cdot y-\hat{P}_0(x))]$$
On $\Omega_{h_0}$,$P_0(x)\geq\frac{1}{2}(x_1^2+x_2^2)$, so $\hat{P}_0(x)=P_0(x)$, otherwise $\hat{P}_0(x)=\frac{1}{2}(x_1^2+x_2^2)$\\
Hence
$$R_1(y)\geq\sup_{x\notin\Omega_{h_0}}(x\cdot y-\frac{1}{2}(x_1^2+x_2^2))$$
Then
$$(x_1^0,x_2^0,0)\cdot y-R_1(y)\leq \frac{1}{2}((x_1^0)^2+(x_2^0)^2)\,\,\,y\in\Lambda$$
Taking supremum over $y$, we obtain
$$P_1(x_1^0,x_2^0,0)\leq \frac{1}{2}((x_1^0)^2+(x_2^0)^2)$$
$(iii)$ is proved. Finally we prove $(iv)$. We start by showing that $P_1=P_0$ on $\Omega_{h_0}$
Indeed
$$R_1(y)=\sup_{x\in\Omega_H}(x\cdot y-\hat{P}_0(x))\leq\sup_{x\in\Omega_H}(x\cdot y-P_0(x))=R_0(y)$$
So
$$P_1(x)=\sup_{y\in\Lambda}(x\cdot y-R_1(y))\geq\sup_{y\in\Lambda}(x\cdot y-R_0(y))=P_0(x)$$
On the other hand
$$R_1(y)\geq\sup_{x\in\Omega_{h_0}}(x\cdot y-P_0(x))$$
So if $x\in\Omega_{h_0}$
$$P_1(x)=\sup_{y\in\Lambda}(x\cdot y-R_1(y))\leq P_0(x)$$
Combined with $(ii),(iii)$ above and recall the definition of $h_P^H$, we can deduce that $h_{P_0}^H=h_{P_1}^H=h_0$. Hence if we define $\gamma_0=(id\times\nabla P_0)_{\sharp}\sigma_{h_0}$, then $(h_0,\gamma_0)$ is the minimizer of $E_{\nu}(h,\gamma)$, So
$$J_{\nu}^H(P_1,R_1)=\int_{\Lambda}[\frac{1}{2}(y_1^2+y_2^2)-R_1(y)]d\nu(y)+\int_{\Omega_{h_0}}[\frac{1}{2}(x_1^2+x_2^2)-P_1(x)]dx$$$$=\int_{\Omega_H\times\Lambda}[\frac{1}{2}(y_1^2+y_2^2)+\frac{1}{2}(x_1^2+x_2^2)-R_1(y)-P_1(x)]d\gamma_0=E_{\nu}(h_0,\gamma_0)$$
Since $(P_1,R_1)$ convex conjugate, and $P_0=P_1$ on $\Omega_{h_0}$, $R_1(y)+P_1(x)=x\cdot y\,\,\,\gamma_0-a.e$
\end{proof}
\begin{rem}
We notice that above argument still works  even if $H=\infty$
\end{rem}

\begin{lem}
Suppose $(P_1,R_1)$ are maximizers of $J_{\nu}^H(P,R)$, convex conjugate over $\Omega_H$ and $\Lambda$. Suppose also that $P_1(x_1,x_2,0)=\frac{1}{2}(x_1^2+x_2^2)$ whenever $h_1(x_1,x_2)=0$, where $h_1=h_{P_1}^H<H$. We  define
 $$R_2(y)=\sup_{x\in\Omega_{h_1}\bigcup\{x_3=0\}}(x\cdot y-P_1(x))$$
and $$P_2(x)=\sup_{y\in\Lambda}(x\cdot y-R_2(y))$$
then

$(i)$ $P_1=P_2$ on $\Omega_{h_1}\bigcup\{x_3=0\}$

$(ii)$ $(P_2,R_2)$ are convex conjugate over $\Omega_H$ and $\Lambda$.

$(iii)$$(P_2,R_2) $ is also a maximizers and $h_{P_2}^H=h_1$.
\end{lem}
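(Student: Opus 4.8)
The plan is to run the double-convexification argument of the preceding lemma, but now tracking the extra structure needed so that $(P_2,R_2)$ is convex conjugate over the ``physical'' set $\Omega_{h_1}\cup\{x_3=0\}$ as well as over $\Omega_H$. I would first record the elementary facts used repeatedly. Since $P_2(x)=\sup_{y\in\Lambda}(x\cdot y-R_2(y))$ is a finite supremum (the sup defining $R_2$ is over a bounded set and $P_1$ is bounded) of affine functions with slopes in the bounded set $\Lambda$, both $P_1$ and $P_2$ are Lipschitz and convex on $\Omega_H$, and, separating the convex compact set $\Lambda$ from any putative subgradient lying outside it, $\partial P_1(\Omega_H)\subset\Lambda$ and $\partial P_2(\Omega_H)\subset\Lambda$; in particular $-\frac{1}{\delta}\leq\partial_{x_3}P_2\leq-\delta$, so $h_{P_2}^H$ is a genuine function on $\Omega_2$. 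Next, $\partial_{x_3}P_1\leq-\delta$ together with the defining property of $h_1=h_{P_1}^H$ (Remark 2.4) forces $P_1\geq\frac{1}{2}(x_1^2+x_2^2)$ on all of $\Omega_{h_1}\cup\{x_3=0\}$, with equality on the flat part $\{h_1=0\}$ by hypothesis; and from the definition of $R_2$ one gets $x\cdot y-R_2(y)\leq P_1(x)$ for $x\in\Omega_{h_1}\cup\{x_3=0\}$, so $P_2\leq P_1$ there.

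For $(i)$ I would prove the reverse inequality $P_2\geq P_1$ on $\Omega_{h_1}\cup\{x_3=0\}$. At a point $x_0\in\Omega_{h_1}$ of differentiability, $y_0:=\nabla P_1(x_0)\in\partial P_1(x_0)\subset\Lambda$, and the subgradient inequality for the convex function $P_1$ on $\Omega_H$ shows the supremum defining $R_2(y_0)$ is attained at $x_0$, i.e.\ $R_2(y_0)=x_0\cdot y_0-P_1(x_0)$; hence $P_2(x_0)\geq x_0\cdot y_0-R_2(y_0)=P_1(x_0)$. Thus $P_2=P_1$ a.e.\ on $\Omega_{h_1}$, hence everywhere on $\Omega_{h_1}$ by continuity, and also on $\{x_3=0\}$ over $\{h_1>0\}$ by continuity up to the boundary. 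Over $\{h_1=0\}$ I would argue exactly as in part $(ii)$ of the preceding lemma: for any $z\in[-\frac{1}{\delta},-\delta]$ the point $(x_1,x_2,z)$ lies in $\Lambda$ (here one uses $\Omega_2\subset B_D(0)$), and using $P_1\geq\frac{1}{2}(x_1^2+x_2^2)$ on the sup-domain together with $x_3z\leq 0$ and completing the square gives $R_2(x_1,x_2,z)\leq\frac{1}{2}(x_1^2+x_2^2)$, whence $P_2(x_1,x_2,0)\geq(x_1^2+x_2^2)-R_2(x_1,x_2,z)\geq\frac{1}{2}(x_1^2+x_2^2)=P_1(x_1,x_2,0)$. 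Combined with $P_2\leq P_1$ this yields $(i)$.

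Part $(ii)$ is then short: $P_2=\sup_{y\in\Lambda}(x\cdot y-R_2(y))$ holds by definition, while for $R_2(y)=\sup_{x\in\Omega_H}(x\cdot y-P_2(x))$ the inequality ``$\geq$'' is immediate from $P_2(x)+R_2(y)\geq x\cdot y$, and ``$\leq$'' follows from $(i)$, since $R_2(y)=\sup_{x\in\Omega_{h_1}\cup\{x_3=0\}}(x\cdot y-P_1(x))=\sup_{x\in\Omega_{h_1}\cup\{x_3=0\}}(x\cdot y-P_2(x))\leq\sup_{x\in\Omega_H}(x\cdot y-P_2(x))$, using $\Omega_{h_1}\cup\{x_3=0\}\subset\Omega_H$.

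For $(iii)$ I would first check $h_{P_2}^H=h_1$: where $h_1(x_1,x_2)\in(0,H)$ one has $P_2(x_1,x_2,h_1)=P_1(x_1,x_2,h_1)=\frac{1}{2}(x_1^2+x_2^2)$ (by $(i)$ extended to $x_3=h_1$ by continuity, and Remark 2.4), so $\partial_s\Pi_{P_2}(x_1,x_2,\cdot)$ vanishes at $h_1$ and uniform convexity identifies $h_1$ as the minimizer on $[0,H]$; where $h_1(x_1,x_2)=0$, $P_2(x_1,x_2,0)=\frac{1}{2}(x_1^2+x_2^2)$ by $(i)$ and hypothesis, and $\partial_{x_3}P_2\leq-\delta$ forces $\partial_s\Pi_{P_2}>0$ for $s>0$, so the minimizer is $0$. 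Now set $\gamma_1=(id\times\nabla P_1)_{\sharp}\sigma_{h_1}$; by Lemma 2.11 its marginals are $\sigma_{h_1}$ and $\nu$, and by Theorem 2.18 $(h_1,\gamma_1)$ minimizes $E_\nu$ with $E_\nu(h_1,\gamma_1)=\sup J_\nu^H$. Splitting $c(x,y)=[\frac{1}{2}(y_1^2+y_2^2)-R_2(y)]+[\frac{1}{2}(x_1^2+x_2^2)-P_2(x)]+[P_2(x)+R_2(y)-x\cdot y]$ and integrating against $\gamma_1$, the marginals turn the first two brackets into $\int[\frac{1}{2}(y_1^2+y_2^2)-R_2]\,d\nu+\int_{\Omega_{h_1}}[\frac{1}{2}(x_1^2+x_2^2)-P_2]$, while the third vanishes $\gamma_1$-a.e., because on the full-measure set where $P_1$ is differentiable $\nabla P_1=\nabla P_2$ (as $P_1=P_2$ on the open set $\Omega_{h_1}$) and the Fenchel equality $P_2(x)+R_2(\nabla P_2(x))=x\cdot\nabla P_2(x)$ holds for the convex conjugate pair from $(ii)$. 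Hence $E_\nu(h_1,\gamma_1)$ equals that right-hand side, which by Remark 2.6 and $h_{P_2}^H=h_1$ is exactly $J_\nu^H(P_2,R_2)$; therefore $J_\nu^H(P_2,R_2)=\sup J_\nu^H$, so $(P_2,R_2)$ is a maximizer with $h_{P_2}^H=h_1$. I expect the only real obstacle to be the bookkeeping at the boundary pieces $\{x_3=0,\,h_1=0\}$ and $\{x_3=h_1\}$: the identity $P_1=P_2$ is established in the open region $\Omega_{h_1}$ and must be propagated to its boundary via continuity of the (Lipschitz) convex functions, while the hypothesis $P_1(x_1,x_2,0)=\frac{1}{2}(x_1^2+x_2^2)$ on $\{h_1=0\}$ has to be carried through; everything else is routine Legendre-transform manipulation as in the preceding lemma.
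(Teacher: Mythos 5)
Your proposal is correct, and for parts $(ii)$ and $(iii)$ it is essentially the paper's own argument: $(ii)$ is the same two-inequality Legendre-transform check, and $(iii)$ follows the paper's scheme of first identifying $h_{P_2}^H=h_1$ from $P_2=P_1$ on $\Omega_{h_1}$ together with $P_2(x_1,x_2,0)=\frac{1}{2}(x_1^2+x_2^2)$ on $\{h_1=0\}$, and then re-running the duality computation of Lemma 5.1$(iv)$ against $\gamma_1=(id\times\nabla P_1)_{\sharp}\sigma_{h_1}$ to get $J_{\nu}^H(P_2,R_2)=E_{\nu}(h_1,\gamma_1)=\sup J_{\nu}^H$. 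The one genuine divergence is in $(i)$, for the inequality $P_1\leq P_2$: the paper obtains it in one line, on all of $\Omega_H$, by noting that the sup defining $R_2$ runs over the smaller set $\Omega_{h_1}\cup\{x_3=0\}\subset\Omega_H$, so $R_2\leq R_1$, and conjugating gives $P_2(x)=\sup_{y\in\Lambda}(x\cdot y-R_2(y))\geq\sup_{y\in\Lambda}(x\cdot y-R_1(y))=P_1(x)$. You instead argue pointwise at points of differentiability of $P_1$ in $\Omega_{h_1}$ (attainment of the sup defining $R_2(\nabla P_1(x_0))$ at $x_0$, using $\partial P_1(\Omega_H)\subset\Lambda$), propagate the equality to the boundary by continuity, and handle $\{x_3=0,\ h_1=0\}$ separately via $P_1\geq\frac{1}{2}(x_1^2+x_2^2)$ on the sup-domain and the test point $(x_1,x_2,z)\in\Lambda$. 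This is valid (and the extra facts you invoke, including $h_1<H$ so that Remark 2.4 applies on $\{x_3=h_1\}$, are all available), but it costs the a.e./continuity bookkeeping and uses the standing assumption $\Omega_2\subset B_D(0)$, none of which the monotonicity-of-conjugation argument needs; the paper's route also hands you $P_1\leq P_2$ on all of $\Omega_H$ at once, which makes $(ii)$ and the extension of $P_1=P_2$ to $\overline{\Omega}_{h_1}$ in $(iii)$ immediate.
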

\begin{proof}
Take $x\in\Omega_{h_1}\bigcup\{x_3=0\}$, then $\forall y\in\Lambda$, we have
$$x\cdot y-R_2(y)=x\cdot y-\sup_{\bar{x}\in\Omega_{h_1}\bigcup\{x_3=0\}}(\bar{x}\cdot y-P_1(\bar{x}))\leq P_1(x)$$
Take supremum over y to get $$P_2(x)\leq P_1(x)\,\,\,(\forall x\in\Omega_{h_1}\bigcup\{x_3=0\})$$
On the other hand, by definition $R_2(y)\leq R_1(y)$, since $(P_1,R_1)$ conjugate over $\Omega_H$ and $\Lambda$, we obtain $$P_1(x)\leq P_2(x)\,\,\,(\forall x\in\Omega_H)$$$(i)$ is proved.\\
To see $(P_2,R_2)$ also convex conjuate over $\Omega_H$ and $\Lambda$, we only need to show
$$R_2(y)=\sup_{x\in\Omega_H}(x\cdot y-P_2(x))$$
Obviously $LHS\leq RHS$ by $(i)$.

On the other hand, for all $x\in\Omega_H$
$$x\cdot y-P_2(x)=x\cdot y-\sup_{\bar{y}\in\Lambda}(x\cdot\bar{y}-R_2(\bar{y}))\leq R_2(y)$$
So $(ii)$ is proved.\\
Finally we only need to see $J^H_{\nu}(P_1,R_1)=J^H_{\nu}(P_2,R_2)$. By assumption, we know $(h_1,(id\times\nabla P_1)_{\sharp}\sigma_{h_1})$ is the minimizer of $E_{\nu}(h,\gamma)$\\
By $(i)$, we know that
$$P_2(x_1,x_2,0)=\frac{1}{2}(x_1^2+x_2^2)\,\,\,whenever\,\,\,h_1(x_1,x_2)=0$$
Combining the fact that $P_1=P_2$ on $\Omega_{h_1}$, it`s easy to see $h_{P_2}^H=h_1$. The same argument as previous lemma $(iv)$ shows that $(P_2,R_2)$ is a maximizer.
\end{proof}
We derive the following corollary as an easy consequence of previous two lemmas.

\begin{cor}
Let $\Lambda=B_D(0)\times[-\frac{1}{\delta},-\delta]$ be such that $\Omega_2\subset B_D(0)$, and H is chosen such that $J_{\nu}^H(P,R)$ has convex conjugate maximizers over $\Omega_H$ and $\Lambda$, say $(P_0,R_0)$, and $h_0:=h_{P_0}^H<H$. Then there exists a maximizer $(P_2,R_2)$ of $J_{\nu}^H(P,R)$ which satisfies the following conditions:

$(i)$$(P_2,R_2)$ are convex conjugate over both $\Omega_{h_0}\bigcup\{x_3=0\},\Lambda$ and $\Omega_H,\Lambda$.

$(ii)$$P_2(x_1,x_2,0)=\frac{1}{2}(x_1^2+x_2^2)$ whenever $h_0(x_1,x_2)=0$.
\end{cor}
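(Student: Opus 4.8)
The plan is to obtain $(P_2,R_2)$ by composing, in order, the two constructions of Lemma 5.1 and Lemma 5.3; the corollary is then essentially pure bookkeeping. First I would apply Lemma 5.1 to the given convex conjugate maximizer $(P_0,R_0)$. Since $\Omega_2\subset B_D(0)$ with $\Lambda=B_D(0)\times[-\frac{1}{\delta},-\delta]$ and $h_0=h_{P_0}^H<H$ by hypothesis, every assumption of that lemma is in force, and it produces a pair $(P_1,R_1)$ which is again a maximizer of $J_{\nu}^H(P,R)$, convex conjugate over $\Omega_H$ and $\Lambda$ (part $(i)$), with $h_{P_1}^H=h_0$ (part $(iv)$), and — combining parts $(ii)$ and $(iii)$ — satisfying $P_1(x_1,x_2,0)=\frac{1}{2}(x_1^2+x_2^2)$ at every $(x_1,x_2)\in\Omega_2$ for which $h_0(x_1,x_2)=0$.

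Next I would feed $(P_1,R_1)$ into Lemma 5.3, with $h_1:=h_{P_1}^H=h_0$. Its hypotheses are exactly what the previous step produced: $(P_1,R_1)$ is a maximizer, convex conjugate over $\Omega_H$ and $\Lambda$, with $h_1<H$, and $P_1(x_1,x_2,0)=\frac{1}{2}(x_1^2+x_2^2)$ on the set where $h_1$ vanishes. Lemma 5.3 then yields $(P_2,R_2)$ which is still a maximizer of $J_{\nu}^H(P,R)$ with $h_{P_2}^H=h_1=h_0$, is convex conjugate over $\Omega_H$ and $\Lambda$ (part $(ii)$, giving the second half of conclusion $(i)$), and agrees with $P_1$ on $\Omega_{h_0}\bigcup\{x_3=0\}$ (part $(i)$). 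In particular $P_2(x_1,x_2,0)=P_1(x_1,x_2,0)=\frac{1}{2}(x_1^2+x_2^2)$ whenever $h_0(x_1,x_2)=0$, which is conclusion $(ii)$ of the corollary.

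It then remains only to check that $(P_2,R_2)$ is convex conjugate over $\Omega_{h_0}\bigcup\{x_3=0\}$ and $\Lambda$ — the first half of conclusion $(i)$. The identity $P_2(x)=\sup_{y\in\Lambda}(x\cdot y-R_2(y))$ holds for all $x\in\Omega_H$ straight from the definition of $P_2$ in Lemma 5.3, hence in particular on $\Omega_{h_0}\bigcup\{x_3=0\}$. For the reverse relation, $R_2(y)=\sup_{x\in\Omega_{h_1}\bigcup\{x_3=0\}}(x\cdot y-P_1(x))$ by definition; using $h_1=h_0$ and substituting $P_1=P_2$ on $\Omega_{h_0}\bigcup\{x_3=0\}$ (Lemma 5.3$(i)$) converts this into $R_2(y)=\sup_{x\in\Omega_{h_0}\bigcup\{x_3=0\}}(x\cdot y-P_2(x))$, which is the missing conjugacy formula. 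The entire argument is a matter of chaining the two lemmas; the one point deserving attention is verifying that the output of Lemma 5.1 meets every hypothesis of Lemma 5.3 — most notably that $h_{P_1}^H=h_0$ remains strictly below $H$, which is immediate from the standing assumption $h_0<H$. There is no genuine analytic obstacle beyond what was already handled in proving Lemmas 5.1 and 5.3.
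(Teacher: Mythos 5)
Your proposal is correct and follows essentially the same route as the paper: apply Lemma 5.1 to $(P_0,R_0)$ to obtain $(P_1,R_1)$, verify (via Lemma 5.1 $(i)$--$(iv)$) that it satisfies the hypotheses of Lemma 5.3, and then read off both conclusions from the properties of the resulting pair $(P_2,R_2)$. Your explicit verification of the conjugacy over $\Omega_{h_0}\bigcup\{x_3=0\}$ and $\Lambda$ (substituting $P_1=P_2$ and $h_1=h_0$ into the defining formula for $R_2$) is just a spelled-out version of what the paper dispatches as holding ``by their very definition.''
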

\begin{proof}
Let $(P_1,R_1)$ be the pair given by Lemma 5.1. The conclusion of Lemma 5.1 shows $(P_1,R_1)$ satisfies the assumptions of Lemma 5.2. Let $(P_2,R_2)$ be the pair given by  Lemma 5.2. Then such a pair is a maximizer by Lemma 5.2 $(iii)$. They are convex conjugate over $\Omega_{h_0}\bigcup\{x_3=0\}$ by their very definition. They are convex conjugate over $\Omega_H,\Lambda$ by Lemma 5.2 $(ii)$. $P_2$ satisfies $(ii)$ because of Lemma 5.2$(i)$ and Lemma 5.1 $(ii),(iii)$.
\end{proof}
The following lemma can be found in \cite{Evans} section 5.3 Theorem 1, and so we omit the proof.
\begin{lem}
Let $\Omega$ be a convex domain in $\mathbf{R}^d$, let $P:\Omega\rightarrow\mathbf{R}$ be a convex function such that $||P||_{L^1(\Omega)}<\infty$. Let $\Omega_1=\{x\in\Omega|dist(x,\Omega^c)>r\}$. Then there exists a constant $C=C(||P||_{L^1(\Omega)},r,d)$,such that
\begin{equation}
||P||_{L^{\infty}(\Omega_1)}\leq C\,\,\,\,||\nabla P||_{L^{\infty}(\Omega_1)}\leq C
\end{equation}
\end{lem}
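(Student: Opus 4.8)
The plan is to obtain the sup-norm bound first, on an interior set slightly fatter than $\Omega_1$, and then to read off the Lipschitz bound from it by a supporting-hyperplane argument. Both steps are elementary once the shrinking radii are set up carefully; this is essentially the textbook argument (cf.\ \cite{Evans}).

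\emph{Upper bound for $P$.} I would start from the sub-mean-value property of convex functions. If $\mathrm{dist}(x,\Omega^c)>\rho$ then $B(x,\rho)\subset\Omega$, and Jensen's inequality for the uniform probability measure on $B(x,\rho)$, whose barycenter is $x$, gives $P(x)\le \frac{1}{|B(x,\rho)|}\int_{B(x,\rho)}P\le \frac{\|P\|_{L^1(\Omega)}}{\omega_d\rho^d}$, where $\omega_d=|B_1|$. Taking $\rho=r/4$ produces an upper bound $P\le C_1=C_1(\|P\|_{L^1(\Omega)},r,d)$ on $\Omega':=\{\,\mathrm{dist}(\cdot,\Omega^c)>r/4\,\}$, which contains $\Omega_1$.

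\emph{Lower bound for $P$.} Here I would argue by contradiction: fix $x\in\Omega_1$ and suppose $P(x)=-M$ with $M\ge C_1$. For every unit vector $e$ the point $x+(r/2)e$ lies in $\Omega'$, so $P(x+(r/2)e)\le C_1$; convexity of $t\mapsto P(x+te)$ on $[0,r/2]$ (these points stay in $\Omega$ since $B(x,r)\subset\Omega$) then forces $P(x+te)\le -M+\frac{2t}{r}(M+C_1)\le -M/2$ for $0\le t\le r/8$, the last inequality using $M\ge C_1$. Since $e$ is arbitrary, $P\le -M/2$ on $B(x,r/8)\subset\Omega$, so $\|P\|_{L^1(\Omega)}\ge \frac{M}{2}\,\omega_d(r/8)^d$, which bounds $M$ in terms of the data. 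Combining with the upper bound gives $|P|\le C_2=C_2(\|P\|_{L^1(\Omega)},r,d)$ on $\Omega_1$; running the same argument with $r$ replaced by $r/2$ also gives $|P|\le C_2'$ on the fatter set $\Omega'':=\{\,\mathrm{dist}(\cdot,\Omega^c)>r/2\,\}\supset\Omega_1$.

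\emph{Gradient bound.} A finite convex function on an open set is locally Lipschitz, is differentiable $\mathcal L^d$-a.e., and at points of differentiability $\nabla P(x)$ is the unique element of the subdifferential $\partial P(x)$. For $x\in\Omega_1$ and $p\in\partial P(x)$ with $p\ne 0$, put $e=p/|p|$; then $x+(r/2)e\in\Omega''$, and the supporting inequality $P(x+(r/2)e)\ge P(x)+\frac{r}{2}|p|$ combined with $|P|\le C_2'$ on $\Omega''$ yields $|p|\le 4C_2'/r$. Hence $\|\nabla P\|_{L^\infty(\Omega_1)}\le 4C_2'/r$, and the lemma holds with $C=\max\{\,C_2,\ 4C_2'/r\,\}$. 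The only point needing genuine care is the bookkeeping of the shrinking factors ($r/4$, $r/2$, $r/8$ and their halves) so that every auxiliary point stays in $\Omega$ and in a region where the bound already proved is available; the measure-theoretic input — a.e.\ differentiability of convex functions, used to pass from the subdifferential estimate to a bound on $\nabla P$ — is standard and is not a real obstacle.
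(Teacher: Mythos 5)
Your proof is correct, and it is essentially the argument the paper is pointing to: the paper omits the proof entirely, citing Evans--Gariepy, and your Jensen/sub-mean-value bound, the $L^1$-mass lower bound on a small ball, and the supporting-hyperplane gradient estimate are exactly the standard textbook argument, with the shrinking radii $r/4$, $r/2$, $r/8$ handled correctly. No gaps.
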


\subsection*{Acknowledgments}
The author would like to thank Mike Cullen for suggesting me this problem, and Mikhail Feldman for helpful discussions and suggestions.


\begin{thebibliography}{9}

\bibitem{BV vector}
Luigi Ambrosio,
Transport Equation and Cauchy Problem for BV Vector Fields,
Invent. math. 158, (2004)

\bibitem{AG}
L.Ambrosio,W.Gangbo.
Hamitonian ODE in the Wasserstein spaces of probability measures.
Comm.Pure Appl.Math. Vol 61 (2008)


\bibitem{gradient-flows}
Luigi Ambrosio,Nicola Gigli and Giuseppe Savare.
Gradient flows in metric spaces and the Wasserstein spaces of probability measures,
Lectures in Mathematics, ETH Zurich, Birkhauser, 2005.

\bibitem{Brenier}
J-D Benamou,Y.Brenier.
Weak existence for the Semi-Geostrophic equations formulated as a coupled Monge-Ampere/transport problem.
SIAM J.Appl. Math,58(1998),no.5

\bibitem{Cullen-Feldman}
Mike Cullen,Mikhail Feldman.
Lagrangian Solutions of Semigeostrophic Equations in Physical Space.
SIAM J. Math. Anal. 37(2006), no 5,

\bibitem{Cullen-Gangbo}
Mike Cullen,Wilfrid Gangbo,
A Variational Approach for the 2-Dimensional Semi-Geostrphic Shallow Water Equations.
Arch. Rational Mech. Anal. 156 (2001)

\bibitem{Pelloni}
Mike Cullen,D.K.Gilbert,T.Kuna,and B.Pelloni.
Free Upper Boundary Value Problems for the Semi-Geostrophic Equations,
Preprint, arXiv:1409.8560

\bibitem{Cullen-Maroofi}
Mike-Cullen,Hamed Maroofi.
The Fully Compressible Semi-Geostrophic System from Meteorology.
Arch. Rational Mech. Anal. 167 (2003)

\bibitem{axisymmtricflow}
Mike Cullen,Marc Sedjro.
Model of Forced Axisymmtric Flows.
SIAM J. Math. Anal. Vol. 46, No. 6,


\bibitem{Evans}
L.C. Evans, R.F. Gariepy.
Measure theory and fine properties of functions.
Studies in Advanced
Mathematics. CRC Press, Boca Raton, FL, 1992.















\bibitem{Feldman-Tudorascu}
Mikhail Feldman,Adrian Tudorascu.
On the Semi-Geostrophic System in Physical Space with General Initial Data.
Arch. Ration. Mech. Anal, Vol 218, Issue 1.





\bibitem{Feldman2}
Mikhail Feldman,Adrian Tudorascu.
Lagrangian solutions for the Semi-geostrophic
shallow water system in physical  space with
general initial data.
 St. Petersburg Mathematical Journal,Vol 27, Issue 3.


\end{thebibliography}
\end{document}